\documentclass{amsart}
\usepackage[utf8]{inputenc}

\usepackage{lineno}

\usepackage{paralist}
\usepackage{extpfeil}


\usepackage{amssymb}
\usepackage{amsmath}
\usepackage{amsfonts}
\usepackage{tikz-cd}
\usetikzlibrary{arrows}
\usepackage{relsize}

\usepackage{centernot}
\usepackage{mathtools}

\makeatletter
\newcommand{\xMapsto}[2][]{\ext@arrow 0599{\Mapstofill@}{#1}{#2}}
\def\Mapstofill@{\arrowfill@{\Mapstochar\Relbar}\Relbar\Rightarrow}
\makeatother

\tikzset{
curvarr/.style={
  to path={ -- ([xshift=2ex]\tikztostart.east)
    |- (#1) [near end]\tikztonodes
    -| ([xshift=-2ex]\tikztotarget.west)
    -- (\tikztotarget)}
  }
}

\tikzset{
  curvedlink/.style={
    to path={
      let \p1=(\tikztostart.east), \p2=(\tikztotarget.west),
      \n1= {abs(\y2-\y1)/4} in
      (\p1) arc(90:-90:\n1) -- ([yshift=2*\n1]\p2) arc (90:270:\n1)
    },
  }
}

\usepackage{amsthm}
\usepackage{enumitem}
\usepackage{mathrsfs}
\usetikzlibrary{decorations.pathmorphing}
\usepackage[utf8]{inputenc}
\usepackage[T1]{fontenc}
\usepackage{indentfirst}
\usepackage{xcolor}
\usepackage{mathtools}
\usepackage{lineno}
\usepackage{hyperref}
\hypersetup{
    colorlinks,
    citecolor=black,
    filecolor=black,
    linkcolor=black,
    urlcolor=black
}
\usepackage{thmtools}

\numberwithin{equation}{section}
\newtheorem{theorem}[equation]{Theorem}

\newtheorem{thm}[theorem]{Theorem}
\newtheorem{corollary}[theorem]{Corollary}
\newtheorem{proposition}[theorem]{Proposition}

\newtheorem{lemma}[theorem]{Lemma}
\newtheorem{cor}[theorem]{Corollary}
\newtheorem{prop}[theorem]{Proposition}
\newtheorem{lem}[theorem]{Lemma}
\newcommand{\myendsymbol}{\ensuremath{\diamondsuit}}

\declaretheorem[
  style=definition,
  title=Definition,
  qed={$\myendsymbol$},
  sharenumber=thm,
]{define}

\declaretheorem[
  style=definition,
  title=Definition,
  qed={$\myendsymbol$},
  sharenumber=thm,
]{dfn}

\declaretheorem[
  style=definition,
  title=Example,
  qed={$\myendsymbol$},
  refname={example,examples},
  Refname={Example,Examples},
  sharenumber=thm,
]{example}

\declaretheorem[
  style=definition,
  title=Example,
  qed={$\myendsymbol$},
  refname={example,examples},
  Refname={Example,Examples},
  sharenumber=thm,
]{exa}

\declaretheorem[
  style=definition,
  title=Convention,
  qed={$\myendsymbol$},
  sharenumber=thm,
]{conv}

\declaretheorem[
  style=definition,
  title=Notation,
  qed={$\myendsymbol$},
  sharenumber=thm,
]{ntn}

\declaretheorem[
  style=definition,
  title=Remark,
  qed={$\myendsymbol$},
  sharenumber=thm,
]{remark}

\declaretheorem[
  style=definition,
  title=Remark,
  qed={$\myendsymbol$},
  sharenumber=thm,
]{rmk}

\declaretheorem[
  style=definition,
  title=Question,
  qed={$\myendsymbol$},
  sharenumber=thm,
]{question}

\declaretheorem[
  style=definition,
  title=Problem,
  qed={$\myendsymbol$},
  sharenumber=thm,
]{problem}



\newcommand{\calB}{\mathcal{B}}
\newcommand{\calC}{\mathcal{C}}

\newcommand{\calF}{\mathcal{F}}

\newcommand{\calH}{\mathcal{H}}
\newcommand{\calI}{\mathcal{I}}

\newcommand{\calT}{\mathcal{T}}

\newcommand{\scrM}{\mathscr{M}}
\newcommand{\scrL}{\mathscr{L}}
\newcommand{\scrF}{\mathscr{F}}
\newcommand{\scrU}{\mathscr{U}}
\newcommand{\scrG}{\mathscr{G}}
\newcommand{\scrO}{\mathscr{O}}

\newcommand{\scrP}{\mathscr{P}}

\newcommand{\frakm}{\mathfrak{m}}

\newcommand{\frakp}{\mathfrak{p}}

\newcommand{\frakx}{\mathfrak{x}}

\DeclareMathOperator{\MaxR}{MaxR}
\DeclareMathOperator{\lct}{lct}

\DeclareMathOperator{\Hom}{Hom}

\DeclareMathOperator{\Var}{Var}

\DeclareMathOperator{\Spec}{Spec}

\DeclareMathOperator{\reg}{reg}
\DeclareMathOperator{\rank}{rank}

\DeclareMathOperator{\codim}{codim}

\DeclareMathOperator{\Sing}{Sing}

\DeclareMathOperator{\Ind}{Ind}
\DeclareMathOperator{\Feyn}{Feyn}
\DeclareMathOperator{\MSP}{MSP}
\DeclareMathOperator{\FMP}{FMP}
\DeclareMathOperator{\Matroidal}{Mtrdl}

\newcommand{\boldb}{{\mathbf{b}}}

\newcommand{\boldu}{{\mathbf{u}}}
\newcommand{\boldv}{{\mathbf{v}}}

\newcommand{\bsx}{{\boldsymbol x}}

\newcommand{\bsalpha}{{\boldsymbol \alpha}}

\newlength{\myl}
\settowidth{\myl}{$\mu$}

\newcommand{\de}{{\mathrm d}}
\newcommand{\eps}{\varepsilon}

\newcommand{\into}{\hookrightarrow}

\renewcommand{\to}{\longrightarrow}
\newcommand{\ideal}[1]{{\langle#1\rangle}}
\newcommand{\bracket}[2]{#1^{[p^{#2}]}}

\newcommand{\minus}{\smallsetminus}

\newcommand{\matB}{{\mathsf B}}
\newcommand{\matD}{{\mathsf D}}
\newcommand{\matN}{{\mathsf N}}
\newcommand{\matF}{{\mathsf F}}
\newcommand{\matM}{{\mathsf M}}
\newcommand{\matQ}{{\mathsf Q}}
\newcommand{\matU}{{\mathsf U}}
\newcommand{\matL}{{\mathsf L}}

\newcommand{\CC}{\mathbb{C}}

\newcommand{\FF}{\mathbb{F}}

\newcommand{\KK}{\mathbb{K}}

\newcommand{\NN}{\mathbb{N}}
\newcommand{\PP}{\mathbb{P}}
\newcommand{\QQ}{\mathbb{Q}}
\newcommand{\RR}{\mathbb{R}}

\newcommand{\ZZ}{\mathbb{Z}}

\newcommand{\VExt}{{V_{\textrm{Ext}}}}
\newcommand{\massm}{{\boldsymbol m}}

\DeclareMathOperator{\iso}{\simeq}

\newcommand\uli[1]{\ensuremath{\clubsuit}{UW:\tiny{#1}}\ensuremath{\clubsuit}}

\newcommand\dan[1]{\ensuremath{\spadesuit}{DB:\tiny{#1}}}

\usepackage [english]{babel}
\usepackage [autostyle, english = american]{csquotes}
\MakeOuterQuote{"}

\title[]{Matroidal polynomials, their singularities, and applications to Feynman diagrams}

\author{Daniel Bath}
\address{Daniel Bath\\
KU Leuven\\ 
Departement Wiskunde,\\ 
Celestijnenlaan 200B,
Leuven 3001,
Belgium.}
\email{dan.bath@kuleuven.be}

\author{Uli Walther}
\address{ Uli~Walther\\
  Purdue University\\
  Dept.\ of Mathematics\\
  150 N.\ University St.\\
  West Lafayette, IN 47907\\ USA}
\email{walther@purdue.edu}

\thanks{During the preparation of this manuscript, 
DB was supported by FWO Grant \#12E9623N.
UW was supported in part by NSF Grant DMS-2100288 and by Simons Foundation Collaboration Grant for Mathematicians
\#580839. 
UW was also supported by the National Science Foundation under Grant DMS-1928930 and by the Alfred P.\ Sloan Foundation under grant G-2021-16778, while in residence at the Simons Laufer Mathematical Sciences Institute (formerly MSRI) in Berkeley, California, during the Spring 2024 semester.}
\subjclass[2020]{Primary 32S25; Secondary: 13A35, 14N20, 32S22, 32S05, 14E18, 81Q30.}
\keywords{matroid, configuration, Kirchhoff, rational, singularity, F-regular, Feynman, jet, arc}

\dedicatory{Dedicated to Brandy Ambrose--Bath, beloved in perpetuity.}

\begin{document}
\sloppy

\begin{abstract}
Given a matroid or flag of matroids we introduce several broad classes of polynomials satisfying Deletion-Contraction identities, and study their singularities. 

There are three main families of polynomials captured by our approach: matroidal polynomials on a matroid (including matroid basis polynomials, configuration polynomials, Tutte polynomials); flag matroidal polynomials on a flag matroid; and Feynman integrands. The last class includes under general kinematics the inhomogeneous Feynman diagram polynomials which naturally arise in the Lee--Pomeransky form of the Feynman integral attached to a Feynman diagram.

Assuming that the primary underlying matroid is connected and of positive rank (and in the flag case, has rank at least two),  we show: a) in positive characteristic, homogeneous  matroidal  polynomials are strongly $F$-regular;  b) over an algebraically  closed field  of characteristic zero, the associated jet schemes of (flag) matroidal polynomials as well as those of Feynman integrands are irreducible. Consequently, all these polynomials have rational singularities (or are smooth).
\end{abstract}
\maketitle
\tableofcontents

\section{Introduction}

A \emph{configuration} is the choice of a subspace $W$ inside an $n$-dimensional vector space $V$ with basis $E=\{e_1,\ldots,e_n\}$ over the field $\KK$. If $W$ arises as the row span of a $\rank(W)\times n$ matrix $A$, this realization of the associated matroid of columns of $A$ induces a \emph{configuration polynomial} $\psi_A$ in the polynomial ring $\KK[E]$; up to a nonzero factor, it only depends on $W$ (and not on $A$). In the classical case, when $A$ is the (truncated) incidence matrix of a graph $G$, this is the Kirchhoff polynomial $\phi_G$ of $G$. Configuration polynomials in general were introduced in \cite{BEK} and placed in a matroidal context in \cite{Patterson}. 

An alternative generalization of Kirchhoff polynomials to all (and not just realizable) matroids are the \emph{matroid basis polynomials}: the sum of those monomials (with coefficient 1) that encode the bases of the matroid. These classes of polynomials have received much recent attention inspired by the work of Huh and his collaborators, see \cite{LogConcavePolynomials,EurHuhLogConcavity}. The fundamental underlying commonality of Kirchhoff, matroid basis, and configuration polynomials is that they satisfy certain recursive \emph{Deletion-Restriction} identities. Based on these,  a study of all these classes of polynomials was undertaken in \cite{DSW}, focussing on size and structure of the singular locus. One particular consequence of this study was the discovery that matroid basis polynomials of representable matroids can fail to be configuration polynomials.

\medskip

In this article we formulate a triad of classes of (often inhomogeneous) polynomials of rising complexity that contain all of the above types of polynomials and uniformly enjoy Deletion-Restriction identities. These are the \emph{matroid support polynomials}, the \emph{matroidal polynomials}, and the \emph{flag matroidal polynomials}.

Matroid support polynomials are simply those whose monomial support agrees with the bases of some matroid. Matroid support polynomials abound in combinatorial contexts and elsewhere. For example, the uniform matroid $\matU_{d,n}$ with $n>d\in\NN$ supports the $d$-th elementary symmetric polynomial in $n$ variables, as well as in fact every polynomial with monomial support exactly all squarefree monomials of degree $d$,
compare Example \ref{exa-UniformMat}. Another natural class of examples arises in graph theory: if $\calT_G^k$ enumerates the  circuit-free sets of edges of cardinality $k$ in a graph $G$, and if $r$ is the common size of the spanning forests of $G$, 
then any $\KK^\times$-weighted sum over $\calT^r_G$ (resp.\ $\calT^k_G$ with $k\le r)$ gives rise to a matroid support polynomial for 
the graphic matroid attached $\matM_G$ to $G$ from Example \ref{ex-GraphicMat} (resp.\ for the truncations of $\matM_G$, compare Example \ref{ex - truncations are quotients}). 

The second class on our list, comprised of matroidal polynomials, is new. Its constituents require, aside from the matroid in question, another input type that we name \emph{singleton data}. This ingredient determines to what extent the resulting expression strays from being a matroid support polynomial.
The additional freedom permits the construction of inhomogeneous objects such as Tutte polynomials and the polynomial enumerating all sets of maximal rank in a matroid (Example \ref{ex-matroidal-polys}.\ref{item-maximalrank}), while "trivial" singleton data simply produce matroid support polynomials. 

Thirdly, we consider flag matroids, \emph{i.e.}\ finite sequences of matroid quotients. A basic example arises from fixing a matroid $\matM$ and then taking a sequence of truncations (Example \ref{ex - truncations are quotients}); each flag matroid polynomial here is then a
 $\KK^\times$-weighted enumeration of the independent sets of $\matM$ of rank at least the number of truncations in the flag (Example \ref{ex - repeated trunctations, monomial support on independent sets}).
 Our flag matroidal polynomials are an adaptation of our matroidal polynomials to the flag case.

\medskip

In the greater part of this article we investigate the type of singularities the members of the above classes offer. Typically,
an element of chaos surrounds the singularities arising out of geometric constructions involving all matroids.  We cite three examples: a) in \cite{BelkaleBrosnanMatroidsKontsevich}, Belkale and Brosnan studied
the hypersurfaces attached to Kirchhoff polynomials and proved that the collection of all Kirchhoff hypersurface complements generates the ring of all geometric motives; b) fixing a representable matroid $\matM$ and the moduli space of all its representations in the appropriate Grassmannian, Mn\"{e}v \cite{Mnev} and Sturmfels \cite{SturmfelsMatroidStrata} showed that these moduli spaces can have arbitrarily complicated singularities over $\QQ$ if one varies $\matM$; c) according to \cite{DSW}, size and Cohen--Macaulayness of the singular locus can vary wildly on the classes of configuration or matroid basis polynomials on $\matM$  (see Subsection \ref{subsect-ModuliSpace}). 

In stark contrast, our studies here on (flag) matroidal polynomials show that they universally enjoy very mild singularities: the singularities of every irreducible polynomial on our list are rational. Remarkably, our discussion involves all matroids, whereas the cases listed above all reside within the subclass of representable matroids (asymptotically with $n$ of density zero \cite{NelsonMatroidNonRep}).

Rational singularities are somewhat rare, and usually have appeared in situations that are amenable to birational or characteristic $p$ methods. For example, the singular points of the following are rational: toric varieties and other quotients by linear transformation groups; generic determinantal varieties \cite{BrunsVetter}; Hankel determinantal varieties \cite{HankelDeterminatalRings}; Schubert varieties \cite{PositivityComplexFlag}; positroid varieties \cite{PositriodVarieties}; theta divisors \cite{ThetaDivisors}; moments maps of quivers with at least one vertex and two loops \cite{BudurRationalMoments}. On the other hand, many ``simple'' varieties do not have rational singularities: hyperplane arrangements; degree $d \geq n$ affine cones of smooth hypersurfaces in $\PP_{\CC}^{n-1}$; most free divisors with strong homogeneity conditions \cite{NarvaezMacarroDuality} (c.f. Remark \ref{rmk-rat-sing-properties}). The birational geometry of Kirchhoff, configuration and matroidal polynomials as at this point relatively unexplored. While the results of Belkale and Brosnan should perhaps be viewed as a caution, our results might  be interpreted as a beacon of hope.

\medskip

Our methods of investigation here rely on the concept of jet spaces, a theory that classically developed in the study of differential equations but was transplanted into algebraic geometry. We focus particularly on Musta\c t\u a's results from \cite{MustataJetsLCI,SingularitiesPairsMustata} characterizing rational singularities in terms of irreducibility of jet spaces. These tools in turn rely on deriving certain dimensional inequalities. In order to establish the necessary estimates, we use the Deletion-Contraction identities in an inductive scheme.  This process aims to preserve connectedness under deletion of edges, and in the face of the non-existence of a suitable such edge relies on the construction of a certain collection of edges termed a \emph{handle}. 

Packaging Theorems \ref{thm - matroid poly, connected implies m-jets irreducible}, \ref{thm - flag matroidal poly, m-jets irreducible} and Corollaries \ref{cor-mtrdl-rat-sing}, \ref{cor-flag-mtrdl-ratsing} together yields:
\begin{thm}
    Let $\matM$ be a connected matroid of positive rank; let $\scrM$ be a terminally strict and terminally connected flag matroid of terminal rank at least two. Over an algebraically closed field $\KK$ of characteristic zero and for all $m \in \mathbb{Z}_{\geq 0}$, we have:
    \begin{itemize}
        \item the $m$-jets of any matroidal polynomial $\zeta_{\matM}$ on $\matM$ are irreducible;
        \item the $m$-jets of any flag matroidal polynomial $\zeta_{\scrM}$ on $\scrM$ are irreducible. 
    \end{itemize}
    In particular, over $\KK$, the following have rational singularities (or are smooth):
    \begin{itemize}
        \item any matroidal polynomial $\zeta_{\matM}$ on $\matM$;
        \item any flag matroidal polynomial $\zeta_{\scrM}$ on $\scrM$.
    \end{itemize}
\end{thm}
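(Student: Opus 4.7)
The plan is to derive the rational-singularities statement as a consequence of the irreducibility of jet schemes via Musta\c t\u a's criterion: a (reduced) hypersurface has rational singularities if and only if all of its jet schemes are irreducible, equivalently, $\dim \calJ_m = (m+1)\dim X$ for every $m \geq 1$. Since (flag) matroidal polynomials cut out hypersurfaces, the real work is to establish the two irreducibility statements; the final implication is then immediate. The natural induction is on $|E(\matM)|$, with an inner induction on $m$, since the hypotheses (connected, positive rank) are precisely of the kind preserved by Deletion--Contraction when a suitable edge exists.

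For the matroidal case with $\matM$ connected of positive rank, the base $m=0$ amounts to irreducibility of $\{\zeta_{\matM}=0\}$, which should follow from a Deletion--Contraction expansion of the form $\zeta_{\matM} = x_e \cdot \zeta_{\matM/e} + (\text{term involving } \zeta_{\matM\setminus e})$ by stratifying over $x_e$ and applying the inductive hypothesis to the two smaller matroids. For the inductive step on $m$, I lift this stratification to $\calJ_m$: introduce the jet coordinates of $x_e$, stratify by the order of vanishing of its jet, and observe that each stratum fibers over $\calJ_{m'}$ of the hypersurface attached to either $\matM\setminus e$ or $\matM/e$ for some $m' \leq m$. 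The inductive dimension bounds on those jet schemes then yield $\dim \calJ_m(\{\zeta_{\matM}=0\}) \leq (m+1)(|E|-1)$, and combined with the reverse inequality (automatic from the smooth locus contributing an irreducible component of that dimension) this forces irreducibility. The main obstacle is the case where \emph{no} single edge $e$ simultaneously preserves connectedness under both deletion and contraction. This is where the introduction's \emph{handle} must be invoked: a distinguished collection of edges whose simultaneous deletion/contraction yields a generalized Deletion--Contraction expansion whose pieces are governed by matroids still satisfying the inductive hypothesis. Building such a handle and checking it retains the needed connectedness and dimension control is the principal combinatorial hurdle.

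For the flag case, the plan is an induction on the length of $\scrM$. The hypotheses ``terminally strict,'' ``terminally connected,'' and ``terminal rank $\geq 2$'' are tailored so that removing the top layer of the flag leaves either a shorter flag still obeying the same assumptions, or else a single terminal matroid to which the first part directly applies. I would stratify $\calJ_m(\{\zeta_{\scrM}=0\})$ by values of the jet coordinates of the variables singled out by the top quotient, and use the Deletion--Contraction identities for flag matroidal polynomials to reduce each stratum to a jet computation on a shorter flag; the requirement ``terminal rank $\geq 2$'' ensures that the bottom of the recursion remains inside the matroidal regime already handled. Finally, Musta\c t\u a's theorem converts the irreducibility of $\calJ_m$ into rational singularities of the hypersurface $\{\zeta_{\scrM}=0\}$.
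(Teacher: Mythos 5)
Your high-level skeleton---Musta\c t\u a's jet-irreducibility criterion, induction on $|E|$, Deletion--Contraction to pass to smaller matroids, and a handle to substitute when no single connectivity-preserving edge exists---matches the paper's architecture. However, there are two substantive gaps in how you plan to carry this out, and a misdescription of the flag case.

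First, you conflate equidimensionality with irreducibility. You write that $\dim\calJ_m \le (m+1)(|E|-1)$ together with the reverse inequality ``forces irreducibility.'' It does not: it forces equidimensionality, and an equidimensional jet scheme can perfectly well be reducible (several components of the top dimension, one over the regular locus and others over the singular locus). Musta\c t\u a's characterization, which the paper records as Proposition \ref{prop - Mustata's characterization of irreducibility of m-jets in terms of dim lying over sing}, says that irreducibility of $\scrL_m$ for an l.c.i.\ hypersurface is equivalent to the \emph{strict} inequality $\dim\scrL_m(\KK^E,X,X_{\Sing}) < (m+1)\dim X$, i.e.\ to the jets lying over the singular locus being of strictly smaller dimension. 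Your proposal proves the weaker equidimensionality bound (the paper's Proposition \ref{prop - matroid poly, equidimensional jets}, obtained by intersecting $\scrL_m$ with the complete intersection $\Gamma=\Var(\{D^qf_e\}_{0\le q\le m})$ and applying the elementary estimate $\dim(Y\cap\Gamma)+\codim\Gamma\ge\dim Y$ of Lemma \ref{lemma - basic intersection theory identity}), but it does not touch the real point.

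Second, and consequently, you have no mechanism for controlling the jets lying over the singular locus. The paper's essential device is to analyze the minimal primes $\frakp$ of the true Jacobian $(\partial\zeta_\matM)+(\zeta_\matM)$ and show, via the handle formula (Proposition \ref{prop - new handle fml - matroidal poly}) and the identity $\zeta_\matM - \sum_h f_h\,\partial_h\zeta_\matM = (\prod_h f_h)\,\zeta_{\matM/H}$, that $\frakp$ must contain either some $f_h$ ($h\in H$) or the handle polynomial $\zeta_{\matM/H}$ (Lemma \ref{lemma - matroid poly, components of Sing for M connected}). In the first case one intersects with a codimension-$m$ piece of $\Gamma$ and loses one from the equidimensionality bound; in the second, one compares $\min$-parts (Lemma \ref{lemma - matroidal poly handle fml, intersecting induced polys}, using that the minimum parts are matroid support polynomials of different degrees) to force a dimension drop in $X_{\zeta^h_{\matM\setminus H}}\cap X_{\zeta_{\matM/H}}$ and then works through a vanishing-order bookkeeping ($\Upsilon$-vectors) that resembles your stratification idea but is applied only at this late stage. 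Without the true-Jacobian analysis you cannot produce the requisite strict inequality.

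Third, the flag case: your plan inducts on the length of the flag and ``removes the top layer.'' The paper instead inducts on $|E|$ exactly as in the matroidal case, putting the handle on the \emph{terminal} matroid $\matM_1$ and using the handle formula for flags (Proposition \ref{prop - new handle fml, flag matroidal poly}); terminal strictness and terminal connectedness are preserved under deletion/contraction of independent edges of $\matM_1$ and under passing to $(\scrM\setminus H)_{<r}$ and $\scrM/H$. Inducting on flag length would be a genuinely different route and you would need to verify that it actually terminates at matroids in the matroidal regime in a way compatible with the singular-locus bound; as stated this is not established.

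Finally, a small but real inaccuracy: irreducibility of $\zeta_\matM$ at level $m=0$ is not obtained in the paper by ``stratifying over $x_e$'' on a Deletion--Contraction expansion; it is obtained by showing $\min(\zeta_\matM)$ is a matroid support polynomial on the connected matroid $\matM$ (Proposition \ref{prop - initial term matroidal poly}, Corollary \ref{cor-irred}) and noting a factorization of $\zeta_\matM$ would induce one on $\min(\zeta_\matM)$.
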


\medskip

For the first class we consider, the matroid support polynomials, we also undertake a study in finite characteristic. Here, the Frobenius homomorphism allows to define a number of singularity types that parallel notions in characteristic zero; see Subsection \ref{subsec-primer-F} for details. For example, 
the notion of \emph{strong $F$-regularity} is, according to results of K.~Smith, a positive characteristic strengthening of the characteristic zero notion of a rational singularity, see  \cite{Smith-AJM97,TakagiWatanabe}. We prove here:

\begin{thm}
    Let $\matM$ be a connected matroid on the ground set $E$. Let $\chi_{\matM} \in \FF[\{x_{e}\}_{e \in E}]$ be a matroid support polynomial where $\FF$ is a field of finite characteristic. Then $\chi_\matM$ is strongly $F$-regular.
\end{thm}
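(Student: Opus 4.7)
The strategy is to apply Fedder's criterion for strong $F$-regularity of a hypersurface and to verify the resulting combinatorial condition by an induction on $|E|$ driven by the Deletion--Contraction identity already central to the paper. Write $S = \FF[\{x_e\}_{e \in E}]$ and $\chi_\matM = \sum_{B \in \calB(\matM)} c_B x^B$, a homogeneous polynomial of degree $r = \rank(\matM)$ with each $c_B \in \FF^\times$. By homogeneity, the non-strongly-$F$-regular locus of $V(\chi_\matM)$ is $\GG_m$-stable, so it suffices to verify strong $F$-regularity at the graded maximal ideal $\frakm = (x_e)_{e \in E}$. Fedder's criterion (in the form due to Glassbrenner for hypersurfaces) then reduces the claim to producing, for every $c \in S$ avoiding every minimal prime of $(\chi_\matM)$, an integer $t \geq 1$ with
\[
c \cdot \chi_\matM^{p^t - 1} \notin \frakm^{[p^t]}.
\]

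The induction is on $|E|$. For the base, when $\rank(\matM) = 1$ the connectedness forces $\matM = \matU_{1, |E|}$ and $\chi_\matM$ is a nonzero linear form, so $V(\chi_\matM)$ is smooth and the claim is trivial. For the inductive step with $|E| > 1$, connectedness excludes loops and coloops, so for every $e \in E$ the Deletion--Contraction identity
\[
\chi_\matM = x_e P_e + Q_e
\]
holds, where $P_e = \chi_{\matM/e}$ and $Q_e = \chi_{\matM \setminus e}$ are matroid support polynomials on $\matM/e$ (of rank $r-1$) and $\matM \setminus e$ (of rank $r$). The binomial expansion
\[
\chi_\matM^{p^t - 1} = \sum_{j=0}^{p^t - 1} \binom{p^t - 1}{j} \, x_e^{\,j} P_e^{\,j} Q_e^{\,p^t - 1 - j}
\]
has every coefficient $\binom{p^t - 1}{j}$ nonzero modulo $p$ by Lucas's theorem, so \emph{provided} $\matM/e$ and $\matM \setminus e$ are both connected, a Fedder witness for $\chi_\matM$ can be assembled out of Fedder witnesses for $P_e$ and $Q_e$ furnished by the inductive hypothesis, and the induction proceeds.

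The main obstacle---and the real crux of the argument---is that for every choice of $e \in E$, one of $\matM/e$ or $\matM \setminus e$ may fail to be connected, blocking a direct induction on a single edge. This is precisely the difficulty the paper's \emph{handle} construction is designed to address: the plan is to locate a handle $H \subseteq E$ whose iterated Deletion--Contraction maintains enough connectivity in the minors to eventually expose a connected smaller matroid to which the inductive hypothesis applies, and then to reassemble a Fedder witness for $\chi_\matM$ by unwinding the handle-by-handle expansion. The technical delicacy throughout is the simultaneous enforcement of (i) the coordinate bound $\alpha_i < p^t$ on the exponent vectors produced, and (ii) the non-vanishing of the corresponding coefficient in $\FF$ after collecting the multinomial contributions; both are attainable through the richness of bases in a connected matroid together with Lucas's theorem, but must be tracked carefully through the handle-peeling recursion.
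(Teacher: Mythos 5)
Your proposal diverges from the paper's argument in a way that opens a genuine gap. The central step you propose—expanding $\chi_\matM^{p^t-1}$ via the identity $\chi_\matM = x_e\chi_{\matM/e}+\chi_{\matM\setminus e}$ and checking a Fedder-type condition via Lucas—hinges on being able to pick $e$ so that both $\matM/e$ and $\matM\setminus e$ are connected, so that witnesses for both $P_e$ and $Q_e$ can be summoned from the inductive hypothesis. No such $e$ need exist: for a circuit $\matU_{n-1,n}$ with $n\geq 3$, $\matM\setminus e$ is never connected for any $e$, and there are many non-circuit connected matroids in which, for every edge, at least one of the deletion or contraction falls apart. You flag this as "the main obstacle" and say that handles should resolve it, but what the handle machinery actually gives is only that at least \emph{one} of $\matM\setminus e$ or $\matM/e$ stays connected for an appropriate $e\in H$; it never gives both at once, so the obstruction is not removed. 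Moreover, even in the favorable situation you do not explain how a Fedder witness for $\chi_\matM$ (living in $\FF[E]$, with $\frakm^{[p^t]}$ involving $x_e$) is reconstituted from witnesses for $P_e,Q_e$ (which live in $\FF[E\minus\{e\}]$ with a different Frobenius power of the smaller maximal ideal); this reassembly step is the crux and is left entirely unaddressed.

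The paper's actual proof runs through different machinery precisely because only one of the two minors is available. It sets up a \emph{modified} Glassbrenner criterion (Lemma \ref{lem-better-glass}) that allows the test element to be a product $c=\prod_{e\in E}x_e$, checked one factor at a time, and proves the factor-by-factor $F$-purity witness $x_e\chi_\matM^{p-1}\notin\frakm^{[p]}$ not by Lucas but by a matroid-polytope optimization (Lemma \ref{lem-Fpure}). Then it uses two distinct reduction mechanisms: when $\matM\setminus e$ is connected, it invokes the deformation result "$R/x_eR$ strongly $F$-regular $\Rightarrow R$ strongly $F$-regular"; when instead $\matM/e$ is connected, it passes to the dual matroid via the Cremona transform (the hypersurfaces of $\chi_\matM$ and $\chi_{\matM^\perp}$ agree on the torus, which is where the modified Glassbrenner criterion allows the check to take place), and runs the deformation argument for $\matM^\perp\setminus e=(\matM/e)^\perp$. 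The handle decomposition then guarantees that at least one of these two reductions is always applicable. In short: the paper needs only one connected minor, and it supplies two independent descent mechanisms to exploit whichever one occurs, whereas your proposal needs both connected minors simultaneously and has no mechanism for descent even when they are available.
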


\medskip

The main motivation for our study arises in Quantum Field Theory, concerned with the qualitative and quantitative properties of certain integrals the importance of which we sketch in Subsection \ref{subsec-FInt}; we refer the reader to   \cite{AluiffiMarcolliParametricFeynman,BrownFeynmanAmplitudes} for a sample of the literature.  These integrals arise from a \emph{Feynman diagram}, a graph $G$ decorated with mass data (on the edges) and a momentum function (on the vertices). These data combine for the formulation of the \emph{Feynman integral} \eqref{eqn-FeynmanIntegral} over an expression involving a \emph{Feynman diagram polynomial}. Understanding the singularity behaviour of the denominator throws interesting light on the convergence of the integral, \emph{cf.}\ Remark \ref{rmk-rat-sing-properties}.

Feynman diagram polynomials are special instances of the \emph{Feynman integrands} of Section \ref{sec-Feynman}; their structure is somewhat different from all previously discussed classes since they involve products, and not just sums, of matroid support polynomials. Nonetheless, we show in our last result, Theorem \ref{thm-feyn-poly-ratsing} (based on Theorem \ref{thm - feynman integrand, irreducible jets}, Corollary \ref{cor-Feyn-ratsing}), that the vast majority of Feynman integrals exhibit a denominator that has rational singularities:

\begin{thm}
    Let $G = G(V, E, \massm, p)$ be a Feynman diagram such that (the graphic matroid) $\matM_G$ is connected. Then the Feynman diagram polynomial $\mathscr{G}$ has rational singularities (or is smooth) and its $m$-jets are irreducible, provided that the Feynman diagram satisfies one of the following:
    \begin{itemize}
    \item $\massm,p$ are identically zero,  or
    \item $p$ is scalar and not identically zero, or
    \item $p$ is vector-valued and has general kinematics.
    \end{itemize}
    The conclusions also hold for Feynman integrands $\Feyn(\zeta_\matN, \Delta_{\massm}^{E}, \xi_\matM)$ provided that the underlying matroid is connected of positive rank.
\end{thm}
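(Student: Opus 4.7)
The plan is to view the theorem as a packaging assertion layered on top of the Feynman-integrand statements already cited: Theorem \ref{thm - feynman integrand, irreducible jets} for the irreducibility of $m$-jets, and Corollary \ref{cor-Feyn-ratsing} for the resulting rational singularities. I would dispatch the second bullet of the theorem (the integrand case) first, and then derive the diagram-polynomial assertion as a specialization.

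For the integrand part, the first step is simply to invoke Theorem \ref{thm - feynman integrand, irreducible jets} under the stated hypothesis (underlying matroid $\matM$ connected of positive rank) to obtain irreducibility of every $m$-jet scheme of $\Feyn(\zeta_\matN, \Delta_{\massm}^{E}, \xi_\matM)$. Rational singularities (or smoothness in degenerate cases) then follow from Musta\c t\u a's jet-space criterion \cite{MustataJetsLCI,SingularitiesPairsMustata} in the form packaged as Corollary \ref{cor-Feyn-ratsing}; concretely, irreducibility of all $m$-jets together with the hypersurface being a reduced complete intersection forces the singularities to be rational, while the alternative ``or smooth'' disjunct absorbs the borderline case. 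For the first bullet, the plan is to recognize the Feynman diagram polynomial $\mathscr{G}$, via its Lee--Pomeransky form $\mathscr{U} + \mathscr{F}$, as a Feynman integrand on the graphic matroid $\matM_G$. When $\massm, p \equiv 0$ we have $\mathscr{G} = \mathscr{U}$, a configuration polynomial on the connected matroid $\matM_G$, so Theorem \ref{thm - matroid poly, connected implies m-jets irreducible} and Corollary \ref{cor-mtrdl-rat-sing} apply directly. In the scalar-nonzero and general vector-kinematic regimes the term $\mathscr{F}$ is nonzero; one exhibits $\mathscr{G}$ as a $\Feyn(\zeta_\matN, \Delta_{\massm}^{E}, \xi_{\matM_G})$ with connected underlying matroid, and then invokes the integrand statement established above.

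The main obstacle is not in the packaging itself but in identifying, for each of the three kinematic regimes, the correct $\matN$ and the correct singleton data that make $\mathscr{G}$ fit the Feynman integrand template of Section \ref{sec-Feynman}: the second Symanzik contribution $\mathscr{F}$ involves products with Kirchhoff-type polynomials on contractions of $\matM_G$, and its precise matroidal shape differs between the scalar case (essentially a single rescaled Kirchhoff term) and the general vector case (an enumeration over 2-forests of $G$). The \emph{general kinematics} hypothesis is exactly what is needed to guarantee that $\mathscr{F}$ does not degenerate into a lower-dimensional matroidal configuration which could violate the matroid-connectivity requirements of Theorem \ref{thm - feynman integrand, irreducible jets}. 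I would verify this genericity case-by-case, checking in each regime that the matroid $\matM_G$ retains positive rank and connectivity after the kinematic data is absorbed into the singleton data. Once these identifications are in place, both the irreducibility of $m$-jets and the resulting rational-singularity statement follow uniformly from the integrand case.
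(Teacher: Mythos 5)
Your high-level architecture is right: reduce the diagram case to Corollary~\ref{cor-Feyn-ratsing} and Theorem~\ref{thm - feynman integrand, irreducible jets} via Proposition~\ref{prop-FDiagPoly=FInt}, handling the massless/momentumless case separately via Corollary~\ref{cor-mtrdl-rat-sing}. That is exactly the paper's plan. But there is a concrete and recurring error that matters: you consistently place the \emph{graphic} matroid $\matM_G$ where the \emph{cographic} matroid $\matM_G^\perp$ is needed.

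The first Symanzik polynomial is $\scrU = \Psi_{\matM_G^\perp}$, a matroid support polynomial on the cographic matroid, not a configuration/Kirchhoff polynomial on $\matM_G$. Likewise Proposition~\ref{prop-FDiagPoly=FInt} realizes the Feynman diagram polynomial as $\Feyn(\Psi_{\matM_G^\perp}, \Delta_\massm^E, \scrF_0^W)$, so in the template $\Feyn(\zeta_\matN, \Delta_\massm^E, \xi_\matM)$ the roles are $\matN = \matM_G^\perp$ and $\matM = \matM_{G,p}^\perp$; writing $\xi_{\matM_G}$ in the third slot gets the quotient ordering backwards. This is not a cosmetic slip: the hypotheses of Theorem~\ref{thm - feynman integrand, irreducible jets} and Corollary~\ref{cor-Feyn-ratsing} require the terminal matroid $\matN$ to be connected of positive rank, and $\matN$ is the \emph{cographic} matroid. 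Connectedness of $\matM_G^\perp$ follows from connectedness of $\matM_G$ by duality, but positivity of $\rank(\matM_G^\perp)$ is a genuine step that your proposal skips (and your stated check, ``$\matM_G$ retains positive rank,'' is both the wrong matroid and vacuous). The paper's argument is that connectedness of $\matM_G$ on $|E| \geq 2$ edges excludes the all-coloop (tree) case, so $\rank(\matM_G) < |E|$, hence $\rank(\matM_G^\perp) = |E| - \rank(\matM_G) \geq 1$.

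Two smaller points. First, you speak of choosing ``singleton data'' for the Feynman integrand, but the Feynman integrand template does not involve singleton data at all — it is assembled from two matroid support polynomials and the linear form $\Delta_\massm^E$; conflating it with the matroidal-polynomial machinery of Section~\ref{sec-jets} obscures what actually needs verification (namely that $\scrF_0^W$ is a matroid support polynomial on $\matM_{G,p}^\perp$, which is exactly the content of the cited Proposition~\ref{prop-Patterson-W}, not something to be re-derived ``case-by-case''). Second, you have $\scrG = \scrU + \scrF$, omitting the mass factor: $\scrG = \scrU(1 + \Delta_\massm^E) + \scrF_0^W$. And the degenerate $K_2$/single-edge case, while trivial, needs to be discarded explicitly to make the $|E| \geq 2$ hypothesis available.
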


The basic steps of the proof are in parallel to the (flag) matroidal case, with a more complicated process for finding an  appropriate  handle when necessary, since additional properties are required of the handle here. 

\medskip

We start below with an introductory section on the topics of matroids, matroidal polynomials, jets, and $F$-singularities. Then in Section \ref{sec-Fsing} we discuss matroid support polynomials and show that they are strongly $F$-regular in positive characteristic. Section \ref{sec-jets} sees the return of matroidal polynomials, and the jet-theoretic proof that they enjoy rational singularities over algebraically closed fields of characteristic zero. Sections \ref{sec-flag-matroids} introduces flag matroids, and establishes the rationality of the singularities of flag matroidal polynomials. Section \ref{sec-Feynman} begins with Feynman integrands and, via the by now standard chain of arguments, shows that they, too, have rational singularities. The paper concludes with the definition of Feynman integral polynomials and the proof that they are indeed Feynman integrands. Future research directions appear as a postscript. 

\section{Basic Notions}
We will use the following notation throughout.
\begin{asparaitem}
\item
We denote by $\NN,\ZZ,\QQ,\RR,\CC$ the natural, integer, rational, real and complex numbers; $\KK$ will denote an arbitrary field; $\FF$ a field of finite characteristic $p>0$. 
\item For two sets $A$ and $B$ we write $A\minus B$ for $\{a\in A\mid a\notin B\}$. 
\item For a subscheme $Z$ of affine $n$-space over $\KK$, $I(Z)$ denotes the corresponding defining ideal in the coordinate ring of $\KK^n$; $\Var(I)$ is the variety attached to an ideal.
\item If $f\neq 0$ is an element of a polynomial ring, $\min(f)$ will denote the sum of nonzero parts of $f$ of lowest degree.
\end{asparaitem}

\subsection{A Primer on Matroids}

Throughout, $\matM$ will denote a matroid on the finite ground set $E$ of size
\[
|E|=n.
\]
We review briefly some properties of matroids.
\begin{dfn}
A \emph{matroid} $\matM$ on the ground set $E$ is a collection $\calB_\matM\subseteq 2^E$ of the power set of $E$ called \emph{bases} that satisfies the exchange axiom 
\[
[B,B'\in \calB_\matM\text{ and }e\in B]\Longrightarrow [\exists e'\in B'| (\{e\}\cup(B'\minus\{e'\})), (\{e'\}\cup(B\minus \{e\})) \in \calB_\matM].
\]
\end{dfn}
Matroids reflect the combinatorial information about linear dependence of given vectors, about algebraic dependence of given polynomials, extend the notion of spanning forests in graphs, and also arise in other contexts. The language of matroids borrows much from graph theory; for example, the elements of $E$ are known as \emph{edges} of $\matM$.

We chose to define a matroid via its set of bases $\calB_\matM$;  alternatives include listing its its \emph{independent sets} $\calI_\matM$ (the collection of all subsets of its bases), its \emph{circuits} $\calC_\matM$ (the minimal non-independent sets in $2^E$), its \emph{rank function} $\rank_\matM(-)\colon 2^E\to\NN$ (the cardinality of a maximal independent subset to a given subset of $E$), and its collection of flats $\calF_\matM$ (the subsets of $2^E$ that maximize a given rank value). We will use these alternative notions and descriptions  freely here and refer to \cite{OxleyMatroidTheory} for details, proofs and theorems on matroids.
Nonetheless, the following constructions on matroids are so essential to us that we include them here explicitly for convenience.
\begin{dfn}\label{dfn-del-contr}
    Let $\matM$ be a matroid on $E\ni e$. The \emph{deletion
    matroid} $M\setminus e$ of $\matM$ by $e$ is the matroid on $E\minus \{e\}$ whose bases are the bases of
    $\matM$ that do not use $e$.

    The \emph{contraction matroid} $\matM/\{e\}$ of $\matM$ by
      $e$ is the matroid on $E\minus\{e\}$ with
      bases
      \[
      \calB_{\matM/\{e\}}=\{B\minus \{e\}\mid B\in\calB_\matM
      \text{ and }e\in B\}.
      \]
  \end{dfn}

\begin{dfn}\label{dfn-connected}
A matroid $\matM$ on $E$ is \emph{disconnected} if $E=E'\sqcup E''$ is the disjoint union, and there are matroids $\matM'$ on $E'$ and $\matM''$ on $E''$ such that $\calB_\matM=\calB_{\matM'}\times \calB_{\matM''}$.
\end{dfn}

Generalizing the idea of a dual to a planar graph, every matroid permits a dual matroid.
\begin{dfn}\label{dfn-dual-matroid}
The dual matroid $\matM^\perp$ of $\matM$ is the matroid on $E$ with bases 
\[
\calB_{\matM^\perp}:=\{E\minus B\mid B\in\calB_\matM\}.
\]
Obviously, $(\matM^\perp)^\perp=\matM$,  $\rank(\matM)+\rank(\matM^\perp)=|E|$, and duals of connected matroids are connected.
\end{dfn}

\begin{exa} \label{ex-GraphicMat}
If $G$ is a graph on the edge set $E$, then the collection of spanning forests of $G$ (maximal sets of edges that do not contain circuits) form the \emph{graphic matroid} $\matM_G$ to $G$. Non-isomorphic graphs can have isomorphic matroids. The \emph{cographic matroid} to $G$ is the dual of $\matM_G$, and is in the planar case the graphic matroid to the dual graph. The graphic matroid to a disconnected graph is disconnected, but so are graphic matroids to trees and various other graphs.
\end{exa}

\begin{exa}[Matroids on a Singleton]\label{exa-free-dep-mat}
Suppose $E=\{e\}$ is a singleton. Then $E$ admits two matroids: the \emph{free matroid} 
\[
\matF_{e} \textrm{ with bases } \calB_{\matF_e}=\{\{e\}\},
\]
and the \emph{loop (or dependent) matroid}
\[
\matD_{e} \textrm{ with bases } \calB_{\matD_e}=\emptyset.
\]

More generally, $e\in E$ in an arbitrary matroid $\matM$ is a \emph{loop} if $e$ is not contained in any basis of $\matM$; $e$ is a \emph{coloop} if it is part of every basis of $\matM$.
\end{exa}

\begin{exa}[Uniform matroids] \label{exa-UniformMat}
Let $0\le k\le n$ be integers. The \emph{uniform matroid} $\matU_{k,n}$ is the matroid on $n$ edges in which the bases are precisely the subsets of $E$ with cardinality $k$. So, for example, $\matU_{0,n}$ is a collection of $n$ loops; $\matU_{1,n}$ is the graphic matroid to a graph with two vertices, $n$ edges and no loops; $\matU_{n-1,n}$ is the graphic matroid to an $n$-gon.
\end{exa}

Several of our arguments will involve an induction on $n=|E|$, along deletion and contraction processes. An integral tool in such process will be the following concept.

      \begin{dfn}\label{dfn-handle}
    A \emph{handle} of $\matM$ is a non-empty set of edges
    $H\subseteq E$ such that for any circuit $C$ of $\matM$, $H$ is
    either disjoint to, or completely inside $C$. We call a handle
    \emph{maximal} if it is maximal within the set $\calH_\matM$ of
    all handles of $\matM$. A handle $H$ is \emph{proper} if $\neq E$.
  \end{dfn}
Details regarding handles can be viewed, for example, in \cite{DSW}. We quickly survey the landscape. First, a relationship between handles and connectedness:
\begin{prop}[{\cite[Prop.~2.8]{DSW}}]
Suppose $\matM$ is connected and $C$ a circuit of $\matM$. If $\matM \neq C$, then there exists a handle $H$ such that $\matM \setminus H$ is connected and $\matM \setminus H$ contains $C$ (as circuit, of course). 
\end{prop}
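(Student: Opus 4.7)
The plan is to argue by induction on $|E|$. The key preliminary observation is that every singleton $\{e\} \subseteq E$ is automatically a handle of $\matM$, since for any circuit $C'$ the intersection $\{e\} \cap C'$ lies in $\{\emptyset, \{e\}\}$. Thus handles disjoint from $C$ always exist (any $\{e\}$ with $e \in E \setminus C$); the substantive content of the proposition is arranging that $\matM \setminus H$ is also connected. For the base case $|E \setminus C| = 1$, take $H = E \setminus C = \{e\}$: then $\matM \setminus H = \matM|_C$, and since $C$ is a circuit with no dependent proper subsets, this restriction equals the uniform matroid $U_{|C|-1,|C|}$, which is connected.

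For the inductive step with $|E \setminus C| \geq 2$, pick any $e \in E \setminus C$. If $\matM \setminus \{e\}$ is already connected, then $H = \{e\}$ works. Otherwise, decompose $\matM \setminus \{e\} = \matM_1 \oplus \matM_2$ on $E \setminus \{e\} = E_1 \sqcup E_2$, arranging $C \subseteq E_1$ (possible since $C$ is a circuit of $\matM \setminus \{e\}$ and hence lies in a single summand). Pass to the contracted matroid $\matN := \matM / E_2$ on ground set $E_1 \cup \{e\}$. I would verify that (i) $\matN$ is connected -- using that connectedness of $\matM$ forces $e$ to share a circuit with every element of $E_1$, which under contraction descends to a circuit structure of $\matN$ -- and (ii) $C$ remains a circuit of $\matN$, since $C \subseteq E_1$ is disjoint from $E_2$ and independent of any basis of $E_2$ in the direct-sum decomposition. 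The inductive hypothesis applied to $\matN$ (whose ground set is strictly smaller) then yields a handle $H'$ of $\matN$ with $H' \cap C = \emptyset$ and $\matN \setminus H'$ connected.

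The natural lift is $H := H' \cup E_2$. Then $H \cap C = \emptyset$ is immediate, and $\matM \setminus H$ agrees, as a matroid, with $\matN \setminus H'$ (both equal the appropriate restriction/contraction of $\matM$ to $(E_1 \cup \{e\}) \setminus H'$), hence is connected. What remains is to verify that $H$ is a handle of $\matM$ by examining each circuit $C''$ of $\matM$ and checking $C'' \cap H \in \{\emptyset, H\}$, split into the cases $C'' \subseteq E_1$ (handled by $H'$ being a handle of $\matN|_{E_1} = \matM_1$), $e \in C''$ (handled by the correspondence between circuits of $\matM$ meeting $E_1 \cup \{e\}$ and circuits of $\matN$), and $C'' \subseteq E_2$.

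I expect the main obstacle to lie precisely in this last case: if $\matM_2$ has any internal circuit $C_2 \subseteq E_2$, then $C_2 \cap H = C_2$ is nonempty but $H = H' \cup E_2 \supsetneq C_2$ fails to be contained in $C_2$, violating the handle condition. To circumvent this, the argument must refine the choice of $e \in E \setminus C$ so that the component $\matM_2$ of $\matM \setminus \{e\}$ on the side away from $C$ is free of internal circuits -- which can be arranged using basic structural facts about separating elements in connected matroids -- or else perform a finer induction on $\matM_2$ to enlarge $H$ in stages. This adjustment is the technically delicate core of the proof.
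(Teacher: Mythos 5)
Your overall plan---induct on $|E|$, contract away the component $\matM_2$ on the far side of a separating element, and lift a handle back---is a reasonable first instinct, but the lift $H := H' \cup E_2$ is genuinely broken, not just ``technically delicate,'' and the obstacle you raise at the end cannot be patched by simply enlarging $H$ in stages.

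Here is a concrete failure. Let $G$ be the graph on vertices $u,v,w$ with parallel edges $a,b$ between $u,v$, parallel edges $c,d$ between $v,w$, and a single edge $e$ between $u,w$. Then $\matM = \matM_G$ is connected of rank $2$, with circuits $\{a,b\}$, $\{c,d\}$, and the four triangles $\{a,c,e\}$, $\{a,d,e\}$, $\{b,c,e\}$, $\{b,d,e\}$. Take $C = \{a,b\}$. Picking $e \in E \minus C$, you get $\matM \setminus e = U_{1,2} \oplus U_{1,2}$ with $E_1 = \{a,b\}$, $E_2 = \{c,d\}$. Then $\matN = \matM/E_2$ is $U_{1,3}$ on $\{a,b,e\}$, the induction returns $H' = \{e\}$, and the lift is $H = \{e,c,d\}$. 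But $\{c,d\}$ is a circuit of $\matM$ that meets $H$ nontrivially yet does not contain $H$, so $H$ is not a handle. The proposition is of course still true here: $H = \{c\}$ works (it is a singleton, hence a handle, $\matM\setminus c$ is still $2$-connected as a graph, and $\{a,b\}$ survives as a circuit), but your algorithm never considers it because the top-level choice ``any $e \in E \minus C$'' has been committed. So the inductive step depends on the choice of $e$ in a non-recoverable way.

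There are also two further gaps you do not flag. First, the case $C'' \subseteq E_1$ is not ``handled by $H'$ being a handle of $\matM_1$'': even granting that, one only gets $C'' \cap H' \in \{\emptyset, H'\}$, and if $C'' \cap H' = H'$ with $E_2 \neq \emptyset$ then $H \not\subseteq C''$ (since $E_2 \cap E_1 = \emptyset$), so $H$ again fails the handle condition. Second, the identification $\matM \setminus H = \matN \setminus H'$ amounts to $\matM\setminus H' \setminus E_2 = \matM\setminus H'/E_2$, which requires every element of $E_2$ to be a coloop in $\matM\setminus H'$; this is false precisely in the situation you are worried about. And the connectedness of $\matN = \matM/E_2$, which you take for granted in step (i), also needs an argument: contraction does not in general preserve connectedness.

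The actual proof cited (DSW, Prop.\ 2.8) avoids lifting entirely by building a \emph{handle decomposition} $C = F_1 \subsetneq F_2 \subsetneq \cdots \subsetneq F_k = E$ in which each restriction $\matM\mid_{F_i}$ is connected and each increment $F_{i}\setminus F_{i-1}$ is a handle of $\matM\mid_{F_i}$; the desired handle is then $H = F_k \setminus F_{k-1} = E \setminus F_{k-1}$, with $\matM\setminus H = \matM\mid_{F_{k-1}}$ automatically connected and containing $C$. The substantive work is in showing such decompositions exist starting from an arbitrary circuit, which is a different induction than the one you are attempting.
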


Second, we record how handles $H$ on $\matM$ behave with respect to $\matM \setminus h$ and $\matM / h$. We use these facts several times.

\begin{rmk}[{\cite[Lemma~2.4]{DSW}}]\label{rmk-handle basics} Throughout, $\matM$ is a matroid on $E$ admitting a proper handle such that $H \in \Ind_\matM$ and $H$ contains no coloops.
    \begin{asparaenum}
        \item $\rank(\matM \setminus H) = \rank(\matM) - |H|$;
        \item For any $h \in H$, every element of $H \minus \{h\}$ is a coloop on $\matM \setminus h$;
        \item $\rank(\matM \setminus H) = \rank(\matM) - |H| + 1$;
        \item For any $h \in H$, the set $H \minus\{h\}$ is a handle on $\matM / h$. Moreover, $H \minus\{h\} \in \Ind_{\matM \setminus h}$ and $H \minus\{h\}$ contains no coloops of $\matM / h$.
    \end{asparaenum}
\end{rmk}
\subsection{Polynomials attached to matroids}

Let $\matM$ be an arbitrary matroid on the ground set $E$,  and denote 
\[
\KK[E] := \KK[\{x_{e}\}_{e \in E}],
\]
the ring of polynomials in (variables matching) the edges of $\matM$ over $\KK$. For any subset $S\subseteq E$ we write
\[
\bsx^S:=\prod_{e\in S}x_e.
\]
We define here two classes of polynomials in $\KK[E]$,
the \emph{matroid support polynomials} and \emph{matroidal polynomials}. Members of both classes are of degree at most one in each variable, and satisfy certain Deletion-Contraction relations. The former class is comprised of homogeneous polynomials, while the second is also entitled to inhomogeneous members, even on matroids on a singleton: \emph{cf.}\ Example \ref{ex-matroidal-polys}.

\subsubsection{Matroid support polynomials}
\begin{dfn}\label{dfn-msp}
  If $\matM$ is a matroid on the ground set $E$, then a 
  polynomial $\chi_\matM\in S_\matM=\KK[E]$ is a \emph{matroid support
    polynomial} (on $\matM$) if
  \[
  \qquad \chi_\matM=\sum_{B\in\calB_\matM}c_B\bsx^B,\qquad c_B\neq 0\,\,\forall B\in\calB_\matM
  \]
  has monomial support exactly the bases of $\matM$. We also introduce
  \[
  \MSP(\matM):=\{\chi_\matM\in \KK[E]\mid \chi_\matM \text{ is a matroid support polynomial on }\matM\}.
  \]
\end{dfn}
Matroid support polynomials on $\matM$ are squarefree and homogeneous of degree
$\rank(\matM)$. One can show that they factor only if the come from a decomposable matroid, along the lines of \cite[Lemma~2.4(b)]{DSW}. It is interesting to note that the implication cannot be reversed. For example, $f=axz+bxw+cyz+dyw$ is an MSP on the sum of the Boolean matroid of rank two with itself, but $f$ factors precisely when $ad=bc$.

\subsubsection{Matroidal Polynomials}

\begin{conv}
In the context of matroidal polynomials, we will always assume that the characteristic of the underlying field $\KK$ is zero and that $\KK$ is algebraically closed. 
\end{conv}
A matroidal polynomial on $E$ involves two ingredients: a matroid $\matM$ on $E$, and a choice of singleton data $\sigma$. 
\begin{dfn}[Singleton Data]
    Let $E$ be a ground set with corresponding coordinate ring $\KK[E]$.

\smallskip
    
    For each $g \in E$ fix two monic polynomials $f_{g}, d_{g} \in \KK[x_{g}]$ such that
    \[
    \deg f_{g} = 1,  \qquad \deg d_{g} \leq 1.
    \]
    We call $f_{g}$ the \emph{free monic singleton on $g$}; $d_{g}$ the \emph{dependent monic singleton on $g$}. We call the collection $\{f_{g}, d_{g}\}_{g \in E}$ the \emph{singleton data}, and frequently use $\sigma$ as abbreviation for chosen singleton data.
\end{dfn}
\begin{conv}
Singleton data $\sigma$ on $E$ induce singleton data 
\[
\sigma_{|_{\scriptstyle E'}}:=\{f_e,d_e\}_{e\in E'}
\]
on any subset $E'$ of $E$. In the following definition (and throughout unless expressly indicated otherwise), given singleton data on $E$ it will be understood that the induced singleton data are used on subsets of $E$. 
\end{conv}

\begin{dfn}[Matroidal Polynomials]
\label{dfn - new - matroidal polys}
Given singleton data $\sigma=\{f_{g}, d_{g}\}_{g \in E}$, let $\matM$ be a matroid on $E$. We define the class $\Matroidal(\matM,\sigma)$ of \emph{matroidal polynomials on $\matM$ with singleton data $\sigma$} inductively as follows. 

 The members
    \[
    \zeta_\matM\in \Matroidal(E, \sigma) 
    \]
    are elements of $\KK[E]$ and attached to a matroid $\matM$ on $E$ (explicit in the notation) and the singleton data $\sigma$ (not explicit in the notation). 

\smallskip

    \noindent\textit{Augmentation}:
    When $E = \emptyset$, the only matroid is $\emptyset$, which allows for no singleton data.  The only matroidal polynomials are here
    \[
    \Matroidal(\emptyset, \emptyset) := \KK^{\times}.
    \]
    If $|E|=1$, there are two possible  matroids $\matD_e,\matF_e$ on $E$, compare Example \ref{exa-free-dep-mat}. The class of matroidal polynomials on $E$ is then given by:
    \[
    [\zeta_\matM \in \Matroidal(E, \{f_e, d_e\})] \iff 
 \begin{cases}
\zeta_{\matM} \in \KK^\times\cdot f_e\text{ if }\matM=\matF_e,\\
\zeta_{\matM} \in \KK^\times\cdot d_e\text{ if }\matM=\matD_e.
\end{cases}
    \]

\smallskip

    \noindent\textit{Recursion}:

    For $|E| \geq 2$, $\zeta_\matM\in\KK[E]$ is an element of $\Matroidal(E, \sigma)$ exactly when all of the following three conditions hold:
    \begin{enumerate}[label=(\roman*)]
        \item (Deletion-Contraction of non-(co)loops)\\
        Suppose that $e \in E$ is neither a loop nor a coloop on $\matM$. Then there exist matroidal polynomials $\zeta_{\matM\setminus e}, \zeta_{\matM/ e} \in \Matroidal(E \minus \{e\}, \sigma_{|_{\scriptstyle E \minus \{e\}}})$, attached to the matroids $\matM\setminus e$ and $\matM/e$ respectively, such that 
        \[
        \qquad\zeta_\matM = \zeta_{\matM\setminus e} + f_{e} \zeta_{\matM/ e}. 
        \]
        \item (Deletion-Contraction of coloops)\\
        Suppose that $e \in E$ is a coloop on $\matM$. Write $\matN = \matM/e = \matM\setminus e$. Then there exists a matroidal polynomial $\zeta_{\matN} \in \Matroidal(E \minus \{e\}, \sigma_{|_{\scriptstyle E\minus \{e\}}})$ attached to the matroid $\matN$ such that
        \[
        \zeta_\matM = f_{e} \zeta_{\matN}.
        \]
        \item (Deletion-Contraction  of loops)\\
        Suppose that $e \in E$ is a loop on $\matM$. Write $\matL = \matM/e = \matM\setminus e$. Then there exists a matroidal polynomial $\zeta_{L} \in \Matroidal(E \minus \{e\}, \sigma_{|_{\scriptstyle E\minus \{e\}}})$ attached to the matroid $L$ such that
        \[
        \zeta_\matM = d_{e} \zeta_{L}.
        \]
    \end{enumerate}

    Finally,     
    \[
    X_{\zeta_\matM}:=\Var(\zeta_\matM) \subseteq \KK^{E}.
    \]
    is \emph{the matroidal polynomial hypersurface} defined by the matroidal polynomial $\zeta_\matM \in \Matroidal(E, \{f_{g}, d_{g}\}_{g \in E})$.
\end{dfn}
\begin{rmk}
 Let $\zeta_\matM\in\Matroidal(E,\sigma)$ be given and let $e \in E$ be an edge of $\matM$. The polynomials $\zeta_{\matM/ e}$ and $\zeta_{\matM\setminus e}$ referenced in the Deletion-Contraction axioms (with singleton data inherited from $\sigma$) are uniquely determined by $\zeta_\matM$; this follows from $f_{e},d_e$ being monic in $\KK[x_e]$   while $\zeta_{\matM/ e}, \zeta_{\matM\setminus e} \in \KK[E \minus \{e\}]$. The point is, that they are in fact matroidal again. In the context of a chosen such $e$ and $\zeta_\matM$ we will always mean the thus defined $\zeta_{\matM/ e}$ and $\zeta_{\matM\setminus e}$, without each time pointing towards Deletion-Contraction.
 \end{rmk}
 
The following remark explores some basic features of the preceding definition.
\begin{rmk}\label{rmk - basic matroid polynomial observations}\begin{asparaenum}
\item
\label{item-Mtrdl=scalable}
    If  $\zeta_\matM \in \Matroidal(E, \sigma)$ is a matroidal polynomial attached to the matroid $\matM$ on $E$, and if $\alpha \in \KK^{\times}$, then $\alpha \zeta_\matM \in \Matroidal(E,\sigma)$ is a matroidal polynomial attached to $\matM$,
    as follows immediately from the linearity of the axioms for matroidal polynomials.
\item If one is only interested in the geometric properties of matroidal polynomials on a given set $E$ and with given singleton data $\sigma$, one may safely assume that $f_e=x_e$ for every $e\in E$, via a linear coordinate change in $\KK[E]$.
\item Matroidal polynomials contain only square-free monomials, since products in the recursive recipe never multiply polynomials with shared variables.  
\item\label{item-U12-matrdl} The calculations in this item and the next three illustrate that the coherence condition on the Deletion-Contraction recursion process (on every possible path between $E$ and the empty set) places strong restrictions on the coefficients that appear in the recursion. 

We consider the matroidal polynomials on the 2-circuit $\matM=\matU_{1,2}$ on edges $E=\{f,g\}$. Up to a coordinate change, $f_e=x_e$ for all $e\in E$. 

The case $d_g=d_h=1$ leads to a linear form $\zeta_\matM=\alpha_gx_g+\alpha_hx_h$, $\alpha_g\alpha_h\neq 0$. The case $d_g=x_g+\delta_g$, $d_h=1$ can be ruled out (with $\delta_g\in\KK$): Deletion-Contraction on the two respective edges leads to
$c_1x_h+x_gc_2\cdot 1=\zeta_\matM=c_3x_g+c_4x_h(x_g+\delta_g)$
which, with nonzero $c_4$, is impossible.
The case $d_g=x_g+\delta_g,d_h=x_h+\delta_h$ leads to
\[
c_1x_h+x_gc_2(x_h+\delta_h)=\zeta_\matM=
c_3x_g+x_hc_4(x_g+\delta_g)
\]
and shows that we must have 
$
c_4\delta_g=c_1$, $c_2\delta_h=c_3$, $c_2=c_4
$.
Since $c_1,c_2,c_3,c_4$ are supposed to be nonzero, we have $\delta_g\neq 0\neq \delta_h$, and
\[
\zeta_\matM=c_4(\delta_gx_h+\delta_hx_g+x_gx_h)=c_4\left((1+x_g\delta_h)(1+x_h\delta_g)-1\right).
\]
Up to rescaling $x_g,x_h$, the only matroidal polynomials on $\matU_{1,2}$ are thus the multiples of $x_g+x_h$, and those of $(1+x_h)(1+x_g)-1$. 

\item\label{item-parallel-sigma} Let $\matM$ be arbitrary and suppose $g,h$ are parallel edges. Deletion-Restriction requires the existence of a matroidal polynomial on the minor $\{f,g\}\simeq \matU_{1,2}$. The previous item shows that this necessitates either $d_f=d_g=1$ or $d_f=x_f+\delta_f$ and $d_h=x_h+\delta_h$ for nonzero $\delta_g,\delta_h$. 

\item\label{item-U1n-mtrdl}Suppose $\matM=\matU_{1,n}$. The previous two items imply that (up to rescaling all $x_e$) the matroidal polynomials on $\matU_{1,n}$ are either multiples of $\sum_{e\in E}x_e$, or come from $\sigma= \{x_e,x_e+\delta_e\}_{e\in E}$ where all $\delta_e$ are nonzero. In the latter case, rescale all $x_e$ so that $\sigma=\{x_e,x_e+1\}_{e\in E}$. Matroidal polynomials on $\matU_{1,2}$ for such $\sigma$ are documented above, and then by induction one derives for $|E|>2$ that
$\zeta_{\matU_{1,n}}=\alpha_e\left(\prod_{e'\neq e}(1+x_{e'})-1\right)+\beta_ex_e\prod_{e'\neq e}(1+x_{e'})=\alpha_e\left( (1+\frac{\beta_e x_1}{\alpha_e})\prod_{e'\neq e}(1+x_{e'})\right)-\alpha_e$. Now vary $e\in E$ in this process and compare coefficients to see that, for such $\sigma$ and in the right coordinates, $\zeta_{\matU_{1,n}}$ is a multiple of $\big[ \prod_{e \in E}(1+x_e) \big] -1$. 

\item \label{item-mat-poly-dependentFact} Suppose that $\sigma$ is singleton data on $E$ such that $f_{e} = x_{e}$ for all $e$ in $E$ and let $\matM$ be a loopless matroid on $E$. If $e \in E$ is not a coloop, then $d_{e} \neq x_e$.

Suppose this is false; let $g \in E$ witness the untruth. Pick a circuit $C$ containing $g$; since $g$ is no loop, we may find $h \in C \minus \{g\}$. 
Deleting all edges outside $C$, and then contracting $C\minus\{g,h\}$ 
makes the $2$-circuit $\matM_{1,2}=\matF$ on $\{g, h\}$ a minor of $\matM$. So there is a matroidal polynomial $\zeta_\matF \in \Matroidal(\{g, h\}, \sigma_{| \{g, h\}})$. By assumption, this singleton data satisfies $f_g = x_g, f_h=x_h, d_g = x_g$. But in Remark \ref{rmk - basic matroid polynomial observations}.\eqref{item-U12-matrdl} we showed that no matroidal polynomial on $\matU_{1,2}$ exists with such singleton data.

\item Any $\zeta_\matM$ is the product of $\prod_{e\textrm{ loop of }\matM}d_e$ with a matroidal polynomial on the underlying loopless matroid.

\item\label{item-mat-poly-nonzero} Matroidal polynomials are not zero. In fact, the degree of a matroidal  polynomial for the matroid $\matM$ on the edge set $E$ is at least the rank of $\matM$. Indeed, it is safe to assume that $\matM$ has no loops or coloops. Then let $B$ be a basis for $\matM$, choose singleton data $\sigma$, and let $\zeta_\matM\in\Matroidal(E,\sigma)$ be attached to $\matM$. Now consider the minor $\matB$ of $\matM$ in which all edges of $\matM$ that are not in $B$ have been deleted. 
Deletion-Contraction on the edges of $E\minus B$ implies the existence of a matroidal polynomial $\zeta_\matB$ on $\matB$ that fits into the recursion process. 
As $B$ is a basis, there is a unique (up to $\KK^\times$-scaling) matroidal polynomial $\zeta_\matB$ to $\sigma$ on $\matB$, namely $\prod_{b\in B}f_b$. The recursion process from $\zeta_\matM$ to $\zeta_\matB$  shows that $\zeta_\matM-\zeta_{\matB}$ is in the ideal $(\{f_e\mid e\in E\minus B\})$ and therefore $\zeta_\matM$ maps via the evaluation morphism with kernel $(\{f_e\mid e\in E\minus B\})$ to a polynomial of degree $|B|$.



\item\label{item-mat-poly-orderDelCont}
Suppose $S = \{s_{1}, \dots, s_{a}\}, T = \{t_{1}, \dots, t_{b}\} \subseteq E$ are disjoint. Assume given singleton data $\sigma$ for $E$. Let $\matN$ be a minor of $\matM$ that arises from $\matM$ by deleting all elements of $S$ and contracting all elements of $T$, in some fixed order, allowing for the possibility of interlacing deletions and contractions.  (The resulting matroid $\matN$ does not depend on the chosen order of deletions and contractions).  
Then there is a matroidal polynomial $\zeta_{\matN}$ attached to singleton data $\sigma_{|_{E\minus(S\cup T)}}$  that fits into the Deletion-Contraction axiom along this order of edges in $S\cup T$.
In general, this matroidal polynomial $\zeta_\matN$  depends not just on $\zeta_\matM$, $S$ and $T$, but also on the order how the Deletion-Contraction axiom was iterated in the process that lead from $\matM$ to $\matN$. 
Example \ref{ex - config poly, minor issues} exhibits a matroidal polynomial $\zeta_{\matM}$, a minor $\matN$ of $\matM$ such that $\matN = \matM / S \setminus T = \matM \setminus T / S$, and induced matroidal polynomials $\zeta_{\matM \setminus S /T}$ and $\zeta_{\matM / T\setminus S}$ whose hypersurfaces have geometrically different properties. 
\item\label{item-mat-poly-degree} Let $F_\matM\subsetneq \KK[E]$ denote the ideal generated by $\{f_e\mid e\in E(\matM)\}$. Then $\zeta_\matM$ is contained in $(F_\matM)^{\rank \matM}\subsetneq \KK[E]$. Indeed, Deletion-Contraction terminating with a basis of $\matM$ shows the claim by induction on rank. In particular, if the  rank of $\matM$ exceeds 1 then all $\zeta_\matM$ are singular at the point $\Var(F_\matM)$.
\item\label{item-partition} Consider a factorization $\zeta_\matM = ab$. Let $e \in E$. Since the $x_{e}$-degree of $\zeta_\matM$ is at most one it is impossible for the $x_{e}$-degree of both $a$ and $b$ to be positive. Thus $a$ and $b$ use disjoint variables: the factorization induces a partition $A \sqcup B \subseteq E$ where $a \in \KK[A], b \in \KK[B]$.
\item\label{item-loopy} For a matroid with only loops and coloops, any $\zeta_\matM$ cuts out a Boolean arrangement.

\end{asparaenum}
\end{rmk}

\begin{proposition} \label{prop - initial term matroidal poly}
    Let $\sigma$ be singleton data on $E$ such that $f_{e} = x_{e}$ for all $e \in E$. If $\matM$ is loopless and $\zeta_\matM \in \Matroidal(E, \sigma)$, then $\min (\zeta_\matM) \in \MSP(\matM)$.
\end{proposition}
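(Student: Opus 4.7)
The plan is to induct on $|E|$. The base cases $|E|\leq 1$ are immediate: when $|E|=0$ the claim reduces to a nonzero scalar being an $\MSP$ on the empty matroid, and when $|E|=1$ looplessness forces $\matM = \matF_e$, so $\zeta_\matM \in \KK^\times \cdot x_e$ is manifestly an $\MSP$ on $\matF_e$. The structural fact that will carry the induction is Remark~\ref{rmk - basic matroid polynomial observations}.\ref{item-mat-poly-dependentFact}: under the standing hypothesis $f_g = x_g$ and looplessness of $\matM$, any non-coloop $g$ of $\matM$ satisfies $d_g \neq x_g$; since $d_g$ is monic of degree at most one, this makes the constant term of $d_g$ nonzero, so $\min(d_g) \in \KK^\times$.

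For $|E| \geq 2$ fix any $e \in E$; looplessness makes $e$ either a coloop or neither a loop nor a coloop. If $e$ is a coloop then $\zeta_\matM = x_e \zeta_{\matM/e}$ with $\matM/e = \matM\setminus e$ loopless on a strictly smaller ground set, and the inductive hypothesis plus the identity $\min(x_e \cdot h) = x_e \min(h)$, together with $\calB_\matM = \{B \cup \{e\} : B \in \calB_{\matM/e}\}$, immediately gives $\min(\zeta_\matM) \in \MSP(\matM)$. If $e$ is neither a loop nor a coloop then $\zeta_\matM = \zeta_{\matM\setminus e} + x_e \zeta_{\matM/e}$; the matroid $\matM\setminus e$ is loopless of rank $\rank(\matM)$, so induction furnishes $\min(\zeta_{\matM\setminus e}) \in \MSP(\matM\setminus e)$, homogeneous of degree $\rank(\matM)$ in $E \setminus \{e\}$. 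The subtlety is that $\matM/e$ may acquire loops---precisely those edges of $\matM$ parallel to $e$. Iterating axiom (iii) writes $\zeta_{\matM/e} = \bigl(\prod_{g \in L} d_g\bigr) \zeta_\matN$, where $L$ is the loop set of $\matM/e$ and $\matN$ its underlying loopless matroid (with $\calB_\matN = \calB_{\matM/e}$). Each $g \in L$ sits in the 2-circuit $\{e,g\}$ of the loopless matroid $\matM$ and so is not a coloop of $\matM$; by the Remark, $\min(d_g) \in \KK^\times$, and multiplicativity of $\min$ in the integral domain $\KK[E]$ yields $\min(\zeta_{\matM/e}) = \alpha\, \min(\zeta_\matN)$ for some $\alpha \in \KK^\times$. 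Applying induction to $\matN$ identifies $\min(\zeta_\matN) \in \MSP(\matN)$, homogeneous of degree $\rank(\matM)-1$ with support $\calB_{\matM/e}$.

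To finish the non-coloop case, observe that $\min(\zeta_{\matM\setminus e})$ is free of $x_e$ while $x_e \min(\zeta_{\matM/e})$ has $x_e$-degree exactly one, so the two degree-$\rank(\matM)$ parts cannot cancel in the sum. Thus $\min(\zeta_\matM) = \min(\zeta_{\matM\setminus e}) + x_e \min(\zeta_{\matM/e})$ is homogeneous of degree $\rank(\matM)$, supported exactly on $\calB_{\matM\setminus e} \sqcup \{B \cup \{e\} : B \in \calB_{\matM/e}\} = \calB_\matM$, with every coefficient nonzero, i.e.\ an element of $\MSP(\matM)$. The only real difficulty in the whole argument is the bookkeeping of loops created by contraction, and this is fully dispatched by Remark~\ref{rmk - basic matroid polynomial observations}.\ref{item-mat-poly-dependentFact}, which pins down the lowest-degree contribution of the dependent singleton factors to a nonzero scalar.
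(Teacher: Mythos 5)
Your proof is correct, but it takes a genuinely different route from the paper's. The paper splits into two cases---$\matM = \matU_{1,n}$, handled by a direct computation, and $\matM \neq \matU_{1,n}$, where it \emph{chooses} $e$ lying only in circuits of size at least three, so that $\matM/e$ stays loopless and both deletion and contraction minors fall immediately under the inductive hypothesis. Your argument instead contracts along an arbitrary edge, accepts that $\matM/e$ may acquire loops, and then dispatches them by peeling off $\prod_{g\in L} d_g$ (the iterated loop axiom from Remark~\ref{rmk - basic matroid polynomial observations}) and observing that each such $g$ is parallel to $e$ in $\matM$, hence a non-coloop, hence $\min(d_g)\in\KK^\times$ by Remark~\ref{rmk - basic matroid polynomial observations}.\ref{item-mat-poly-dependentFact}. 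Both proofs thus hinge on the same key structural fact about $d_g$; the paper uses it once to rule out parallel edges from the chosen $e$ (and separately to settle $\matU_{1,n}$), while you use it locally to neutralize the loops that contraction creates. Your approach has the merit of being a single uniform induction with no case split on the matroid type; the paper's approach has the merit of making the no-cancellation step slightly more transparent since both minors are loopless from the outset and need no factoring. One small thing worth making explicit: the multiplicativity $\min(fg) = \min(f)\min(g)$ that you invoke for the factorization $\zeta_{\matM/e} = \bigl(\prod_{g\in L}d_g\bigr)\zeta_\matN$ is automatic because $\KK[E]$ is an integral domain, so the product of the lowest homogeneous pieces cannot vanish---you gesture at this but it deserves a word.
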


\begin{proof}
    By Remark \ref{rmk - basic matroid polynomial observations}.\eqref{item-mat-poly-dependentFact}, for all non-coloops $e \in E$ we have $d_e \neq x_e$. We proceed via cases. 

    \emph{Case 1: $\matM$ is the rank-1 loopless matroid $\matU_{1, n}$}. 
    
    \noindent We proceed by induction on $n$, with the case $n=1$ clear. Let $e\in E$ and set $F = E \minus\{e\}$, $\matF = \matM\setminus e$.   Contracting $e$ leads to a matroid of $n-1$ loops; deleting it leads to $\matU_{1,n-1}$. By Deletion-Contraction,
    \[
    \zeta_{\matM} = \zeta_{\matF} + b x_{e} \prod_{g\in E\minus\{e\}} d_{g}
    \]
    where $\zeta_\matF\in\Matroidal(\matF,\sigma_{|_F})$ and $b \in \KK^{\times}$. Each $d_{g}$ has a constant term and so $\min (\zeta_\matM - \zeta_\matF) \in \KK^{\times} x_{e}$. By the inductive framework, $\min(\zeta_{\matF}) = \sum_{f \in F} c_{f} x_{f}$ for units $c_{f} \in \KK^{\times}$. \emph{Case 1} follows.

    \emph{Case 2: $\matM \neq \matU_{1,n}$}.

    \noindent Again, proceed by induction on $n$, with the case $n=1$ clear. We may assume that $\matM$ has no coloops. Since $\matM \neq \matU_{1,n}$, we find $e \in \matM$ such that the smallest circuit containing $e$ has cardinality at least three. Then $\matM\setminus e$ and $\matM/e$ are both loopless, hence subject to the inductive hypothesis. Deletion-Contraction on $e$ yields
    \[
    \min(\zeta_{\matM}) = \min( \zeta_{\matM\setminus e} + x_{e} \zeta_{\matM/e}) = \min( \zeta_{\matM\setminus e} ) + x_{e} \min (\zeta_{\matM/e})
    \]
    where the last equality uses: the induction hypothesis; that there is no cancellation of terms since $\zeta_{\matM\setminus e}, \zeta_{\matM/e} \in \KK[E \minus \{e\}]$; that, for all matroids $\matF$, elements of $\MSP(\matF)$ are homogeneous of degree $\rank(\matF)$. Thus, $\min(\zeta_\matM) \in \MSP(\matM)$ follows.
\end{proof}

\begin{cor}\label{cor-irred}
    Let $\zeta_\matM \in \Matroidal(E, \sigma)$ be a matroidal polynomial attached to $\matM$. If $\matM$ is connected of positive rank then $\zeta_\matM$ is irreducible.
\end{cor}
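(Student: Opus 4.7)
The plan is to reduce to the matroid support polynomial case via Proposition~\ref{prop - initial term matroidal poly}. For the base case $|E|=1$, connectedness together with positive rank forces $\matM = \matF_e$, so $\zeta_\matM \in \KK^\times \cdot f_e$ is a degree-one polynomial and hence irreducible. For $|E|\geq 2$, I would first apply the linear change of coordinates noted in Remark~\ref{rmk - basic matroid polynomial observations} to reduce to the situation $f_e = x_e$ for every $e \in E$; this change preserves irreducibility. Since $\matM$ is connected on $|E|\geq 2$ edges, splitting off a loop would disconnect $\matM$, so $\matM$ is loopless and Proposition~\ref{prop - initial term matroidal poly} yields $\min(\zeta_\matM) \in \MSP(\matM)$. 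In particular, every edge of $\matM$ lies in some basis, so every variable $x_e$ appears in $\min(\zeta_\matM)$ and hence in $\zeta_\matM$.

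Now suppose, for contradiction, that $\zeta_\matM = a \cdot b$ is a nontrivial factorization. By Remark~\ref{rmk - basic matroid polynomial observations}.\eqref{item-partition}, $a$ and $b$ are supported on disjoint variable subsets $A, B \subseteq E$ with $a \in \KK[A]$ and $b \in \KK[B]$; nontriviality forces both $A$ and $B$ to be nonempty, and the fact that every $x_e$ appears in $ab$ gives $A \sqcup B = E$. Since $a$ and $b$ involve disjoint variables there is no cancellation, so $\min(a \cdot b) = \min(a) \cdot \min(b)$, exhibiting $\min(\zeta_\matM) \in \MSP(\matM)$ as a product of nonconstant polynomials on disjoint variable sets. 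The monomial support of such a product is the Cartesian product of the factors' supports, so $\calB_\matM = \calB' \times \calB''$ for nonempty $\calB' \subseteq 2^A$ and $\calB'' \subseteq 2^B$. The exchange axiom on $\calB_\matM$ descends to $\calB'$ and $\calB''$ separately (an exchange strictly within the $A$-part of two bases cannot be realized by a swap with an element of $B$, since all bases have the same cardinality), so $\calB'$ and $\calB''$ are the bases of matroids on $A$ and $B$. Hence $\matM$ is disconnected by Definition~\ref{dfn-connected}, contradicting the hypothesis.

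The only subtle point is the passage from a product decomposition of the MSP $\min(\zeta_\matM)$ to a decomposition of $\matM$ itself, but this is exactly the content flagged in the discussion after Definition~\ref{dfn-msp} and rests on the elementary observation that products on disjoint variables have product support. The remainder of the argument is routine bookkeeping once one has Proposition~\ref{prop - initial term matroidal poly} and the factorization dichotomy of Remark~\ref{rmk - basic matroid polynomial observations}.\eqref{item-partition} in hand.
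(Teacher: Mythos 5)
Your proof is correct and takes essentially the same route as the paper: reduce to the matroid support polynomial case via Proposition~\ref{prop - initial term matroidal poly}, use the disjoint-variable factorization dichotomy, and invoke irreducibility of a matroid support polynomial on a connected matroid. The only difference is that you re-derive the last fact (which the paper states without proof, citing \cite[Lemma~2.4(b)]{DSW} in the surrounding discussion) and make explicit the point, glossed over in the paper, that a nontrivial factorization of $\zeta_\matM$ really does yield a nontrivial factorization of $\min(\zeta_\matM)$.
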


\begin{proof}
    We may prove this after a linear change of coordinates on $\KK[E]$, arranging that $\sigma$ satisfies $f_{e} = x_{e}$ for all $e \in E$. By Proposition \ref{prop - initial term matroidal poly}, $\min(\zeta_\matM) \in \MSP(\matM)$. Any nontrivial factorization of $\zeta_\matM$ induces a nontrivial factorization on $\min(\zeta_\matM)$, which is itself irreducible since $\matM$ is connected of with positive rank.
\end{proof}

\begin{example} \label{ex-matroidal-polys}
    Given a matroid $\matM$ on $E$, the following are classes of matroidal polynomials. 
    \begin{enumerate}[label=(\alph*)]
        \item\label{item-matroidbasis} The \emph{matroid basis polynomial}
        \[
        \Psi_{\matM} = \sum_{B\in\calB_\matM} \bsx^{B}
        \]
        is a matroidal polynomial on $E$ with singleton data $\{f_{g} = x_{g}; d_{g} = 1\}_{g \in E}$, homogeneous of degree $\rank(\matM)$. Here the matroidal polynomials arising in the Deletion-Contraction process are again matroid basis polynomials, $\zeta_{\matM\setminus e}=\Psi_{\matM\setminus e}$ and $\zeta_{\matM/e}=\Psi_{\matM/e}$. 
        \item\label{item-maximalrank} The \emph{matroid maximal rank polynomial}
        \[
        \MaxR_{\matM} = \sum_{\substack{A \subseteq E \\ \rank A = \rank E}} \bsx^{A}
        \]
        is a matroidal polynomial on $E$ with singleton data $\{f_{g} = x_{g}, d_{g} = 1 + x_{g}\}_{g \in E}$, inhomogeneous of degree $|E|$. 
        \item\label{item-configpoly} Let $A\in\CC^{r,n}$ be a full rank matrix. It induces a matroid $\matM$ of rank $r$, with bases the maximal independent column sets of $A$. If $E$ is the ground set of $\matM$ and $X$ is the diagonal $n\times n$ matrix with $X_{e,e}=x_e$  then the determinant $\Psi_A$ of $AXA^T$ is the \emph{matroid configuration polynomial} to $A$; it is a matroid support polynomial of $\matM$ and thus a matroidal polynomial if $\matM$ is connected. See \cite{DSW,Patterson,BEK} for more details. 
        \end{enumerate}
\end{example}

Suppose $\zeta_\matM$ is a matroidal polynomial for the matroid $\matM$ on $E$. Regarding just the matroid structure, 
the operations of matroid deletion and matroid contraction of (subsets of) elements of $E$ commute. If the resulting minor is a single edge, the singleton data determine the corresponding matroidal polynomial on the minor induced from $\zeta_\matM$. The following example shows that for larger minors of $\matM$, $\zeta_\matM$ can discriminate between the different ways an abstract matroid can arise from $\matM$ as minor.

\begin{example} \label{ex - config poly, minor issues}

Let $\lambda_{1} < \lambda_{2} < \cdots < \lambda_{6}$ and $x, y, z$ be sufficiently generic elements of $\mathbb{Z}_{\geq 1}$. Consider the matrices
    \[
    A = \begin{pmatrix} 
    1 & 0 & 0 & 1 & 2 & 3 \\
    0 & 1 & 0 & 2 & 3 & 4 \\
    0 & 0 & 1 & 2 & 6 & 12
    \end{pmatrix}
    \quad B = \begin{pmatrix}
    x^{\lambda_{1}} & x^{\lambda_{2}} & x^{\lambda_{3}} & x^{\lambda_{4}} & x^{\lambda_{5}} & x^{\lambda_{6}} \\
    y^{\lambda_{1}} & y^{\lambda_{2}} & y^{\lambda_{3}} & y^{\lambda_{4}} & y^{\lambda_{5}} & y^{\lambda_{6}} \\
    z^{\lambda_{1}} & z^{\lambda_{2}} & z^{\lambda_{3}} & z^{\lambda_{4}} & z^{\lambda_{5}} & z^{\lambda_{6}}
    \end{pmatrix}.
    \]
The column matroids $\matM(A)$ and $\matM(B)$ are both $\matU_{3,6}$ (\cite[Ex.~ 5.3, Ex.~2.20]{DSW}). Now consider the $6 \times 12$ block matrix
\[
G = \begin{pmatrix}
    I_{3} & 0 & A \\
    0 & I_{3} & B
\end{pmatrix}
\]
and let $S$ denote the column set $\{1,2,3\}$, $T$ the column set $\{4, 5, 6\}$.

Let $\matM(G)$ be the column matroid of $G$. The row space of $G$ is a realization $W \subseteq \KK^{12}$ with associated configuration polynomial $\zeta_{\matM(G)}$. Contracting elements of $S=\{1,2,3\}$ from smallest to largest and \emph{then} deleting elements of $T=\{4,5,6\}$ from smallest to largest gives, according to Deletion-Contraction, a configuration polynomial $\zeta_{\matM(B)}$ for $B$. On the other hand, deleting elements of $T$ from smallest to largest and \emph{then} contracting the elements of $S$ from smallest to largest induces a configuration polynomial $\zeta_{\matM(A)}$ of $A$. Thus, both are configuration polynomials to the "same" minor $\matU_{3,6}$ of $\matM(G)$, but define qualitatively different hypersurfaces by \cite[Ex.~5.3]{DSW}: the Jacobian scheme of the former is reduced and Cohen--Macaulay; the Jacobian scheme of the latter is not reduced and not Cohen--Macaulay.
\end{example}

Matroidal polynomials generalize matroid support polynomials, under the most simple choice of singleton data:

\begin{proposition}\label{prop-MSP-are-MtrdlM}
Let $\matM$ be a matroid on $E$. Then the class of matroid support polynomials on $\matM$ agrees with the class of matroidal polynomials on $E$ attached to $\matM$ with singleton data $\sigma = \{f_e = x_e, d_e = 1\}_{e \in E}$:
    \begin{equation*}
        \MSP(\matM) = \{\zeta_{\matN} \in \Matroidal(E, \{f_e = x_e, d_e = 1\}_{e \in E} \mid \matM = \matN) \}.
    \end{equation*}
\end{proposition}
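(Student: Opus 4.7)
The plan is to prove both inclusions by induction on $|E|$, with each inductive step using a Deletion-Contraction argument on a chosen edge $e \in E$ and handling the three cases (neither (co)loop; coloop; loop) in parallel between the two definitions. The base cases $|E|=0,1$ are immediate from the augmentation clause of Definition \ref{dfn - new - matroidal polys} matched against the bases of the free and loop matroids on a singleton.

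For $(\supseteq)$, I would take $\zeta_\matM \in \Matroidal(E, \sigma)$ with $\sigma = \{f_e=x_e, d_e=1\}_{e \in E}$ and show by induction that $\zeta_\matM = \sum_{B\in\calB_\matM} c_B \bsx^B$ with all $c_B \neq 0$. Pick any $e \in E$ and split on its matroid-theoretic type. If $e$ is neither loop nor coloop, the recursion gives $\zeta_\matM = \zeta_{\matM\setminus e} + x_e \zeta_{\matM/e}$; by induction the two summands have monomial supports $\calB_{\matM\setminus e}$ and $\{B \cup \{e\}: B \in \calB_{\matM/e}\}$ respectively, and these two families of monomials partition $\calB_\matM$, with no cancellation since $\zeta_{\matM\setminus e}, \zeta_{\matM/e} \in \KK[E\minus\{e\}]$. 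If $e$ is a coloop, $\zeta_\matM = x_e \zeta_\matN$ and every basis of $\matM$ is of the form $B \cup \{e\}$ with $B \in \calB_\matN$. If $e$ is a loop, $\zeta_\matM = 1 \cdot \zeta_L$ and $\calB_\matM = \calB_L$. In each case, inductive hypothesis applied to the smaller matroid closes the argument.

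For $(\subseteq)$, I would take a matroid support polynomial $\chi_\matM = \sum_{B\in\calB_\matM} c_B \bsx^B$ with all $c_B \neq 0$ and show it satisfies the recursive clauses (i)--(iii) of Definition \ref{dfn - new - matroidal polys} for the chosen $\sigma$. Again, pick any $e \in E$. If $e$ is neither loop nor coloop, partition $\calB_\matM$ into bases avoiding $e$ and bases containing $e$ to write
\[
\chi_\matM = \underbrace{\sum_{B\in\calB_{\matM\setminus e}} c_B \bsx^B}_{=:\chi_{\matM\setminus e}} + x_e \cdot \underbrace{\sum_{B'\in\calB_{\matM/e}} c_{B' \cup \{e\}} \bsx^{B'}}_{=:\chi_{\matM/e}},
\]
and observe that $\chi_{\matM\setminus e}$ and $\chi_{\matM/e}$ are again MSPs (on the respective minors), hence matroidal by the inductive hypothesis. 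The coloop and loop cases are analogous: when $e$ is a coloop, $\chi_\matM = x_e \chi_{\matM/e}$ with $\chi_{\matM/e}$ an MSP on $\matM/e$; when $e$ is a loop, $\chi_\matM = 1 \cdot \chi_L = d_e \chi_L$ with $\chi_L$ an MSP on $L$.

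I do not anticipate a serious obstacle: the whole proof is a direct bookkeeping exercise after observing the standard matroid identity $\calB_\matM = \calB_{\matM\setminus e} \sqcup \{B \cup \{e\} : B \in \calB_{\matM/e}\}$ for any non-(co)loop $e$. The only mild subtlety is ensuring, in the $(\subseteq)$ direction, that the existence of \emph{some} $e$ to feed into Deletion-Contraction is always available---which it is, since $E \neq \emptyset$ in the inductive step---and that the coefficients $c_B$ surviving to the minor are nonzero, which is automatic since they are a subset of the nonzero coefficients of $\chi_\matM$.
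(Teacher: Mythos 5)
Your proof is correct and takes exactly the approach the paper intends; the paper's own proof is literally the one-line instruction ``Induce on $|E|$ and use Deletion-Contraction identities,'' and you have simply spelled out the bookkeeping behind it (the partition $\calB_\matM = \calB_{\matM\setminus e} \sqcup \{B\cup\{e\} : B\in\calB_{\matM/e}\}$ and its coloop/loop degenerations, plus the no-cancellation observation). Nothing to add.
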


\begin{proof}
    Induce on $|E|$ and use Deletion-Contraction identities.
\end{proof}


\subsection{A Primer on $F$-singularities}\label{subsec-primer-F}
\begin{ntn}
Whenever we deal with fields of positive characteristic, we use the letter $\FF$ as default.
\end{ntn}

Our main references for
background on this subsection are
\cite{HH94TAMS,MaPolstra}.
Throughout this section, let $R$ be a ring of characteristic $p>0$
containing 
the field $\FF$.
On $R$ there is the \emph{Frobenius morphism} $F$, the ring
endomorphism $F$ that raises elements to the $p$-th power.  We
denote $F^e$ the $e$-fold iterate of $F$.  In order to distinguish
various copies of $R$ that arise as source and target of the
Frobenius, we shall write
\[
F^e\colon R\to F^e_*R,\qquad r\mapsto F^e_*r^{p^e}.
\]
So, $F^e_*R$ denotes the $R$-module whose additive group structure is
that of $R$, and which is an $R$-module via $F^e$. The element of
$F^e_*R$ that corresponds to $r\in R$ in the group isomorphism $R\iso
F^e_*R$ will be denoted $F^e_*r$.  Note that $F^e_*R$ is not just an
$R$-module: it has a natural multiplication that makes it an
$R$-algebra via $F$.

Given an ideal $I\subseteq R$, the Frobenius allows us to form the
\emph{$e$-th bracket} or \emph{Frobenius power} $\bracket{I}{e}$ of
$I$ as the ideal
\[
\bracket{I}{e}:=R\{r^{p^e}\mid r\in I\}
\]
generated by the $p$-th powers of the elements of $I$.

The ring $R$ is \emph{$F$-finite} if $F^e_*R$ is a finitely generated
$R$-module for some $e$ (or, equivalently, for all $e$).  Moreover,
$F$-finiteness is preserved under localization and under the formation
of quotients, polynomial rings and power series rings. In particular,
if the extension degree $[\FF:\FF^p]$ is finite, then all quotients of
polynomial rings $\FF$ are $F$-finite. Specifically, this applies when
$\FF$ is finite, or when it is separably closed.

Recall that an injective  morphism $\phi\colon M\to M'$ of $R$-modules is
\emph{pure} if $\phi\otimes_RN$ is injective for every $R$-modules $\matN$. 

\begin{dfn}\label{dfn-F-pure}
  The ring $R$ is \emph{$F$-pure} if $F^e\colon R\to F^e_*R$ is pure
  for some positive $e$,
  The ring $R$ is called
  \emph{$F$-split} if the map splits as map of $R$-modules.
\end{dfn}
A split map stays split under any linear functor and hence being
$F$-split implies being $F$-pure. On the other hand, $F$-purity
implies that the Frobenius is injective and so $R$ has no nilpotent
elements. For $F$-finite rings, and for complete local rings, the two
concepts are in fact equivalent by \cite[Cor.~2.4]{MaPolstra}.

If $F^e\colon R\to F^e_*R$ is split or pure, so is the corresponding
map for all $e'>0$, see \cite[Dfn.~2.1]{MaPolstra}. If $R$ is a field,
then $F_*R$ is a vector space over $R$ and in particular free, so $R$
is $F$-split. 
A polynomial ring $\FF[x_1,\ldots,x_n]$ is $F$-split since the map
\[
\bsx^\boldu\mapsto\left\{\begin{array}{cl}
\bsx^\boldv&\text{if }\boldu=p\cdot \boldv\text{ for some $\boldv\in\NN^n$};\\
0&\text{else;}\end{array}\right.
\]
(when combined with a splitting for $\FF\to F_*\FF$) splits the
Frobenius.

Fedder\cite{Fedder} gave a useful explicit criterion to determine
whether a quotient of a polynomial ring $R$ over an $F$-finite field
by an ideal $I\subseteq R$ generated by homogeneous polynomials is
$F$-pure: it is so if and only if the ideal
quotient
\[
\bracket{I}{1}\colon I\not\subseteq 
\bracket{\frakm}{1}
\]
is not in the Frobenius power of the maximal ideal $\frakm$ generated by the
indeterminates. For example, when $I=(f)$ is principal and $f$
homogeneous, then $R/I$ is $F$-pure if and only if $f^{p-1}$ is not in
$\bracket{\frakm}{1}$, \cite[Remark 2.8]{MaPolstra}.

A strengthening of the splitting condition is $F$-regularity:

\begin{dfn}\label{dfn-Freg}
The $F$-finite ring $R$ is \emph{strongly $F$-regular} if for all
non-zerodivisors $c\in R$ there exists $0<e\in\NN$ such that the
$R$-linear map
$R\to F^e_*R$ that sends $1$ to $F^e_*c$ splits as $R$-morphism.
\end{dfn}
Note that if $c=c'c''$ in $R$, and if $1\mapsto F^e_*c$ splits, then
so does $1\mapsto F^e_*c'$. Indeed, multiplication by $F^e_*c''$ on
$F^e_*R$ provides an $R$-linear endomorphism and it can be combined
with a splitting of $1\mapsto F^e_*c$ for a splitting of the map that
sends $1\mapsto F_*^ec'$. In particular, splitting any
$1\mapsto F^e_*c$ implies that the ring is $F$-split, hence $F$-pure.

For $F$-finite rings, strong $F$-regularity localizes, and can be
checked locally. Moreover, $F$-finite regular rings are strongly
$F$-regular, while in turn $F$-finite strongly $F$-regular rings are
normal domains.

The following appears already in \cite{Fedder}; we reproduce here the
proof from \cite[Claim 2.7]{MaPolstra}.

\begin{lem}\label{lem-dsum}
  Let $(S,\frakm,\FF)$ be a polynomial ring over an perfect 
  field $\FF$ with homogeneous maximal ideal $\frakm=\ideal{x_1,\ldots,x_n}$. Then
  $F_*S$ is the free $S$-module
  \begin{eqnarray}\label{eqn-dsum}
    F_*S=\bigoplus_{0\le u_1,\ldots,u_n< p}S\cdot F_*(\bsx^\boldu)
  \end{eqnarray} 
  in multi-index notation. For any such multi-index, let $\Phi_\boldu$
  be the 
   $S$-morphism $\Phi\colon F_*S\to S$ that identifies
  the summand $S\cdot F_*(\bsx^\boldu)$ with $S$ and which is
  zero on all other summands. Set
  \[
  (p-1,\ldots,p-1)=:\bsalpha.
  \]

  Let $I\subseteq \frakm$ be an ideal of $S$ and choose $s\in S$. Then
  the composition of $\Phi_\bsalpha$ following multiplication by
  $F_*s$ on $F_*S$ induces a morphism from $F_*(S/I)$ to $S/I$
  precisely when $sI\subseteq I^{[p]}$.
\end{lem}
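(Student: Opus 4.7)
The plan is to prove two separate statements: the free decomposition of $F_*S$, and the equivalence characterizing when the composed map descends modulo $I$. For the first, I would use the observation that any monomial $\bsx^{\boldc}$ admits a unique factorization $\bsx^{\boldc} = (\bsx^{\boldq})^p \bsx^\boldu$ with $0 \le u_i < p$, obtained by Euclidean division $c_i = p q_i + u_i$. Since $\FF$ is perfect, each $\FF$-coefficient of an arbitrary $r \in S$ has a unique $p$-th root, so one obtains a unique expansion $r = \sum_\boldu s_\boldu^p \bsx^\boldu$ with $s_\boldu \in S$. Under the twisted $S$-action $a \cdot F_*b = F_*(a^p b)$ on $F_*S$ this translates into the claimed direct sum decomposition with basis $\{F_*(\bsx^\boldu)\}_{0 \le u_i < p}$.

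For the main equivalence, I observe that $\phi := \Phi_\bsalpha \circ (\text{mult.\ by } F_*s)$ is $S$-linear (both factors are) and descends to a map $F_*(S/I) \to S/I$ precisely when $\phi(F_*I) \subseteq I$; explicitly, $\phi(F_*r) = t_\bsalpha$ where $sr = \sum_\boldu t_\boldu^p \bsx^\boldu$ is the Frobenius expansion of $sr$. The implication $sI \subseteq I^{[p]} \Rightarrow \phi(F_*I) \subseteq I$ will then be immediate: writing $sr = \sum_i a_i b_i^p$ with $b_i \in I$, we obtain $F_*(sr) = \sum_i b_i \cdot F_*(a_i) \in I \cdot F_*S$, and the $S$-linearity of $\Phi_\bsalpha$ sends this into $I$.

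The converse is the main technical step, and the key obstacle lies in extracting information about every coefficient $t_\boldw$ even though $\phi$ only records $t_\bsalpha$. The trick will be to apply the hypothesis $\phi(F_*I) \subseteq I$ not just to $r \in I$ itself, but to each shifted element $r \bsx^{\bsalpha - \boldw} \in I$ as $\boldw$ ranges over multi-indices with $0 \le w_i \le p-1$. A quick monomial count will show that when expanding $sr \cdot \bsx^{\bsalpha - \boldw}$ in the Frobenius basis, exactly one summand contributes to the $\bsx^\bsalpha$ component: namely the one coming from the $\boldu = \boldw$ term of $sr = \sum_\boldu t_\boldu^p \bsx^\boldu$. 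This is because the congruence $u_i - w_i \equiv 0 \pmod p$ combined with $u_i, w_i \in [0, p-1]$ forces $u_i = w_i$. Hence $\phi(F_*(r \bsx^{\bsalpha - \boldw})) = t_\boldw \in I$, and letting $\boldw$ vary over $[0, p-1]^n$ then places every coefficient of the Frobenius expansion of $sr$ into $I$, yielding $sr \in I^{[p]}$ as required.
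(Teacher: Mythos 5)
Your proposal is correct and uses essentially the same argument as the paper: the forward direction follows from $F_*(I^{[p]})=I\cdot F_*S$, and the converse relies on the identity $\Phi_\bsalpha\bigl(F_*(a\,\bsx^{\bsalpha-\boldu})\bigr)=a_\boldu$ applied to all shifts of an element of $I$. The only cosmetic difference is that you run the converse implication directly while the paper phrases it as a contrapositive, exhibiting a single witnessing shift.
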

\begin{proof}
  If $sI\subseteq I^{[p]}$ then
  $\Phi_\boldu(F_*s\cdot F_*I)=\Phi_\boldu(F_*(sI))\subseteq
  \Phi_\boldu(F_*I^{[p]})=\Phi_\boldu(IF_*(RS)=I$ for all $\boldu$. In
  the other direction, suppose $sI\not\subseteq I^{[p]}$ and pick
  $r'\in I$ with $r:=sr'\notin I^{[p]}$. Decompose $F_*r$ according to
  the direct sum decomposition \eqref{eqn-dsum}. One of the components
  $r_\boldu F_*(\bsx^\boldu)$ of $F_*r$ is outside $F_*(I^{[p]})=IF_*S$,
  which implies that $r_\boldu$ is not in $I$. Then the corresponding
  projection $\Phi_\boldu$ sends this component $r_\boldu\cdot
  F_*\bsx^\boldu$ to the element $r_\boldu$ outside $I$ in $S$, and
  kills all other components. It follows that
  $r_\boldu=\Phi_\bsalpha(F_*r\bsx^{\bsalpha-\boldu})$ is outside
  $I$. As $r=sr'\in I$, the map $\Phi_\bsalpha(F_*s\cdot(-))\colon
  F_*S\to S$ does not send $F_*I$ to $I$, and hence does not induce a
  morphism $F_*(S/I)\to S/I$.
\end{proof}

\subsection{A Primer on Jets}

\begin{ntn}
Whenever we deal with jets of varieties, the underlying field will be called $\KK$ and it will be assumed to be algebraically closed of characteristic zero.
\end{ntn}

Consider an affine scheme 
\[
X \subseteq \KK^n:=\mathbb{A}_{\KK}^{n},
\]
embedded in affine $n$-space over $\KK$.  We give a working definition of the $m$-jets of $X$ that is sufficient for our needs. A more intrinsic definition as well as deeper topics in jet schemes and arc spaces can be found in  \cite{SingularitiesPairsMustata, MustataJetsLCI}.

In light of the embedding $X\into\KK^n$, the $m$-jets of $X$ will appear as subschemes of the $m$-jets of $\KK^n$, and so we
begin with the $m$-jets of $\KK^{n}$:
\begin{define} \label{def - m-jets of affine}
Write the coordinate ring of $\KK^{n}$ as $\KK[x_{1}, \dots, x_{n}]$. For each $q \in \mathbb{Z}_{\geq 0}$ introduce new variables $x_{i}^{(q)}$. With the standard identification  $x_{i} = x_{i}^{(0)}$, the \emph{$m^{\text{th}}$-jet scheme of $\KK^{n}$} is
\[
\scrL_{m}(\KK^{n}) := \Spec \KK[\{x_{1}^{(q)}, \dots, x_{n}^{(q)}\}_{0 \leq q \leq m}].
\]
\end{define}

To construct the jet schemes of $X \subseteq \KK^{n}$, first define a universal $\KK$-linear derivation $D$ on $\KK[\{x_{1}^{(q)}, \dots, x_{n}^{(q)}\}_{0 \leq q \leq \infty}]$ by the laws: 
\begin{itemize}
\item Leibniz rule: $D(gh)=gD(h)+(Dg)h$; \item Prolongation: $Dx_{i}^{(q)} = Dx_{i}^{(q+1)}$.
\end{itemize}
With the understanding that the 0-fold iteration $D^0$ is to be the identity map, we now introduce the following concept. 
\begin{dfn} \label{dfn-jets-der}
    Let $I \subseteq \KK[x_{1}, \dots, x_{n}]$ be the defining ideal of an affine scheme $X \subseteq \KK^{n}$, generated by $f_1,\ldots,f_r$. 
    With the standard identification $x_i^{(q)}=x_i$, the  \emph{$m$-th jet scheme of $X$} is 
    the affine scheme 
    \[
    \scrL_{m}(\KK^{n}, X) := \Var(\{D^{q}f_{1}, \dots, D^{q}f_{r}\}_{0 \leq q \leq m}) \subseteq 
    \scrL_m(\KK^n),
    \]
    and in particular
    \[
    \scrL_{0}(\KK^{n}, X) = X. 
    \]
    These are independent of the generator choice for $I$ since $D$ is a derivation. We agree that $\scrL_{-1}(\KK^n,X)$ is simply $\KK^n$.
\end{dfn}
    For $-1 \leq t < m$ we have a natural projection map
    \begin{eqnarray} \label{def - affine jets, projection map}
        \pi_{m, t}: \scrL_{m}(\KK^{n}) &\xtwoheadrightarrow{\phantom{xxx}}& \scrL_{t}(\KK^{n})\\
    (x_{1}^{(q)}, \dots, x_{n}^{(q)})_{0 \leq q \leq m} &\xmapsto[]{\pi_{m,t}}& (x_{1}^{(q)}, \dots, x_{n}^{(q)})_{0 \leq q \leq t}.    \notag
    \end{eqnarray}
    given by forgetting the $x_{i}^{(q)}$-coordinates for $1 \leq i \leq n$ and $t+1 \leq q \leq m$.
We can sort the $m$-jets of $X \subseteq \KK^{n}$ based on their image under $\pi_{m,0}$:

\begin{define} \label{def - m-jets lying over}
    Let $Z \subseteq \KK^{n}$ be Zariski locally closed. We say that an \emph{$m$-jet $\gamma \in \scrL_{m}(\KK, X)$ lies over $Z$} if it is an element of
    \[
    \scrL_{m}(\KK^{n}, X, Z) := \{\gamma \in \scrL_{m}(\KK^{n}, X) \mid \pi_{m,0}(\gamma) \in Z \}.
    \]
    We convene that $\scrL_{-1}(\KK^n,X,Z)$ is $Z$; this is in harmony with the earlier convention $\scrL_{-1}(\KK^n,X)=\KK^n$.
\end{define}

\begin{remark}
    Whenever there is ambiguity, we will use a superscript  to clarify what projection we are considering. For example, $\pi_{m,t}^{\KK^{n}}: \scrL_{m}(\KK^{n}) \to \scrL_{t}(\KK^{n})$ is the projection $\pi_{m,t}: \scrL_{m}(\KK^{n}) \to \scrL_{t}(\KK^{n})$. 
\end{remark}

\begin{remark} \label{rmk - always a m-jet lying over a point in X}
Consider a monomial $x_{i_{1}} \cdots x_{i_{k}} \in \KK[x_{1}, \dots, x_{n}]$. Then \begin{align}\label{eqn-D-m+1}
    D^{m} (x_{i_{1}} \cdots x_{i_{k}} ) 
    &= \sum_{j_{1} + \cdots + j_{k} = m} \binom{m}{j_{1}, \dots, j_{m}} (D^{j_{1}}x_{i_{1}}) \cdots (D^{j_{k}}x_{i_{k}}) 
\end{align}    
is in the ideal $(\{x_{1}^{(q)}, \dots, x_{n}^{(q)}\}_{1 \leq q \leq m})$ of $\KK[\{x_{1}^{(q)}, \dots, x_{n}^{(q)}\}{_{0 \leq q \leq m}}]$.
(The point to note is the lower bound of the $q$-indices for the ideal).
Since $D$ is $\KK$-linear and additive, we quickly deduce:
\begin{equation} \label{eqn - m-jets fiber over alpha in X is nonempty}
        [\alpha \in X = \scrL_{0}(\KK^{n}, X)] \implies [(\alpha, 0, \dots, 0) \in \scrL_{m}(\KK^{n}, X)].
\end{equation}
In particular,  if $\alpha \in X$, then $\pi_{m,0}^{-1}(\alpha) \cap \scrL_{m}(\KK^{n}, X) \neq \emptyset.$
\end{remark}

A point of $\scrL_{m}(\KK^{n}, X)$ either lies over (with respect to $\pi_{m,0})$ the smooth or singular part of $X$. This gives a decomposition into Zariski closed sets:
\[
\scrL_{m}(\KK^{n}, X) = \scrL_{m}(\KK^{n}, X, X_{\Sing}) \cup \overline{\scrL_{m}(\KK^{n}, X, X_{\reg})}.
\]
One checks that the projection $\pi_{m,0}: \scrL_{m}(\KK, X, X_{\reg}) \to X_{\reg}$ is a locally trivial fibration with fibers $\KK^{m \cdot\dim (X)}$ and so 
\[
\dim(\overline{\scrL_{m}(\KK^{n}, X, X_{\reg})})=(m+1)\dim(X).
\]
Moreover, it turns out that $\overline{\scrL_{m}(\KK^{n}, X, X_{\reg})}$ is an irreducible component of $\scrL_{m}(\KK^{n}, X)$. In the case $X$ is a locally complete intersection (hereafter denoted l.c.i) this leads to the straightforward but powerful result:

\begin{proposition} \label{prop - Mustata's characterization of irreducibility of m-jets in terms of dim lying over sing} 
\cite[Prop.'s~1.4,~1.5,~1.7]{MustataJetsLCI}
    Let $X$ be an l.c.i.\ and choose $m \in \mathbb{Z}_{\geq 1}$. Then
    \begin{enumerate}[label=(\alph*)]
        \item $[\scrL_{m}(\KK^{n}, X)\text{ is equidimensional}\,]$ $\iff [\dim \scrL_{m}(\KK^{n}, X) \leq (m+1)\dim X]$, and this happens exactly when equality holds;
        \item $[\scrL_{m}(\KK^{n}, X)\text{ is equidimensional}\,]$ $\implies$ $[\scrL_{m}(\KK^{n}, X)\text{ is an l.c.i.}\,]$;
        \item $[\scrL_{m}(\KK^{n}, X)\text{ is irreducible}\,]$ $\iff [\dim \scrL_{m}(\KK^{n}, X, X_{\Sing}) < (m+1)\dim X]$;
        \item $[\scrL_{m}(\KK^{n}, X)\text{ is irreducible}\,]$ $\implies$ $[\scrL_{m}(\KK^{n}, X)\text{ is reduced and $X$ is normal}\,]$.
    \end{enumerate}
\end{proposition}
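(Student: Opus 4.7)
The plan is to leverage that if $X\subseteq \KK^n$ is cut out by $f_1,\ldots,f_r$ as an l.c.i.\ of codimension $r$, then $\scrL_m(\KK^n,X)$ is a subscheme of $\scrL_m(\KK^n)\simeq \AA^{(m+1)n}_\KK$ defined by the $(m+1)r$ prolonged equations $\{D^q f_i\}_{0\le q\le m,\,1\le i\le r}$. Krull's height theorem then guarantees every irreducible component of $\scrL_m(\KK^n,X)$ has codimension at most $(m+1)r$, i.e., dimension at least $(m+1)\dim X$. Complementarily, local triviality of $\pi_{m,0}\colon \scrL_m(\KK^n,X,X_\reg)\to X_\reg$ with $\KK^{m\dim X}$-fibers forces $\overline{\scrL_m(\KK^n,X,X_\reg)}$ to be an irreducible component of $\scrL_m(\KK^n,X)$ of dimension exactly $(m+1)\dim X$. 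All four statements then reduce to dimension bookkeeping built on these two anchor facts.

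For (a), the Krull lower bound on component dimensions, paired with the hypothesis $\dim \scrL_m(\KK^n,X)\le (m+1)\dim X$, pins every component's dimension to $(m+1)\dim X$, yielding equidimensionality \emph{with equality}; conversely, equidimensionality combined with the existence of the $(m+1)\dim X$-dimensional component $\overline{\scrL_m(\KK^n,X,X_\reg)}$ forces the common dimension to equal $(m+1)\dim X$. For (b), equidimensionality says the defining ideal of $\scrL_m(\KK^n,X)$ has codimension precisely $(m+1)r$, matching the number of generators, so the converse of Krull certifies that the $(m+1)r$ prolonged equations form a regular sequence locally, placing $\scrL_m(\KK^n,X)$ in the l.c.i.\ class. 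For (c), because $\overline{\scrL_m(\KK^n,X,X_\reg)}$ is always an irreducible $(m+1)\dim X$-dimensional component, irreducibility of $\scrL_m(\KK^n,X)$ prohibits further components; any stray component must sit inside the closed set $\scrL_m(\KK^n,X,X_\Sing)$ yet carry dimension at least $(m+1)\dim X$ by Krull, so irreducibility corresponds exactly to the strict inequality $\dim \scrL_m(\KK^n,X,X_\Sing)<(m+1)\dim X$.

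For (d), irreducibility combined with (c) gives equidimensionality, hence the l.c.i.\ property from (b) and in particular Cohen--Macaulayness; reducedness then follows because the smooth locus of $\scrL_m(\KK^n,X)$ contains the pullback of $X_\reg$, which is dense under irreducibility, and a Cohen--Macaulay scheme that is generically reduced admits no embedded primes. The subtle remaining point is normality of $X$ itself: via Serre's criterion $R_1+S_2$ (with $S_2$ automatic from the l.c.i.\ structure on $X$), one must verify regularity in codimension one on $X$, and this is the main obstacle. The strategy is to assume for contradiction that $X_\Sing$ has codimension one in $X$ and then construct a large family of $m$-jets lying over a generic point of $X_\Sing$ (roughly, by prolonging tangent data through the singularity); a careful Jacobian count shows this family has dimension at least $(m+1)\dim X$, contradicting the bound in (c). This Jacobian dimension estimate for jets over the singular stratum is the technical heart of the argument.
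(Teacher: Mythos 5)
This proposition is quoted directly from Musta\c t\u a and is not given a proof in the paper you are reading; the paragraph preceding the statement is only the heuristic set-up (the regular/singular decomposition and the fibration over $X_\reg$). Your sketch reconstructs the two load-bearing facts of Musta\c t\u a's actual argument --- Krull's height theorem bounding every irreducible component of a subscheme of $\scrL_m(\KK^n)\simeq\AA^{(m+1)n}$ cut out by $(m+1)r$ equations from below by $(m+1)\dim X$, and the piece over $X_\reg$ supplying a component of exactly that dimension --- so (a)--(c) go through as you describe, and (b) is indeed the unmixedness/Krull-converse statement in the CM ambient.

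Two issues in (d). First, a small slip: equidimensionality follows from irreducibility alone (there is a single component) together with (a), not from (c). Second, the "careful Jacobian count" is exactly where the content is and should be made explicit. For an l.c.i.\ of codimension $r$, a point $\alpha$ is singular precisely when the Jacobian has rank $\leq r-1$; the level-one jet equations $D^1f_i$ over a fixed base point $\alpha$ are $r$ linear forms in the first-order variables with coefficient matrix this Jacobian, so over $\alpha\in X_\Sing$ they impose at most $r-1$ conditions, and Krull on the remaining levels gives $\dim\big(\pi_{m,0}^{-1}(\alpha)\cap\scrL_m(\KK^n,X)\big)\geq m\dim X+1$. If $X_\Sing$ had a component $Z$ of codimension one, summing this fiber estimate over $Z$ would give $\dim\scrL_m(\KK^n,X,Z)\geq(\dim X-1)+(m\dim X+1)=(m+1)\dim X$, contradicting~(c). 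Hence $X_\Sing$ has codimension $\geq 2$ ($R_1$), and together with $S_2$ from the l.c.i.\ hypothesis Serre's criterion gives normality. Your phrase "this family has dimension at least $(m+1)\dim X$" should therefore refer to the jets lying over all of $Z$, not over a single generic point: over one point you only get $m\dim X+1$, and you need the $\dim Z$ summand to close the contradiction.
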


We now arrive at this section's main tool, the principal result of \cite{MustataJetsLCI}. This lets us, in certain situations, equate rationality in terms of irreduciblity of $m$-jets. We provide no explanation for his proof other than that motivic integration interprets irreducible component data of $\scrL_{m}(\KK^{n}, X)$ into data of a log resolution.

\begin{thm}[{\cite[{Thm.~3.3}]{MustataJetsLCI}}]\label{thm-mustata-main}
    Let $X$ be an l.c.i. Then $\scrL_{m}(\KK^{n}, X)$ is irreducible for all $m \geq 1$ if and only if $X$ has rational singularities.
\end{thm}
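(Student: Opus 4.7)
By Proposition \ref{prop - Mustata's characterization of irreducibility of m-jets in terms of dim lying over sing}(c), irreducibility of $\scrL_m(\KK^n,X)$ for every $m\geq 1$ is equivalent to the dimensional estimate $\dim\scrL_m(\KK^n,X,X_{\Sing})<(m+1)\dim X$ holding for all such $m$. The plan is to compute this dimension via motivic integration on a log resolution, and then invoke Elkik's theorem that an l.c.i.\ has rational singularities if and only if it has canonical singularities.

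First, fix a log resolution $\pi\colon Y\to X$, so that $Y$ is smooth, $\pi$ is an isomorphism over $X_{\reg}$, and the exceptional locus is a simple normal crossings divisor $E=\sum_{i\in I} E_i$. Since $X$ is l.c.i.\ and hence Gorenstein, the relative canonical divisor has integer coefficients $K_{Y/X}=\sum_i k_i E_i$, and $X$ is by definition canonical precisely when all $k_i\geq 0$.

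Second, stratify $\scrL_m(Y)$ by multi-contact order $\nu=(\nu_i)$, where $\nu_i=\ord_{E_i}(\gamma)$, and pass to $\scrL_m(\KK^n,X)$ via the morphism $\pi_m$ induced by $\pi$. The Denef--Loeser/Kontsevich change of variables formula, truncated to $m$-jets, estimates the codimension of the image of a stratum as $\sum_i\nu_i k_i$ relative to the stratum itself. Summing contributions over all nonzero $\nu$ (these correspond to jets landing in the exceptional locus, hence after projection in $X_{\Sing}$) is expected to yield an identity of the form
\[
\dim\scrL_m(\KK^n,X,X_{\Sing}) - (m+1)\dim X = \max_{\nu\neq 0}\Bigl(-\sum_i\nu_i(k_i+1)\Bigr) + (\text{bounded correction}),
\]
and the right-hand side is strictly negative for every $m\geq 1$ precisely when $k_i\geq 0$ for every $i$, i.e.\ $X$ is canonical.

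The main technical hurdle is the dimension bookkeeping in the second step: one must carefully extract $m$-truncated data from arcs on $Y$ in a way that correctly tracks the Jacobian contribution $\sum_i\nu_i k_i$ coming from $K_{Y/X}$ and the $m$-dependent dimension drops forced by the $E_i$-contact conditions, paying particular attention to strata where several $E_i$ meet. Once the dimension estimate is in hand, Elkik's theorem closes the loop ($X$ l.c.i.\ and canonical $\iff X$ l.c.i.\ and rational), delivering the equivalence.
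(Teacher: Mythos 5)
The paper does not prove this theorem: it is quoted verbatim from \cite{MustataJetsLCI} (Thm.~3.3), and the authors explicitly decline to give more than the one-line slogan that motivic integration translates the irreducible-component structure of $\scrL_m(\KK^n,X)$ into log-resolution data. Your sketch is a faithful expansion of that slogan and of Musta\c t\u a's actual strategy: reduce via Proposition~\ref{prop - Mustata's characterization of irreducibility of m-jets in terms of dim lying over sing}(c) to a dimension bound on $\scrL_m(\KK^n,X,X_{\Sing})$, stratify $m$-jets by contact order along the exceptional divisors of a log resolution, track the Jacobian contribution $K_{Y/X}=\sum_i k_i E_i$ under pushforward, and close the equivalence canonical $\Leftrightarrow$ rational for l.c.i.\ hypersurfaces via Elkik/Flenner.

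There is, however, a genuine gap exactly where you flag it. The display containing the unnamed ``bounded correction'' is not a proved statement, and it is not even well-formed as written: one compares dimensions of different strata, so what you want is a maximum of stratum dimensions (and identification of a dominating stratum), not a sum of contributions, and the collection of contact vectors $\nu$ that can arise among $m$-jets is itself $m$-dependent and must be controlled. Likewise the dimension of a multi-contact stratum on $\scrL_m(Y)$ requires a separate computation using smoothness of $Y$ and the SNC structure of $E=\sum_i E_i$; you cannot cite the Kontsevich--Denef--Loeser change of variables, which lives on arc spaces, and pass to $m$-truncations for free. Those estimates are precisely the technical lemmas in \cite{MustataJetsLCI} that turn your road map into a proof. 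As it stands, the equivalence between $\dim\scrL_m(\KK^n,X,X_{\Sing})<(m+1)\dim X$ for every $m\geq 1$ and $k_i\geq 0$ for every $i$ has been asserted, not established.
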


\begin{rmk}
\label{rmk-rat-sing-properties}
While one could use the previous theorem as a definition for rational singularities, the original definition is cast in terms of a resolution of singularities: the variety $X$ has \emph{rational singularities} if and only if for some  resolution of singularities
$Y\stackrel{\pi}{\to} X$, one has
\[
R\pi_*(\scrO_Y)=\scrO_X.
\]
If one resolution has this property, then every does. In practice, this is a complicated definition to work with, but the property has interesting consequences. For example, the \emph{log-canonical threshold} $\lct(\frakx,\Var(f))\subseteq \CC^n$ of a hypersurface singularity $f(\frakx)=0$ is the number 
\[
\lct(\frakx,X):=\sup\{c\in\RR\mid 1/||f||\text{ is locally $L^2$-integrable at }\frakx\},
\]
and $\frakx\in \Var(f)$ is a point near which $X$ has rational singularities only if $\lct(\frakx,\Var(f))$ equals the maximum possible value 1. Moreover, $-1$ exceeds the largest root of the local reduced Bernstein--Sato polynomial $b_{f,\frakx}(s)/(s+1)$ attached to $f$ near $p$ if and only if $\Var(f)$ has rational singularities near $\frakx$ \cite[Thm.~0.4]{SaitoOnBFunction}. See \cite{Kollar} for more details. 
\end{rmk}

We conclude with three elementary lemmas that we make frequent use of. 

\begin{lemma} \label{lemma - intersecting irreducible jet scheme with inverse image of avoidant variety downstairs}
    Consider $f, g \in \KK[X] = K[x_{1}, \dots, n]$, with $n \geq 2$, and let $X_{f}$ be the hypersurface corresponding to $f$. Suppose that for $m \in \mathbb{Z}_{\geq 1}$ we have that:
    \begin{enumerate}[label=(\alph*)]
        \item $\scrL_{m}(\KK^{n}, X_{f})$ is irreducible, necessarily of dimension $(m+1)(n-1)$;
        \item $\dim (\Var(f) \cap \Var(g)) \leq n - 2$.
    \end{enumerate}
    Then 
    \[
    \dim \scrL_{m}(\KK^{n}, X_{f}, \Var(g)) < (m+1)(n-1).
    \]
\end{lemma}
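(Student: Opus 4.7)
The plan is to argue by contradiction, exploiting the irreducibility hypothesis to force a containment of $\Var(f)$ in $\Var(g)$, which will directly violate condition (b).

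First, note that $\scrL_{m}(\KK^{n}, X_{f}, \Var(g))$ is a \emph{closed} subscheme of $\scrL_{m}(\KK^{n}, X_{f})$: it equals the intersection of $\scrL_{m}(\KK^{n}, X_{f})$ with the preimage $\pi_{m,0}^{-1}(\Var(g))$ of the Zariski closed set $\Var(g) \subseteq \KK^{n}$. Suppose, for contradiction, that
\[
\dim \scrL_{m}(\KK^{n}, X_{f}, \Var(g)) \geq (m+1)(n-1).
\]
Then $\scrL_{m}(\KK^{n}, X_{f}, \Var(g))$ contains a closed irreducible component of dimension at least $(m+1)(n-1)$. Since $\scrL_{m}(\KK^{n}, X_{f})$ is itself irreducible of dimension exactly $(m+1)(n-1)$ by hypothesis (a), such a closed irreducible subset must coincide with all of $\scrL_{m}(\KK^{n}, X_{f})$. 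Hence
\[
\scrL_{m}(\KK^{n}, X_{f}, \Var(g)) = \scrL_{m}(\KK^{n}, X_{f}).
\]

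Now I apply $\pi_{m,0}$. By Remark \ref{rmk - always a m-jet lying over a point in X}, every point $\alpha \in X_{f} = \Var(f)$ lifts to the constant jet $(\alpha, 0, \dots, 0) \in \scrL_{m}(\KK^{n}, X_{f})$; in particular $\pi_{m,0}$ surjects onto $\Var(f)$. Combining with the equality above, every $\alpha \in \Var(f)$ satisfies $\alpha \in \Var(g)$, so $\Var(f) \subseteq \Var(g)$. Note that (a) forces $\Var(f)$ to be a genuine hypersurface of dimension $n-1$: otherwise either $f$ is a unit (making the statement vacuous) or $f=0$, in which case $\scrL_{m}(\KK^{n}, X_{f}) = \scrL_{m}(\KK^{n})$ has dimension $(m+1)n$, contradicting (a). Therefore
\[
\Var(f) \cap \Var(g) = \Var(f)
\]
has dimension $n-1$, contradicting (b).

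The whole argument is essentially a dimension-count combined with the elementary principle that an irreducible variety cannot contain a proper closed subvariety of the same dimension; the only non-formal input is the constant-jet lifting, which guarantees that the image of $\pi_{m,0}$ restricted to $\scrL_{m}(\KK^{n}, X_{f})$ really is all of $\Var(f)$. There is no substantial obstacle.
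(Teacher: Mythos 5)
Your proof is correct and rests on exactly the same two ingredients as the paper's: the irreducibility of $\scrL_{m}(\KK^{n},X_{f})$, which makes any full-dimensional closed subscheme equal the whole thing, and Remark \ref{rmk - always a m-jet lying over a point in X}, which lets you descend containments at the jet level to containments in $\KK^{n}$ via constant jets. The only cosmetic difference is that you argue by contradiction working with $\Var(g)$ as a whole, while the paper runs the same dimension count directly and component-by-component on $\Var(g)$; both phrasings are equally valid.
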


\begin{proof}
    Let $C$ be an irreducible component of $\Var(g)$. Since $\scrL_{m}(\KK^{n}, X_{f})$ is irreducible of dimension $(m+1)(n-1)$, the claim amounts to checking that intersecting $\scrL_{m}(\KK^{n}, X_{f})$ with $\pi_{m,0}^{-1}(C)$ causes a dimension drop. Since $\scrL_{m}(\KK^{n}, X_{f})$ is irreducible, it suffices to show that $\scrL_{m}(\KK^{n}, X_{f}) \not\subseteq \pi_{m,0}^{-1}(C)$. By assumption (b) we may find $\alpha \in \Var(f) \setminus C$. By Remark \ref{rmk - always a m-jet lying over a point in X}, $(\alpha, 0, \dots, 0) \in \scrL_{m}(\KK^{n}, X_{f}) \setminus \pi_{m}^{-1}(C)\neq \emptyset$. 
\end{proof}

\begin{lemma} \label{lemma - jet schemes of products, not nec disjoint variables} Suppose $f, g \in \KK[x_{1}, \dots, x_{n}]$.  If $\mathfrak{p}$ is a prime ideal containing $I(\scrL_{m}(\KK^{n}, \Var(fg)))$ then there exists $-1 \leq t \leq m$ such that
\[
\mathfrak{p} \ni \{D^{a} f \}_{0 \leq a \leq t} \quad \text{ and } \quad \mathfrak{p} \ni \{D^{b} g \}_{0 \leq b \leq m-t - 1}.
\]
(When $t = -1$, $\{D^{a} f\}_{0 \leq a \leq t} = \emptyset$.) 
\end{lemma}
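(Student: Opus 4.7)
The plan is to let $t$ be chosen maximally with respect to the first condition and then show the second condition follows, via an induction using the Leibniz formula for $D$.

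Concretely, let $t \in \{-1, 0, 1, \ldots, m\}$ be the largest integer such that $D^a f \in \mathfrak{p}$ for every $0 \le a \le t$ (so $t = -1$ precisely when $f \notin \mathfrak{p}$). If $t = m$ there is nothing left to prove. Otherwise, $D^{t+1} f \notin \mathfrak{p}$ by the maximal choice of $t$. I want to show by induction on $b$ in the range $0 \le b \le m - t - 1$ that $D^b g \in \mathfrak{p}$.

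The key input is that $D^{t+1+b}(fg)$ lies in $I(\scrL_{m}(\KK^n, \Var(fg)))$ (since $t + 1 + b \le m$), and hence in $\mathfrak{p}$. By the Leibniz rule,
\[
D^{t+1+b}(fg) = \sum_{j=0}^{t+1+b} \binom{t+1+b}{j}(D^j f)(D^{t+1+b-j}g).
\]
For $0 \le j \le t$, the factor $D^j f$ is in $\mathfrak{p}$ by the choice of $t$, so those terms vanish modulo $\mathfrak{p}$. Re-indexing with $k = j - t - 1$, the congruence becomes
\[
\sum_{k=0}^{b} \binom{t+1+b}{\,k+t+1\,}(D^{k+t+1}f)(D^{b-k}g) \equiv 0 \pmod{\mathfrak{p}}.
\]
For the base case $b = 0$ this reads $(D^{t+1}f)\cdot g \in \mathfrak{p}$; primality and $D^{t+1}f \notin \mathfrak{p}$ give $g \in \mathfrak{p}$. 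For the inductive step, assume $D^{b'}g \in \mathfrak{p}$ for all $0 \le b' \le B-1$ with $B \le m - t - 1$. In the sum at $b = B$ every term with $k \ge 1$ contains a factor $D^{B-k} g$ with $B - k \le B - 1$, hence lies in $\mathfrak{p}$. What remains modulo $\mathfrak{p}$ is the $k = 0$ term $\binom{t+1+B}{t+1}(D^{t+1}f)(D^B g)$. Since the characteristic is zero the binomial coefficient is a unit, and since $D^{t+1}f \notin \mathfrak{p}$ primality forces $D^B g \in \mathfrak{p}$, closing the induction.

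The only subtlety is the choice of $t$: one must pick it maximal so that $D^{t+1}f$ is guaranteed outside $\mathfrak{p}$, which is what powers both the base case and every inductive step through primality. Everything else is bookkeeping with Leibniz, and no structure of the variety $\Var(fg)$ beyond the generators $\{D^q(fg)\}_{0 \le q \le m}$ of its jet ideal is used.
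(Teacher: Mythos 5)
Your proof is correct. Both arguments rely on the same engine—expand $D^q(fg)$ by Leibniz, observe that all but one term in the expansion already lies in $\mathfrak{p}$, and invoke primality together with the characteristic-zero assumption to conclude—but the bookkeeping is organized differently. The paper performs an outer induction on $m$: it passes from $I(\scrL_m)$ to $I(\scrL_{m+1})$, applies the inductive hypothesis to obtain some $t$, and then analyzes the single new generator $D^{m+1}(fg)$, at which point primality pushes $t$ up by one or extends the range for $g$. You instead fix $m$ once and for all, select $t$ maximally among indices with $D^a f\in\mathfrak{p}$ (which has the pleasant side effect that $D^{t+1}f\notin\mathfrak{p}$ is available at every step), and then run an inner induction on $b$ using $D^{t+1+b}(fg)$. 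The two routes are logically equivalent, but yours avoids the induction on $m$ and makes the role of $t$'s maximality more transparent, so it reads as a mild streamlining rather than a different proof. One cosmetic remark: when you say ``those terms vanish modulo $\mathfrak{p}$,'' you mean they lie in $\mathfrak{p}$; the congruence notation you use makes this clear, so it is not a gap, just a phrasing worth tightening.
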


\begin{proof}
    We induce on $m$ with the base case $m=0$ obvious. Suppose $\mathfrak{p}$ is a minimal prime of $I(\scrL_{m+1}(\Var(fg)))$. Then it certainly contains $I(\scrL_{m}(\Var(fg)))$, and by the inductive hypothesis there is $t\in\{-1,...,m\}$ such that
    \[
    \mathfrak{p} \ni \{D^{a} f \}_{0 \leq a \leq t} \quad \text{ and } \quad \mathfrak{p} \ni \{D^{b} g \}_{0 \leq b \leq m - t-1}.
    \]
    Since $\mathfrak{p} \ni D^{m+1}(fg)$, the claim follows with Equation \eqref{eqn-D-m+1}.
\end{proof}

\begin{remark}\label{rmk - jets of arrangements}
    Hypersurfaces $X \subseteq \KK^n$ with explicit descriptions of $\scrL_m(\KK^n, X)$ are scarce. When $X$ is the Boolean arrangement cut out by $x_1 \cdots x_k \in \KK[x_1, \dots, x_n]$, its $m$-jets are equidimensional of dimension $(m+1)(n-1)$. Moreover, the irreducible components of $\scrL_m(\KK^n, X)$ are
    \[
    \{x_1^{(0)} = \cdots = x_1^{(v_1)} = 0\} \cup \cdots \cup \{x_n^{(0)} = \cdots = x_k^{(v_{k})} = 0\} \subseteq \scrL_m(\KK^n)
    \]
    where: $\mathbf{v} = (v_{1}, \dots, v_{k})$ ranges over all integral vectors in $\mathbb{Z}_{\geq -1}^{k}$ such $|\mathbf{v}| = v_{1} + \cdots + v_{k} = m+1$; we convene $x_{i}^{(-1)} = 0.$ See this by inducing on $m$ or using the formula for $m$-jets of a hyperplane arrangement \cite{BudurTueArrangementsJets}.
\end{remark}

\section{The $F$-singularities of Matroid Support Polynomials}
\label{sec-Fsing}
\begin{ntn}
In this section the underlying field is denoted $\FF$ and stands for a field of finite characteristic $p>0$. 
\end{ntn}
We focus here exclusively on the class of homogeneous matroidal polynomials, the matroid support polynomials. We will prove that the are strongly $F$-regular. Naturally, the main tool is the Frobenius morphism; compare Subsection \ref{subsec-primer-F} for background.

\subsection{The Glassbrenner Criterion revisited}

The following is a modification of Glassbrenner's criterion
\cite{Glassbrenner}, which was introduced for the case $k=1$. 

\begin{lem}\label{lem-better-glass}
  Suppose $R$ is an $F$-finite ring of characteristic $p>0$. Assume
  that $c\in R$ is a non-zerodivisor for which $R_c$ is strongly
  $F$-regular. Suppose $c$ factors as $c=a_1\cdots a_k$. Then $R$ is
  strongly $F$-regular if and only if 
  there exists $e_a\in\NN$ such that for
  each $i\in\{1,2,\ldots,k\}$ the $R$-morphism $R\to F^{e_a}_*R$ sending
  $1\mapsto F^e_*a_i$ splits.
\end{lem}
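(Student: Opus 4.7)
The proof splits into the two directions of the equivalence.

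For the forward direction, assume $R$ is strongly $F$-regular. Each $a_i$ is a non-zerodivisor as a divisor of $c$, so by Definition \ref{dfn-Freg} there exists $e_i$ such that $1\mapsto F^{e_i}_* a_i$ splits. The composition formula for splittings states: if $\alpha\colon 1\mapsto F^{e_1}_* b_1$ and $\beta\colon 1\mapsto F^{e_2}_* b_2$ both split, then the composition $F^{e_2}_*\alpha\circ\beta\colon R\to F^{e_1+e_2}_* R$ splits $1\mapsto F^{e_1+e_2}_*(b_1 b_2^{p^{e_1}})$. Iterating this formula with $\alpha=\beta$ both equal to the splitting of $1\mapsto F^{e_i}_* a_i$ produces, after $m$ steps, a splitting of $1\mapsto F^{me_i}_*(a_i^{1+p^{e_i}+\cdots+p^{(m-1)e_i}})$. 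Since $a_i$ divides this power, the remark preceding the lemma (splittings descend to factors by pre-multiplying by the complementary factor) yields a splitting of $1\mapsto F^{me_i}_* a_i$ for every $m\geq 1$. Taking $e_a$ to be any common multiple of the $e_i$ furnishes the desired common exponent.

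For the reverse direction, the plan is to reduce to Glassbrenner's original criterion \cite{Glassbrenner} (the case $k=1$ of this lemma) by producing a splitting of $1\mapsto F^E_* c$ for some $E$. Iterating the composition formula across the $k$ given splittings of $1\mapsto F^{e_a}_* a_i$ yields a splitting of
\[
1\mapsto F^{ke_a}_*\bigl(a_1\, a_2^{p^{e_a}}\, a_3^{p^{2e_a}}\cdots a_k^{p^{(k-1)e_a}}\bigr).
\]
Since $c=a_1\cdots a_k$ divides this element, the factoring remark again produces a splitting of $1\mapsto F^{ke_a}_* c$. Together with the assumption that $R_c$ is strongly $F$-regular, Glassbrenner's criterion then yields the strong $F$-regularity of $R$.

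The principal bookkeeping hurdle is tracking how Frobenius twists compound through iterated composition: each successive factor $a_i$ reappears with a shifted $p^{je_a}$ exponent rather than with exponent one. The saving grace is that only divisibility, not equality, is needed, so once we have a splitting for any polynomial having $c$ (or $a_i$) as a factor, the factoring remark descends to a splitting for $c$ (respectively $a_i$) itself. This is what permits the clean reduction to the $k=1$ case of Glassbrenner's criterion without ever needing to produce splittings of specific Frobenius-twisted products directly.
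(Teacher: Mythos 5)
Your proof is correct, but you take a genuinely different route from the paper in the substantive (``if'') direction, and a slightly roundabout one in the easy direction.

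For the forward direction, the paper's argument is shorter than yours: since $R$ is strongly $F$-regular and $c$ is a non-zerodivisor, the definition directly gives a splitting of $1\mapsto F^e_* c$ for some single $e$, and then the factoring remark after Definition~\ref{dfn-Freg} hands you splittings of $1\mapsto F^e_* a_i$ for every $i$ at that same exponent $e$. You instead obtain splittings of each $1\mapsto F^{e_i}_* a_i$ separately and then bump the exponents by the self-composition trick to reach a common multiple. Your route works — the composition formula you state is correct, and the factoring remark applies since $a_i$ divides $a_i^{1+p^{e_i}+\cdots}$ — but it spends effort on a common-exponent issue that evaporates once you observe that a single splitting of $c$ already covers all the $a_i$.

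For the reverse direction, the difference is more interesting. The paper re-derives the conclusion essentially from scratch, adapting the peeling argument of \cite[Thm.~3.11]{MaPolstra}: first establish $F$-purity, then for an arbitrary non-zerodivisor $d$ build (via localization and $F$-finiteness) a map $F^{e_0}_* R\to R$ sending $F^{e_0}_*d$ to $c^n$, and then iteratively strip away one factor $a_i$ of $c$ at a time using the hypothesized $a_i$-splittings, ultimately landing on a map sending $F^{ke_1+e_0}_*d$ to $1$. You instead compose the $k$ given $a_i$-splittings to build a single splitting of $1\mapsto F^{ke_a}_*\bigl(a_1 a_2^{p^{e_a}}\cdots a_k^{p^{(k-1)e_a}}\bigr)$, note that $c$ divides this element, descend to a splitting of $1\mapsto F^{ke_a}_* c$, and then invoke the $k=1$ case (Glassbrenner's original criterion) as a black box. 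This is a clean reduction and it is valid: the composition formula you cite is correct (a routine check on where $1$ goes, plus the observation that a composite of split maps splits, using the $F^{e_2}_*$-pushforward of the second splitting), and Glassbrenner's $k=1$ criterion applies under the stated hypotheses ($F$-finite ring, $c$ a non-zerodivisor, $R_c$ strongly $F$-regular). What the paper's longer route buys is self-containment — it never cites Glassbrenner's theorem, only the elementary properties of splittings developed in the text — whereas your route buys brevity at the cost of treating the $k=1$ statement as known. Both are legitimate proofs of the lemma.
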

\begin{proof}
  The ``only if'' part follows from the original Glassbrenner
  criterion, in conjunction with the paragraph following Definition
  \ref{dfn-Freg}. 
  
  The proof of the ``if'' part is modeled after
  \cite[Thm.~3.11]{MaPolstra}. It follows immediately from the
  discussion following Definition \ref{dfn-F-pure} that $R$ is $F$-pure (and since it is $F$-finite also that it is $F$-split).
  
  Choose $d\in R$. As $R_c$ is strongly $F$-regular and by hypothesis, there exists $e_0$
  such that a) $R\to F_*^{e_0}R$ with $1\mapsto F_*^{e_0}a_i$ splits for all $i$, and b) there is an $R_c$-module map $\pi_c\colon F^{e_0}_*R_c\to
  R_c$ that sends $F^{e_0}_*(d/1)$ to $1/1$. Since $R$ is $F$-finite,
  $\Hom_{R_c}(F^{e_0}_*R_c,R_c)=R_c\otimes \Hom_R(F^{e_0}_*R,R)$ and
  so there is an $R$-module map $\pi\colon F^{e_0}_*R\to R$ that sends
  $F^{e_0}_*d$ to $c^n$ for some $n\in\NN$. Since 
  $R$ is $F$-pure we also have, for all $e\in \NN$,
  $R$-morphisms $F^{e+e_0}_*R\to R$ sending $F^{e+e_0}_*d\mapsto c^n$.

  One now shows that for large $e$ the exponent $n$ can be removed, and one factor may be peeled away
  from the product $c=a_1\cdots a_k$.  
  For this, choose $e_1\in\NN$ such that $n<p^{e_1-e_0}$.
  Applying $F^{e_1}$ to $\pi$ gives a morphism $F^{e_0+e_1}_*R\to
  F^{e_1}_*R$ that sends $F^{e_1+e_0}_*d$ to
  $F^{e_1}_*c^n=F^{e_1}_*(a_1)^n\cdot F^{e_1}_*(a_2\cdots
  a_k)^n$. Composing it with the endomorphism $\mu_1$ on $F^{e_1}_*R$ that
  multiplies by $F^{e_1}_*(a_1)^{p^{e_1-e_a}-n}\cdot
  F^{e_1}_*(a_2\cdots a_k)^{p^{e_1}-n}$ on $F^{e_1}_*R$ yields, for
  all large $e_1$, an $R$-morphism $F^{e_0+e_1}_*R\to F^{e_1}_*R$ that
  sends $F^{e_1+e_0}_*d$ to $F^{e_1}_*(a_1)^{p^{e_1-e_0}}\cdot
  F^{e_1}_*(a_2\cdots a_k)^{p^{e_1}}$.

  Recall that we chose $e_0$ to make $R\to F^{e_0}_* R$ with $1\mapsto
  F^{e_0}_*a_1$ split. Composition with $F^{e_1-e_0}$ for $e_1>e_0$
  gives a morphism $R\to F^{e_0}_* R\to F^{e_1}_*R$ sending $1\mapsto
  F^{e_0}_*a_1\mapsto F^{e_1}_*(a_1)^{p^{e_1-e_0}}$. This map splits
  since it is a composition of split maps. Let $\theta_1\colon
  F^{e_1}_*R\to R$ be a splitting; then $\theta$ sends
  $F^{e_1}_*(a_1)^{p^{e_1-e_0}}\cdot F^{e_1}_*(a_2\cdots a_k)^{p^{e_1}}$
  to $(a_2\cdots a_k)\in R$.

  Thus, we have for sufficiently large $e_1$ an $R$-morphism
  $\pi_2\colon F^{e_1+e_0}_*R\stackrel{F^{e_1}\pi}{\to} F^{e_1}_*R\to
  F^{e_1}_*R\stackrel{\theta}{\to} R$ that sends
  $F^{e_1+e_0}_*d\mapsto F^{e_1}_*c^n\stackrel{\mu_1}{\mapsto}
  F^{e_1}_*a_1^{p^{e_1-e_0}}\cdot F^{e_1}_*(a_2\cdots
  a_k)^{p^{e_1}}\mapsto (a_2\cdots a_k)$.

  Repetition of the argument produces a morphism 
  $F^{ke_1+e_0}_*R\to R$ that sends $F^{ke_1+e_0}_*d$ to $1$.
\end{proof}

Combining Lemmas \ref{lem-better-glass} and \ref{lem-dsum}, we have
\begin{prop}
    Suppose $S$ is a polynomial ring over an $F$-finite field $\FF$ of
    characteristic $p>0$. Let $I\subseteq S$ be a homogeneous ideal of
    $S$ and set $R:=S/I$. Assume that $c\in R$ is a homogeneous
    non-zerodivisor for which $R_c$ is strongly $F$-regular. Suppose
    $c$ factors as $c=a_1\cdots a_k$, all $a_i\in\frakm$. Then $R$ is
    strongly $F$-regular if and only if there exists $e_a\in\NN$ such
    that for each $i\in\{1,2,\ldots,k\}$ we have
    $a_i(\bracket{I}{e_a}:I)\not\subseteq \bracket{\frakm}{e_a}$.
\end{prop}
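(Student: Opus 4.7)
The plan is to prove the equivalence by combining Lemma \ref{lem-better-glass} with an iterated form of Lemma \ref{lem-dsum}. The first move is to invoke the modified Glassbrenner criterion (Lemma \ref{lem-better-glass}): since $R_c$ is strongly $F$-regular and $c=a_1\cdots a_k$, strong $F$-regularity of $R$ is equivalent to the existence of a single exponent $e_a\in\NN$ such that, for every $i\in\{1,\ldots,k\}$, the $R$-linear map $\rho_i\colon R\to F^{e_a}_*R$ sending $1\mapsto F^{e_a}_*a_i$ splits. This reduces the proposition to showing, for fixed $e_a$ and each $i$, that such a splitting exists if and only if $a_i\bigl(\bracket{I}{e_a}:I\bigr)\not\subseteq \bracket{\frakm}{e_a}$.

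Next I would iterate Lemma \ref{lem-dsum} from $F_*$ to $F^{e_a}_*$ by replacing $p$ with $p^{e_a}$ throughout its proof, obtaining the direct-sum decomposition of $F^{e_a}_*S$ indexed by $0\le u_j<p^{e_a}$, the associated projections $\Phi_\boldu^{(e_a)}$, and the distinguished $\Phi_\bsalpha^{(e_a)}$ for $\bsalpha=(p^{e_a}-1,\ldots,p^{e_a}-1)$. The key input is that $\Hom_S(F^{e_a}_*S,S)$ is free of rank one over $F^{e_a}_*S$ with generator $\Phi_\bsalpha^{(e_a)}$, so every $S$-linear map $F^{e_a}_*S\to S$ has the form $\Phi_\bsalpha^{(e_a)}\circ\mu_{F^{e_a}_*s}$; the iterated Lemma \ref{lem-dsum} then says such a map descends to an $R$-linear map $F^{e_a}_*R\to R$ exactly when $s\in(\bracket{I}{e_a}:I)$, and every element of $\Hom_R(F^{e_a}_*R,R)$ arises this way.

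The final step is to translate ``$\rho_i$ splits'' into the colon-ideal condition. A splitting of $\rho_i$ amounts to a $\phi\in\Hom_R(F^{e_a}_*R,R)$ sending $F^{e_a}_*a_i$ to a unit of $R$; by the parametrization above, this is the existence of $s\in(\bracket{I}{e_a}:I)$ with $\Phi_\bsalpha^{(e_a)}(F^{e_a}_*(sa_i))\notin \frakm+I$. Examining the free expansion, this is equivalent to $sa_i\notin \bracket{\frakm}{e_a}$: if so, some coefficient in the $p^{e_a}$-ary expansion of $sa_i$ is a unit, and multiplying $s$ by the shift monomial $\bsx^{\bsalpha-\boldu}$ (which keeps it inside the ideal $(\bracket{I}{e_a}:I)$) moves that unit into the $\bsalpha$-slot, exactly as in the proof of Lemma \ref{lem-dsum}. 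Across all $i$ this yields the stated criterion.

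The main obstacle I anticipate is the bookkeeping in the second step, namely confirming that Lemma \ref{lem-dsum} and the rank-one description of $\Hom_R(F^{e_a}_*R,R)$ extend cleanly to arbitrary $e_a$ under the $F$-finiteness hypothesis on $\FF$. Everything else is a direct translation of splittings into colon-ideal inclusions via standard Fedder-type arithmetic, and the use of homogeneity of $c$ and $I$ ensures that ``unit in $R$'' correctly converts into ``outside $\frakm+I$''.
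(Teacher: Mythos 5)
Your plan follows the paper's own argument almost step for step: invoke Lemma~\ref{lem-better-glass} to reduce to splitting $1\mapsto F^{e_a}_*a_i$ for each $i$, then use the monomial/Fedder decomposition of $F^{e_a}_*S$ (the iterated Lemma~\ref{lem-dsum}) together with the rank-one description of $\Hom_S(F^{e_a}_*S,S)$ to convert splitting into the colon condition $a_i(\bracket{I}{e_a}:I)\not\subseteq\bracket{\frakm}{e_a}$. For a \emph{perfect} coefficient field this is exactly what the paper does, and your translation in step three (unit versus $1$, shifted projections $\Phi_{\boldu}$, adjusting $s$ by $\bsx^{\bsalpha-\boldu}$) is correct.

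Where you and the paper diverge is precisely the point you flag as ``bookkeeping.'' Lemma~\ref{lem-dsum} is stated and proved only for \emph{perfect} $\FF$, because the clean monomial direct-sum decomposition \eqref{eqn-dsum} of $F_*S$ relies on $F_*\FF=\FF$. When $\FF$ is merely $F$-finite, $F^{e_a}_*\FF$ is free of rank $[\FF:\FF^{p}]^{e_a}>1$ and the basis of $F^{e_a}_*S$ over $S$ must include a chosen $p$-basis of $\FF$; the projection $\Phi_\bsalpha$ no longer has the same explicit description, and your step-three argument about ``some coefficient in the $p^{e_a}$-ary expansion of $sa_i$ being a unit'' needs to be reworked. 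The paper does \emph{not} extend Lemma~\ref{lem-dsum} to this setting. Instead it proves the equivalence over the perfect closure $\bar\FF$ and then descends: the colon-ideal condition is unchanged under the faithfully flat extension $\FF\to\bar\FF$, and strong $F$-regularity of $R$ is equivalent to that of $\bar\FF\otimes_\FF R$ by cited descent results (\cite[Cor.~2.13]{HochsterYao23}, \cite[Cor.~4.4]{LySmith99}). Your route of directly generalizing Lemma~\ref{lem-dsum} is also viable, but it needs a genuine modification of the decomposition (not a cosmetic one), so you should either carry that out explicitly or switch to the paper's base-change-and-descent argument; as written, this is the one real gap in the proposal.
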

\begin{proof} Abbreviate $e:=e_a$.
  With assumptions as made, and in view of Lemma
  \ref{lem-better-glass}, we need to show that the $R$-linear maps
  $1\mapsto F^{e}_*a_i$ split if and only if
  $a_i(I^{[p^{e_a}]}:I)\not\subseteq \frakm^{[p^{e_a}]}$. Note that
  (by homogeneity) it suffices to show that the local ring $R_\frakm$
  is strongly $F$-regular.

  Assume first that $\FF$ is perfect. 
  If $a_i(I^{[p^{e_a}]}:I)\not\subseteq \frakm^{[p^{e_a}]}$, choose
  $s\in (I^{[p^{e_a}]}:I)$ for which $a_i s\notin\frakm^{[p^{e_a}]}$.
  Then Lemma \ref{lem-dsum} implies that multiplication by
  $F^{e_a}_*s$ on $F^{e_a}_*S$ when followed by $\Phi_\bsalpha$ gives a
  map from $F^{e_a}_*(S/I)$ to $S/I$, and this map sends $F^{e_a}_*(a_i)$ to
  $\Phi_\bsalpha(F^{e_a}_*(a_is))\notin \frakm$. Localizing at the two maximal
  ideals involved, a suitable adjustment of $s$ will therefore send
  $F^{e_a}_*(a_i)$ to $1\in R_\frakm$.

  Conversely, note that $\Hom_S(F^e_*S,S)$ is identified with the
  morphisms that arise by composing multiplication on $F^e_*S$ by some
  $F^e_*s$ with $\Phi_\bsalpha$. Hence, any $R$-morphism from
  $F^e_*(R)=F^e_*S/F^e_*I$ to $R=S/I$ is induced by some $F^e_*s$ with
  $s\in (I^{[p^e]}:I)$. If all such $s$ multiply $a_i$ into
  $\frakm^{[p^e]}$ then any such morphism sends $F^e_*a_i$ into
  $\frakm\subseteq R$, and then no splitting for $1\mapsto F^e_*a_i$
  can exist.

  Now suppose $\FF$ is only $F$-finite, and let $\bar\FF$ be a perfect
  field containing $\FF$. Extending scalars to $\bar\FF$ is a
  faithfully flat process. Thus $a_i(\bracket{I}{e_a}:I)\subseteq
  \bracket{\frakm}{e_a}$ is true over $\FF$ if and only it is so over
  $\bar\KK$. On the other hand, $\bar \FF\otimes_\FF R$ is strongly $F$-regular
  if and only $\bar \FF\otimes R$ is. Since the statement holds over
  $\bar \FF$, it also holds over $\FF$, by
  \cite[Cor.~2.13]{HochsterYao23} or \cite[Cor~4.4]{LySmith99}.
\end{proof}

\subsection{Strong $F$-regularity in the Homogeneous Case}

\begin{thm}\label{thm-strongFreg}
  Let $\matM$ be a connected matroid on the ground set $E$.
  
  Then any matroid support polynomial $\zeta_\matM$ on $\matM$ is, in
  every characteristic $p>0$, strongly $F$-regular. 
\end{thm}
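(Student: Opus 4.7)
The plan is to apply the modified Glassbrenner criterion of the Proposition following Lemma~\ref{lem-better-glass}. Two base cases: if $|E|=1$, connectedness and positive rank force $\matM$ to be a single coloop, so $R := \FF[E]/(\zeta_\matM) \cong \FF$; if $\rank(\matM) = 1$, then $\matM \cong \matU_{1,|E|}$, $\zeta_\matM$ is a nonzero linear form, and $R$ is a polynomial ring. Both are regular, hence strongly $F$-regular. Henceforth I assume $|E|\geq 2$ and $\rank(\matM)\geq 2$; connectedness then forbids loops and coloops, each of which would constitute its own component.

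Fix any $e \in E$. From the Deletion-Contraction identity $\zeta_\matM = \zeta_{\matM\setminus e} + x_e \zeta_{\matM/e}$ together with $\zeta_{\matM\setminus e},\zeta_{\matM/e} \in \FF[E\setminus\{e\}]$, one reads off $\partial\zeta_\matM/\partial x_e = \zeta_{\matM/e}$, a nonzero homogeneous polynomial of degree $\rank(\matM)-1 \geq 1$ (nonzero since $e$ is not a loop). I take $c:=\zeta_{\matM/e}$ as test element with the trivial factorization $c=a_1$; by Corollary~\ref{cor-irred}, $R$ is a domain, so $c$ is a non-zerodivisor. In the localization $R_c$, the relation $\zeta_\matM=0$ solves for $x_e = -\zeta_{\matM\setminus e}/\zeta_{\matM/e}$, identifying $R_c$ with the regular ring $\FF[E\setminus\{e\}][\zeta_{\matM/e}^{-1}]$; in particular $R_c$ is strongly $F$-regular.

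It remains to find $e_a \in \NN$ with $\zeta_{\matM/e}\cdot\zeta_\matM^{p^{e_a}-1} \notin \frakm^{[p^{e_a}]}$, since in the UFD $\FF[E]$ one has $I^{[p^{e_a}]}:I = (\zeta_\matM^{p^{e_a}-1})$. My approach is combinatorial: pick a basis $B_0 \ni e$ and focus on the summand $c_{B_0}\bsx^{B_0\setminus\{e\}}$ of $\zeta_{\matM/e}$, then expand $\zeta_\matM^{p^{e_a}-1}$ multinomially with weights $(n_B)_{B \in \calB_\matM}$ satisfying $\sum_B n_B = p^{e_a}-1$, chosen approximately proportional to the uniform distribution over bases. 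Because $\matM$ has neither loops nor coloops, the proportion of bases containing any given edge $h$ lies strictly in $(0,1)$, so each partial sum $\sum_{B \ni h} n_B$ stays uniformly below $p^{e_a}-1$ by a linear margin; for $e_a$ large this slack easily absorbs the $+1$ contributed by $\bsx^{B_0\setminus\{e\}}$ on edges $h \in B_0 \setminus \{e\}$, yielding a target monomial with every exponent strictly less than $p^{e_a}$.

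The main obstacle is confirming that the witness monomial carries a nonzero coefficient in $\FF$: the multinomial $\binom{p^{e_a}-1}{(n_B)}$ must survive reduction modulo $p$, and the several parallel multinomial expansions contributing to the same monomial must not cancel. I would arrange the first via a Lucas/Kummer-style base-$p$ digit analysis on the $n_B$, and the second by choosing $(n_B)$ so that the target monomial has a single dominant expansion---the latitude to do so is supplied precisely by loop- and coloop-freeness, since every edge lies both in some basis and outside some other. The cited Proposition then delivers strong $F$-regularity of $R$.
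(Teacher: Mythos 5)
Your setup through ``$R_c$ is regular'' is correct and is a genuinely different route from the paper's: you invoke the modified Glassbrenner criterion directly with $c = \zeta_{\matM/ e}$ and then try to verify the test-element membership $c\,\zeta_\matM^{q-1}\notin\frakm^{[q]}$ by hand, whereas the paper never attempts this. Instead it proves the much weaker Lemma~\ref{lem-Fpure} (a splitting for $1\mapsto F_* x_e$, one coordinate at a time), applies Lemma~\ref{lem-better-glass} with $c=\bsx^E=\prod_e x_e$, and then shows $R_{\bsx^E}$ is strongly $F$-regular by a recursion on $|E|$ that alternates between the Gorenstein deformation property ($R/x_eR$ strongly $F$-regular $\Rightarrow R$ strongly $F$-regular) and the Cremona transform to pass to $\matM^\perp$, with handle decompositions guaranteeing one of the two reductions is always available. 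That recursive architecture is what actually discharges the hard content; your proposal has none of it.

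The gap is the final combinatorial step, and I do not see how to close it by the tools you indicate. You need a monomial $\bsx^{\boldu}$, with all $u_h < q=p^{e_a}$, that appears with a \emph{nonzero} coefficient in $\zeta_{\matM/ e}\cdot\zeta_\matM^{q-1}$. You acknowledge that this requires both a single multinomial to survive modulo $p$ and all contributing expansions not to cancel, and you propose to achieve the latter by ``choosing $(n_B)$ so that the target monomial has a single dominant expansion.'' But uniqueness of the expansion of a lattice point $\boldw$ of $(q-1)P_\matM$ as a sum $\sum_B n_B\boldv_B$ with $\sum n_B = q-1$ holds precisely when $\boldw$ is a \emph{vertex} of $(q-1)P_\matM$, i.e.\ $\boldw = (q-1)\boldv_{B'}$ for a single basis $B'$; this is exactly the fact exploited in the paper's Lemma~\ref{lem-Fpure}, where the slack needed is only $1$ on the single coordinate $e$, so any $B'\not\ni e$ works. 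In your situation you also multiply by $\bsx^{B_0\setminus\{e\}}$, so you would need $(B_0\setminus\{e\})\cap B'=\emptyset$ to keep every exponent $\le q-1$, and such a pair $(B_0,B')$ with $e\in B_0$ need not exist --- e.g.\ for $\matU_{r,n}$ with $2r-n\ge 2$, any two bases share at least two elements. Your actual proposal of weights ``approximately proportional to the uniform distribution on bases'' places $\boldw$ in the far interior of $(q-1)P_\matM$, which is the worst case: many decompositions, and their signed products $\prod_B c_B^{n_B}$ times multinomials have no reason not to cancel in $\FF$ (the $c_B$ are arbitrary nonzero scalars, not generic). A Lucas/Kummer analysis controls a single $\binom{q-1}{(n_B)}$ but says nothing about the sum over all decompositions. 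So the step from ``such a lattice point exists in the polytope'' to ``its coefficient is nonzero mod $p$'' is unsupported, and for your choice of $c$ I believe it is genuinely hard --- this is why the paper routes through duality and induction rather than a direct coefficient computation.
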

\begin{proof}
  Let $S_\matM=\FF[\bsx_E]$ be the matroidal polynomial ring and set
  $R_\matM=S_\matM/\ideal{\zeta_\matM}$.  Since $\zeta_\matM$ is
  homogeneous, $R_\matM$ is strongly $F$-regular if and only if the
  local ring at the origin is strongly $F$-regular. Since $\zeta_\matM$ is a matroidal  polynomial on a connected matroid, it is irreducible over any field and hence geometrically irreducible.

  \medskip
  
  The strategy of the proof of strong $F$-regularity  of $\zeta_\matM$ will be
  to show that the truth of the statement for a complicated connected
  matroid can be reduced to the truth for a simpler connected matroid,
  and then check base cases. Simplicity is measured by the size of the
  ground set. The reduction comes in two flavors, an
  easy one relying on deformation, and a more elaborate one using
  Cremona transforms.  In order to determine the necessary base cases
  we will use handles.

  We can immediately dispose of the case of a matroid of rank one: connectedness forces it to be a $\matU_{1,n}$, and we have discussed the structure of the corresponding matroidal polynomials in Remark \ref{rmk - basic matroid polynomial observations}.\eqref{item-U1n-mtrdl}. We thus assume from now on that $\rank(\matM)\geq 2$.

  We start with a preparatory lemma that paves the way for using
  the modified Glassbrenner Criterion, Lemma \ref{lem-better-glass}. 
  \begin{lem}\label{lem-Fpure}
    For every edge $e\in\matM$ that is not a coloop, 
    \[
    x_e(\zeta_\matM)^{p-1}\notin\bracket{\frakm}{1}.
    \]
    In particular, if $\matM$ permits a non-coloop, all matroid
    support polynomials are $F$-pure.
  \end{lem}
  \begin{proof}
    Fix $e\in E$ that is not a coloop, and choose a basis $B$ of
    $\matM$ that does not involve $e$; such basis exists since
    otherwise $e$ would be a coloop. If $P_\matM$ is the matroid
    polytope (=support polytope) of $\matM$, then the indicator vector
    $\boldv_B$ is the unique optimal solution to the problem
    \begin{eqnarray*}
      \boldv^T\cdot \boldv_B&\to&\max,\\
      \boldv&\in&P_\matM.
    \end{eqnarray*}
    Indeed, since the vertices of $P_\matM$ have vertices inside
    the plane $\sum_Ex_e=\rank(\matM)$, the optimal solution has
    $\boldv^T\cdot \boldv_B\le\rank(\matM)$, and inside
    $P_\matM$ only $\boldv_B$ realizes this upper bound. Thus, for
    $k\in\NN$, $k\cdot \boldv_B$ is the unique optimal solution to
    \begin{eqnarray*}
      \boldv^T\cdot \boldv_B&\to&\max,\\
      \boldv&\in&kP_\matM.
    \end{eqnarray*}
    Since $kP_\matM$ is the support polytope of
    $(\zeta_\matM)^k$, the term $\bsx^{(p-1)B}$ appears in
    $(\zeta_\matM)^{p-1}$ with coefficient 1, and thus
    $x_e\bsx^{(p-1)B}$ appears in $x_e(\zeta_\matM)^{p-1}$ with
    coefficient 1 and is
    (clearly) not in $\bracket{\frakm}{1}$.
  \end{proof}

  For our first reduction, recall that strong $F$-regularity deforms
  in good situations. More precisely, suppose $R$ is a local
  $F$-finite Gorenstein local ring with a non-zerodivisor $x\in R$. If
  $R/xR$ is strongly $F$-regular, then so is $R$, \emph{cf.}\
  \cite[Thm.~5.11]{MaPolstra}.

  Suppose $\matM$ has an edge $e$ such that the deletion matroid
  $\matM\setminus e$ is connected. Then Deletion-Contraction yields
  \[
  \zeta_\matM=x_e\cdot\zeta_{\matM/ e}+\zeta_{\matM\setminus e},
  \]
  and we have 
  $R_{\matM\setminus e}=R_\matM/\ideal{x_e}$. Thus, we can
  infer strong $F$-regularity of $R_{\matM}$ from
  $R_{\matM\setminus e}$ based on the deformation property.

  Unfortunately, not every matroid has an edge $e$ that keeps the
  deletion connected. Our second reduction uses duals, see Definition \ref{dfn-dual-matroid}.
  If $e$ is a loop of $\matM$ then it appears in no basis of
  $\matM$, hence in every basis of $\matM^\perp$ and thus is a
  coloop there. Our connectedness assumption implies that neither
  $\matM$ nor its dual have loops or coloops.

  Suppose $B'$ is a basis for the deletion matroid
  $\matM^\perp\setminus e$. Let $B=E\minus B'$ be its
  complement. By definition, if $e$ is not a coloop in
  $\matM^\perp$, we have
  \[
  (\matM/ e)^\perp= (\matM^\perp)\setminus e
  \]
  since the bases for both displayed matroids are labeled by the 
  bases for $\matM^\perp$ that do not involve $e$.

  Recall now that the
  \emph{Cremona transform} is the birational map on projective space that
  inverts all coordinates. 
  If $\zeta_\matM$ is a matroid support polynomial on $\matM$ then
\begin{equation}\label{eqn-on-torus}
  \zeta_{\matM^\perp}(\bsx):=\bsx^E\cdot \zeta_\matM(\bsx^{-1})
  \end{equation}
is a matroid support polynomial on $\matM^\perp$ since
  \[
  \zeta_{\matM}(\bsx)= \sum_{B\in\calB_\matM}c_B\bsx^B= \bsx^E\cdot
  \sum_{B\in\calB_\matM}c_B(\bsx^{-1})^{E\minus B}=
  \bsx^E\cdot\zeta_{\matM^\perp}(\bsx^{-1}).
  \]
  We view them (non-canonically) as living in the same polynomial ring $\KK[\bsx_E]$
  and note that the corresponding hypersurfaces agree on the open
  torus $\Spec(\KK[\bsx_E^{\pm}])$.
  
  By Lemma \ref{lem-Fpure} above, for each $e\in E$ we have the
  relation $x_i(\zeta_\matM)^{p-1}\notin \bracket{\frakm}{1}$, and so
  (in particular) the morphism $R_\matM\to F_*R_\matM$ sending $1$ to
  $F_*x_i$ splits. By Lemma \ref{lem-better-glass} then, the
  hypersurface $\zeta_\matM$ will be strongly $F$-regular if it is so
  on the big torus. That in turn will follow if the dual hypersurface
  $\zeta_{\matM^\perp}$ is strongly $F$-regular. And this can be
  inferred from the deformation property provided that we can find an
  edge $e$ in $\matM^\perp$ for which the deletion matroid
  $\matM^\perp\setminus e$ is connected.

  As the deletion matroid of $\matM^\perp$ along $e$ is the
  contraction matroid along $e$ of $\matM$, we now know that
  $\zeta_\matM$ will be strongly $F$-regular whenever $\matM$ admits an
  edge $e$ for which either $\matM/ e$ or $\matM\setminus e$ is
  connected. We thus need to study the matroids for which no such
  edges exist; for this we shall use the following tools.


  We call a filtration
  \[
  \calC_\matM\ni F_1\subsetneq \cdots\subsetneq F_k=E
  \]
  a \emph{handle decomposition} if the deletions
  $\matM\setminus(E\minus F_i)$ are connected for all $i$, and each
  $F_i\minus F_{i-1}$ is a handle of $\matM\setminus(E\minus F_i)$
  for $2\le i\le k$. Connected matroids have handle decompositions,
  and for each circuit $C$ of $\matM$ there is a handle decomposition
  that has that circuit $C$ as $F_1$, see \cite[Prop.~2.8]{DSW}.

  
  Suppose a matroid has a handle decomposition where $E\neq F_1$. Let
  $H$ be the top handle $F_k$. If $|H|=1$, deleting the element of $H$
  leads to a connected matroid $\matM\setminus H$ by definition of handle
  decompositions. In that case, the deformation property implies that
  $\zeta_\matM$ is strongly $F$-regular provided that
  $\zeta_{\matM\setminus e}$ is. If, on the other hand, $|H|\geq 2$ then the
  contraction $\matM/ e$ is connected by
  \cite[Lemma~2.4]{DSW}. Since $\matM/ e$ is the dual of
  $\matM^\perp\setminus e$, the latter matroid is connected. In that
  case, strong $F$-regularity of $\zeta_{\matM^\perp}$ is implied by
  that of $\zeta_{\matM^\perp\setminus e}$ and then implies strong
  $F$-regularity of $\zeta_\matM$ by Lemma \ref{lem-Fpure} and Lemma
  \ref{lem-better-glass} and Equation \ref{eqn-on-torus}. This refers
  in both cases from a connected matroid $\matM$ to a connected
  matroid with fewer edges.

  By induction on $|E|$ it remains to investigate the case when
  the handle decomposition has $F_1=E$. In that case, $E$ is a circuit
  and as long as there are at least three edges one may contract any
  one of them to arrive at a smaller (connected) circuit matroid. But for a
  circuit of size 2, any matroid support polynomial is linear, hence
  smooth and surely strongly $F$-regular. This covers the inductive
  start and the theorem is proved.
\end{proof}

\section{Jets of Matroidal Polynomials}
\label{sec-jets}

We turn here to the $m$-jets $\scrL_{m}(\KK^{E}, X_{\zeta_\matM})$ of the matroidal polynomial hypersurface $X_{\zeta_{\matM}}$. Our intermediate goal is to prove that these are irreducible under mild hypotheses. 
Musta\c t\u a characterized equidimensionality and irreducibility of the $m$-jets of an l.c.i $X$ (Proposition \ref{prop - Mustata's characterization of irreducibility of m-jets in terms of dim lying over sing}) in terms of the dimension of the $m$-jets and the dimension of the $m$-jets lying over the singular locus  of $X$. We will use such a dimension count in order to apply Theorem \ref{thm-mustata-main}, from which we derive our main result of this section, Corollary \ref{cor-mtrdl-rat-sing}. 

The idea is induce on the size of the ground set, and reduce studying $\scrL_{m}(\KK^{E}, X_{\zeta_\matM})$ to studying $\scrL_{m}(\KK^{E\minus\{e\}}, X_{\zeta_{\matM\minus \{e\}}})$. This is accomplished as follows:

\begin{proposition} \label{prop - matroidal poly, m-jets intersect with Gamma}
Let $\matM$ be a positive rank matroid on ground set $E$ equipped with a matroidal polynomial $\zeta_\matM$. Fix $e \in E$ that is not a loop nor coloop and let $\Gamma \subseteq \scrL_{m}(\KK^{E})$ be the variety defined by the ideal $(f_{e}, D f_{e}, \dots, D^{m} f_{e}) \subseteq \KK[\{x_{f}^{(q)}\}_{\substack{f \in E \\ 0 \leq q \leq m}}].$ Then we have a natural identification
\[
\scrL_{m}(\KK^{E}, X_{\zeta_\matM}) \cap \Gamma \simeq \scrL_{m}(\KK^{E \minus \{e\}}, X_{\zeta_{\matM\setminus e}}).
\]
\end{proposition}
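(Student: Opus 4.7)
The plan is to pin down $\Gamma$ explicitly as a translate of a smaller jet scheme, then exploit the Deletion-Contraction recursion $\zeta_\matM = \zeta_{\matM\setminus e} + f_e \zeta_{\matM/e}$ to see that, modulo the equations of $\Gamma$, the defining equations of $\scrL_m(\KK^E, X_{\zeta_\matM})$ collapse to those of $\scrL_m(\KK^{E\minus\{e\}}, X_{\zeta_{\matM\setminus e}})$.

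First I would analyze $\Gamma$. Since $f_e \in \KK[x_e]$ is monic of degree one, write $f_e = x_e - \alpha$ for some $\alpha \in \KK$. The prolongation rule $Dx_e^{(q)} = x_e^{(q+1)}$ together with $\KK$-linearity of $D$ yields $D^0 f_e = x_e^{(0)} - \alpha$ and $D^q f_e = x_e^{(q)}$ for $1 \le q \le m$. Therefore $\Gamma$ is cut out by $x_e^{(0)} = \alpha$ and $x_e^{(q)} = 0$ for $1 \le q \le m$, and forgetting the $x_e^{(q)}$-coordinates gives a natural isomorphism $\Gamma \iso \scrL_m(\KK^{E\minus\{e\}})$.

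Next I would invoke the Deletion-Contraction axiom (available since $e$ is neither a loop nor a coloop): there exist uniquely determined matroidal polynomials $\zeta_{\matM\setminus e}, \zeta_{\matM/e} \in \KK[E\minus\{e\}]$ with
\[
\zeta_\matM = \zeta_{\matM\setminus e} + f_e\,\zeta_{\matM/e}.
\]
Applying $D^q$ and using Leibniz,
\[
D^q(f_e\,\zeta_{\matM/e}) \;=\; \sum_{j=0}^{q} \binom{q}{j}\,(D^j f_e)\,(D^{q-j}\zeta_{\matM/e}).
\]
For $0 \le q \le m$, every term contains some $D^j f_e$ with $0 \le j \le m$, hence vanishes on $\Gamma$. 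Consequently $D^q \zeta_\matM \equiv D^q \zeta_{\matM\setminus e} \pmod{I(\Gamma)}$ for $0 \le q \le m$.

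Finally, because $\zeta_{\matM\setminus e}$ lives in $\KK[E\minus\{e\}]$, the iterated derivatives $D^q \zeta_{\matM\setminus e}$ involve only the variables $x_f^{(q)}$ with $f \neq e$, so under the identification $\Gamma \iso \scrL_m(\KK^{E\minus\{e\}})$ they restrict to the very equations defining $\scrL_m(\KK^{E\minus\{e\}}, X_{\zeta_{\matM\setminus e}})$. Running this at the level of ideals (intersecting $I(\scrL_m(\KK^E, X_{\zeta_\matM}))$ with $I(\Gamma)$ and quotienting) delivers the asserted scheme-theoretic identification. I do not anticipate a serious obstacle: the only subtlety is to track that the equivalence $D^q \zeta_\matM \equiv D^q \zeta_{\matM\setminus e} \pmod{I(\Gamma)}$ really does hold for \emph{every} $q \le m$ simultaneously, which is immediate since $I(\Gamma)$ already contains $D^j f_e$ for all $0 \le j \le m$.
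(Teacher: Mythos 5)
Your argument is correct and is essentially the paper's own: both proofs invoke the Deletion-Contraction identity $\zeta_\matM = \zeta_{\matM\setminus e} + f_e\,\zeta_{\matM/e}$, apply Leibniz to $D^q$, observe that every term $(D^j f_e)(D^{q-j}\zeta_{\matM/e})$ with $0\le j\le q\le m$ lies in $I(\Gamma)$, and conclude that the two defining ideals agree modulo $I(\Gamma)$. The only cosmetic difference is that you explicitly record $\Gamma$ as a coordinate translate of $\scrL_m(\KK^{E\minus\{e\}})$, which the paper leaves tacit.
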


\begin{proof}
    Recall that $(\zeta_\matM, \dots, D^{q} \zeta_\matM)$ is the defining ideal of $\scrL_{m}(X_{\zeta_\matM})$. Because $e$ is not a loop nor coloop, by Deletion-Contraction $\zeta_\matM = \zeta_{\matM\setminus e} + f_{e} \zeta_{\matM/ e}$. For $q \in \mathbb{Z}_{\geq 0}$ we compute:
    \begin{align*}
        D^{q} \zeta_\matM 
        &= D^{q} \zeta_{\matM\setminus e} + D^{q} (f_{e} \zeta_{\matM/ e}) \\
        &= D^{q} \zeta_{\matM\setminus e} + \sum_{0 \leq p \leq q} \binom{q}{p} (D^{p} f_{e})(D^{q-p} \zeta_{\matM/ e}). 
    \end{align*}
    It quickly follows that 
    \[
    (\zeta_\matM, \dots, D^{q} \zeta_\matM) + (f_{e}, \dots, D^{q} f_{e}) = (\zeta_{\matM\setminus e}, \dots, D^{q} \zeta_{\matM\setminus e}) + (f_{e}, \dots, D^{q} f_{e}),
    \]
    which proves the claim.
\end{proof}

 Our arguments rely on bounding the dimension of $m$-jets intersected with well chosen varieties. We make frequent use of the following lemma from elementary intersection theory:

\begin{lemma} \label{lemma - basic intersection theory identity} (Proposition I.7.1 \cite{HartshorneAlgGeo}) Let $Y, \Gamma \subseteq \KK^{n}$ be affine varieties with non-empty intersection. Then
\[
\dim (Y \cap \Gamma) + \codim \Gamma \geq \dim Y.
\]  
\end{lemma}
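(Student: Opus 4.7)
The plan is to use the classical \emph{diagonal trick} to reduce the inequality to Krull's Hauptidealsatz. I would embed $Y \cap \Gamma$ into $\KK^n \times \KK^n = \KK^{2n}$ via $\alpha \mapsto (\alpha, \alpha)$, identifying it scheme-theoretically with $(Y \times \Gamma) \cap \Delta$, where $\Delta = \{(x,y) \mid x = y\}$ is the diagonal. Under either coordinate projection, this intersection maps isomorphically onto $Y \cap \Gamma$, so the two have the same dimension.

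The key observation is that $\Delta$ is cut out by the $n$ linear equations $x_i - y_i$ for $i = 1, \ldots, n$, giving it codimension exactly $n$ in $\KK^{2n}$. I would then apply Krull's principal ideal theorem iteratively: restricting to $Y \times \Gamma$ one hyperplane $\{x_i = y_i\}$ at a time, each nonempty cut drops the dimension of any irreducible component by at most one. Since $Y \cap \Gamma$ is assumed nonempty, the fully cut intersection $(Y \times \Gamma) \cap \Delta$ is nonempty as well. Using the product-dimension formula $\dim(Y \times \Gamma) = \dim Y + \dim \Gamma$, after all $n$ cuts one obtains
\[
\dim\bigl((Y \times \Gamma) \cap \Delta\bigr) \geq \dim(Y \times \Gamma) - n = \dim Y + \dim \Gamma - n = \dim Y - \codim \Gamma,
\]
which, after rearrangement and the identification above, is exactly the desired inequality.

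I expect the only real obstacle to be bookkeeping with irreducible components: one must track, at each hyperplane cut, which components survive and verify that Krull's theorem applies to the coordinate ring of each. In the degenerate case where some $x_i - y_i$ vanishes identically on an irreducible component of the partial intersection, that cut contributes no dimension drop to that component, which only strengthens the inequality. Beyond this purely combinatorial tracking, no deeper input is required than Krull's theorem, the product-dimension formula, and the non-emptiness hypothesis; the argument is short and entirely standard.
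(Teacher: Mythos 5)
Your argument is correct and is essentially the same one Hartshorne gives for Proposition I.7.1, which is all the paper cites for this lemma: replace $Y\cap\Gamma$ by $(Y\times\Gamma)\cap\Delta$ and invoke Krull with the $n$ diagonal equations. One small point of precision: when cutting one hyperplane at a time, the dimension of the \emph{partial intersection} can in principle drop by more than one (the top-dimensional component may simply become empty); what you really want is that every irreducible component of the $i$-th cut has dimension at least $\dim(Y\times\Gamma)-i$, which you get either by tracking a chain of components through a fixed point of $(Y\times\Gamma)\cap\Delta$, or by applying Krull's generalized principal ideal theorem to the $n$-generated ideal of $\Delta$ once and for all. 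Also note, as in Hartshorne, that ``affine variety'' here must be read as \emph{irreducible}; for reducible $Y$ the bound fails in general, since the top-dimensional component of $Y$ need not meet $\Gamma$ even when $Y\cap\Gamma\neq\emptyset$.
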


By combining Proposition \ref{prop - matroidal poly, m-jets intersect with Gamma} and Lemma \ref{lemma - basic intersection theory identity} we deduce equidimensionality:

\begin{proposition} \label{prop - matroid poly, equidimensional jets}
    Let $\matM$ be a positive rank matroid on $E$ with attached matroidal polynomial $\zeta_\matM$. Then $\scrL_{m}(\KK^{E}, X_{\zeta_\matM})$ is an equidimensional l.c.i of dimension $(m+1)(n-1)$ for all $m \geq 0$.
\end{proposition}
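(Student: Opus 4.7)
I would prove this by induction on $n = |E|$. Write $W := \scrL_m(\KK^E, X_{\zeta_\matM})$. Since $W$ is cut out inside $\scrL_m(\KK^E) \simeq \AA^{(m+1)n}$ by the $m+1$ equations $\{D^q\zeta_\matM\}_{0 \leq q \leq m}$, Krull's height theorem already yields the lower bound $\dim W_i \geq (m+1)(n-1)$ for every irreducible component $W_i$ of $W$. So it suffices to establish the matching upper bound $\dim W \leq (m+1)(n-1)$; equidimensionality then follows automatically, and the l.c.i.\ conclusion is delivered by Proposition~\ref{prop - Mustata's characterization of irreducibility of m-jets in terms of dim lying over sing}(b). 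The base case $n = 1$ is immediate: positive rank forces $\matM = \matF_e$, so $\zeta_\matM$ is linear and $X_{\zeta_\matM}$ is a point.

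For the inductive step two scenarios must be treated. If every edge of $\matM$ is a loop or a coloop, then (after a linear change of coordinates in $\KK[E]$) $\zeta_\matM$ becomes a monomial in a subset of the coordinate variables, so $X_{\zeta_\matM}$ is a Boolean arrangement extended trivially in the remaining variables; Remark~\ref{rmk - jets of arrangements} then yields equidimensionality of $W$ in dimension $(m+1)(n-1)$ directly. Otherwise, $\matM$ has some edge $e$ that is neither a loop nor a coloop, and hence $\matM \setminus e$ retains the rank of $\matM$ and so has positive rank, which makes the inductive hypothesis applicable to $\matM \setminus e$.

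In this remaining case, after adjusting coordinates so that $f_e = x_e$, Deletion-Contraction gives $\zeta_\matM = \zeta_{\matM\setminus e} + x_e\,\zeta_{\matM/e}$. I would then deform along the one-parameter family
\[
\zeta_t := \zeta_{\matM\setminus e} + t\,x_e\,\zeta_{\matM/e}, \qquad t \in \AA^1_\KK,
\]
and form the total jet scheme $\widetilde W := \Var\bigl(\{D^q\zeta_t\}_{0 \leq q \leq m}\bigr) \subseteq \scrL_m(\KK^E) \times \AA^1_t$ with projection $\pi\colon \widetilde W \to \AA^1_t$ whose fiber over $t$ is $W_t = \scrL_m(\KK^E, \Var(\zeta_t))$. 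For every $t \neq 0$, the substitution $x_e \mapsto t\,x_e$ identifies $\Var(\zeta_t)$ with $X_{\zeta_\matM}$ and hence $W_t \simeq W$, whereas at $t = 0$ one has $\zeta_0 = \zeta_{\matM\setminus e} \in \KK[E \minus \{e\}]$, so $W_0 = \scrL_m(\KK^{E\minus\{e\}}, X_{\zeta_{\matM\setminus e}}) \times \AA^{m+1}_{x_e^{(0)},\dots,x_e^{(m)}}$, of dimension $(m+1)(n-2) + (m+1) = (m+1)(n-1)$ by induction applied to $\matM \setminus e$. Upper semi-continuity of fiber dimension for $\pi$ then forces $\dim W \leq \dim W_0 = (m+1)(n-1)$.

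The main technical obstacle is that the apparently direct strategy—combining Proposition~\ref{prop - matroidal poly, m-jets intersect with Gamma} with Lemma~\ref{lemma - basic intersection theory identity} to write $\dim W \leq \dim(W \cap \Gamma) + \codim \Gamma$—implicitly requires every irreducible component of $W$ to meet the closed subset $\Gamma$, a componentwise hypothesis that is not obviously valid a priori. The one-parameter degeneration sidesteps this concern by trading it for a dimension comparison that upper semi-continuity supplies uniformly over the base $\AA^1_t$, regardless of how $W$ decomposes.
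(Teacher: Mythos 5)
Your proposal is a genuinely different route from the paper's. The paper proves the bound $\dim W \le (m+1)(n-1)$ by intersecting $W := \scrL_m(\KK^E, X_{\zeta_\matM})$ with the linear space $\Gamma = \Var(f_e, Df_e, \dots, D^mf_e)$ (Proposition~\ref{prop - matroidal poly, m-jets intersect with Gamma}), then applying the intersection inequality of Lemma~\ref{lemma - basic intersection theory identity} followed by the inductive hypothesis. You replace the intersection-theoretic step with a one-parameter degeneration $\zeta_t = \zeta_{\matM\setminus e} + t\,x_e\,\zeta_{\matM/e}$. Both approaches reach the same numerical bound via different special-fiber objects: the paper compares $W$ to $W \cap \Gamma \simeq \scrL_m(\KK^{E\minus\{e\}}, X_{\zeta_{\matM\setminus e}})$, whose dimension is $(m+1)(n-2)$, and adds $\codim\Gamma = m+1$; you compare $W$ to the flat limit $W_0 = \scrL_m(\KK^{E\minus\{e\}}, X_{\zeta_{\matM\setminus e}}) \times \AA^{m+1}$, of dimension $(m+1)(n-1)$.

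Your concern about the paper's argument is legitimate and worth flagging. Hartshorne's Proposition I.7.1 concerns \emph{irreducible} varieties, and the inequality $\dim(Y\cap\Gamma) + \codim\Gamma \ge \dim Y$ with only the hypothesis $Y\cap\Gamma\neq\emptyset$ can genuinely fail for reducible $Y$: take $\Gamma = \{x_1 = 0\}\subset\AA^3$ and $Y = \{x_1 = 1\}\cup\{0\}$. So the paper's Step~1, as literally written, needs the additional verification that every top-dimensional component of the jet scheme meets $\Gamma$; this is not supplied, and (because $\zeta_\matM$ need not be homogeneous) the usual $\GG_m$-arc-action argument does not obviously force it.

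However, your justification of the degeneration step has its own gap, in a direction you may not have noticed. You invoke ``upper semi-continuity of fiber dimension'' for $\pi\colon\widetilde W\to\AA^1_t$ to infer $\dim W \le \dim W_0$. But fiber-dimension semi-continuity (in either of its usual forms, Chevalley's on the source or the proper version on the target) gives the inequality only at points where the special fiber is \emph{non-empty}; for a non-proper morphism, a component of $\widetilde W$ that dominates $\AA^1$ can have empty fiber over $t=0$, the classic example being the hyperbola $\{xt=1\}\subset\AA^2\to\AA^1_t$. So the argument as stated does not close. What saves it is a feature of the \emph{particular} degeneration you set up: over $\GG_m$ the identification $(p,t)\mapsto(\phi_t(p),t)$, where $\phi_t$ scales each $x_e^{(q)}$ by $t$, is $\GG_m$-equivariant for an action whose weights on the $x_e^{(q)}$ coordinates are all equal to $1\ge 0$. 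Consequently, for any $p\in W$, the limit $\lim_{t\to0}\phi_t(p)$ exists (it is a polynomial, not Laurent, expression in $t$), so the closure of each orbit does reach $t=0$. Hence every irreducible component $\widetilde W_j = \overline{\phi(W_j\times\GG_m)}$ of $\widetilde W$ that dominates $\AA^1$ has a non-empty fiber over $t=0$. Once that is established, the standard fiber-dimension inequality for dominant morphisms from irreducible varieties gives
\[
\dim W_j = \dim\widetilde W_j - 1 \le \dim(\widetilde W_j)_0 \le \dim W_0 = (m+1)(n-1),
\]
and this holds for every component $W_j$ of $W$, closing the argument. Rewrite the key step to verify explicitly that each dominating component of $\widetilde W$ reaches $t=0$: the non-negativity of the $\GG_m$-weights is the engine here, not semi-continuity in the abstract.
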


\begin{proof}
By Musta\c t\u a's Proposition \ref{prop - Mustata's characterization of irreducibility of m-jets in terms of dim lying over sing}, it suffices to show that 
\begin{equation} \label{eqn - matroid poly, m-jets equidimensional dimension bound}
\dim \scrL_{m}(\KK^{E}, X_{\zeta_\matM}) \leq (m+1)(n-1) \text{ for all } m \geq 1,
\end{equation}
the case $m=0$ being immediate. We will certify this dimension bound. Before giving the inductive proof (on $|E|$), we take preliminary steps:
    
\emph{Step 1: Inductive Step.} Suppose $e \in E$ is not a loop/coloop on $\matM$ and $\zeta_{\matM\setminus e}$ satisfies \eqref{eqn - matroid poly, m-jets equidimensional dimension bound}. Let $\Gamma \subseteq \scrL_{m}(\KK^{E})$ have defining ideal $(f_{e}, D f_{e}, \dots, D^{m} f_{e})$. By Proposition \ref{prop - matroidal poly, m-jets intersect with Gamma},
\[
\scrL_{m}(\KK^{E}, X_{\zeta_\matM}) \cap \Gamma \simeq \scrL_{m}(\KK^{E \minus \{e\}}, X_{\zeta_{\matM\setminus e}}).
\]
This is non-empty since $\rank(\matM\setminus e) = \rank(\matM) \geq 1$ and so $\zeta_{\matM\setminus e} \notin \KK^{\times}$, \emph{cf.}\ Remark \ref{rmk - basic matroid polynomial observations}.\eqref{item-mat-poly-degree}. Our dimension assumption on $\scrL_{m}(\KK^{E \minus \{e\}}, X_{\zeta_{\matM\setminus e}})$ plus Lemma \ref{lemma - basic intersection theory identity} gives
\begin{align*}
\dim \scrL_{m}(\KK^{E}, X_{\zeta_\matM}) 
    &\leq \dim (\scrL_{m}(\KK^{E}, X_{\zeta_\matM}) \cap \Gamma) + \codim \Gamma \\
    &= \dim \scrL_{m}(\KK^{E \minus \{e\}}, X_{\zeta_{\matM\setminus e}}) + \codim \Gamma \\
    &\leq (m+1)(n-2) + (m+1) \\
    &= (m+1)(n-1).
\end{align*}
So \eqref{eqn - matroid poly, m-jets equidimensional dimension bound} is satisfied under these hypotheses.

\emph{Step 2: Base Case}: Suppose $\matM$ consists of only loops and coloops. Then Remark \ref{rmk - basic matroid polynomial observations}.\eqref{item-loopy} and the hypothesis $\rank(\matM) \geq 1$ shows that $\zeta_\matM$ is a Boolean arrangement. So \eqref{eqn - matroid poly, m-jets equidimensional dimension bound} follows from Remark \ref{rmk - jets of arrangements}.

\emph{The Inductive Proof}: We prove the desired dimension bound by induction on $|E|$. If $|E| = 1$, we fall into the setting of \emph{Step 2}. So assume $|E| \geq 2$ and assume an induction hypothesis. If there exists some $e \in E$ such that $e$ is not a loop/coloop, then the induction hypothesis applies to $\matM\setminus e$ and we can use \emph{Step 1}. If no such $e$ exists, then $E$ consists of only loops and coloops and we invoke \emph{Step 2}.
\end{proof}

\subsection{Rationality of Matroidal Singularities}

Now we construct machinery to prove that a matroidal polynomial $\zeta_\matM$ has rational singularities when $\matM$ is connected and has rank at least two. We will use Musta\c t\u a's \cite[Thm.~3.3]{MustataJetsLCI} and show $\scrL_{m}(\KK^{E}, X_{\zeta_\matM})$ is irreducible for all $m \geq 1$. Similarly to Proposition \ref{prop - matroid poly, equidimensional jets}, this reduces to bounding the dimension of $m$-jets, though now we must bound the dimension of $m$-jets of $X_{\zeta_\matM}$ lying over the singular locus (Proposition \ref{prop - Mustata's characterization of irreducibility of m-jets in terms of dim lying over sing}). Again, like in Proposition \ref{prop - matroid poly, equidimensional jets}, we use induction on $|E|$ to pass from $\matM$ to the matroid $\matM \setminus T$ on $E \setminus T$ for some well chosen $T \subseteq E$. As noted before,  there are connected matroids $\matM$ such that $\matM \setminus e$ is disconnected for all $e \in E$. This time, the Cremona trick cannot be used since matroidal polynomials do not have to be homogeneous. 

In sum, we have two obstacles:
\begin{enumerate}[label=(\roman*)]
    \item Adequately understand the singular locus $X_{\zeta_\matM, \Sing}$ of $X_{\zeta_\matM}$;
    \item Find a suitable way to pass from $\scrL_m(\KK^E, X_{\zeta_{\matM}}, X_{\zeta_{\matM}, \Sing})$ to jets on a ``nice'' and ``smaller'' matroidal polynomial.
\end{enumerate}
As in the characteristic $p$ leg of our journey, the existence of connective handles resolves our problems. This time, we require a mild generalization/augmentation of \cite[Cor.~3.13]{DSW}: this lets us express $\zeta_\matM$ in terms of matroidal polynomials attached to $\matM \setminus H$ and $\matM / H$.

\begin{proposition} \label{prop - new handle fml - matroidal poly}
Let $\matM$ be a matroid on $E$ admitting a proper handle $H$ that is both independent and contains no coloops. Let $\zeta_\matM$ be a matroidal polynomial. Then there exist matroidal polynomials $\zeta_{\matM / H}$ and $\zeta_{\matM \setminus H}^{h}$, for each $h \in H$, with respect to the matroids $\matM / H$ and $\matM \setminus H$ respectively, such that
\[
\zeta_\matM = \sum_{h \in H} \left( \prod_{g \in H\minus\{h\}} f_{g} \right) \zeta_{\matM \setminus H}^{h} + \left( \prod_{h \in H} f_{h} \right) \zeta_{\matM / H}.
\]
\end{proposition}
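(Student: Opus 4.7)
The plan is to induct on $|H|$. The base case $|H| = 1$ is immediate: writing $H = \{h\}$, the hypotheses that $H$ is independent and coloop-free say that $h$ is neither a loop nor a coloop, so Deletion-Contraction axiom (i) gives $\zeta_\matM = \zeta_{\matM \setminus h} + f_h \zeta_{\matM/h}$, which matches the claimed formula once the empty product $\prod_{g \in \emptyset} f_g$ is read as $1$.

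For the inductive step with $|H| \geq 2$, I fix any $h \in H$, apply axiom (i) to get $\zeta_\matM = \zeta_{\matM \setminus h} + f_h \zeta_{\matM/h}$, and then unwind each summand using the handle hypothesis. For $\zeta_{\matM \setminus h}$: by Remark~\ref{rmk-handle basics}.(b) every $g \in H \setminus \{h\}$ is a coloop of $\matM \setminus h$, and coloops remain coloops under coloop contraction (which coincides with coloop deletion), so iterating axiom (ii) peels off one $f_g$ at a time and produces
\[
\zeta_{\matM \setminus h} = \left( \prod_{g \in H \setminus \{h\}} f_g \right) \zeta^h_{\matM \setminus H}
\]
for some matroidal polynomial $\zeta^h_{\matM \setminus H}$ on the matroid $(\matM \setminus h)/(H \setminus \{h\}) = \matM \setminus H$. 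For $\zeta_{\matM/h}$: Remark~\ref{rmk-handle basics}.(d) plus the fact that contractions preserve independence say that $H \setminus \{h\}$ is a proper handle on $\matM/h$ that is independent and coloop-free, so the inductive hypothesis applies.

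The main obstacle, and the only nontrivial step, is the matroid identification $(\matM/h) \setminus (H \setminus \{h\}) = \matM \setminus H$, needed to turn the inductively produced matroidal polynomials $\zeta^{h'}_{(\matM/h) \setminus (H \setminus \{h\})}$ into matroidal polynomials on $\matM \setminus H$. I plan to prove this by comparing independent sets on the common ground set $E \setminus H$. Clearly any $B \subseteq E \setminus H$ that is independent in $\matM/h$ is independent in $\matM$. Conversely, if $B \subseteq E \setminus H$ is independent in $\matM$ but $B \cup \{h\}$ is dependent, then $B \cup \{h\}$ contains a circuit $C$ with $h \in C$; since $H$ is a handle and $h \in H \cap C$, we would need $H \subseteq C$, yet $C \cap H = \{h\}$ since $C \subseteq B \cup \{h\}$ and $B \cap H = \emptyset$ --- contradicting $|H| \geq 2$. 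Thus independence of subsets of $E \setminus H$ is the same in $\matM$ and $\matM/h$, yielding the desired matroid equality.

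Combined with the trivial $(\matM/h)/(H \setminus \{h\}) = \matM/H$, the inductive expansion of $\zeta_{\matM/h}$ gives a sum whose terms pair against the $\zeta_{\matM \setminus h}$ piece via the coefficient collapse
\[
f_h \cdot \prod_{g \in H \setminus \{h, h'\}} f_g \;=\; \prod_{g \in H \setminus \{h'\}} f_g \qquad (h' \in H \setminus \{h\}),
\]
and the $\zeta_{\matM/H}$ term accumulates the missing $f_h$ factor. Reindexing the sum over $H$ produces the claimed identity. Apart from the matroid identification above, everything is routine bookkeeping.
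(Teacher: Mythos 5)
Your proof is correct and follows essentially the same inductive Deletion--Contraction scheme as the paper: peel off $h$, extract coloop factors from $\zeta_{\matM\setminus h}$ via axiom~(ii), invoke the inductive hypothesis on $\zeta_{\matM/h}$ using Remark~\ref{rmk-handle basics}, and reassemble. The one small point of divergence is how you justify $(\matM/h)\setminus(H\setminus\{h\}) = \matM\setminus H$ --- you argue directly at the level of independent sets using the handle property, whereas the paper commutes the deletion and contraction operations and then appeals to the fact that the elements of $H$ become coloops after deleting one of them; both are sound, and yours is a perfectly reasonable alternative.
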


\begin{proof}   
     We induce on $|H|$. 
     The base case $|H|=1$ is a restatement of the Deletion-Contraction axiom for non-(co)loops (i).
    
    Assume an induction hypothesis. Let $h \in H$ and set $H^{\prime} = H\minus\{h\}$. By Deletion-Contraction with respect to $h$ we have
    \begin{equation} \label{eqn - new handle fml - matroidal poly - part 1}
        \zeta_\matM = \zeta_{\matM \setminus h} + f_{h} \zeta_{\matM / h}
    \end{equation}
    for suitable matroidal polynomials $\zeta_{\matM \setminus h}$ and $\zeta_{\matM / h}$. 
    Since $H^{\prime}$ is a handle on $\matM / h$ satisfying the inductive set-up (Remark \ref{rmk-handle basics}), we may write
    \begin{equation} \label{eqn - new handle fml - matroidal poly - part 2}
        \zeta_{\matM / h} = \sum_{h^{\prime} \in H\minus\{h\}} \left(\prod_{g \in H \minus \{h, h^{\prime} \}} f_{g} \right) \zeta_{\matM / h \setminus H^{\prime}}^{h^{\prime}} + \left(\prod_{h^{\prime} \in H^{\prime}} f_{h^{\prime}} \right) \zeta_{\matM / h / H^{\prime}}.
    \end{equation}
    
    By commutativity of deletion and contraction for matroids, $\matM / h \setminus H^{\prime} = \matM \setminus H^{\prime} / h =\matM \setminus h^{\prime} \setminus (H^{\prime} \setminus h^{\prime}) / h$, where $h^{\prime} \in H^{\prime}$.  Since every element of $H\minus\{h\}^{\prime}$ is a coloop on $\matM \setminus h^{\prime}$, we deduce $\matM / h \setminus H^{\prime} = \matM \setminus H$. So we may set $\zeta_{\matM \setminus H}^{h^{\prime}} = \zeta_{\matM / h \setminus H^{\prime}}^{h^{\prime}}$, for each $h^{\prime} \in H^{\prime}$ and $\zeta_{\matM / H} = \zeta_{\matM / h / H^{\prime}}$. The notation change means we can combine \eqref{eqn - new handle fml - matroidal poly - part 1} and \eqref{eqn - new handle fml - matroidal poly - part 2} into
    \begin{equation} \label{eqn - new handle fml - matroidal poly - part 3}
        \zeta_\matM = \zeta_{\matM \setminus h} + \sum_{h^{\prime} \in H^{\prime}} \left( \prod_{g \in H\minus\{h^{\prime}\}} f_{g} \right) \zeta_{\matM \setminus H}^{h^{\prime}} + \left( \prod_{h \in H} f_{h} \right) \zeta_{\matM / H}.
    \end{equation}

    Now since $H^{\prime} = \{h_{1}^{\prime}, \dots, h_{k}^{\prime}\}$ consists of coloops on $\matM \setminus h$, by repeated Deletion-Contraction for coloops we have that 
    \begin{align} \label{eqn - new handle fml - matroidal poly - part 4}
        \zeta_{\matM \setminus h} = f_{h_{1}^{\prime}} \zeta_{\matM \setminus h \setminus h^{\prime}} = f_{h_{1}}^{\prime} f_{h_{2}^{\prime}} \zeta_{\matM \setminus h \setminus h_{1}^{\prime} \setminus h_{2}^{\prime}} = \cdots = \left( \prod_{h^{\prime} \in H^{\prime}} f_{h^{\prime}} \right) \zeta_{\matM \setminus h \setminus h_{1}^{\prime} \setminus \cdots \setminus h_{k}^{\prime}}, 
    \end{align}
    for suitably chosen matroidal polynomials. Since $\matM \setminus h \setminus h_{1}^{\prime} \setminus \cdots \setminus h_{k}^{\prime} = \matM \setminus H$, the formula \eqref{eqn - new handle fml - matroidal poly - part 4} simplifies to: there exists a matroidal polynomial $\zeta_{\matM \setminus H}^{h}$ attached to the matroid $\matM \setminus H$ such that 
    \begin{equation} \label{eqn - new handle fml - matroidal poly - part 5}
        \zeta_{\matM \setminus h} =  \left( \prod_{h^{\prime} \in H^{\prime}} f_{h^{\prime}} \right) \zeta_{\matM \setminus H}^{h}.
    \end{equation}
    Plug \eqref{eqn - new handle fml - matroidal poly - part 5} into \eqref{eqn - new handle fml - matroidal poly - part 3} to complete the induction. 
\end{proof}

\begin{define} \label{def - handle matroidal polynomials}
    For $H \in \calI_{\matM}$ a proper handle with no coloops and $\zeta_\matM$ a matroidal polynomial on $\matM$, we call the matroidal polynomials $\zeta_{\matM / H}$ attached to $M / H$ and $\zeta_{\matM \setminus H}^{h}$ attached to $\matM \setminus H$, for each $h \in H$, from Proposition \ref{prop - new handle fml - matroidal poly} the \emph{handle matroidal polynomials}. We will often refer to the handle matroidal polynomials by just using the notation $\zeta_{\matM / H}$ or $\zeta_{\matM \setminus H}^{h}$. 
\end{define}

Proposition \ref{prop - new handle fml - matroidal poly} yields the following relationship between the handle matroidal polynomials. This is vital to our induction. 

\begin{lemma} \label{lemma - matroidal poly handle fml, intersecting induced polys}
    Let $\matM$ be a connected matroid on $|E|\geq 2$ edges, and let $\zeta_\matM$ be a matroidal polynomial attached to $\matM$. Let $H \in \calI_{\matM}$ be a proper handle of $\matM$ such that $\matM \setminus H$ is connected and $\rank(\matM/ H) \geq 1$. 
    Then for each $h \in H$,
    \begin{equation}\label{eqn-codim-bound}
        X_{\zeta_{\matM \setminus H}^{h}} \cap X_{\zeta_{\matM / H}} \subseteq \KK^{E \minus H} \text{ has dimension at most } |E| - |H| - 2.
    \end{equation}
\end{lemma}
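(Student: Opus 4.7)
The plan is to reduce the codimension estimate to a non-divisibility statement and then certify it by comparing degrees of initial forms \(\min(\cdot)\).

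First I would observe that \(\matM\setminus H\) is connected by assumption and has rank \(\rank(\matM)-|H|+1 = \rank(\matM/H)+1 \geq 2\) by Remark \ref{rmk-handle basics} together with the hypothesis \(\rank(\matM/H)\geq 1\); Corollary \ref{cor-irred} then gives that \(\zeta_{\matM\setminus H}^h\) is irreducible, so \(X_{\zeta_{\matM\setminus H}^h}\) is an irreducible hypersurface in \(\KK^{E \minus H}\), while \(X_{\zeta_{\matM/H}}\) is a hypersurface since matroidal polynomials are nonzero (Remark \ref{rmk - basic matroid polynomial observations}). Their intersection has codimension at least two precisely when no irreducible factor of \(\zeta_{\matM/H}\) is a scalar multiple of \(\zeta_{\matM\setminus H}^h\), equivalently when \(\zeta_{\matM\setminus H}^h \nmid \zeta_{\matM/H}\).

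For the non-divisibility, I would first change coordinates on \(\KK[E]\) to arrange that \(f_e=x_e\) for every \(e\in E\), as permitted by Remark \ref{rmk - basic matroid polynomial observations}. Since \(\matM\) is connected with \(|E|\geq 2\) it has no loops or coloops, so by Remark \ref{rmk - basic matroid polynomial observations} each \(d_e\) is either \(1\) or has a nonzero constant term; in particular \(\min(d_e)\) is always a nonzero constant. Because \(\matM\setminus H\) is loopless (inheriting looplessness from \(\matM\)), Proposition \ref{prop - initial term matroidal poly} identifies \(\min(\zeta_{\matM\setminus H}^h)\) with an element of \(\MSP(\matM\setminus H)\), homogeneous of degree \(\rank(\matM\setminus H) = \rank(\matM)-|H|+1\). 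For \(\zeta_{\matM/H}\) I would first peel off the loops \(L \subseteq E\minus H\) of \(\matM/H\) via iterated loop Deletion-Contraction, writing \(\zeta_{\matM/H} = \bigl(\prod_{e\in L} d_e\bigr)\,\zeta_{(\matM/H)\setminus L}\) with \((\matM/H)\setminus L\) loopless; Proposition \ref{prop - initial term matroidal poly} applied to this factor, combined with the constancy of each \(\min(d_e)\), yields \(\min(\zeta_{\matM/H})\) of degree exactly \(\rank(\matM/H) = \rank(\matM)-|H|\). A divisibility \(\zeta_{\matM\setminus H}^h \mid \zeta_{\matM/H}\) would descend to initial forms and force the absurd inequality \(\rank(\matM)-|H|+1 \leq \rank(\matM)-|H|\).

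The main delicacy is the treatment of loops of \(\matM/H\): Proposition \ref{prop - initial term matroidal poly} cannot be applied directly to \(\zeta_{\matM/H}\), because that proposition requires looplessness. Peeling off the loop factors and exploiting that \(\matM\) is coloop-free, so that no \(d_e\) equals \(x_e\), is precisely what keeps \(\min(\zeta_{\matM/H})\) from acquiring extra linear contributions at the loops of \(\matM/H\) and preserves the one-off degree gap that drives the argument.
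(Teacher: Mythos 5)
Your proof is correct and follows the same route as the paper's: normalize the singleton data so $f_e = x_e$, reduce the codimension estimate to the non-divisibility $\zeta_{\matM\setminus H}^h \nmid \zeta_{\matM/H}$ using irreducibility of $\zeta_{\matM\setminus H}^h$ (Corollary \ref{cor-irred}), pass to initial forms via Proposition \ref{prop - initial term matroidal poly}, and derive a contradiction from the degree gap $\rank(\matM\setminus H) = \rank(\matM/H)+1$.

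The one place you diverge is also the most interesting: you rightly flag that $\matM/H$ need not be loopless, so Proposition \ref{prop - initial term matroidal poly} cannot be applied to $\zeta_{\matM/H}$ directly. This is a real concern — for instance, if $\matM$ is the graphic matroid of a triangle $\{a,b,c\}$ glued along the edge $c$ to a second triangle $\{c,d,e'\}$, then $H=\{a,b\}$ is an independent proper handle with $\matM\setminus H$ connected and $\rank(\matM/H)=1$, yet $c$ becomes a loop in $\matM/H$. Your fix — peel off the loop factor $\prod_{e\in L} d_e$ using the loop Deletion-Contraction axiom, apply Proposition \ref{prop - initial term matroidal poly} to the loopless quotient, and invoke Remark \ref{rmk - basic matroid polynomial observations}.\eqref{item-mat-poly-dependentFact} (no coloops of $\matM$, so each $d_e$ has a nonzero constant term and $\min(d_e)$ is a unit) — is exactly what is needed, and it preserves the degree count $\deg\min(\zeta_{\matM/H}) = \rank(\matM/H)$. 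The published proof applies Proposition \ref{prop - initial term matroidal poly} to $\zeta_{\matM/H}$ without this intermediate reduction; your version supplies the missing justification, so it is a modest improvement in rigor over the paper's own proof, not merely a re-derivation.
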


\begin{proof}
    As $\emptyset\neq H$ is independent, it is not a circuit but must be contained in one since otherwise $\matM=(\matM_{|_H})\oplus (\matM\setminus H)$ would be disconnected. That circuit cannot be all of $E$ since else we should have $\rank(\matM / H) = 0$. It follows that  $|E\minus H|\geq 2$ and so  $X_{\zeta_{\matM \setminus H}^{h}}$ and $X_{\zeta_{\matM / H}}$ are natively hypersurfaces in the space $\KK^{E \minus H}$ of dimension at least $2$.

    Via a coordinate change we may assume that the singleton data $\sigma$ satisfies $f_{e} = x_{e}$ for all $e \in E$. By Proposition \ref{prop - initial term matroidal poly}, $\min(\zeta_{\matM / H}) \in \MSP(\matM / H)$ and $\min(\zeta_{\matM \setminus H}^{h}) \in \MSP(\matM \setminus H)$ for all $h \in H$; by Corollary \ref{cor-irred} and hypothesis, $\zeta_{\matM \setminus H}^{h} \in \KK[E \minus H]$ is irreducible for all $h\in H$.
    
    Fix  $h \in H$; as $\zeta_{\matM \setminus H}^{h}$ is irreducible, \eqref{eqn-codim-bound} can fail only if $\zeta_{\matM / H} = p_{h} \zeta_{\matM \setminus H}^{h}$ for some $p_{h} \in \mathbb{K}[E]$. This would entail
    \begin{align*}
    \MSP(\matM / H) \ni \min(\zeta_{\matM / H}) 
    &= \min(p_{h} \zeta_{\matM \setminus H}^{h}) \\
    &= \min(p_{h}) \min(\zeta_{\matM \setminus H}^{h}) \in \min(p_{h}) \cdot \MSP(\matM \setminus H).
    \end{align*}
    However, elements of $\MSP(\matM / H)$ are homogeneous of degree $\rank(\matM / H) = \rank(\matM) - |H|$; elements of $\min(p_{h}) \cdot \MSP(\matM \setminus H)$ are homogeneous of degree $\deg(\min(p_{h})) + \rank(\matM \setminus H) = \deg(\min(p_{h})) + \rank(\matM) - |H| + 1$. In consequence, \eqref{eqn-codim-bound} holds.
\end{proof}

Recall our first obstacle to showing $\scrL_m(\KK^E, X_{\zeta_\matM}, X_{\zeta_\matM, \Sing})$ is irreducible, and hence $\zeta_\matM$ has rational singularities \cite[Prop.~1.4]{MustataJetsLCI}, is to adequately ``understand'' the singular locus $X_{\zeta_\matM, \Sing}$ of $\zeta_\matM$. We do not attempt a precise account of the singular locus: its basic properties mysteriously depend on both the matroid and the particular matroidal polynomial selected (\emph{cf.}\ \cite[Ex.~5.2, 5.3]{DSW}). For us, it is enough to approximate it by certain hypersurfaces. 

\begin{lemma} \label{lemma - matroid poly, components of Sing for M connected}
Let $\matM$ be a connected matroid on $E$ and $H$ a proper handle on $\matM$. Let $\zeta_\matM$ be a matroidal polynomial and $\mathfrak{p}$ a prime containing the true Jacobian $(\partial \zeta_\matM) + (\zeta_\matM)$ of $\zeta_\matM$. Then one of the following occurs:
\begin{enumerate}[label=(\alph*)]
    \item $\mathfrak{p} \ni f_{h}$ for some $h \in H$;
    \item $\mathfrak{p} \ni \zeta_{\matM / H}$.
\end{enumerate}
(Item (b) refers to the handle matroidal polynomials derived from $\zeta_\matM$.)
\end{lemma}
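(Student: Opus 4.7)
The plan is to argue by contrapositive: assume $\mathfrak{p}$ contains no $f_h$ for any $h\in H$, and deduce $\mathfrak{p}\ni\zeta_{\matM/H}$. The workhorse is the handle expansion from Proposition \ref{prop - new handle fml - matroidal poly}, namely
\[
\zeta_\matM = \sum_{h \in H} \Bigl( \prod_{g \in H\minus\{h\}} f_{g} \Bigr) \zeta_{\matM \setminus H}^{h} + \Bigl( \prod_{h \in H} f_{h} \Bigr) \zeta_{\matM / H},
\]
together with the observation that $f_g \in \KK[x_g]$ is monic of $x_g$-degree one and each handle matroidal polynomial $\zeta_{\matM\setminus H}^{\bullet}$, $\zeta_{\matM/H}$ belongs to $\KK[E\minus H]$.

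First, for each $h\in H$ I would compute $\partial\zeta_\matM/\partial x_h$. Because the only $x_h$-dependence in the expansion enters through the factors $f_h$, the derivative annihilates the $h$-th summand and strips off one $f_h$ factor from the remaining terms. Multiplying by $f_h$ and subtracting from the handle formula yields the clean identity
\[
\zeta_\matM - f_h\,\frac{\partial \zeta_\matM}{\partial x_h} \;=\; \Bigl( \prod_{g \in H\minus\{h\}} f_{g} \Bigr) \zeta_{\matM \setminus H}^{h}.
\]
Since $\mathfrak{p}$ contains the true Jacobian $(\partial\zeta_\matM)+(\zeta_\matM)$, the left-hand side lies in $\mathfrak{p}$. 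Under the running assumption, no $f_g$ with $g\in H$ lies in $\mathfrak{p}$, so primeness forces $\zeta_{\matM\setminus H}^{h}\in\mathfrak{p}$. Running $h$ over all of $H$, every handle polynomial $\zeta_{\matM\setminus H}^{h}$ is in $\mathfrak{p}$.

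Feeding this back into the handle formula and again using $\zeta_\matM\in\mathfrak{p}$, we isolate
\[
\Bigl( \prod_{h \in H} f_{h} \Bigr) \zeta_{\matM/H} \;\in\; \mathfrak{p}.
\]
One more application of primeness, together with the hypothesis that no $f_h$ lies in $\mathfrak{p}$, delivers $\zeta_{\matM/H}\in\mathfrak{p}$, closing the contrapositive.

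I do not expect a serious obstacle here: the argument is essentially bookkeeping around a single partial-derivative identity derived from the handle expansion, with primeness doing all the heavy lifting. The only place where care is required is verifying that the derivative computation for $\partial \zeta_\matM/\partial x_h$ produces exactly the coefficient structure needed to cancel against $\zeta_\matM$, leaving a single product-times-$\zeta_{\matM\setminus H}^{h}$ term; once that identity is in hand, the extraction of $\zeta_{\matM\setminus H}^{h}$ and then $\zeta_{\matM/H}$ is automatic.
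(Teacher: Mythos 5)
Your proposal is correct and uses essentially the same approach as the paper: the decisive step in both is the identity $\zeta_\matM - f_h\,\partial_h\bullet\zeta_\matM = \bigl(\prod_{g\in H\minus\{h\}}f_g\bigr)\zeta_{\matM\setminus H}^h$, obtained by differentiating the handle expansion of Proposition \ref{prop - new handle fml - matroidal poly}. The paper simply sums these identities over $h\in H$ to exhibit $\bigl(\prod_{h\in H}f_h\bigr)\zeta_{\matM/H}$ directly in the true Jacobian ideal and applies primeness once, whereas you package it as a contrapositive with an extra round of primeness to extract the intermediate memberships $\zeta_{\matM\setminus H}^h\in\mathfrak{p}$ first; this is a cosmetic difference only.
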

\begin{proof}

For any $h \in H$ and any $A \subseteq H$, the definition of $f_{e}$ implies that
\[
\partial_{h} \bullet \prod_{g \in A} f_{g} = 
\left\{\begin{array}{rcl}
    \prod_{g \in A \minus \{h\}} f_{g}&\text{ if }& h \in A; \\
    0&\text{ if }& h \notin A.\end{array}\right.
\]
Invoking Proposition \ref{prop - new handle fml - matroidal poly} for each $h \in H$, we compute:
\begin{align} \label{eqn - matroid poly, components of Sing for M connected, eq 1}
    \zeta_\matM - f_{h} \partial_{h} \bullet \zeta_\matM 
    &= \zeta_\matM - \left( \sum_{h^{\prime} \in H\minus\{h\}} \left( \prod_{g \in H\minus\{h^{\prime}\}} f_{g} \right) \zeta_{\matM \setminus H}^{g} \right) + \left( \prod_{h \in H} f_{h} \right) \zeta_{\matM / H} \\
    &= \left( \prod_{h^{\prime} \in H\minus\{h\}} f_{h^{\prime}} \right) \zeta_{\matM \setminus H}^{h} \nonumber
\end{align}
Using Proposition \ref{prop - new handle fml - matroidal poly} and \eqref{eqn - matroid poly, components of Sing for M connected, eq 1} gives
\begin{equation} \label{eqn - matroid poly, components of Sing for M connected, eq 2}
    \zeta_\matM - \left( \sum_{h \in H} f_{h} \partial_{h} \bullet \zeta_\matM \right) = \left( \prod_{h \in H} f_{h} \right) \zeta_{\matM / H}.
\end{equation}
The lemma follows by the primality of $\mathfrak{p} \supseteq (\partial \zeta_\matM) + (\zeta_\matM).$
\end{proof}

\begin{example} \label{ex - matroid poly, case of circuit} Let $M = \matU_{n-1, n}$ be the $n$-circuit with $n \geq 2$. For a matroidal polynomial $\zeta_\matM$ and a prime $\mathfrak{p} \in \Spec \KK[E]$ containing the true Jacobian $(\partial \zeta_\matM) + (\zeta_\matM)$, we give a refined version of Proposition \ref{lemma - matroid poly, components of Sing for M connected} in this special case. Pick $g \in E$. As $\matM \setminus g$ has only coloops,
    \[
    \zeta_\matM = \zeta_{\matM \setminus g} + f_{g} \zeta_{\matM / g} = \left( \alpha \prod_{e \in E \minus  \{g\}} f_{e} \right) + f_{g} \zeta_{\matM / g}
    \]
for some constant $\alpha \in \KK^{\times}$. Then
    \[
    (\partial \zeta_\matM) + (\zeta_\matM) \ni \zeta_\matM - f_{g} \partial_{g} \bullet \zeta_\matM = \alpha \prod_{e \in E \minus \{g\}} f_{e}.
    \]
So there exists $e \in E$ such that $\mathfrak{p} \ni f_{e}.$
\end{example}

Now we come to the second obstacle in our induction puzzle: the mechanism of reduction to ``nice'' and ``smaller'' matroidal polynomial. As when proving equidimensionality of $m$-jets, we do this by intersecting $\scrL_{m}(\KK^{E}, X_{\zeta_\matM}, X_{\zeta_\matM , \Sing})$ with a well chosen complete intersection. What follows is a variant of Proposition \ref{prop - matroidal poly, m-jets intersect with Gamma}: 

\begin{lemma} \label{lemma - matroidal poly, handle fml intersect with Gamma}
    Let $\matM$ be a connected matroid on $E$ and $\zeta_\matM$ a matroidal polynomial. Suppose that $H$ is a proper handle on $\matM$. For $h \in H$ define
        \[
        Q_{h} = \left( \prod_{h^{\prime} \in H\minus\{h\}} f_{h^{\prime}} \right) \zeta_{\matM \setminus H}^{h}
        \]
    and let $\Gamma \subseteq \scrL_{m}(\KK^{E})$ be the variety cut out by $(\{D^{q} f_{h}\}_{0 \leq q \leq m})$. Then
    \begin{equation} \label{eqn - matroid poly, connected, m-jet handle intersect with Gamma}
        \scrL_{m}(\KK^{E}, X_{\zeta_\matM}) \cap \Gamma \simeq \scrL_{m}(\KK^{E\minus\{h\}}, \Var(Q_{h}))
    \end{equation}
    and 
    \begin{equation} \label{eqn - matroidal poly, connected, m-jet handle lying over intersect with Gamma}
        \scrL_{m}(\KK^{E}, X_{\zeta_\matM}, \Var(\zeta_{\matM / H})) \cap \Gamma \simeq \scrL_{m}(\KK^{E\minus\{h\}}, \Var(Q_{h}), \Var(\zeta_{\matM / H})).
    \end{equation}
    Moroever, if $\rank(\matM/ H) \geq 1$ then the intersection in \eqref{eqn - matroidal poly, connected, m-jet handle lying over intersect with Gamma} is non-empty.
\end{lemma}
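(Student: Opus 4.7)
The plan mirrors Proposition \ref{prop - matroidal poly, m-jets intersect with Gamma}, with the handle formula of Proposition \ref{prop - new handle fml - matroidal poly} playing the role of the single-edge Deletion-Contraction identity used there. Fixing $h \in H$, I will isolate the $h' = h$ summand of the handle formula; every remaining summand, including the final $\zeta_{\matM/H}$-summand, carries $f_h$ as an explicit factor (since $h \in H \minus \{h'\}$ whenever $h' \neq h$, and $h \in H$). This yields a decomposition $\zeta_\matM = Q_h + f_h \cdot R$ for some $R \in \KK[E]$. Setting $I_\Gamma := (f_h, Df_h, \ldots, D^m f_h)$, the Leibniz rule then gives $D^q(f_h R) \in I_\Gamma$ for all $0 \leq q \leq m$, so $D^q \zeta_\matM \equiv D^q Q_h \pmod{I_\Gamma}$ and hence
\[
(\{D^q \zeta_\matM\}_{0 \leq q \leq m}) + I_\Gamma = (\{D^q Q_h\}_{0 \leq q \leq m}) + I_\Gamma.
\]

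To convert this ideal equality into the stated isomorphisms, I use that every $f_g$ is monic of degree $1$ in $x_g$: then $D^q f_h = x_h^{(q)}$ for $q \geq 1$, so $I_\Gamma$ pins down the coordinates $x_h, x_h^{(1)}, \ldots, x_h^{(m)}$ by linear equations, and projection forgetting them identifies $\Gamma$ with $\scrL_m(\KK^{E \minus \{h\}})$. Since $Q_h \in \KK[E \minus \{h\}]$, under this identification $(\{D^q Q_h\})$ defines exactly $\scrL_m(\KK^{E \minus \{h\}}, \Var(Q_h))$, proving \eqref{eqn - matroid poly, connected, m-jet handle intersect with Gamma}. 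The second isomorphism \eqref{eqn - matroidal poly, connected, m-jet handle lying over intersect with Gamma} is then immediate: $\zeta_{\matM/H} \in \KK[E \minus H]$ involves no $x_h$-variable, so the condition $\pi_{m,0}(\gamma) \in \Var(\zeta_{\matM/H})$ transports through the projection unchanged.

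For non-emptiness under $\rank(\matM/H) \geq 1$ I will exhibit a concrete jet. Let $\alpha = (a_g)_{g \in E \minus \{h\}}$ with each $a_g$ the unique root of $f_g$. By Remark \ref{rmk - basic matroid polynomial observations}.\eqref{item-mat-poly-degree}, the hypothesis $\rank(\matM/H) \geq 1$ places $\zeta_{\matM/H}$ inside the ideal $F_{\matM/H}$ generated by $\{f_g\}_{g \in E \minus H}$, so $\zeta_{\matM/H}(\alpha) = 0$. For $Q_h(\alpha) = 0$ I split into two cases: if $H \minus \{h\} \neq \emptyset$, some factor $f_{h'}$ of $Q_h$ vanishes at $\alpha$; if $H = \{h\}$, then $Q_h = \zeta_{\matM \setminus h}$, and the chain $\rank(\matM \setminus h) \geq \rank(\matM/h) = \rank(\matM/H) \geq 1$ combined with the same remark forces $\zeta_{\matM \setminus h}(\alpha) = 0$. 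The $m$-jet $(\alpha, 0, \ldots, 0)$ provided by Remark \ref{rmk - always a m-jet lying over a point in X} then witnesses non-emptiness of $\scrL_m(\KK^{E \minus \{h\}}, \Var(Q_h), \Var(\zeta_{\matM/H}))$. The only non-mechanical step in the entire argument is extracting the decomposition $\zeta_\matM = Q_h + f_h R$ from the handle formula; once this is in hand, the rest is Leibniz-rule bookkeeping and a single evaluation at the common zero of the singleton data.
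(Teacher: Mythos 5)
Your proof is correct and takes essentially the same route as the paper: the key step in both is isolating the single $h'=h$ summand of Proposition \ref{prop - new handle fml - matroidal poly} to write $\zeta_\matM = Q_h + f_h\cdot R$, and then invoking the Leibniz-rule argument of Proposition \ref{prop - matroidal poly, m-jets intersect with Gamma}. Your non-emptiness argument via an explicit point $\alpha$ (with a mild case split on $|H|$) is a cosmetic variant of the paper's uniform containment $Q_h, \zeta_{\matM/H} \in (\{f_e\}_{e \in E\minus H})$, both resting on Remark \ref{rmk - basic matroid polynomial observations}.\eqref{item-mat-poly-degree}.
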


\begin{proof}
    We can rewrite the handle formula Proposition \ref{prop - new handle fml - matroidal poly} as
    \begin{equation*}
        \zeta_\matM = f_{h} \left[ \sum_{g \in H\minus\{h\}} \left( \prod_{e \in H \minus \{h, g\}} f_{e} \right) \zeta_{\matM \setminus H}^{g} + \left( \prod_{h^{\prime} \in H\minus\{h\}} f_{h^{\prime}} \right) \zeta_{\matM / H} \right] + Q_{h}.
    \end{equation*}
    More succinctly, there exists a $g \in \KK[E\minus\{h\}]$ such that $\zeta_\matM = Q_{h} + f_{h} g.$ So
    \[
    D^{q} \zeta_\matM = D^{q} Q_{h} + \sum_{0 \leq p \leq q} \binom{q}{p} (D^{p} f_{h}) (D^{p-q} g),
    \]
    and \eqref{eqn - matroid poly, connected, m-jet handle intersect with Gamma} follows exactly as in Proposition \ref{prop - matroidal poly, m-jets intersect with Gamma}. Since $\scrL_{m}(\KK^{E}, X_{\zeta_\matM}, \Var(\zeta_{\matM / H}))$ is defined by $(\{D^{q} \zeta_\matM \}_{0 \leq q \leq m}) + (\zeta_{\matM / H})$, similar considerations justify \eqref{eqn - matroidal poly, connected, m-jet handle lying over intersect with Gamma}.

    Now we prove the non-emptiness of $\scrL_{m}(\KK^{E \minus \{e\}}, \Var(Q_{h}), \Var(\zeta_{\matM / H}))$ provided that $\rank(\matM/ H) \geq 1$. By Remark \ref{rmk - basic matroid polynomial observations}.\eqref{item-mat-poly-degree}, 
    \[
    \zeta_{\matM / H} \in (\{f_{e} \}_{e \in E \minus H}) \subsetneq \KK[E\minus\{h\}].
    \]
    As $\rank(\matM/ H) \geq 1$ implies that $\rank(M \setminus H) \geq 2$ (Remark \ref{rmk-handle basics}), again Remark \ref{rmk - basic matroid polynomial observations}.\eqref{item-mat-poly-degree} gives
    \[
    Q_{h} \in (\zeta_{\matM \setminus H}^{h}) \subseteq (\{f_{e} \}_{e \in E \minus H}) \subsetneq \KK[E\minus\{h\}].
    \]
    So
    \[
    \KK^{E\minus\{h\}} \supsetneq \Var(Q_{h}) \cap \Var(\zeta_{\matM / H}) \neq \emptyset. 
    \]
    The non-emptyness of $\scrL_{m}(\KK^{E\minus\{h\}}, \Var(Q_{h}), \Var(\zeta_{\matM / H}))$ then follows by Remark \ref{rmk - always a m-jet lying over a point in X} which says we can always find a $m$-jet lying over a point in $\Var(Q_{h}) \cap \Var(\zeta_{\matM / H}) \subseteq \KK^{E\minus\{h\}}$.
    \end{proof}

We have now assembled the tools in order to prove this section's main theorem:

\begin{theorem} \label{thm - matroid poly, connected implies m-jets irreducible}
Let $\matM$ be a connected matroid on $E$ of rank at least 1,  equipped with a matroidal polynomial $\zeta_\matM$. Then $\scrL_{m}(\KK^{E}, X_{\zeta_\matM})$ is an irreducible, reduced, l.c.i. of dimension $(m+1)(n-1)$ for all $m \geq 1$. Moreover $X_{\zeta_\matM}$ is normal.
\end{theorem}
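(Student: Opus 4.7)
The strategy is to combine the equidimensionality already established in Proposition~\ref{prop - matroid poly, equidimensional jets} with part (c) of Musta\c{t}\u{a}'s criterion (Proposition~\ref{prop - Mustata's characterization of irreducibility of m-jets in terms of dim lying over sing}): it suffices to certify
\[
    \dim \scrL_m(\KK^E, X_{\zeta_\matM}, X_{\zeta_\matM, \Sing}) < (m+1)(n-1) \qquad \text{for every } m \geq 1.
\]
Reducedness of the $m$-jets and normality of $X_{\zeta_\matM}$ then fall out automatically from parts (b) and (d) of the same Proposition. I plan to establish the displayed inequality by strong induction on $|E|$. The base case $|E|=1$ is immediate: connectedness of positive rank forces $\matM = \matF_e$, making $\zeta_\matM$ proportional to the linear form $f_e$, so $X_{\zeta_\matM}$ is smooth. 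For the inductive step, $\matM$ has no loops nor coloops (since $\matM$ is connected with $|E| \geq 2$), and I would split according to whether $\matM$ is a single circuit.

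If $\matM$ is not a circuit, \cite[Prop.~2.8]{DSW} (recorded after Definition~\ref{dfn-handle}) furnishes a proper handle $H$ with $\matM \setminus H$ connected; I can further arrange $\rank(\matM/H) \geq 1$ using Remark~\ref{rmk-handle basics}, which makes $\matM \setminus H$ a smaller connected matroid of positive rank, activating the inductive hypothesis. Lemma~\ref{lemma - matroid poly, components of Sing for M connected} confines $X_{\zeta_\matM, \Sing}$ within $\bigcup_{h \in H}\Var(f_h) \cup \Var(\zeta_{\matM/H})$, and I would bound the $m$-jets above each piece separately. Fixing $h\in H$ and intersecting $\scrL_m(\KK^E, X_{\zeta_\matM})$ with the codimension-$(m+1)$ variety $\Gamma = \Var(\{D^q f_h\}_{0\leq q\leq m})$, Lemma~\ref{lemma - matroidal poly, handle fml intersect with Gamma} identifies the intersection with $\scrL_m(\KK^{E\setminus h}, \Var(Q_h))$ (or its $\Var(\zeta_{\matM/H})$-variant), where $Q_h = (\prod_{h'\neq h} f_{h'}) \cdot \zeta_{\matM\setminus H}^h$. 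Lemma~\ref{lemma - jet schemes of products, not nec disjoint variables} splits the minimal primes across the factors of $Q_h$: the linear $f_{h'}$ strata are controlled by Remark~\ref{rmk - jets of arrangements}, and the $\zeta_{\matM\setminus H}^h$ stratum by the inductive irreducibility of $\scrL_m(\KK^{E\setminus H}, X_{\zeta_{\matM\setminus H}^h})$. The decisive additional input is Lemma~\ref{lemma - matroidal poly handle fml, intersecting induced polys}, which ensures $\Var(\zeta_{\matM\setminus H}^h) \cap \Var(\zeta_{\matM/H})$ has codimension at least two in $\KK^{E\setminus H}$; combined with inductive irreducibility through Lemma~\ref{lemma - intersecting irreducible jet scheme with inverse image of avoidant variety downstairs}, this converts a weak dimension bound into the strict one required.

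If $\matM = \matU_{n-1,n}$ is a single circuit the handle deletion approach collapses, since $\matM\setminus H$ is disconnected for every nonempty proper $H$. I would instead appeal to Example~\ref{ex - matroid poly, case of circuit}, which sharpens the containment to $X_{\zeta_\matM, \Sing} \subseteq \bigcup_e \Var(f_e)$. Fixing $e$ and intersecting with $\Gamma_e$, Proposition~\ref{prop - matroidal poly, m-jets intersect with Gamma} identifies $\scrL_m(\KK^E, X_{\zeta_\matM}) \cap \Gamma_e$ with $\scrL_m(\KK^{E\setminus e}, X_{\zeta_{\matM\setminus e}})$, where $\zeta_{\matM\setminus e}$ is now the Boolean arrangement $\prod_{e'\neq e} f_{e'}$ with jet components explicitly described by Remark~\ref{rmk - jets of arrangements}. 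Using Corollary~\ref{cor-irred} (to ensure $\zeta_\matM$ is irreducible, so $X_{\zeta_\matM} \not\subseteq \Var(f_e)$) together with inductive irreducibility applied to the smaller connected circuit $\matM/e = \matU_{n-2,n-1}$, one can rule out problematic second components of $\scrL_m(\KK^E, X_{\zeta_\matM})$ trapped over $\Var(f_e)$. The boundary case $n=2$ is dispatched by the explicit classification in Remark~\ref{rmk - basic matroid polynomial observations}.\eqref{item-U12-matrdl}, which makes $\zeta_\matM$ outright smooth.

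The main obstacle I anticipate is the circuit case: the absence of a connective handle forces a direct case analysis on the linear jet components of the Boolean arrangement $\zeta_{\matM\setminus e}$ (as catalogued in Remark~\ref{rmk - jets of arrangements}), and extracting the strict rather than merely weak dimension drop requires coordinated use of inductive irreducibility on $\matM/e$ together with the sharper containment of Example~\ref{ex - matroid poly, case of circuit}. A secondary obstacle in the non-circuit case is arranging all the dimension drops---from intersecting with $\Gamma$, from the product split of Lemma~\ref{lemma - jet schemes of products, not nec disjoint variables}, and from the inductive irreducibility---to combine into a strict inequality; the essential source of slack is the codimension-two pairing guaranteed by Lemma~\ref{lemma - matroidal poly handle fml, intersecting induced polys}.
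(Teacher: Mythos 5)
Your overall architecture matches the paper's: reduce via Musta\c{t}\u{a}'s Proposition~\ref{prop - Mustata's characterization of irreducibility of m-jets in terms of dim lying over sing} to bounding $\dim\scrL_m(\KK^E, X_{\zeta_\matM}, X_{\zeta_\matM,\Sing})$, locate the singular locus via the handle formula and Lemma~\ref{lemma - matroid poly, components of Sing for M connected}, and then induct using the product decomposition of $Q_h$ together with Lemma~\ref{lemma - matroidal poly handle fml, intersecting induced polys} and Lemma~\ref{lemma - intersecting irreducible jet scheme with inverse image of avoidant variety downstairs}. That is the paper's \emph{Case~2}.

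There is, however, a gap in how you treat the piece of the singular locus contained in $\bigcup_h \Var(f_h)$. You intersect with $\Gamma = \Var(\{D^q f_h\}_{0\le q\le m})$ of codimension $m+1$. But for jets already lying over $\Var(f_h)$, the equation $f_h = 0$ is automatic, so $\scrL_m(X_{\zeta_\matM}, \Var(f_h))\cap\Gamma = \scrL_m(X_{\zeta_\matM})\cap\Gamma \simeq \scrL_m(\KK^{E\setminus h}, X_{\zeta_{\matM\setminus h}})$, and Lemma~\ref{lemma - basic intersection theory identity} only yields
\[
\dim \scrL_m(X_{\zeta_\matM}, \Var(f_h)) \le (m+1)(n-2) + (m+1) = (m+1)(n-1),
\]
which is not strict. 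The paper's \emph{Case~1} avoids this by using the \emph{codimension-$m$} variety cut out by $(Df_h, \dots, D^m f_h)$ only, omitting $f_h$ as a generator; this is legitimate precisely because $f_h\in\frakp$ already, and it recovers the needed strict inequality together with Proposition~\ref{prop - matroid poly, equidimensional jets}. You also cannot instead split $Q_h=\zeta_{\matM\setminus h}$ into factors and hope to win back the $-1$, because $\scrL_m(\KK^{E\setminus h}, X_{\zeta_{\matM\setminus h}})$ is already equidimensional of exactly $(m+1)(n-2)$ by Proposition~\ref{prop - matroid poly, equidimensional jets}. Your \emph{Case~2}-style argument is exactly right when $\frakp\ni\zeta_{\matM/H}$; you just need a separate, simpler \emph{Case~1} with a codimension-$m$ slice whenever $\frakp\ni f_h$.

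Your reading of the circuit case is also backwards: it is the easiest case, not the hardest. Example~\ref{ex - matroid poly, case of circuit} already places every minimal prime of the true Jacobian inside some $\Var(f_e)$, so only \emph{Case~1} occurs and one never needs the product analysis nor any inductive appeal to $\matM/e$. Your proposed detour through inductive irreducibility of $\matU_{n-2,n-1}$ is therefore unnecessary, and the invocation of Corollary~\ref{cor-irred} to rule out "second components trapped over $\Var(f_e)$" does no work that Proposition~\ref{prop - matroid poly, equidimensional jets} does not already do. One small further point: to get $\rank(\matM/H)\ge 1$ you cannot simply cite Remark~\ref{rmk-handle basics}; you must first pick the circuit $C$ via \cite[Lem.~2.13]{DSW} so that $\rank_\matM(C)\ge 2$ and then take the handle from \cite[Prop.~2.8]{DSW} with $\matM\setminus H\supseteq C$.
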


\begin{proof}
We shall assume that coordinates have been adjusted so that $f_e=x_e$ in all cases.

If the rank of $\matM$ is one, then connectedness forces $\matM$ to be loopless, hence equal to $\matU_{1,n}$. The matroidal polynomials in this case have been determined in Remark \ref{rmk - basic matroid polynomial observations}.\eqref{item-U1n-mtrdl}. Up to shifting $x_e$ by $\delta_e$, they are equivalent to $1-\prod_{e\in E}x_e$ and hence smooth. We can thus focus on matroids of rank at least two.

By Proposition \ref{prop - Mustata's characterization of irreducibility of m-jets in terms of dim lying over sing} it suffices to show that $\scrL_{m}(\KK^{E}, X_{\zeta_\matM})$ is irreducible, which is equivalent to
\begin{equation} \label{eqn - matroid poly, m-jets irreducible dimension bound}
\dim \scrL_{m}(\KK^{E}, X_{\zeta_\matM}, X_{\zeta_\matM, \Sing}) < (m+1)(n-1) \text{ for all } m \geq 1.
\end{equation}
Our plan is to certify \eqref{eqn - matroid poly, m-jets irreducible dimension bound} via an induction argument on $|E|$, using the existence of a handle decomposition on $\matM$.

To this end, let $\mathfrak{p} \in \Spec \KK[E]$ be a minimal prime containing the true Jacobian $(\partial \zeta_\matM) + (\zeta_\matM)$. We will show that 
\begin{equation} \label{eqn - matroid poly, m-jets irreducible, dimension condition for minimal primes}
    \dim \scrL_{m}(\KK^{E}, X_{\zeta_\matM}, \Var(\mathfrak{p})) < (m+1)(n-1),
\end{equation}
under certain assumptions on $\frakp$; these are the main steps of the proof. 

\vspace{3ex}

\emph{Case 1}: If $\mathfrak{p} \ni f_{e}$ for some $e \in E$, then \eqref{eqn - matroid poly, m-jets irreducible, dimension condition for minimal primes} holds.

\vspace{2ex}

\noindent\emph{Proof}: Let $\Gamma \subseteq \scrL_{m}(\KK^{E})$ be cut out by $(D f_{e} , \dots, D^{m} f_{e})$. Proposition \ref{prop - matroidal poly, m-jets intersect with Gamma} implies that
\begin{equation} \label{eqn - matroid poly, case 1 intersect gamma}
    \scrL_{m}(\KK^{E}, X_{\zeta_\matM}, \Var(f_{e})) \cap \Gamma
    \simeq \scrL_{m}(\KK^{E\minus\{e\}}, X_{\zeta_{\matM\setminus e}}).
\end{equation}
This variety is non-empty since $\rank(\matM) = \rank(\matM\setminus e)$ implies that $\zeta_{\matM\setminus e}$ is not a constant.  We deduce \eqref{eqn - matroid poly, m-jets irreducible, dimension condition for minimal primes}:
\begin{align*}
    \dim \scrL_{m}(\KK^{E}, X_{\zeta_\matM}, \Var(\mathfrak{p})) 
    &\leq \dim \scrL_{m}(\KK^{E}, X_{\zeta_\matM}, \Var(f_{e})) \\
    &\leq \dim (\scrL_{m}(\KK^{E}, X_{\zeta_\matM}, \Var(f_{e})) \cap \Gamma) + \codim \Gamma \\
    &= \dim \scrL_{m}(\KK^{E\minus\{e\}}, X_{\zeta_{\matM\setminus e}}) + \codim \Gamma \\
    &\leq (m+1)(n-2) + m \\
    &= (m+1)(n-1) - 1.
\end{align*}
The justification is: the first ``$\leq$'' is $\mathfrak{p} \ni f_{e}$; the second ``$\leq$'' is Lemma \ref{lemma - basic intersection theory identity}; the first ``$=$'' is \eqref{eqn - matroid poly, case 1 intersect gamma}; the third ``$\leq$'' is Proposition \ref{prop - matroid poly, equidimensional jets}. Hence, \eqref{eqn - matroid poly, m-jets irreducible, dimension condition for minimal primes} follows in Case 1.

\vspace{3ex}

\emph{Case 2:} Suppose $H \subsetneq E$ is a proper handle such that: $\matM \setminus H$ is connected; $\rank(\matM/ H) \geq 1$; $\mathfrak{p} \ni \zeta_{\matM /H}$; there exists a $h \in H$ such that $ X_{\zeta_{\matM \setminus H}^{h}} \cap X_{\zeta_{\matM / H}} \subseteq \KK^{E\minus H}$ has dimension at most $|E| - |H| - 2$. Then if \eqref{eqn - matroid poly, m-jets irreducible dimension bound} holds for $\scrL_{m}(\KK^{E\minus H}, X_{\zeta_{\matM \setminus H}^{h}})$, then \eqref{eqn - matroid poly, m-jets irreducible, dimension condition for minimal primes} holds.

\vspace{3ex}

\noindent\emph{Proof}: Using the $h \in H$ promised in the claim's hypothesis, define $Q_{h} = \left( \prod_{h^{\prime} \in H \minus \{h\}} f_{h^{\prime}} \right) \zeta_{\matM \setminus H}^{h}$ with corresponding variety $\Var(Q_{h}) \subseteq \KK^{{E\minus\{h\}}}$. Let $\Gamma \subseteq \scrL_{m}(\KK^{E})$ have defining ideal $(f_{h}, D f_{h}, \dots, D^{m} f_{h})$. By Lemma \ref{lemma - matroidal poly, handle fml intersect with Gamma},
\[
\scrL_{m}(\KK^{E}, X_{\zeta_\matM}, \Var(\zeta_{\matM / H})) \cap \Gamma \simeq \scrL_{m}(\KK^{{E\minus\{h\}}}, \Var(Q_{h}), \Var(\zeta_{\matM / H})) \neq \emptyset.
\]
Then Lemma \ref{lemma - basic intersection theory identity} in conjunction with the claim's hypotheses yields that
\begin{align*}
\dim \scrL_{m}(\KK^{E}, X_{\zeta_\matM}, \Var(\mathfrak{p})) 
    &\leq \dim \scrL_{m}(\KK^{E}, X_{\zeta_\matM}, \Var(\zeta_{\matM / H})) \\
    &\leq \dim (\scrL_{m}(\KK^{E}, X_{\zeta_\matM}, \Var(\zeta_{\matM / H})) \cap \Gamma) + \codim \Gamma \\
    &= \dim (\scrL_{m}(\KK^{{E\minus\{h\}}}, \Var(Q_{h}), \Var(\zeta_{\matM / H})) + (m+1).
\end{align*}
We will have certified \eqref{eqn - matroid poly, m-jets irreducible, dimension condition for minimal primes} once we confirm
\begin{equation} \label{eqn - m-jets of Q h lying over zeta M / H}
\dim (\scrL_{m}(\KK^{{E\minus\{h\}}}, \Var(Q_{h}), \Var(\zeta_{\matM / H})) \leq (m+1)(n-2) - 1.
\end{equation}

If $|H| - 1 = 0$, then \eqref{eqn - m-jets of Q h lying over zeta M / H} is exactly \eqref{eqn - m-jets of Q h, vector 3} with $t=m$, while  otherwise we go through a series of simplifications that reduce to the general case of \eqref{eqn - m-jets of Q h, vector 3}.

Consider the bounded collection of integral vectors
\[
\Upsilon = \{ \mathbf{v} \in \mathbb{Z}^{H\setminus\{h\}} \mid -1 \leq v_k \leq m \text{ and } |\mathbf{v}| \leq m -  (|H| - 1) \},
\]
where $|\boldv|$ denotes the sum of entries.
To each $\mathbf{v} \in \Upsilon$, define (using the convention $D^{-1}(g) = 0$) the ideal  $J_{\mathbf{v}} \subseteq \KK[\{x_{e}^{(p)}\}_{\substack{ e \in E \minus \{h\}\\ 0 \leq p \leq m}}]$ by 
\begin{equation*}
J_{\mathbf{v}} \text{ is generated by } \{ D^{0}f_{h^{\prime}}, \dots, D^{v_{h^{\prime}}} f_{h^{\prime}}\}_{h^{\prime} \in H\minus\{h\}} \cup \{\zeta_{\matM \setminus H}^{h}, \dots, D^{m - (|H| - 1) - |\mathbf{v}|} \zeta_{\matM \setminus H}^{h} \}.
\end{equation*}
Now let $\mathfrak{q}$ be a minimal prime of $I(\scrL_m(\KK^{{E\minus\{h\}}}, \Var(Q_h)))$. Using Lemma \ref{lemma - jet schemes of products, not nec disjoint variables} repeatedly, $\mathfrak{q}$ must contain such a $J_{\mathbf{v}}$. To establish \eqref{eqn - m-jets of Q h lying over zeta M / H} it suffices to bound the dimension of $\Var(\mathfrak{q}) \cap \pi_{m,0}^{-1}(\Var(\zeta_{M / H}))$ for all such $\mathfrak{q}$; hence, in order to establish \eqref{eqn - m-jets of Q h lying over zeta M / H} it suffices to confirm that
\begin{equation} \label{eqn - m-jets of Q h, vector 1}
   \dim ( \Var(J_{\mathbf{v}}) \cap \pi_{m,0}^{-1}(\Var(\zeta_{\matM / H})) ) \leq (m+1)(n-2) - 1 \quad \forall \enspace \mathbf{v} \in \Upsilon.
\end{equation}

Fix $\mathbf{v} \in \Upsilon$. If $|\mathbf{v}| = - (|H| - 1)$, then $J_\boldv$ contains $\{\zeta_{\matM\setminus H}^h, \dots, D^m \zeta_{\matM \setminus H}^{h} \}$ and it suffices to skip to \eqref{eqn - m-jets of Q h, vector 3} and verify it with $t=m$. 
Otherwise, since $f_{h^\prime} \in \KK[h^\prime]$ for all $h^\prime \in H\minus\{h\}$ and since $\zeta_{\matM \setminus H}^{h}, \zeta_{\matM / H} \in \KK[E\minus H]$, we may check \eqref{eqn - m-jets of Q h, vector 1} after intersecting the situation with the $|\boldv|+(|H|-1)$ hyperplanes
$D^a f_{h^\prime} = 0$ for which $0 \leq a \leq v_{h^\prime}$. In the resulting vector space we are concerned with the vanishing locus of $D^{0} \zeta_{\matM \setminus H}^{h}, \dots, D^{m - (|H| - 1) - |\mathbf{v}|} \zeta_{\matM \setminus H}^{h}, \text{ and } \zeta_{\matM / H}$. These polynomials only use the variables $x_{e}^{(p)}$ for $e \in E\minus H$ and $0 \leq p \leq m$. Removing the remaining variables $\bsx^{(p)}_e$ attached to $e\in H$,  we can verify  \eqref{eqn - m-jets of Q h, vector 1} by  checking that
\begin{equation*}
    \{D^{0} \zeta_{\matM \setminus H}^{h} = \dots = D^{m - (|H| - 1) - |\mathbf{v}|} \zeta_{\matM \setminus H}^{h} = 0 = \zeta_{\matM / H} \} \subseteq \Spec \scrL_m(\KK^{E\minus H})
\end{equation*}
has dimension at most 
\begin{align*}
    (m+1)(n - 2) - 1 - &\left(\sum_{h^\prime \in H\minus\{h\}}  (m+1) - (v_{h^\prime} + 1)\right)\\
    &= (m+1)(n - |H| - 1) - 1 + (|\mathbf{v}| + |H| - 1) \\
    &= ( \bigg[ m - |\mathbf{v}| - (|H| - 1) \bigg] + 1) (n - |H| - 1) - 1 \\
    &\phantom{xxx}+ (m - \bigg[ m - |\mathbf{v}| - (|H| - 1) \bigg]) (n - |H|).
\end{align*}
To visualize this in more succinct notation, set $t =  m - |\mathbf{v}| - (|H| - 1)$. Then in order to show that \eqref{eqn - m-jets of Q h, vector 1}, it is enough to show that
\begin{align} \label{eqn - m-jets of Q h, vector 2}
    \dim \bigg( (\pi_{m,t}^{\KK^{E\minus H}})^{-1} \scrL_t(\KK^{E\minus H}, X_{\zeta_{M \setminus H}^{h}}, X_{\zeta_{M / H}}) \bigg) &\leq (t + 1)(n - |H| - 1) - 1 \\
    &+ (m - t)(n - |H|). \nonumber
\end{align}
When $t=-1$, we our conventions imply that the left-hand side of \eqref{eqn - m-jets of Q h, vector 2} is $(\pi_{m,0}^{\KK^{E\minus H}})^{-1}(X_{\zeta_{\matM / H}})$, and 
the desired inequality  is clear: the left-hand side is the dimension of a a hypersurface in $\scrL_m(\KK^{E\minus H})$; the right-hand side is $(m+1)(n - |H|) - 1$. For $0 \leq t \leq m$, the fibration $\pi_{m,t}^{\KK^{E\minus H}} : \scrL_m (\KK^{E\minus H}) \to \scrL_t (\KK^{E\minus H})$ is trivial with fiber dimension $(n-|H|)(m-t)$, and so validating \eqref{eqn - m-jets of Q h, vector 2} is tantamount to validating
\begin{equation} \label{eqn - m-jets of Q h, vector 3}
    \dim \scrL_t(\KK^{E\minus H}, X_{\zeta_{M \setminus H}^{h}}, X_{\zeta_{M / H}}) \leq (t + 1)(n - |H| - 1) - 1.
\end{equation}
By Lemma \ref{lemma - intersecting irreducible jet scheme with inverse image of avoidant variety downstairs}, the truth of \eqref{eqn - m-jets of Q h, vector 3} follows provided both: $\scrL_t(\KK^{E\minus H}, X_{\zeta_{M \setminus H}^{h}})$ is irreducible; $\dim (X_{\zeta_{M \setminus H}^{h}} \cap X_{\zeta_{M / H}}) \leq (n - |H| - 2)$. But these are (a subset of) our hypotheses in \emph{Case 2}. Thus \eqref{eqn - m-jets of Q h, vector 3} is indeed true, and unraveling all the implications/reductions, we see \eqref{eqn - matroid poly, m-jets irreducible, dimension condition for minimal primes} holds in \emph{Case 2}.

\medskip

We now complete the proof of the theorem, showing that always one of  \emph{Case 1} and \emph{Case 2} applies.

We start with the case of circuits, when $\matM = \matU_{n-1,n}$ for $n \geq 2$. By Example \ref{ex - matroid poly, case of circuit}, for any minimal prime $\mathfrak{p}$ of the true Jacobian, there exists $e \in E$ such that $\mathfrak{p} \ni f_{e}$. Then \eqref{eqn - matroid poly, m-jets irreducible, dimension condition for minimal primes} holds for this $\mathfrak{p}$ by \emph{Case 1} and hence $\eqref{eqn - matroid poly, m-jets irreducible dimension bound}$ is validated.



Now suppose that $\matM$ is an arbitrary connected matroid of rank at least $2$. We will induce on $|E|$; note that $|E|>2$ is forced. If $|E|=3$ then $\matM=\matU_{2,3}$ is a circuit and so the base case is settled.

Now let $\matM$ be connected of rank at least 2, but not a circuit. Assume that the theorem has already been shown for all connected minors of $\matM$ that have rank 2 or more. 
 By \cite[Lem.~2.13]{DSW} we may find a circuit $C \neq E$ of $\matM$ such that $\rank_{\matM}(C) \geq 2$. Since $\matM$ is not a circuit, by \cite[Prop.~2.8]{DSW} we know $\matM$ admits a proper handle $H$ such that $\matM \setminus H$ is connected and $\matM \setminus H \supseteq C$. So $2 \leq \rank_{\matM}(C) \leq \rank(\matM \setminus H) = \rank(\matM) - |H| + 1$ which forces $\rank(\matM / H) = \rank(\matM) - |H| \geq 1$. By Lemma \ref{lemma - matroid poly, components of Sing for M connected}, we know $\mathfrak{p} \ni f_{e}$ for some $e \in H$ or $\mathfrak{p} \ni \zeta_{\matM / H}$. If the first membership holds, \emph{Case 1} applies. If the second membership holds, then Lemma \ref{lemma - matroidal poly handle fml, intersecting induced polys} implies that there exists some $h \in H$ such that $X_{\zeta_{\matM \setminus H}^{h}} \cap X_{\zeta_{\matM / H}} \subseteq \KK^{E\minus H}$ has dimension at most $|E| - |H| - 2$, so that (since $\matM\setminus H$ is a strict minor of $\matM$)  \emph{Case 2} applies. In either setting, the cases validate \eqref{eqn - matroid poly, m-jets irreducible, dimension condition for minimal primes} and consequently also \eqref{eqn - matroid poly, m-jets irreducible dimension bound}.
\end{proof}

\begin{corollary} \label{cor-mtrdl-rat-sing}
    Suppose that $\matM$ is a connected matroid on $E$ with $\rank(\matM) \geq 2$ equipped with a matroidal polynomial $\zeta_\matM$. Then $X_{\zeta_\matM}$ has rational singularities.
\end{corollary}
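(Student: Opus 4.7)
The plan is to apply Musta\c t\u a's criterion (Theorem \ref{thm-mustata-main}), which characterizes rational singularities of l.c.i.\ varieties in terms of irreducibility of all their jet schemes. Since $X_{\zeta_\matM} \subseteq \KK^E$ is cut out by the single polynomial $\zeta_\matM$, it is automatically a hypersurface, hence l.c.i., so this criterion is applicable.

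The remaining hypothesis to verify is that $\scrL_m(\KK^E, X_{\zeta_\matM})$ is irreducible for every $m \geq 1$. But this is precisely the content of Theorem \ref{thm - matroid poly, connected implies m-jets irreducible}, whose hypotheses (connected matroid of positive rank, equipped with a matroidal polynomial) are implied by those of the corollary (connected of rank at least $2$). Thus the corollary follows immediately by combining these two results.

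There is no real obstacle here; the entire technical difficulty lies in Theorem \ref{thm - matroid poly, connected implies m-jets irreducible}, whose proof carries out the inductive handle-based dimension count. It would also be worth remarking in the write-up that the rank-one case excluded by the hypothesis $\rank(\matM) \geq 2$ is not a loss: by Remark \ref{rmk - basic matroid polynomial observations}.\eqref{item-U1n-mtrdl}, a matroidal polynomial on $\matU_{1,n}$ is, after a suitable coordinate change, either linear or of the form $\prod_{e \in E}(1 + x_e) - 1$, and both are smooth, so rational singularities hold trivially. Hence the conclusion ``has rational singularities (or is smooth)'' used in the main theorem of the introduction follows in all cases.
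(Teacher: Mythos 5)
Your proof is correct and follows essentially the same route as the paper: both deduce the result by combining Theorem~\ref{thm - matroid poly, connected implies m-jets irreducible} with Musta\c t\u a's jet-scheme criterion for the l.c.i.\ hypersurface $X_{\zeta_\matM}$. The only cosmetic difference is that the paper's own proof unpacks Theorem~\ref{thm-mustata-main} into two steps---cite \cite{MustataJetsLCI} for canonical singularities and then Elkik--Flenner for the equivalence canonical~$\iff$~rational for l.c.i.\ varieties---while you invoke the paper's restatement of that theorem directly, which is equivalent.
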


\begin{proof}
By Remark \ref{rmk - basic matroid polynomial observations}.(d), the matroidal polynomial is singular. By Theorem \ref{thm - matroid poly, connected implies m-jets irreducible}, we know $\scrL_{m}(\KK^{E}, X_{\zeta_\matM})$ is irreducible for all $m \geq 1$. Thus $X_{\zeta_\matM}$ has canonical singularities by \cite[Thm.~3.3]{MustataJetsLCI}. And \cite{ElkikRationalSingularities} and \cite{FlennerRationalSingularities} demonstrate that $X_{\zeta_\matM}$ has rational singularities if and only if it has canonical singularities (because $X_{\zeta_\matM}$ is a l.c.i).
\end{proof}

\subsection{Multivariate Tutte Polynomials}

Here we discuss a multivariate Tutte polynomial $Z_{\matM}$ that is a slight variation of the one appearing in \cite{SokalMultiTuttePoly}. (See loc. cit. for a general discussion of this polynomial's importance.) While a matroid can be reconstructed from its set of bases and hence its matroid basis polynomial, the multivariate Tutte polynomial encodes \emph{all} the data about a matroid simultaneously by encoding the rank of every subset of $E$. Moreover, $Z_{\matM}$ encompasses many other polynomials important in combinatorics and physics. For example, specializing $p^{- \rank M}Z_{\matM}$ to $p = q \in \mathbb{Z}_{\geq 1}$ recovers the $q$-state Potts model (see \cite{SokalMultiTuttePoly}).

\begin{define} \label{def - multi tutte poly}
    For $\matM$ a matroid on the ground set $E$ we define the \emph{multivariate Tutte polynomial} $Z_{\matM} \in \KK[E][p]$ by
    \[
    Z_{\matM} = Z_{\matM}(p, \{x_{e}\}_{e \in E}) = \sum_{A \subseteq E} p^{\rank M - \rank A} \prod_{a \in A} x_{a}.
    \]
    The \emph{multivariate Tutte hypersurface} $X_{Z_{\matM}} \subseteq \KK^{E \cup \{p\}}$ is hypersurface in $\KK^{n + 1}$ with defining ideal $(Z_{\matM}) \subseteq \KK[E][p]$. When $\matM$ is the matroid on $\emptyset$, we declare $Z_{\matM} = 1$. Note that if $\matM$ has rank at least 2, then $Z_\matM$ vanishes at the origin with order at least 2 and is hence singular there.
\end{define}

\begin{remark} \label{rmk - multivariate tutte, delete-contract}
    The multivariate Tutte polynomial satisfies the Deletion-Contraction formula
    \[ Z_{\matM} = 
    \begin{cases}
        Z_{\matM\setminus e} + x_{e} Z_{\matM/e} \enspace \text{ if $e$ not a (co)loop}; \\
        (p + x_{e}) Z_{\matM\setminus e} \enspace \enspace \enspace \text{ if $e$ is a coloop}; \\
        (1 + x_{e}) Z_{\matM\setminus e} \enspace \enspace \enspace \text{ if $e$ is a loop}.
        
    \end{cases}
    \]
\end{remark}

We show that our results about the $m$-jets of matroidal polynomials quickly imply the corresponding results for the $m$-jets of multivariate Tutte polynomials. We will prove that $X_{Z_{\matM}}$ (is singular at the origin and) has rational singularities whenever $\matM$ is connected of $\rank(\matM) \geq 2$, while of course if $\matM$ is connected with $\rank(\matM) = 1$  then $Z_{\matM} = p + x_{e}$ is smooth.

We begin with a general deformation lemma about rational singularities.

\begin{lemma} \label{lemma - deformation of rat sing}
    Let $E$, $J$ be disjoint sets. Let $f \in (\{x_j\}_{j \in J}) \cdot \KK[E \sqcup J]$. Let $g \in \KK[E]$ have rational singularities. If the ideals $(\partial(f+g))$ and $(f+g)$ are contained in the ideal generated by all variables then $f + g \in \KK[E \sqcup J]$ has rational singularities.
\end{lemma}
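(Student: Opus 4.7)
The plan is to realize $X_{f+g}$ as a fiber of a flat family whose central fiber has rational singularities, and then invoke openness of the rational-singularities locus in flat families.

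Introduce an auxiliary parameter $t$. Because $f\in(\{x_j\}_{j\in J})\cdot\KK[E\sqcup J]$, substituting $x_j\mapsto tx_j$ inside $f$ yields a polynomial
\[
\tilde h := g(\{x_e\}_{e\in E}) + f(\{x_e\}_{e\in E},\{tx_j\}_{j\in J}) \in \KK[E\sqcup J][t].
\]
Setting $t=0$ gives $\tilde h|_{t=0}=g$, while for any $t_0\in\KK^\times$ the linear automorphism $(x_e,x_j)\mapsto (x_e,t_0 x_j)$ of $\KK^{E\sqcup J}$ identifies $\Var(\tilde h|_{t=t_0})$ with $X_{f+g}$. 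Hence $\tilde X:=\Var(\tilde h)\subseteq \KK^{E\sqcup J}\times \AA^1_t$ is a family over $\AA^1_t$ whose central fiber is the product $X_g\times\KK^J$ and all of whose fibers over $t\in\KK^\times$ are isomorphic to $X_{f+g}$.

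Since $g$ is nonzero, $\tilde h$ does not vanish identically on any $t$-fiber, so the hypersurface family $\tilde X\to\AA^1_t$ is flat. The central fiber $X_g\times\KK^J$ has rational singularities, because $g$ does by hypothesis and the product of a variety with rational singularities with a smooth variety has rational singularities (apply K\"unneth to a product resolution). By Elkik's theorem \cite{ElkikRationalSingularities}, the set $\{t\in\AA^1:\tilde X_t\text{ has rational singularities}\}$ is Zariski open in $\AA^1_t$. Since it contains $t=0$, it contains some $t_0\in\KK^\times$, and thus $\tilde X_{t_0}\simeq X_{f+g}$ has rational singularities.

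The hypothesis $(\partial(f+g))+(f+g)\subseteq\frakm_0$ (with $\frakm_0$ the ideal of all variables) is what makes the conclusion substantive by forbidding the trivial case in which $X_{f+g}$ is smooth at the origin; it does not enter directly into the argument above. The chief potential obstacle is simply justifying Elkik's openness statement in the present flat, excellent setup. If one prefers to stay within the jet-theoretic framework of Section~\ref{sec-jets}, the following parallel route works up to a point: intersect $\scrL_m(\KK^{E\sqcup J},X_{f+g})$ with $\Gamma:=\Var(\{D^q x_j\}_{j\in J,\,0\le q\le m})$ to get $\scrL_m(\KK^{E\sqcup J},X_{f+g})\cap \Gamma\simeq \scrL_m(\KK^E,X_g)$ (using that $D^q f$ lies in the defining ideal of $\Gamma$), then use Lemma~\ref{lemma - basic intersection theory identity} to obtain $\dim \scrL_m(\KK^{E\sqcup J},X_{f+g})\le (m+1)(n-1)$ with $n=|E|+|J|$, hence equidimensionality; promoting equidimensionality to irreducibility then reduces to the same openness phenomenon via a deformation argument.
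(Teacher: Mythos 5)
Your primary route (realizing $X_{f+g}$ as a $\KK^\times$-fiber of a flat family degenerating to $X_g\times\KK^J$ and invoking Elkik) is genuinely different from the paper's, but it has a real gap that you yourself flag. Elkik's deformation theorem yields openness of the rational-singularity locus in the \emph{total space} $\tilde X$ near the special fiber $\tilde X_0$; it does not give openness on the base $\AA^1_t$ for an \emph{affine} (non-proper) family, because a closed ``bad'' locus in $\tilde X$ disjoint from $\tilde X_0$ can still surject onto $\AA^1\setminus\{0\}$. In this very family the escape is not hypothetical: the identification $\tilde X_{t_0}\simeq X_{f+g}$ sends a point $(a_E,a_J)\in X_{f+g}$ to $(a_E,a_J/t_0,t_0)$, so any singular point of $X_{f+g}$ with $a_J\neq 0$ runs off to infinity as $t_0\to 0$ and is never seen by the neighborhood of $\tilde X_0$ that Elkik controls. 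Nothing in the hypotheses forbids $X_{f+g}$ from having singular points with $a_J\neq 0$, so your argument, as written, cannot reach them. (A Bertini-type restriction would require rational singularities of $\tilde X$ \emph{everywhere}, not just near $\tilde X_0$; a $\GG_m$-contracting argument fails because the equivariant action expands the $x_J$-directions.)

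Your fallback jet-theoretic sketch is closer to what the paper does but stops short at the crucial step. Intersecting $\scrL_m(\KK^{E\sqcup J},X_{f+g})$ with $\Gamma=\Var(\{D^q x_j\})$ and invoking Lemma~\ref{lemma - basic intersection theory identity} bounds $\dim\scrL_m(\KK^{E\sqcup J},X_{f+g})$, giving equidimensionality. But rationality via Theorem~\ref{thm-mustata-main} and Proposition~\ref{prop - Mustata's characterization of irreducibility of m-jets in terms of dim lying over sing} requires bounding the jets lying \emph{over the singular locus}, and your sketch defers this back to ``the same openness phenomenon via a deformation argument,'' which is circular. The ingredient the paper supplies at exactly this point is the Jacobian containment
\[
(\partial(f+g))+(f+g)+(\{x_j\}_{j\in J}) \supseteq (\partial g)+(g)+(\{x_j\}_{j\in J}),
\]
which shows $X_{f+g,\Sing}\cap\{x_J=0\}\subseteq X_{g,\Sing}$, so that
\[
\scrL_m(\KK^{E\sqcup J},X_{f+g},X_{f+g,\Sing})\cap\Gamma \subseteq \scrL_m(\KK^E,X_g,X_{g,\Sing}),
\]
and then the rationality hypothesis on $g$ feeds in as the strict dimension bound on the right-hand side. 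This is the missing step; without it your jet-theoretic route only produces equidimensionality.
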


\begin{proof}
By Proposition \ref{prop - Mustata's characterization of irreducibility of m-jets in terms of dim lying over sing} and the justification for Corollary \ref{cor-mtrdl-rat-sing} it suffices to demonstrate:
    \begin{equation} \label{eqn - rat sing deformation, required dim bound}
        \dim \scrL_{m}(\KK^{E \sqcup J}, X_{f + g}, X_{f + g, \Sing}) < (m+1)(|E| + |J| - 1) \text{ for all } m \geq 1.
    \end{equation}
For each $j \in J$ pick $f_{j} \in \KK[E \sqcup J]$ such that $f = \sum_{j \in J} x_{j} f_{j}$. Let $\Gamma_{p} = \{x_{j}^{(q)} = 0\}_{\substack{ j \in J \\ 0 \leq q \leq p}}.$ Abbreviate $\Gamma = \Gamma_{m}$. Because 
\begin{align*}
    (D^{p} (f + g)) + I(\Gamma_{p}) 
    &= \left(D^{p} g + \sum_{j \in J} \sum_{0 \leq a_{j} \leq p} \binom{p}{a_{j}} D^{a_{j}} x_{j} D^{p - a_{j}} f_{j} \right) + I(\Gamma_{p}) \\
    &= (D^{p} g ) + I(\Gamma_{p}),
\end{align*} we have the natural isomorphism
\begin{equation} \label{eqn - rat sing deformation, intersecting jet scheme iso}
    \scrL_{m}(\KK^{E \sqcup J}, X_{f+g}) \cap \Gamma \simeq \scrL_{m}(\KK^{E}, X_{g}).
\end{equation}
Moreover, 
\begin{align} \label{eqn - rat sing deformation, intersecting jacobian containment}
    (\partial (f + g)) + (f+g) + (\{x_{j}\}_{j \in J}) 
    &\supseteq (\{\partial_{x_{e}} \bullet (f+g)\}_{e \in E}) + (f + g) + (\{x_{j}\}_{j \in J}) \\
    &= (\{\partial_{x_{e}} \bullet g\}_{e \in E}) + (g) + (\{x_{j}\}_{j \in J}) \nonumber \\
    &= (\partial g) + (g) + (\{x_{j}\}_{j \in J}). \nonumber
\end{align}

Then we obtain \eqref{eqn - rat sing deformation, required dim bound} by combining \eqref{eqn - rat sing deformation, intersecting jet scheme iso} and \eqref{eqn - rat sing deformation, intersecting jacobian containment} (for the econd ``$\leq$'') with our intersection theory identity Lemma \ref{lemma - basic intersection theory identity} (for the first ``$\leq$'') as follows:
\begin{align*}
    \dim \scrL_{m}(\KK^{E \sqcup J}, X_{f+g}, X_{f+g, \Sing}) 
    &\leq \dim (\scrL_{m}(\KK^{E \sqcup J}, X_{f+g}, X_{f + g, \Sing}) \cap \Gamma) \\
    &\qquad+ \codim \Gamma \\
    &\leq \dim \scrL_{m}(\KK^{E}, X_{g}, X_{g, \Sing}) + \codim \Gamma \\
    &= \dim \scrL_{m}(\KK^{E}, X_{g}, X_{g, \Sing}) + (m+1)|J| \\
    &< (m+1)(|E| - 1) + (m+1)|J| \\
    &= (m+1)({E} + |J| - 1).
\end{align*}
Indeed, the ``$<$'' is our assumption of rationality of $g$; the required fact $\scrL_{m}(\KK^{E \sqcup J}, X_{f+g}, X_{f+g, \Sing}) \cap \Gamma \neq \emptyset$ comes from the assumption $0$ is a singular point of $f+g$, \emph{cf.}\ Remark \ref{rmk - always a m-jet lying over a point in X}.
\end{proof}

Lemma in hand, we can prove:

\begin{theorem} \label{thm - multi tutte poly, rat sing}
    Let $Z_{\matM} \in \KK[E][p]$ be the multivariate Tutte polynomial of a connected matroid $\matM$ on $E$. Let $\rank(\matM) \geq 2$. Then $Z_{\matM}$ has rational singularities.
\end{theorem}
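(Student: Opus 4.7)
The plan is to invoke the deformation Lemma \ref{lemma - deformation of rat sing} applied to the decomposition $Z_\matM = f + g$ with $J = \{p\}$, where
\[
g := \MaxR_\matM = Z_\matM|_{p=0} \in \KK[E] \qquad \text{and} \qquad f := Z_\matM - \MaxR_\matM \in \KK[E \sqcup \{p\}].
\]
The first (and really only) conceptual step is recognizing the $p \to 0$ degeneration of $Z_\matM$: setting $p=0$ kills every summand in the defining formula of $Z_\matM$ except those indexed by $A \subseteq E$ with $\rank A = \rank \matM$, leaving exactly the matroid maximal rank polynomial from Example \ref{ex-matroidal-polys}\ref{item-maximalrank}. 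Since $\matM$ is connected of rank at least two, $\MaxR_\matM \in \Matroidal(E, \{f_g = x_g, d_g = 1 + x_g\}_{g \in E})$ is a matroidal polynomial on $\matM$, so Corollary \ref{cor-mtrdl-rat-sing} immediately gives that $g = \MaxR_\matM$ has rational singularities.

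The remaining hypotheses of Lemma \ref{lemma - deformation of rat sing} are straightforward to verify. The containment $f \in (p) \cdot \KK[E \sqcup \{p\}]$ follows directly from
\[
f = \sum_{\substack{A \subseteq E \\ \rank A < \rank \matM}} p^{\rank \matM - \rank A} \prod_{a \in A} x_a,
\]
since each summand carries a strictly positive power of $p$. For the condition that both $(Z_\matM)$ and $(\partial Z_\matM)$ sit inside the maximal ideal generated by all variables, I would observe that every monomial $p^{\rank \matM - \rank A} \prod_{a \in A} x_a$ of $Z_\matM$ has total degree $(\rank \matM - \rank A) + |A| \geq \rank \matM \geq 2$, using $\rank A \leq |A|$. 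Therefore $Z_\matM$ vanishes to order at least two at the origin, so the origin is a singular point and the ideal conditions are satisfied.

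The lemma then produces the conclusion. There is no genuine obstacle here beyond matching the setup of Lemma \ref{lemma - deformation of rat sing}: the multivariate Tutte polynomial has been engineered so that its constant-in-$p$ part is a well-behaved matroidal polynomial (in fact, the maximal rank polynomial already treated by Corollary \ref{cor-mtrdl-rat-sing}), and the remaining terms contribute a multiple of $p$ that cannot damage rationality thanks to the deformation lemma.
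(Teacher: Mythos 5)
Your proposal is correct and follows essentially the same route as the paper: the same decomposition $Z_\matM = (Z_\matM - \MaxR_\matM) + \MaxR_\matM$, the same appeal to Corollary \ref{cor-mtrdl-rat-sing} for the rationality of $\MaxR_\matM$, and the same application of Lemma \ref{lemma - deformation of rat sing}. You merely spell out the degree-two vanishing at the origin explicitly (which the paper records in Definition \ref{def - multi tutte poly}).
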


\begin{proof}
    If we restrict $Z_{\matM}$ to $p=0$ we get $\text{MaxR}_{\matM}$ which is a matroidal polynomial on $\matM$ (the maximal rank polynomial of Example \ref{ex-matroidal-polys}). So we may write $Z_{\matM} = (Z_{\matM} - \text{MaxR}_{\matM}) + \text{MaxR}_{\matM}$ where $Z_{\matM} - \text{MaxR}_{\matM} \in (p) \cdot \KK[E][p]$ and $\text{MaxR}_{\matM} \in \KK[E]$. Since $\text{MaxR}_{\matM}$ has rational singularities by Corollary \ref{cor-mtrdl-rat-sing}, the claim follows by Lemma \ref{lemma - deformation of rat sing}.
\end{proof}

\section{Flag Matroids and Flag Matroidal Polynomials}
\label{seg-flagmatroids}

We extend our strategy of combining handle style induction and jet schemes to study the singularities of sums of matroid support polynomials attached to certain matroids. We give a handle formula, use it (via suitable hypersurfaces containing the true Jacobian) to show that the $m$-jets are equidimensional under minimal assumptions, and derive irreducibility of $m$-jets under  connectedness conditions. 

The main application, obtained via \cite{MustataJetsLCI} and using vocabulary as yet to be defined, is Corollary \ref{cor-flag-mtrdl-ratsing}: flag matroidal polynomials on (terminally strict and terminally connected) flag matroids $\scrM$ of terminal rank at least two have rational singularities. This result is quite robust, \emph{cf.}\ Corollary \ref{cor - polys whose monomial support are ind sets are rational}.

\subsection{Primer on Matroid Quotients}
\label{sec-flag-matroids}

Matroid quotients originate from linear algebra. Suppose $\matM$ is representable over $\KK$, as a collection of vectors $v_{e} \in \KK^{s}$ where the independent sets $I$ of $\matM$ correspond to the subsets $\{v_{e}\}_{e \in I}$ that are linearly independent. For $\rho \in \KK^{s}$, let $\KK^{s} \to \KK^{s} / \KK \rho \simeq \KK^{s-1}$ be the canonical quotient map. Then the elements 
\[
\{ \overline{v}_{e} \in \KK^{s}/\KK \rho \}_{e \in E}
\]
represent a matroid again. Special cases arise when fixing a subset $E'\subseteq E$ and choosing $\rho$ to be a generic $\KK$-linear functional on $\KK^{E'}$ that vanishes on $\KK^{E\minus E'}$; then the bases of the new matroid are
the sets
\[
\{B = \{\overline{v}_{e_{1}}, \dots, \overline{v}_{e_{\rank(\matM) - 1}}\}\mid \exists e^{\prime} \in E^{\prime} \text{ such that }  B \cup \overline{v}_{e^{\prime}} \text{ is linearly independent}\}.
\]

The general definition does not require representability:

\begin{define} \label{def - matroid quotient}
    Let $\matM$ be a matroid on a ground set $E$. We say a matroid $\matN$ on $E$  \emph{is a quotient of} $\matM$, denoted $\matM \twoheadrightarrow \matN$, if every circuit of $\matM$ is a union of circuits of $\matN$. 
\end{define}
This indeed is a generalization: a (minimal) linear dependency descends to the quotient by $\KK\rho$; it might just become a sum of smaller minimal linear dependencies.
\begin{example} \label{ex - truncations are quotients}
    Given a matroid $\matM$ on $E$, its \emph{truncation} is the matroid $\tau(\matM)$ on $E$ whose independent sets are
    \[
    \calI_{\tau(\matM)} = \{ I \in \calI_{\matM} \mid \rank_{\matM}(I) \leq \rank(\matM) - 1\}.
    \]
    The truncation $\tau(\matM)$ is a quotient of $\matM$ (\cite[pg.~277]{OxleyMatroidTheory}) and corresponds to the case $E=E'$ in the quotient construction in the representable case. If $\matM$ is loopless and $\rank(\matM) \geq 2$, then any two elements $e,f\in E$ are in a common circuit of $\tau(\matM)$ and hence  $\tau(\matM)$ is connected. (Indeed,  $\{e, f\}$ is not already a circuit, hence in $\calI_\matM$ then any $\matM$-basis containing it will be a circuit in $\tau(\matM)$). 
    
    Applying the truncation operation $t$-times gives the \emph{$t$-trunctation} matroid $\tau^{t}(\matM):=\tau(\tau^{t-1}(\matM))$ on $E$ characterized by
    \[
    \calI_{\tau^{t}(\matM)} = \{I \in \calI_{\matM} \mid \rank_{\matM}(I) \leq \rank(\matM) - t \}.
    \]
\end{example}

\begin{remark} \label{rmk-mat-quots} Let $\matN$ be a quotient of $\matM$ on ground set $E$.
    
\noindent 
\begin{enumerate}[label=(\alph*)]
    \item Let $I \in \calI_{\matN}$. If $I$ were dependent on $\matM$ then it would contain a circuit of $\matM$. But then $I$ should contain a circuit of $\matN$, so that 
        \[
        \calI_{\matN} \subseteq \calI_{\matM}.
        \]
        \item\label{item-mat-quot-b} In particular,
        \[
        \rank(\matN) \leq \rank(\matM),
        \]
        with equality precisely if $\matM=\matN$ by 
        \cite[Cor.~7.3.4]{OxleyMatroidTheory}.
        \item\label{item-mat-quot-c} Let $H$ be a handle of $\matN$ that meets a circuit $C$ of $\matM$. Since $C$ is a union of circuits of $\matN$, $H$ must meet (hence, be inside) one of these $\matN$-circuits, and in particular inside $C$:
        \[
        \{\text{handles of } \matN\} \subseteq \{\text{handles of }\matM\} \]
        \item This is true in particular for coloops:
        \[
        \{\text{coloops of } \matN \} \subseteq \{\text{coloops of } \matM \}
        \]
        \item Loops (of $\matM$) are circuits, and thus: 
        \[
        \{\text{loops of } \matN\} \supseteq \{\text{loops of } \matM\}.
        \]
        \item\label{item-dual-DC} By  \cite[Lem.~7.3.3]{OxleyMatroidTheory}, $\matN$ is a quotient of $\matM$ if and only if for some set $E'$  there exists a matroid $\matQ$ on $E \sqcup E^{\prime}$ where: $\rank(\matQ) = \rank(\matM)$; $E^{\prime} \in \calI_{\matQ}$; $\matM = \matQ \setminus E^{\prime}$; $\matN = \matQ / E$. We call $\matQ$ a \emph{lift} of the quotient $M \twoheadrightarrow \matN$. By duality of Deletion-Contraction,
        \[
        \matN^{\perp} = (\matQ / E^{\prime})^{\perp} = \matQ^{\perp} \setminus E^{\prime} \quad \text{ and } \quad \matM^{\perp} = (\matQ \setminus E^{\prime})^{\perp} = \matQ^{\perp} / E^{\prime}.
        \]
        One checks that this yields:
        \[
        \matM^{\perp} \text{ is a quotient of } \matN^{\perp} \text{ with lift }\matQ^{\perp}.
        \]
        \item\label{item-quot-del-contr}
        Let $\matN$ be a matroid quotient of $\matM$ on the ground set $E$. For $e \in E$, one may easily check with the help of \cite[Prop.~7.3.6]{OxleyMatroidTheory} that
    \[
    \matN / e \text{ is a quotient of } \matM/e \quad \text{\normalfont and } \quad \matN \setminus e \text{ is a quotient of } \matM\setminus e.
    \]
    \end{enumerate}
\end{remark}

%
%

\subsection{Flag Matroids and Flag Matroidal Polynomials}

\begin{define} \label{def - flag matroid}
    A sequence of matroids $\mathscr{M} = (\matM_{k}, \dots, \matM_{1})$ on the shared ground set $E$ with $\rank(\matM_1)\geq 1$ is a \emph{flag matroid of length $k$} provided that $\matM_{i}$ is a quotient of $\matM_{i+1}$ for all $i \in \ZZ_{\geq 1}$, 
    \[
    \matM_{k} \twoheadrightarrow \matM_{k-1} \twoheadrightarrow \cdots \twoheadrightarrow \matM_{2} \twoheadrightarrow \matM_{1}.
    \]
    The flag matroid $\mathscr{M}$ is  \emph{terminally strict} if $k=1$, or if
    \[
    \rank(\matM_{2}) > \rank(\matM_{1}).
    \]

    When $\matM_{1}$ is connected, $\mathscr{M}$ is a \emph{terminally connected} flag matroid; when $\matM_{1}$ is loopless, $\mathscr{M}$ is a \emph{terminally loopless} flag matroid. The \emph{$t$-abutment} of $\mathscr{M}$ is the length $t$ flag matroid $\mathscr{M}_{\leq t} = (\matM_{t}, \dots, \matM_{1})$.

    We define the class of \emph{flag matroidal polynomials} on the flag matroid $\mathscr{M}$ by
    \[
    \FMP(\mathscr{M}) = \{ \zeta_{\mathscr{M}} = \sum_{1 \leq i \leq k} \zeta_{\matM_{i}} \mid \zeta_{\matM_{i}} \text{ a matroid support polynomial on } \matM_{i} \}.
    \]
    Flag matroidal polynomials on $\mathscr{M}$ are usually denoted by $\zeta_{\mathscr{M}}$ as above. The \emph{$t$-abutment} of a flag matroidal polynomial is the flag matroidal polynomial $\zeta_{\mathscr{M}_{\leq t}} = \sum_{i \leq t} \zeta_{\matM_{i}} \in \FMP(\mathscr{M}_{\leq t})$ on $\mathscr{M}_{\leq t} = (\matM_{t}, \matM_{t-1}, \dots, \matM_{1})$.

    Finally, we let $X_{\zeta_{\mathscr{M}}} \subseteq \KK^{E}$ be the hypersurface defined by $\zeta_{\mathscr{M}}.$
\end{define}

\begin{remark}
\begin{asparaenum}
\item For $S \subseteq E$ the flag matroid $\mathscr{M}$ induces flag matroids
    \[
    \mathscr{M} \setminus S = (\matM_{k} \setminus S, \dots, \matM_{1} \setminus S) \quad \text{ and } \quad \mathscr{M} / S = (\matM_{k} / S, \dots, \matM_{1} / S),
    \]
    see Remark \ref{rmk-mat-quots} \ref{item-quot-del-contr}. 
    \item 
    The  monomial support of $\zeta_{\mathscr{M}} \in \FMP(\mathscr{M})$ is certainly contained in the union $\calB_{\matM_{i}}$, but the containment may be strict, due to possible cancellation. For example, when $\scrM = (\matM, \matM)$, the difference of matroid basis polynomials  $\Psi_{\matM} - \Psi_{\matM} = 0$ belongs to $\FMP(\mathscr{M})$. Note that when $\mathscr{M}$ is terminally strict and $\zeta_{\mathscr{M}} \in \FMP(\mathscr{M})$, we have $\min \zeta_{\mathscr{M}} = \zeta_{\matM_{1}} \neq 0$, since each $\zeta_{\matM_{i}}$ is homogeneous of degree $\rank(M_{i})$.
    \end{asparaenum}
\end{remark}

\begin{example} \label{ex - repeated trunctations, monomial support on independent sets}
    Let $\matM$ be a matroid on $E$. Then for all $1 \leq s \leq \rank(\matM) - 1$ we have a length $s+1$ terminally strict flag matroid $\mathscr{M}$ given by repeated truncation:
    \[
    \mathscr{M} = (\matM, \tau(\matM), \tau^{2}(\matM), \dots, \tau^{s}(\matM)).
    \]
    A matroid support polynomial on $\tau^t(\matM)$ has monomial support $\{I \in \calI(\matM) \mid \rank(I) = \rank(\matM) - t\}$. So the sum of matroid support polynomials on $\matM, \tau(\matM), \dots, \tau^s(\matM)$ has monomial support $\calI_\matM^{\geq \rank(\matM) - s}$. Conversely, any polynomial with monomial support $\calI_\matM^{\geq \rank(\matM) - s}$ can be reverse enginered as a sum of matroid support polynomials on $\matM, \tau(\matM), \dots, \tau^s(\matM)$. We conclude that 
    \[
    \FMP(\mathscr{M}) = \{\text{polynomials whose monomial support is } \calI_{\matM}^{\geq \rank(\matM) - s} \}.
    \]
\end{example}

\begin{remark} \label{rmk - basics about flag matroidal polys} Let $\mathscr{M}$ be a length $k$ flag matroid, $\zeta_{\mathscr{M}}$ a flag matroidal polynomial.
\begin{enumerate}[label=(\alph*)]
    \item Assume that $\mathscr{M}$ is terminally strict and $\rank(\matM_{1}) \geq 2$. Since $\rank(\matM_{i}) \geq 2$ for all $i$ and $\zeta_{\matM_{i}}$ is homogeneous of degree at least $2$,  $\zeta_{\mathscr{M}} \in (\{x_{e}\}_{e \in E})^{2}$. As $\min \zeta_{\mathscr{M}} = \zeta_{\matM_{1}} \neq 0$, we see that $\zeta_{\mathscr{M}}$ is singular at $0$.
 
    \item\label{item-flag-mat-b} Assume that $\mathscr{M}$ is terminally strict. Then $\min(\zeta_{\mathscr{M}}) = \zeta_{\matM_{1}}$ and $\zeta_{\mathscr{M}} - \zeta_{\matM_{1}}$ is a sum of squarefree monomials ranging in degree from $\rank(\matM_{2})$ to $\rank(\matM_{k})$. Thus, $\rank \matM_{1} \leq \deg \zeta_{\mathscr{M}} \leq \rank(\matM_{k})$,  and $\rank \matM_{1} = \deg \zeta_{\mathscr{M}}$ exactly when $\zeta_{\mathscr{M}} - \zeta_{\matM_{1}} = 0$.

    \item\label{item-flag-mat-c} If $\mathscr{M}$ is terminally connected and terminally strict, then $\zeta_{\mathscr{M}}$ is irreducible. Indeed, let $ab = \zeta_{\mathscr{M}}$ be a factorization. Then, $\min(a)\min(b) = \min(\zeta_{\mathscr{M}}) = \zeta_{\matM_{1}}$ is irreducible and hence we may assume $\min(a) = \zeta_{\matM_{1}}$ and $\min(b) = 1$. Since $\zeta_{\mathscr{M}}$ consists of square-free monomials, the factorization $ab$ induces a partition $A \sqcup B \subseteq E$ where $a \in \KK[A]$ and $b \in \KK[B]$, \emph{cf.}\ Remark \ref{rmk - basic matroid polynomial observations}.\eqref{item-partition}. Since $\matM_{1}$ is connected, and $\zeta_{\matM_{1}} \in \MSP(\matM_{1})$, every $e \in E$ appears in one of the monomials of $\zeta_{\matM_{1}}$. It folows that  $A = E$ which then forces $b\in\KK^\times$.
 
    \item\label{item-strict-contr/del} Assume that $\mathscr{M}$ is terminally strict. If $I \in \calI_{\matM_{1}}$, then both $\mathscr{M} / I$ and $\scrM\setminus I$ are terminally strict. Indeed, we find $I \in \calI_{\matM_{2}}$ and 
    \[
    \rank(\matM_{2} / I) = \rank(\matM_{2}) - |I| > \rank(\matM_{1}) - |I| = \rank(\matM_{1} / I),
    \]
     while a basis $B_1$ of $\matM_1$ that contains $I$ can be extended to a basis of $\matM_2$, implying terminal strictness of the deletion.
    
 %
\end{enumerate}
\end{remark}

\begin{define}\label{dfn-flag-poly-contr/del}
Let $\mathscr{M}$ be a flag matroid of length $k$ and let $\zeta_{\mathscr{M}} \in \FMP(\mathscr{M})$. Take $e \in E$ with $e \in \calI_{\matM_{1}}$. Let $r$ be the smallest index such that $e$ is a coloop in $\matM_k$; if no such index exists set $r = k+1$. Each $\zeta_{\matM_i}$ satisfies a Deletion-Contraction identity with respect to $e$. Taken together this yields a Deletion-Contraction identity for $\zeta_{\mathscr{M}}$ with respect to $e$, on the level of flag matroidal polynomials:
    \begin{align} \label{eqn - delete contract, flag matroidal polynomials}
    \zeta_{\mathscr{M}} 
    &= \sum_{1 \leq i \leq r-1} \zeta_{\matM_{i}} + \sum_{r \leq t \leq k} \zeta_{\matM_{t}}\\ 
    &= \underbrace{\left(\sum_{1 \leq i \leq r-1} \zeta_{\matM_{i} \setminus e} + x_{e}\zeta_{\matM_{i} / e} \right)}_{\displaystyle
    =:\zeta_{(\mathscr{M} \setminus e)_{\leq r-1}}}    
    + x_e\cdot \underbrace{\left( \sum_{r \leq t \leq k} \zeta_{\matM_{t} / e} \right)}_{\displaystyle
    =:\zeta_{\mathscr{M} / e}}.   \nonumber 
    \end{align}
We refer to $\zeta_{(\matM \setminus e)_{\leq r - 1}} \in \FMP((\mathscr{M} \setminus e)_{\leq r - 1})$ and $\zeta_{(\matM / e)} \in \FMP(\mathscr{M} / e)$ as the \emph{induced deletion} (resp.\ \emph{contraction) flag matroidal polynomials}. 

When $\mathscr{M}$ happens to be terminally strict, the independence of $e$ forces $(\mathscr{M} \setminus e)_{\leq r- 1}$ and $\mathscr{M} / e$ to be terminally strict as well, by Remark \ref{rmk - basics about flag matroidal polys}.\ref{item-strict-contr/del}. 
\end{define}


Our first nontrivial result is that the $m$-jets of a flag matroidal polynomial attached to a terminally strict and terminaly loopless flag matroid are equidimensional for all $m \geq 1$. Our argument is as in the case of matroidal polynomials: we use the intersection theory identity Lemma \ref{lemma - basic intersection theory identity} along with a straightforward induction powered by Deletion-Contraction.

\begin{proposition} \label{prop - flag matroidal poly, delete contract intersect with gamma}
    Let $\zeta_{\mathscr{M}}$ be a flag matroidal polynomial on a length $k$ flag matroid $\mathscr{M}$. Select $e \in E$ that is neither a loop nor coloop on $\matM_{1}$ and let $r$ be the smallest index such that $e$ is a coloop on $\matM_{r}$; if no such index exists set $r = k+1$. Let $\Gamma \subseteq \scrL_{m}(\KK^{E})$ be the variety cut out by $(\{ D^{q}x_{e} \}_{0 \leq q \leq m})$. Then
    \[
    \scrL_{m}(\KK^{E}, X_{\zeta_{\mathscr{M}}}) \cap \Gamma \simeq \scrL_{m}(\KK^{E\minus\{e\}}, Y)
    \]
    where $Y \subseteq \KK^{E\minus\{e\}}$ is the hypersurface cut out by $\zeta_{(\mathscr{M} \setminus e )_{\leq r - 1}}.$
\end{proposition}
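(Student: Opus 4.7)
The approach is to imitate Proposition \ref{prop - matroidal poly, m-jets intersect with Gamma}, with the flag-matroidal Deletion-Contraction identity \eqref{eqn - delete contract, flag matroidal polynomials} playing the role of the matroidal version. The starting point is
\[
\zeta_{\mathscr{M}} \;=\; \zeta_{(\mathscr{M}\setminus e)_{\leq r-1}} \;+\; x_{e}\cdot \zeta_{\mathscr{M}/e},
\]
and the critical structural observation, inherited from Definition \ref{dfn-flag-poly-contr/del}, is that $\zeta_{\mathscr{M}/e}\in\KK[E\setminus\{e\}]$ has no $x_{e}$-dependence, since each of its summands $\zeta_{\matM_{t}/e}$ is a matroid support polynomial on a matroid whose ground set is $E\setminus\{e\}$. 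This is the flag analog of the property that powered the argument in the matroidal case.

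Applying $D$ iteratively and expanding via Leibniz yields, for every $q\geq 0$,
\[
D^{q}\zeta_{\mathscr{M}} \;=\; D^{q}\zeta_{(\mathscr{M}\setminus e)_{\leq r-1}} \;+\; \sum_{0\leq p\leq q}\binom{q}{p}\,(D^{p}x_{e})\,(D^{q-p}\zeta_{\mathscr{M}/e}).
\]
Since $\Gamma$ is cut out by $(\{D^{q}x_{e}\}_{0\leq q\leq m}) = (x_{e}^{(0)},x_{e}^{(1)},\dots,x_{e}^{(m)})$, the trailing sum lies in $I(\Gamma)$ for every $q\leq m$, so
\[
(\{D^{q}\zeta_{\mathscr{M}}\}_{0\leq q\leq m}) + I(\Gamma) \;=\; (\{D^{q}\zeta_{(\mathscr{M}\setminus e)_{\leq r-1}}\}_{0\leq q\leq m}) + I(\Gamma).
\]
Passing to the quotient by $I(\Gamma)$ sets each $x_{e}^{(q)}$ to zero and identifies the ambient coordinate ring with that of $\scrL_{m}(\KK^{E\setminus\{e\}})$. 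Writing $\zeta_{(\mathscr{M}\setminus e)_{\leq r-1}} = A + x_{e}B$ with $A,B\in\KK[E\setminus\{e\}]$ (explicitly $A=\sum_{i<r}\zeta_{\matM_{i}\setminus e}$ and $B=\sum_{i<r}\zeta_{\matM_{i}/e}$), a second Leibniz expansion gives $D^{q}(A+x_{e}B)\equiv D^{q}A\pmod{I(\Gamma)}$ for $q\leq m$. Consequently, the image of the defining ideal in the quotient is $(\{D^{q}A\}_{0\leq q\leq m})$, the defining ideal of $\scrL_{m}(\KK^{E\setminus\{e\}},\Var(A)) = \scrL_{m}(\KK^{E\setminus\{e\}},Y)$.

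I do not foresee a substantive obstacle: the argument is essentially bookkeeping around the Leibniz rule combined with the absence of $x_{e}$ in $\zeta_{\mathscr{M}/e}$. The one point that requires care is interpreting the right-hand side: the polynomial $\zeta_{(\mathscr{M}\setminus e)_{\leq r-1}}$ as written in Definition \ref{dfn-flag-poly-contr/del} lives a priori in $\KK[E]$, and it is its restriction $A=\zeta_{(\mathscr{M}\setminus e)_{\leq r-1}}\big|_{x_{e}=0}=\sum_{i<r}\zeta_{\matM_{i}\setminus e}$ that cuts out $Y\subseteq\KK^{E\setminus\{e\}}$; this restriction is a flag matroidal polynomial on $(\mathscr{M}\setminus e)_{\leq r-1}$ in the sense of Definition \ref{def - flag matroid}.
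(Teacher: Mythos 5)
Your proof is correct and follows the same route as the paper's: apply the flag Deletion--Contraction identity \eqref{eqn - delete contract, flag matroidal polynomials}, expand via Leibniz, and observe that every term carrying a $D^{p}x_{e}$ factor with $p\leq m$ lies in $I(\Gamma)$. The only divergence is the extra \textquotedblleft second Leibniz pass\textquotedblright{} you introduce to handle $x_{e}$-dependence of $\zeta_{(\mathscr{M}\setminus e)_{\leq r-1}}$: the paper's intended reading of Definition \ref{dfn-flag-poly-contr/del} has $\zeta_{(\mathscr{M}\setminus e)_{\leq r-1}}=\sum_{i<r}\zeta_{\matM_{i}\setminus e}\in\KK[E\setminus\{e\}]$ already (this is your $A$, the flag matroidal polynomial on $(\mathscr{M}\setminus e)_{\leq r-1}$), so one Leibniz expansion suffices with $g=\zeta_{\mathscr{M}/e}=\sum_{t}\zeta_{\matM_{t}/e}\in\KK[E\setminus\{e\}]$. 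Your resolution of the (admittedly ambiguous) underbrace placement is equivalent and harmless, and your final identification of $Y=\Var(A)$ matches the statement.
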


\begin{proof}
    By the Deletion-Contraction identity \eqref{eqn - delete contract, flag matroidal polynomials}, there is a $g \in \KK[E \setminus e]$ such that 
    \[
    \zeta_{\mathscr{M}} = \zeta_{(\mathscr{M} \setminus e )_{\leq r - 1}} + x_{e} g. 
    \]
    The result follows exactly as in Proposition \ref{prop - matroidal poly, m-jets intersect with Gamma}.
\end{proof}

\begin{proposition} \label{prop - flag matroids, jets equidimensional}
    Let $\zeta_{\mathscr{M}}$ be a flag matroidal polynomial on a terminally strict and terminally loopless flag matroid $\mathscr{M}$. Then $\scrL_{m}(\KK^{E}, X_{\zeta_{\mathscr{M}}})$ is an equidimensional l.c.i of dimension $(m+1)(n-1)$ for all $m \geq 0$.
\end{proposition}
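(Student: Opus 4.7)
The plan parallels that of Proposition \ref{prop - matroid poly, equidimensional jets}. By Musta\c t\u a's criterion (Proposition \ref{prop - Mustata's characterization of irreducibility of m-jets in terms of dim lying over sing}), the equidimensionality, dimension, and l.c.i.\ assertions reduce to the single dimension bound
\[
\dim \scrL_m(\KK^E, X_{\zeta_{\mathscr{M}}}) \leq (m+1)(n-1) \quad \text{for all } m \geq 1,
\]
the case $m=0$ being obvious since $\zeta_{\mathscr{M}}$ is nonzero (its lowest degree part is $\zeta_{\matM_1} \neq 0$ by terminal strictness). I would then induct on $|E|$.

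For the inductive step I would split into two cases. If every element of $E$ is a coloop on $\matM_1$, then $\matM_1 = \matU_{n,n}$; since coloops of a quotient are coloops of its lift (Remark \ref{rmk-mat-quots}(d)), this propagates up the flag, forcing $\matM_i = \matU_{n,n}$ for all $i$. Terminal strictness then collapses the flag to length one, so $\zeta_{\mathscr{M}}$ is (up to scalar) the Boolean monomial $\prod_{e \in E} x_e$, and Remark \ref{rmk - jets of arrangements} yields the required bound. This step also subsumes the base case $|E|=1$. Otherwise I would pick $e \in E$ that is not a coloop on $\matM_1$ (and automatically not a loop, since $\matM_1$ is loopless), let $r$ be the least index with $e$ a coloop on $\matM_r$ (setting $r = k+1$ if no such index exists), and apply Proposition \ref{prop - flag matroidal poly, delete contract intersect with gamma} together with the intersection-theoretic inequality of Lemma \ref{lemma - basic intersection theory identity} to the variety $\Gamma \subseteq \scrL_m(\KK^E)$ cut out by $(\{D^q x_e\}_{0 \leq q \leq m})$. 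This yields
\[
\dim \scrL_m(\KK^E, X_{\zeta_{\mathscr{M}}}) \leq \dim \scrL_m(\KK^{E \setminus \{e\}}, X_{\zeta_{(\mathscr{M} \setminus e)_{\leq r-1}}}) + (m+1),
\]
and invoking the induction hypothesis on the right produces the desired $(m+1)(n-2) + (m+1) = (m+1)(n-1)$.

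The delicate step, and the main obstacle, is verifying that the induction hypothesis genuinely applies to $\zeta_{(\mathscr{M} \setminus e)_{\leq r-1}}$. I would check three items: (i) $\zeta_{(\mathscr{M} \setminus e)_{\leq r-1}}$ is an honest flag matroidal polynomial on $(\mathscr{M} \setminus e)_{\leq r-1}$; this holds because $e$ is not a loop on any $\matM_i$ (loops of $\matM_{i+1}$ pull back to loops of $\matM_i$ by Remark \ref{rmk-mat-quots}(e), so $e$ being no loop on $\matM_1$ forces the same on every $\matM_i$) and $e$ is not a coloop on $\matM_i$ for $i < r$, which lets standard Deletion-Contraction exhibit each $\zeta_{\matM_i \setminus e}$ as a matroid support polynomial on $\matM_i \setminus e$; (ii) $(\mathscr{M} \setminus e)_{\leq r-1}$ is terminally loopless, which is immediate since $\matM_1 \setminus e$ inherits looplessness from $\matM_1$; (iii) $(\mathscr{M} \setminus e)_{\leq r-1}$ is terminally strict: when $r = 2$ the abutment has length one and strictness is vacuous, while for $r \geq 3$, $e$ is no coloop on $\matM_2$ so $\rank(\matM_2 \setminus e) = \rank(\matM_2) > \rank(\matM_1) = \rank(\matM_1 \setminus e)$. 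Finally, the non-emptiness of $\scrL_m(\KK^E, X_{\zeta_{\mathscr{M}}}) \cap \Gamma$ required in Lemma \ref{lemma - basic intersection theory identity} follows from Remark \ref{rmk - always a m-jet lying over a point in X}, applied to any point of the non-empty hypersurface $X_{\zeta_{(\mathscr{M} \setminus e)_{\leq r-1}}}$ (non-empty because $\min \zeta_{(\mathscr{M} \setminus e)_{\leq r-1}} = \zeta_{\matM_1 \setminus e}$ is a non-zero matroid support polynomial on the loopless rank-preserving deletion).
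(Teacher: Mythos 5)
Your proof is correct and follows essentially the same route as the paper's: reduce to the Musta\c t\u a dimension bound, induct on $|E|$ by picking a non-coloop $e\in\matM_1$, intersect the jet scheme with $\Gamma$ via Proposition \ref{prop - flag matroidal poly, delete contract intersect with gamma} and Lemma \ref{lemma - basic intersection theory identity}, and treat the Boolean $\matM_1$ as the base case. The one cosmetic difference is your base case: you propagate the coloop hypothesis up the flag via Remark \ref{rmk-mat-quots}(d), whereas the paper uses the rank sandwich $|E|=\rank(\matM_1)\le\rank(\matM_i)\le|E|$ to force equality of all $\matM_i$; both arguments immediately yield $k=1$ and $\zeta_{\mathscr{M}}=c\bsx^E$.
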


\begin{proof}
    By Proposition \ref{prop - Mustata's characterization of irreducibility of m-jets in terms of dim lying over sing} it suffices to prove that 
    \begin{equation} \label{eqn - flag matroidal poly, equidimensional jets, dim criterion}
        \dim \scrL_{m}(\KK^{E}, X_{\zeta_{\mathscr{M}}}) \leq (m+1)(n-1) \text{ for all } m \geq 1,
    \end{equation}
    the case $m=0$ being self-evident. Suppose that $e\in E$ is not a (loop nor) coloop of $\matM_{1}$; then $\rank(\matM_1\setminus e)\geq 1$. Let $r$ be the smallest index such that $e$ is a coloop on $\matM_{r}$; if no such index exists let $r = k+1$.  Let $Y \subseteq \KK^{E\minus\{e\}}$ be the variety cut out by $\zeta_{(\mathscr{M} \setminus e)_{\leq r-1}}$ and let $\Gamma \subseteq \scrL_{m}(\KK^{E})$ be the variety cut out by $(\{D^{q} x_{e} \}_{0 \leq q \leq m})$. Then
    \begin{align} \label{eqn - flag matroidal poly, equidimensional jets, part 1}
        \dim \scrL_{m}(\KK^{E}, X_{\zeta_{\mathscr{M}}})
        &\leq \dim \left( \scrL_{m}(\KK^{E}, X_{\zeta_{\mathscr{M}}}) \cap \Gamma \right) + \codim \Gamma \\
        &= \dim \scrL_{m}(\KK^{E\minus\{e\}}, Y) + \codim \Gamma, \nonumber
    \end{align}
    by Lemma \ref{lemma - basic intersection theory identity} (which applies since $\scrL_{m}(\KK^{E\minus\{e\}}, Y) \neq \emptyset)$ and  Proposition \ref{prop - flag matroidal poly, delete contract intersect with gamma}. 
    
    If \eqref{eqn - flag matroidal poly, equidimensional jets, dim criterion} holds for $Y$, then we can extend \eqref{eqn - flag matroidal poly, equidimensional jets, part 1} by adding the line 
    \[
    \leq (m+1)(n-2) + (m+1) = (m+1)(n-1).
    \]
    So if $\eqref{eqn - flag matroidal poly, equidimensional jets, dim criterion}$ holds for such a $Y$ then it holds for $X_{\zeta_{\mathscr{M}}}$ as well. 
    
    The above gives the inductive step of a proof of \eqref{eqn - flag matroidal poly, equidimensional jets, dim criterion} via induction on $|E|$ since looplessness of $\matM_{1}$ persists under deletion, and  $\mathscr{M}$ being terminally strict implies the same for $(\mathscr{M} \setminus e)_{\leq r-1}$.
    
    The base case for this induction occurs  when $\matM_{1}$ is Boolean: then 
    $|E|=\rank(\matM_1)\le\rank(\matM_i)\le|E|$ for all $i$, forcing all $\matM_i$ to be equal; by strictness, $r=1$ and
     $\zeta_{\mathscr{M}} = c\bsx^{E}$ for $c \in \KK^{\times}$. That $\zeta_{\mathscr{M}}$ satisfies \eqref{eqn - flag matroidal poly, equidimensional jets, dim criterion} in this case is Remark \ref{rmk - jets of arrangements}.
\end{proof}

\subsection{Rationality of Flag Matroidal Polynomials}

We are now ready to prove flag matroidal polynomials have rational singularities given mild conditions on the associated flag matroid. As before we will use a handle style induction to show that the $m$-jets are irreducible, with the handles coming from the terminal matroid $\matM_{1}$ of the flag $\mathscr{M}$. 

The first step is to give a handle formula for $\zeta_{\mathscr{M}}$ based on a handle $H$ of $\matM_{1}$. 

\begin{proposition} \label{prop - new handle fml, flag matroidal poly}
    Let $\zeta_{\mathscr{M}}$ be a flag matroidal polynomial on the length $k$ flag matroid $\mathscr{M}$. Suppose that $H \in \calI_{\matM_{1}}$ is a proper handle on $\matM_{1}$ such that $\rank(\matM_{1} / H) \geq 1$. Let $r$ be the smallest index such that $H$ contains a coloop on $\matM_{r}$; if no such index exists let $r = k+1$. Then there exist flag matroidal polynomials $\zeta_{\mathscr{M} / H} \in \FMP(\mathscr{M} / H)$ and $\zeta_{(\mathscr{M} \setminus H)_{< r}}^{h} \in \FMP((\mathscr{M} \setminus H)_{< r})$, for each $h \in H$, such that
    \begin{equation} \label{eqn - new handle fml, flag matroidal poly}
        \zeta_{\mathscr{M}} = \left( \sum_{h \in H} \bsx^{H\minus\{h\}} \zeta_{(\mathscr{M} \setminus H)_{< r}}^{h} \right) + \bsx^{H} \zeta_{\mathscr{M} / H}
    \end{equation}
\end{proposition}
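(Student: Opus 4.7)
The plan is to split $\zeta_{\mathscr{M}} = \sum_{i=1}^{k} \zeta_{\matM_i}$ into its matroid-support summands and treat two regimes separately: the layers $i < r$ (where $H$ meets no coloop of $\matM_i$) and $i \geq r$ (where $H$ meets some coloop of $\matM_i$). Before beginning, I would collect three preparatory facts via Remark \ref{rmk-mat-quots}: $H$ is a handle of every $\matM_i$ (handles of a quotient remain handles upstream), $H$ is independent in every $\matM_i$ (since $\calI_{\matN} \subseteq \calI_{\matM}$ under quotients), and $H$ remains proper since the ground set is shared.

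For each $i < r$, I would invoke Proposition \ref{prop - new handle fml - matroidal poly} applied to $\matM_i$ with singleton data $\{f_e = x_e, d_e = 1\}$, under which matroidal polynomials coincide with matroid support polynomials by Proposition \ref{prop-MSP-are-MtrdlM}. This yields, for each $h \in H$, matroid support polynomials $\zeta_{\matM_i \setminus H}^h \in \MSP(\matM_i \setminus H)$ and $\zeta_{\matM_i / H} \in \MSP(\matM_i / H)$ with
\[
\zeta_{\matM_i} = \sum_{h \in H} \bsx^{H \minus \{h\}} \zeta_{\matM_i \setminus H}^h + \bsx^H \zeta_{\matM_i / H}.
\]

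The main technical point will be a sub-claim handling $i \geq r$: if $H$ contains any coloop of $\matM_i$ then \emph{every} element of $H$ is a coloop of $\matM_i$. I would argue by contradiction. If $h_0 \in H$ is a coloop and some $h \in H$ is not, choose a basis $B$ of $\matM_i$ with $h \notin B$; then $B \cup \{h\}$ contains a circuit $C$ with $h \in C$, and the handle property forces $H \subseteq C$ and in particular $h_0 \in C$, contradicting that coloops lie in no circuit. Once the sub-claim is in hand, iterated use of the coloop Deletion-Contraction axiom (together with the elementary observation that coloops remain coloops under the deletion of other coloops, and that deletion and contraction agree on a coloop) produces $\zeta_{\matM_i} = \bsx^H \chi_i$ for some $\chi_i \in \MSP(\matM_i / H)$.

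Summing both cases and rearranging then yields \eqref{eqn - new handle fml, flag matroidal poly} with
\[
\zeta_{(\mathscr{M}\setminus H)_{<r}}^h \;:=\; \sum_{i < r} \zeta_{\matM_i \setminus H}^h, \qquad \zeta_{\mathscr{M}/H} \;:=\; \sum_{i < r} \zeta_{\matM_i / H} \;+\; \sum_{i \geq r} \chi_i.
\]
To finish I would verify that these are bona fide flag matroidal polynomials on genuine flag matroids: the quotient relations persist under uniform deletion or contraction by Remark \ref{rmk-mat-quots}, and the terminal rank conditions follow from the hypothesis $\rank(\matM_1 / H) \geq 1$ together with the rank arithmetic of Remark \ref{rmk-handle basics} (using that $H$ has no coloops in $\matM_1$ when $r \geq 2$). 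The degenerate cases are automatic: if $r = 1$ the first sum is empty and the formula collapses to $\zeta_{\mathscr{M}} = \bsx^H \zeta_{\mathscr{M}/H}$, while if $r = k+1$ the second case is vacuous and one recovers the matroidal handle formula applied layer by layer. The real obstacle is the coloop sub-claim; the rest is algebraic bookkeeping.
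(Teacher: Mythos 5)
Your proposal is correct and follows essentially the same route as the paper: establish that $H$ is a handle of every $\matM_i$ (and hence all-or-nothing with respect to coloops on each layer), apply the matroidal handle formula of Proposition \ref{prop - new handle fml - matroidal poly} to the layers $i < r$, peel off $\bsx^H$ from the layers $i \geq r$ by iterated coloop Deletion-Contraction, and then repackage into the handle flag matroidal polynomials of \eqref{eqn - handle flag matroidal polys}. Your contradiction argument for the coloop sub-claim is just the explicit version of the paper's one-line justification ("as soon as $H$ contains a coloop on $\matM_i$ it consists of only coloops on $\matM_i$"), so there is no real divergence in method.
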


\begin{proof}
    By Remark \ref{rmk-mat-quots}.\eqref{item-mat-quot-c}, $H$ is a handle on each $\matM_{i}$. Thus, as soon as $H$ contains a coloop on $\matM_{i}$ it consists of only coloops on $\matM_{i}$. Thus: if $i < r$, the handle $H \in \calI_{\matM_{i}}$ on $\matM_{i}$ contains no coloops; for $i \geq r$, the set $H$ contains only coloops on $\matM_{i}$. By using Proposition \ref{prop - new handle fml - matroidal poly} we compute
    \begin{equation} \label{eqn - new handle fml, flag matroidal poly, 1}
        \sum_{i < r} \zeta_{\matM_{i}} = \sum_{i < r} \left[ \left( \sum_{h \in H} \bsx^{H\minus\{h\}} \zeta_{\matM_{i} \setminus H}^{h} \right) + \bsx^{H} \zeta_{\matM_{i} / H} \right],
    \end{equation}
    where $\zeta_{\matM_{i} / H}$ and $\zeta_{\matM_{i} \setminus H}^{h}$, for each $h \in H$, are the handle matroidal polynomials of Definition \ref{def - handle matroidal polynomials}. As for $i \geq r$, by using Deletion-Contraction for matroidal polynomials with respect to coloops, we obtain
    \begin{equation} \label{eqn - new handle fml, flag matroidal poly, 2}
        \sum_{i \geq r} \zeta_{\matM_{i}} = \sum_{i \geq r} \bsx^{H} \zeta_{\matM_{i} / H}
    \end{equation}
    where each $\zeta_{\matM_{i}/ H} \in \MSP(\matM_{i} / H)$ is obtained by repeated application of contraction. Set, with $h \in H$, 
    \begin{equation} \label{eqn - handle flag matroidal polys}
        \zeta_{(\mathscr{M} \setminus H)_{< r}}^{h} := \sum_{1 \leq i < r} \zeta_{\matM_{i} \setminus H}^{h} \quad \text{ AND } \quad \zeta_{\mathscr{M} / H} := \sum_{1 \leq i \leq k} \zeta_{\matM_{i} / H}.
    \end{equation}
    Then $\zeta_{(\mathscr{M} \setminus H)_{< r}}^{h} \in \FMP((\mathscr{M} \setminus H)_{< r})$, for all $h \in H$, and $\zeta_{\mathscr{M} / H} \in \FMP(\mathscr{\matM}/H)$ (since $\rank(\matM_1 \setminus H) \geq \rank(\matM_1 / H) \geq 1$ by hypothesis).  As $\zeta_{\mathscr{M}} = \sum_{1 \leq i \leq r} \zeta_{\matM_{i}}$, combining \eqref{eqn - handle flag matroidal polys}, \eqref{eqn - new handle fml, flag matroidal poly, 1} and \eqref{eqn - new handle fml, flag matroidal poly, 2} yields \eqref{eqn - new handle fml, flag matroidal poly}.
\end{proof}

\begin{define} \label{def - handle flag matroidal polys}
    In the set-up of Proposition \ref{prop - new handle fml, flag matroidal poly}, we call the flag matroidal polynomials $\zeta_{\mathscr{M} / H} \in \FMP(\mathscr{M}/H)$ and $\zeta_{(\mathscr{M} \setminus H)_{<r}}^{h} \in \FMP((\mathscr{M} \setminus H)_{<r})$, that appear in \eqref{eqn - handle flag matroidal polys} the \emph{handle flag matroidal polynomials} (with respect to $H$) of $\zeta_\scrM$. 
\end{define}

We next prove an analogue to Lemma \ref{lemma - matroidal poly handle fml, intersecting induced polys}. 

\begin{lemma} \label{lemma - flag matroidal poly handle fml, intersecting induced polys}
    Let $\zeta_{\mathscr{M}}$ be a flag matroidal polynomial on the length $k$ flag matroid $\mathscr{M}$. Assume that $\mathscr{M}$ is terminally connected and terminally strict. Also assume $H \in \calI_{\matM_{1}}$ is a proper handle on $\matM_{1}$ such that $\matM_{1} \setminus H$ is connected and $\rank(\matM_{1} / H) \geq 1$. Then, for each $h \in H$, we have the following dimension bound for hypersurfaces attached to handle flag matroidal polynomials:
    \[
    X_{\zeta_{(\mathscr{M} \setminus H)_{< r}}^{h}} \cap X_{\zeta_{\mathscr{M} / H}} \subseteq \KK^{E\minus H} \text{ has dimension at most } |E| - |H| - 2.
    \]
\end{lemma}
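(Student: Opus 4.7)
The proof will mirror the strategy of Lemma \ref{lemma - matroidal poly handle fml, intersecting induced polys}, with the key additional step being the verification that the reduced flag $(\mathscr{M}\setminus H)_{<r}$ inherits the structural properties needed to apply our irreducibility criterion for flag matroidal polynomials.

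First, I would confirm $|E \setminus H| \geq 2$ so that the stated dimension bound is non-vacuous. Since $\emptyset \neq H \in \calI_{\matM_1}$, it is not itself a circuit, and connectedness of $\matM_1$ forces $H$ to be contained in some circuit $C$ of $\matM_1$; since $\rank(\matM_1/H) \geq 1$, we have $C \subsetneq E$, so $E \setminus H$ contains both a nonempty piece of $C\setminus H$ and of $E\setminus C$.

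The central task is to establish that $\zeta_{(\mathscr{M}\setminus H)_{<r}}^{h}$ is irreducible, which I would accomplish via Remark \ref{rmk - basics about flag matroidal polys}.\ref{item-flag-mat-c}. This requires showing that $(\mathscr{M}\setminus H)_{<r}$ is terminally strict and terminally connected. Terminal connectedness is given: $\matM_1 \setminus H$ is connected by hypothesis. For terminal strictness, since $H$ is a handle on each $\matM_i$ (Remark \ref{rmk-mat-quots}.\ref{item-mat-quot-c}), is independent in every $\matM_i$ (Remark \ref{rmk-mat-quots}.\ref{item-mat-quot-b}), and contains no coloops of $\matM_i$ for $i < r$ (by definition of $r$), Remark \ref{rmk-handle basics} yields $\rank(\matM_i \setminus H) = \rank(\matM_i) - |H| + 1$ for all $i < r$. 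Consequently $\rank(\matM_2 \setminus H) - \rank(\matM_1\setminus H) = \rank(\matM_2) - \rank(\matM_1) > 0$ by terminal strictness of $\mathscr{M}$, as required.

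With irreducibility secured, I would finish with a degree comparison in parallel with the matroidal case. By Remark \ref{rmk - basics about flag matroidal polys}.\ref{item-flag-mat-b}, the lowest-degree part of $\zeta_{(\mathscr{M}\setminus H)_{<r}}^{h}$ equals $\zeta_{\matM_1 \setminus H}^{h} \in \MSP(\matM_1 \setminus H)$, homogeneous of degree $\rank(\matM_1) - |H| + 1$, while $\min(\zeta_{\mathscr{M}/H}) = \zeta_{\matM_1/H} \in \MSP(\matM_1/H)$, homogeneous of degree $\rank(\matM_1) - |H|$ (using that $\mathscr{M}/H$ is terminally strict by Remark \ref{rmk - basics about flag matroidal polys}.\ref{item-strict-contr/del}, and that $\rank(\matM_i) > \rank(\matM_1)$ for all $i \geq 2$ via terminal strictness and monotonicity of rank under matroid quotients). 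If the stated dimension bound failed, then since both $X_{\zeta_{(\mathscr{M}\setminus H)_{<r}}^h}$ and $X_{\zeta_{\mathscr{M}/H}}$ are hypersurfaces in $\KK^{E\setminus H}$ with the former irreducible, we would obtain $\zeta_{\mathscr{M}/H} = p_h \cdot \zeta_{(\mathscr{M}\setminus H)_{<r}}^{h}$ for some nonzero $p_h \in \KK[E \setminus H]$. Taking $\min$ of both sides yields $\deg(\min p_h) = -1$, a contradiction.

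The main technical obstacle is the bookkeeping needed to verify that the flag structures $(\mathscr{M}\setminus H)_{<r}$ and $\mathscr{M}/H$ retain terminal strictness (and, for the former, terminal connectedness), particularly the careful rank tracking that relies on the handle-theoretic identities in Remark \ref{rmk-handle basics} combined with the behavior of handles under matroid quotients in Remark \ref{rmk-mat-quots}.
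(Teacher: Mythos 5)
Your proof is correct and follows essentially the same route as the paper's: establish that $(\mathscr{M}\setminus H)_{<r}$ and $\mathscr{M}/H$ are terminally strict (and the former terminally connected), conclude irreducibility of $\zeta^h_{(\mathscr{M}\setminus H)_{<r}}$, and rule out divisibility by comparing the degrees of the $\min$ parts $\zeta^h_{\matM_1\setminus H}$ and $\zeta_{\matM_1/H}$, which differ by exactly one. Two small points: the inclusion $\calI_{\matM_1}\subseteq\calI_{\matM_i}$ you need is Remark~\ref{rmk-mat-quots}(a), not (b); and your inference ``$\rank(\matM_1/H)\geq 1$ hence $C\subsetneq E$'' also silently uses connectedness of $\matM_1\setminus H$ (if $\matM_1$ were a circuit, $\rank(\matM_1/H)\geq 1$ alone forces $|E\setminus H|\geq 2$, but then $\matM_1\setminus H$ would be free on $\geq 2$ elements, hence disconnected) --- the paper sidesteps this by arguing directly from the rank equality.
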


\begin{proof}
    Since $1 \leq \rank(\matM_{1} / H) = \rank(\matM_{1}) - |H|$  we cannot have $|E\minus H| \leq 1$, lest $\rank(M_{1}) = |E|$ in violation of the connectedness of $\matM_{1}$. 
    
    Fix $h \in H$. By construction, $\mathscr{M} / H$ and $(\mathscr{M} \setminus H)_{< r}$ are both terminally strict, \emph{cf.}\ Remark \ref{rmk - basics about flag matroidal polys}.\ref{item-strict-contr/del} and the definition of $r$; by hypothesis, $(\mathscr{M} \setminus H)_{<r}$ is terminally connected. So $\min \zeta_{(\mathscr{M} \setminus H)_{< r}}^{h} = \zeta_{(\matM_{1} \setminus H)^{h}}^{h}, \min \zeta_{\mathscr{M} / H} = \zeta_{\matM_{1} / H}$, and $\zeta_{(\mathscr{M} \setminus H)_{< r}}^{h}$ is irreducible, \emph{cf.}\ Remark \ref{rmk - basics about flag matroidal polys}.\ref{item-flag-mat-b},\ref{item-flag-mat-c}. By irreducibility, the only way the dimension bound can fail is if $\zeta_{\mathscr{M} / H}$ is divisible by $\zeta_{(\mathscr{M} \setminus H)_{< r}}^{h}$. But then $\min \zeta_{\mathscr{M} / H}$ is divisible by $\min \zeta_{(\mathscr{M} \setminus H)_{< r}}^{h}$ which is impossible as the former has degree $\rank(\matM_{1} / H) = \rank(\matM_{1}) - |H|$ and the latter has degree $\rank(\matM_{1} \setminus H) = \matM_{1} - (|H| - 1)$. So the dimension bound does hold.
\end{proof}

As in the case of matroidal polynomials, instead of studying the dimension of $\scrL_{m}(X_{\zeta_{\mathscr{M}}}, \Var(\mathfrak{p}))$, for $\mathfrak{p}$ a minimal prime of the true Jacobian ideal of $\zeta_{\mathscr{M}}$, it will be both easier and sufficient to replace $\Var(\mathfrak{p})$ with certain hypersurfaces. This is a straightforward application of our new handle formula.

\begin{proposition} \label{prop - min primes of jacobian, flag matroidal poly}
    Let $\zeta_{\mathscr{M}}$ be a matroidal polynomial on a length $k$ flag matroid $\mathscr{M}$. Let $H \in \calI_{\matM_{1}}$ be a proper handle on $\matM_{1}$ that contains no coloops on $\matM_{1}$. Furthermore, suppose that $\rank(\matM_{1} / H) \geq 1$. If $\mathfrak{p} \in \Spec \KK[E]$ is a prime containing the true Jacobian $(\partial \zeta_{\mathscr{M}}) + (\zeta_{\mathscr{M}})$ of $\zeta_{\mathscr{M}}$, then we have one of the following memberships:
    \begin{enumerate}[label=(\alph*)]
        \item $\mathfrak{p} \ni x_{h} \text{ for some } h \in H$;
        \item $\mathfrak{p}$ contains the handle flag polynomial  $\zeta_{\mathscr{M} / H}$ of $\zeta_\scrM$.
    \end{enumerate}
\end{proposition}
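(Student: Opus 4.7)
The plan is to mirror the computation used for the single-matroid analogue (Lemma \ref{lemma - matroid poly, components of Sing for M connected}), but now driven by the flag handle formula of Proposition \ref{prop - new handle fml, flag matroidal poly} instead of Proposition \ref{prop - new handle fml - matroidal poly}. The hypotheses match exactly what that proposition requires: $H\in\calI_{\matM_1}$ is an independent proper handle on $\matM_1$ containing no coloops on $\matM_1$ (this forces the index $r$ to satisfy $r\geq 2$, so that $\zeta^h_{(\mathscr{M}\setminus H)_{<r}}$ is a genuine flag matroidal polynomial involving the $\matM_1$ component), and $\rank(\matM_1/H)\geq 1$ (which guarantees $\zeta_{\mathscr{M}/H}$ is a genuine flag matroidal polynomial). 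Since flag matroidal polynomials are sums of matroid support polynomials, all singleton data is effectively $f_e=x_e$, so the $\partial_h$-derivatives act on the monomial prefactors $\bsx^{H\setminus\{h'\}}$ in the combinatorially simplest way.

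Concretely, I would fix $h\in H$ and compute $\zeta_{\mathscr{M}}-x_h\partial_h\bullet\zeta_{\mathscr{M}}$ starting from
\[
\zeta_{\mathscr{M}} \;=\; \sum_{h'\in H}\bsx^{H\setminus\{h'\}}\,\zeta^{h'}_{(\mathscr{M}\setminus H)_{<r}} \;+\; \bsx^{H}\,\zeta_{\mathscr{M}/H}.
\]
Since the handle flag polynomials $\zeta^{h'}_{(\mathscr{M}\setminus H)_{<r}}$ and $\zeta_{\mathscr{M}/H}$ all live in $\KK[E\setminus H]$, the only sensitivity to $x_h$ comes from the monomial prefactors. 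For $h'\neq h$ the variable $x_h$ appears exactly linearly in $\bsx^{H\setminus\{h'\}}$, so $x_h\partial_h$ returns that prefactor unchanged; for $h'=h$, $\bsx^{H\setminus\{h\}}$ is free of $x_h$ and contributes nothing; and $x_h\partial_h\bullet(\bsx^H\zeta_{\mathscr{M}/H})=\bsx^H\zeta_{\mathscr{M}/H}$. The three contributions telescope to yield
\[
\zeta_{\mathscr{M}}-x_h\partial_h\bullet\zeta_{\mathscr{M}}\;=\;\bsx^{H\setminus\{h\}}\,\zeta^h_{(\mathscr{M}\setminus H)_{<r}},
\]
an element of $(\partial\zeta_{\mathscr{M}})+(\zeta_{\mathscr{M}})\subseteq\mathfrak{p}$ that holds for every $h\in H$.

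I would then finish by primality. If $\mathfrak{p}$ contains some $x_h$ with $h\in H$, conclusion (a) is immediate. Otherwise, for each $h\in H$ primality applied to $\bsx^{H\setminus\{h\}}\zeta^h_{(\mathscr{M}\setminus H)_{<r}}\in\mathfrak{p}$ forces $\zeta^h_{(\mathscr{M}\setminus H)_{<r}}\in\mathfrak{p}$. Plugging this back into the handle formula together with $\zeta_{\mathscr{M}}\in\mathfrak{p}$ gives $\bsx^H\zeta_{\mathscr{M}/H}\in\mathfrak{p}$, and one more use of primality (no $x_h$ is in $\mathfrak{p}$) yields $\zeta_{\mathscr{M}/H}\in\mathfrak{p}$, which is (b).

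There is no genuine obstacle beyond the bookkeeping in the telescoping identity above; the only thing that could conceivably go wrong is a subtlety about whether the handle flag polynomials are well-defined and non-trivial, and that is precisely what the hypotheses ``$H$ contains no coloops on $\matM_1$'' and ``$\rank(\matM_1/H)\geq 1$'' guarantee.
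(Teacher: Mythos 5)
Your proof is correct and follows essentially the same route as the paper: apply the flag handle formula, compute $\zeta_{\mathscr{M}}-x_h\partial_h\bullet\zeta_{\mathscr{M}}=\bsx^{H\setminus\{h\}}\zeta^h_{(\mathscr{M}\setminus H)_{<r}}$, and invoke primality. The paper is marginally more direct in the last step, observing that $\zeta_{\mathscr{M}}-\sum_{h\in H}\bigl(\zeta_{\mathscr{M}}-x_h\partial_h\bullet\zeta_{\mathscr{M}}\bigr)=\bsx^H\zeta_{\mathscr{M}/H}$ lies in the true Jacobian outright and then applying primality once to that single product, rather than your two-stage use of primality, but the difference is purely cosmetic.
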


\begin{proof}
    Let $r$ be the smallest index such that $H$ contains a coloop on $\matM_r$; if no such index exists set $r = k+1$. By Proposition \ref{prop - new handle fml, flag matroidal poly}, for each $h \in H$ we compute
    \[
    \mathfrak{p} \supseteq (\partial \zeta_{\mathscr{M}}) + (\zeta_{\mathscr{M}}) \ni \zeta_{\mathscr{M}} - (x_{h} \partial_{x_{h}} \bullet \zeta_{\mathscr{M}}) = \bsx^{H\minus\{h\}} \zeta_{(\mathscr{M} \setminus H)_{< r}}^{h}.
    \]
    So again using Proposition \ref{prop - new handle fml, flag matroidal poly},
    \[
    \mathfrak{p} \ni \zeta_{\mathscr{M}} - \left(\sum_{h \in H} \zeta_{\mathscr{M}} - (x_{h} \partial_{x_{h}} \bullet \zeta_{\mathscr{M}}) \right) = \bsx^{H} \zeta_{\mathscr{M} / H}.
    \]
    Now use primality of $\mathfrak{p}$.
\end{proof}

As in the case of matroidal polynomials, we next intersect $\scrL_{m}(\KK^{E}, X_{\zeta_{\mathscr{M}}})$ with a well chosen variety based on whatever handle we are working with, leading to an inductive set-up.

\begin{lemma} \label{lemma - jets, flag matroidal polynomial, handle intersect with gamma formula}
    Let $\zeta_{\mathscr{M}}$ be a flag matroidal polynomial on the length $k$ terminally strict flag matroid $\mathscr{M}$. Suppose that $H \in \calI_{\matM_{1}}$ is a proper handle on $\matM_{1}$ that contains no coloops on $\matM_{1}$ while $\rank(\matM_{1} / H) \geq 1$. Fix $h \in H$ and let $\xi_{h} = \bsx^{H\minus\{h\}} \zeta_{(\mathscr{M} \setminus H)_{< r}}^{h} \in \KK[{E\minus\{h\}}].$ Furthermore, let $\Gamma \subseteq \scrL_{m}(\KK^{E})$ be the variety cut out by $(\{D^{q} x_{h} \}_{0 \leq q \leq m})$. Then
    \begin{equation} \label{eqn - jets, flag matroidal poly, handle intersect with gamma fml}
    \scrL_{m}(\KK^{E}, X_{\zeta_{\mathscr{M}}}, \Var(\zeta_{\mathscr{M} / H})) \cap \Gamma \simeq \scrL_{m}(\KK^{{E\minus\{h\}}}, \Var(\xi_{h}), \Var(\zeta_{\mathscr{M}/H})) \neq \emptyset.
    \end{equation}
    Here $\zeta_{\mathscr{M} / H}, \zeta_{(\mathscr{M} \setminus H)_{< r}}^{h}$ are the handle flag matroidal polynomials from Definition \ref{def - handle flag matroidal polys}, while $r$ is the smallest index such that $H$ contains a coloop on $\matM_{r}$ (and is $k+1$ if no such index exists).
\end{lemma}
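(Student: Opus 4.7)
The plan is to adapt the argument of Lemma \ref{lemma - matroidal poly, handle fml intersect with Gamma} to the flag setting, with Proposition \ref{prop - new handle fml, flag matroidal poly} playing the role of Proposition \ref{prop - new handle fml - matroidal poly}. First I would peel off the variable $x_h$ from the handle formula: by Proposition \ref{prop - new handle fml, flag matroidal poly},
\[
\zeta_{\mathscr{M}} = \xi_{h} + x_{h}\cdot g, \qquad g := \sum_{h' \in H\minus\{h\}} \bsx^{H \minus \{h,h'\}} \zeta_{(\mathscr{M}\setminus H)_{<r}}^{h'} + \bsx^{H\minus\{h\}} \zeta_{\mathscr{M}/H},
\]
and the key observation is that $g \in \KK[E\minus\{h\}]$, because every factor on the right uses variables from $E\minus H$ or from $H\minus\{h\}$.

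Next I would apply the universal derivation, getting
\[
D^{q} \zeta_{\mathscr{M}} = D^{q}\xi_{h} + \sum_{0 \leq p \leq q} \binom{q}{p}(D^{p} x_{h})(D^{q-p} g).
\]
Reducing modulo the ideal $(\{D^{q} x_{h}\}_{0 \leq q \leq m})$ that cuts out $\Gamma$, the defining ideals agree:
\[
(\{D^{q}\zeta_{\mathscr{M}}\}_{0\le q\le m}) + (\{D^{q}x_{h}\}_{0\le q\le m}) = (\{D^{q}\xi_{h}\}_{0\le q\le m}) + (\{D^{q}x_{h}\}_{0\le q\le m}),
\]
exactly as in Proposition \ref{prop - matroidal poly, m-jets intersect with Gamma}. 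This yields the isomorphism $\scrL_m(\KK^E, X_{\zeta_\mathscr{M}}) \cap \Gamma \simeq \scrL_m(\KK^{E\minus\{h\}}, \Var(\xi_h))$. The ``lying over $\Var(\zeta_{\mathscr{M}/H})$'' condition carries across verbatim because $\zeta_{\mathscr{M}/H} \in \KK[E\minus H] \subseteq \KK[E\minus\{h\}]$ and so the defining ideal of $\pi_{m,0}^{-1}(\Var(\zeta_{\mathscr{M}/H}))$ is generated in variables not involving any $x_h^{(q)}$; adding it to both sides of the equality above proves \eqref{eqn - jets, flag matroidal poly, handle intersect with gamma fml}.

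The non-emptiness assertion is then immediate via Remark \ref{rmk - always a m-jet lying over a point in X}: it suffices to exhibit a point of $\Var(\xi_h) \cap \Var(\zeta_{\mathscr{M}/H}) \subseteq \KK^{E\minus\{h\}}$ and take its zero $m$-jet. The origin works: the polynomial $\xi_h = \bsx^{H \minus \{h\}} \zeta_{(\mathscr{M}\setminus H)_{<r}}^{h}$ vanishes at $0$ since $|H \minus \{h\}| \geq 0$ and, if $|H|=1$ already $\zeta_{(\mathscr{M}\setminus H)_{<r}}^{h}$ has positive degree (as $\rank(\matM_1\setminus H) = \rank(\matM_1) - |H| + 1 \geq 1$); and $\zeta_{\mathscr{M}/H}$ vanishes at $0$ because each summand $\zeta_{\matM_i / H}$ is a matroid support polynomial of positive degree. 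Indeed, $H \in \calI_{\matM_1} \subseteq \calI_{\matM_i}$ by Remark \ref{rmk-mat-quots}, so $\rank(\matM_i/H) = \rank(\matM_i) - |H| \geq \rank(\matM_1) - |H| = \rank(\matM_1/H) \geq 1$.

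The only real subtlety is checking that the $g$ produced by the handle formula is genuinely independent of $x_h$ (so that $D^{q-p}g$ is annihilated by the relevant quotients and the telescoping works), and this is a direct inspection of the formula. Everything else is a verbatim reuse of the derivation trick already deployed twice in the matroidal case.
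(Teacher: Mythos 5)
Your argument is correct and follows essentially the same path as the paper's: decompose $\zeta_{\mathscr{M}} = \xi_h + x_h g$ via the flag handle formula, observe the ideal identity modulo $(\{D^q x_h\}_{0\le q\le m})$, and deduce non-emptiness by exhibiting the origin as a common zero via Remark \ref{rmk - always a m-jet lying over a point in X}. Two small observations: the statement that $g$ is independent of $x_h$, which you highlight as the key subtlety, is true but not actually required for the ideal equality (every term $\binom{q}{p}(D^p x_h)(D^{q-p}g)$ with $0\le p\le q\le m$ lies in the ideal $(\{D^q x_h\}_{0\le q\le m})$ regardless of whether $g$ uses $x_h$; what's genuinely needed is $\xi_h \in \KK[E\minus\{h\}]$); and your rank-based argument for the vanishing of $\xi_h$ and $\zeta_{\mathscr{M}/H}$ at the origin is an equivalent variant of the paper's terminal-strictness phrasing, both tracing back to the same rank inequalities.
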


\begin{proof}
    First note that because $H$ contains no coloops on $\matM_{1}$, we know that $r > 1$. In particular, $\zeta_{h}$ is well-defined. By the handle formula for flag matroidal polynomials, \emph{cf.}\ Proposition \ref{prop - new handle fml, flag matroidal poly}, there exists a polynomial $g \in \KK[E]$ such that $\zeta_{\mathscr{M}} = \xi_{h} + x_{h} g$.  The justification for the ``$\simeq$'' part of \eqref{eqn - jets, flag matroidal poly, handle intersect with gamma fml} follows by similar argument as in Lemma \ref{lemma - matroidal poly, handle fml intersect with Gamma}. 
    
    As for the ``$\neq \emptyset$'' part of \eqref{eqn - jets, flag matroidal poly, handle intersect with gamma fml}, because $\rank(\matM_{1} / H) \geq 1$ and because $\mathscr{M} / H$ is terminally strict (Remark \ref{rmk - basics about flag matroidal polys}.(e)), we have
    \begin{equation} \label{eqn - flag matroidal poly, handle intersect with gamma, 1}
    0 \neq \zeta_{\mathscr{M} / H} \in (\{x_{e}\}_{e \in E\minus H}) \subsetneq \KK[{E\minus\{h\}}].
    \end{equation}
    Because $(\mathscr{M} \setminus H)_{< r}$ is terminally strict by definition of $r$ and because $\rank(\matM_{1} \setminus H) \geq \rank(\matM_{1} / H) \geq 1$, we see that
    \begin{equation} \label{eqn - flag matroidal poly, handle intersect with gamma, 2}
        0 \neq \xi_{h} \in (\zeta_{(\mathscr{M} \setminus H)_{< r}}^{h}) \subseteq (\{x_{e}\}_{e \in E\minus H}) \subsetneq \KK[{E\minus\{h\}}]. 
    \end{equation}
    Together \eqref{eqn - flag matroidal poly, handle intersect with gamma, 1} and \eqref{eqn - flag matroidal poly, handle intersect with gamma, 2} imply that 
    \begin{equation*} \label{eqn - flag matroidal poly, handle intersect with gamma, 3}
        \KK^{{E\minus\{h\}}} \supsetneq \Var(\xi_{h}) \cap \Var(\zeta_{\mathscr{M} / H}) \neq \emptyset.
    \end{equation*}
    The non-emptiness of $\scrL_{m}(\KK^{{E\minus\{h\}}}, X_{\zeta_{h}}, \Var(\zeta_{\mathscr{M} / H}))$ follows by Remark \ref{rmk - always a m-jet lying over a point in X}.
\end{proof}

Finally, we can use all this handle set-up to give our inductive proof the $m$-jets of $X_{\zeta_{\mathscr{M}}}$ are irreducible under mild hypotheses. While the nature of flag matroidal polynomials makes the argument more involved than its counterpart Theorem \ref{thm - matroid poly, connected implies m-jets irreducible}, the architecture of the two proofs are of a kind.

\begin{theorem} \label{thm - flag matroidal poly, m-jets irreducible}
    Consider a flag matroidal polynomial $\zeta_{\mathscr{M}}$ attached to a terminally connected and terminally strict flag matroid $\mathscr{M}$ of length $k$ on the ground set $E$. If $\rank(M_{1}) \geq 2$, then $\scrL_{m}(\KK^{E}, X_{\zeta_{\mathscr{M}}})$ is an irreducible, reduced l.c.i of dimension $(m+1)(n-1)$ for all $m \geq 1$. Moreover, $X_{\zeta_{\mathscr{M}}}$ is normal.
\end{theorem}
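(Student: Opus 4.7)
The plan is to mirror the strategy of Theorem~\ref{thm - matroid poly, connected implies m-jets irreducible} (which itself is the matroidal analogue) and leverage the handle formula Proposition~\ref{prop - new handle fml, flag matroidal poly} together with Musta\c t\u a's criterion, Proposition~\ref{prop - Mustata's characterization of irreducibility of m-jets in terms of dim lying over sing}. Equidimensionality and the dimension value $(m+1)(n-1)$ already come from Proposition~\ref{prop - flag matroids, jets equidimensional} (noting that $\matM_1$ is loopless by terminal connectedness plus $\rank \geq 2$). Hence the entire proof reduces to certifying
\[
\dim \scrL_{m}(\KK^{E}, X_{\zeta_{\scrM}}, X_{\zeta_{\scrM}, \Sing}) < (m+1)(n-1) \quad \text{for all } m \geq 1,
\]
after which irreducibility, reducedness, the l.c.i.\ property and normality all follow from Proposition~\ref{prop - Mustata's characterization of irreducibility of m-jets in terms of dim lying over sing}. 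Let $\mathfrak{p}$ be an arbitrary minimal prime of the true Jacobian $(\partial \zeta_{\scrM}) + (\zeta_{\scrM})$; it suffices to bound $\dim \scrL_m(\KK^E, X_{\zeta_\scrM}, \Var(\mathfrak{p}))$ strictly below $(m+1)(n-1)$.

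The argument proceeds by induction on $|E|$. The base cases are: (i) $\matM_1 = \matU_{n-1,n}$ a circuit, handled as in Example~\ref{ex - matroid poly, case of circuit} by observing that every $\zeta_{\matM_i}$ with $\rank(\matM_i) \geq 2$ collapses under a single deletion-contraction to force $\mathfrak{p} \ni x_e$ for some $e$; (ii) small $|E|$, which by $\rank(\matM_1) \geq 2$ and connectedness forces $\matM_1$ to be a small circuit. For the inductive step on arbitrary terminally connected, terminally strict $\scrM$ with $\rank(\matM_1) \geq 2$ and $\matM_1$ not a circuit, I would invoke \cite[Lem.~2.13]{DSW} and \cite[Prop.~2.8]{DSW} applied to $\matM_1$ to produce a proper handle $H \in \calI_{\matM_1}$ with $\matM_1 \setminus H$ connected and $\rank(\matM_1 / H) \geq 1$. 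Proposition~\ref{prop - min primes of jacobian, flag matroidal poly} then splits the analysis of $\mathfrak{p}$ into two cases.

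\emph{Case 1}: $\mathfrak{p} \ni x_e$ for some $e \in H$. Letting $\Gamma = \Var(\{D^q x_e\}_{0\leq q\leq m})$, by Proposition~\ref{prop - flag matroidal poly, delete contract intersect with gamma} we have
$\scrL_m(\KK^E,X_{\zeta_\scrM},\Var(x_e)) \cap \Gamma \simeq \scrL_m(\KK^{E\minus\{e\}}, Y)$ where $Y$ is cut out by the induced deletion flag matroidal polynomial $\zeta_{(\scrM \setminus e)_{\leq r-1}}$ on the length-$(r-1)$ terminally strict, terminally loopless flag matroid $(\scrM \setminus e)_{\leq r-1}$. Applying Proposition~\ref{prop - flag matroids, jets equidimensional} to bound $\dim \scrL_m(\KK^{E\minus\{e\}}, Y) \leq (m+1)(n-2)$ and then Lemma~\ref{lemma - basic intersection theory identity} across the codimension-$(m+1)$ intersection with $\Gamma$ yields the required strict bound, exactly as in Case~1 of Theorem~\ref{thm - matroid poly, connected implies m-jets irreducible}.

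\emph{Case 2}: $\mathfrak{p}$ contains the handle flag matroidal polynomial $\zeta_{\scrM/H}$. Pick $h \in H$ and set $\xi_h = \bsx^{H \minus \{h\}} \zeta_{(\scrM \setminus H)_{<r}}^{h}$. Lemma~\ref{lemma - jets, flag matroidal polynomial, handle intersect with gamma formula} identifies $\scrL_m(\KK^E, X_{\zeta_\scrM}, \Var(\zeta_{\scrM/H})) \cap \Gamma$ with $\scrL_m(\KK^{E\minus\{h\}}, \Var(\xi_h), \Var(\zeta_{\scrM/H}))$, and we invoke the inductive hypothesis on the (strictly smaller, terminally connected, terminally strict) flag matroid $(\scrM \setminus H)_{<r}$ to know that $\scrL_m(\KK^{E\minus H}, X_{\zeta_{(\scrM\setminus H)_{<r}}^{h}})$ is irreducible of the expected dimension; a separate base case $\rank(\matM_1 \setminus H) = 1$ (forcing $r = 1$, excluded) or handled directly via the smooth rank-one flag will be addressed. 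The heavy lifting is then a dimension count of the same architecture as Case~2 of Theorem~\ref{thm - matroid poly, connected implies m-jets irreducible}: decompose minimal primes of $I(\scrL_m(\KK^{E\minus\{h\}}, \Var(\xi_h)))$ using Lemma~\ref{lemma - jet schemes of products, not nec disjoint variables} against the factorization $\xi_h = (\prod_{h' \in H\minus\{h\}} x_{h'}) \cdot \zeta_{(\scrM \setminus H)_{<r}}^{h}$, reduce to intersecting $\scrL_t(\KK^{E\minus H}, X_{\zeta_{(\scrM \setminus H)_{<r}}^h})$ with $\pi_{t,0}^{-1}(\Var(\zeta_{\scrM/H}))$, and apply Lemma~\ref{lemma - intersecting irreducible jet scheme with inverse image of avoidant variety downstairs}. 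The applicability of that lemma is supplied by Lemma~\ref{lemma - flag matroidal poly handle fml, intersecting induced polys}, which furnishes the codimension-$2$ bound on $X_{\zeta_{(\scrM \setminus H)_{<r}}^{h}} \cap X_{\zeta_{\scrM/H}}$ via the degree comparison $\rank(\matM_1 \setminus H) = \rank(\matM_1) - |H| + 1 > \rank(\matM_1) - |H| = \rank(\matM_1/H)$.

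The main obstacle I anticipate is the Case~2 bookkeeping: matching the dimension drop coming from the codimension-$(m+1)$ complete intersection $\Gamma$ with the drop coming from the $\bsx^{H\minus\{h\}}$-factor of $\xi_h$, while keeping track of the parameter $r$ governing when coloops first appear in the flag. The combinatorial miracle that makes everything fit is exactly the terminal-strictness assumption, which ensures $\zeta_{\matM_1}$ remains the minimum-degree term throughout the handle formula and prevents divisibility between the handle flag matroidal polynomials; this is precisely what Lemma~\ref{lemma - flag matroidal poly handle fml, intersecting induced polys} encodes. Once the two cases are in place, standard l.c.i.\ and normality assertions follow automatically from Proposition~\ref{prop - Mustata's characterization of irreducibility of m-jets in terms of dim lying over sing}.
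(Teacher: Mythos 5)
Your outline follows the paper's own proof almost step for step: reduce via Proposition~\ref{prop - Mustata's characterization of irreducibility of m-jets in terms of dim lying over sing} to bounding $\dim \scrL_m(\KK^E, X_{\zeta_\scrM}, \Var(\frakp))$, split into \emph{Case~1} ($\frakp \ni x_e$) and \emph{Case~2} ($\frakp$ contains a handle polynomial), power the induction with a connective handle obtained from \cite[Lem.~2.13]{DSW} and \cite[Prop.~2.8]{DSW}, and use Lemma~\ref{lemma - flag matroidal poly handle fml, intersecting induced polys} to supply the codimension-two bound needed in Case~2. On Case~1 note the paper's statement covers any $e\in E$, not merely $e\in H$, which is harmless here but is actually used in the circuit case below. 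The side case $\rank(\matM_1\setminus H)=1$ you worry about never arises, precisely because DSW's Prop.~2.8 is applied to a circuit $C$ with $\rank_{\matM_1}(C)\geq 2$, forcing $\rank(\matM_1\setminus H)\geq 2$; you can delete that caveat.

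The one genuine imprecision is your treatment of the base case $\matM_1 = \matU_{n-1,n}$. You cite Example~\ref{ex - matroid poly, case of circuit}, but that example concerns a single matroidal polynomial, not a flag matroidal polynomial, and your phrase ``collapses under a single deletion-contraction'' is not the right mechanism. When $\matM_1$ is a circuit and the flag has length $\geq 2$, terminal strictness forces $\matM_i$ to be the free matroid on $E$ for every $i\geq 2$, so
\[
\zeta_{\scrM} \;=\; \Bigl(\sum_{e\in E} c_{E\minus\{e\}}\,\bsx^{E\minus\{e\}}\Bigr) + b\,\bsx^{E}, \qquad c_{E\minus\{e\}}\in\KK^\times,\ \ b\in \KK.
\]
If $b=0$ the flag polynomial is a pure matroid support polynomial on $\matM_1$ and one invokes Theorem~\ref{thm - matroid poly, connected implies m-jets irreducible}. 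If $b\neq 0$, one computes, for any fixed $e\in E$,
\[
\frakp \;\ni\; \zeta_\scrM - x_e\,\partial_{x_e}\bullet\zeta_\scrM \;=\; c_{E\minus\{e\}}\,\bsx^{E\minus\{e\}},
\]
so $\frakp\ni x_{e'}$ for some $e'\in E\minus\{e\}$ and now Case~1 applies (this is why Case~1 should be stated for all $e\in E$). The ingredient is the differential identity, not a deletion-contraction step. Patching this in gives a complete argument identical in structure to the paper's.
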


\begin{proof}
    By Proposition \ref{prop - Mustata's characterization of irreducibility of m-jets in terms of dim lying over sing} it suffices to certify that
    \begin{equation} \label{eqn - dim characterization irreducible m-jets, flag matroidal poly}
    \dim \scrL_{m}(\KK^{E}, X_{\zeta_{\mathscr{M}}}, X_{\zeta_{\mathscr{M}}, \Sing}) < (m+1)(n-1) \text{ for all } m \geq 1.
    \end{equation}
    We will induce on $|E|$. First we establish some cases preparing the inductive story. With this in mind, let $\mathfrak{p}$ be a minimal prime containing the true Jacobian $(\partial \zeta_{\mathscr{M}}) + (\zeta_{\mathscr{M}})$ of $\zeta_{\mathscr{M}}$. We must check that
    \begin{equation} \label{eqn - flag matroidal poly, jets irreducible, dim characterization in terms of min primes}
        \dim \scrL_{m}(\KK^{E}, X_{\zeta_{\mathscr{M}}}, \Var(\mathfrak{p})) < (m+1)(n-1).
    \end{equation}

    \vspace{3ex}

    \emph{Case 1}: If $\mathfrak{p} \ni x_{e}$ for some $e \in E$, then \eqref{eqn - flag matroidal poly, jets irreducible, dim characterization in terms of min primes} holds.

    \vspace{2ex}

    \emph{Proof}: Let $\Gamma \subseteq \scrL_{m}(\KK^{E})$ be the codimension $m$ variety defined by $(\{D^{q} x_{e} \}_{1 \leq q \leq m})$.  Let $r$ be the smallest index such that $e$ is a coloop on $\matM_r$; if no such index exists, then set $r = k+1$. Note that $r>1$ by hypothesis. Proposition \ref{prop - flag matroidal poly, delete contract intersect with gamma} and  Lemma \ref{lemma - basic intersection theory identity}
    \begin{align} \label{eqn - flag matroid poly, jets irreducible, case 1 part 1}
        \dim \scrL_{m}(\KK^{E}, X_{\zeta_{\mathscr{M}}}, \Var(\mathfrak{p})) 
        &\leq \dim \scrL_{m}(\KK^{E}, X_{\zeta_{\mathscr{M}}}, \Var(x_{e})) \\
        &\leq \dim \left( \scrL_{m}(\KK^{E}, X_{\zeta_{\mathscr{M}}}, \Var(x_{e})) \cap \Gamma \right) + \codim \Gamma \nonumber \\
        &= \dim \scrL_{m}(\KK^{E\minus\{e\}}, X_{\zeta_{(\mathscr{M} \setminus e)_{\leq r-1}}}) + \codim \Gamma. \nonumber
    \end{align}
    By the definition of $r$, the flag matroid $(\mathscr{M} \setminus e)_{\leq r - 1}$ is terminally strict; since $\rank(\matM_{1}) \geq 2$, certainly $\rank(\matM_{1} \setminus e) \geq 2$.
    In particular, the usage of Lemma \ref{lemma - basic intersection theory identity} is valid. Proposition \ref{prop - flag matroids, jets equidimensional} lets us extend \eqref{eqn - flag matroid poly, jets irreducible, case 1 part 1} with the line 
    \[
    = (m+1)(n-2) + \codim \Gamma = (m+1)(n-2) + m = (m+1)(n-1) - 1,
    \]
    verifying \eqref{eqn - flag matroidal poly, jets irreducible, dim characterization in terms of min primes}.   (When $r=2$, we are discussing a matroid support polynomial on $\scrM = \matM_1$, which is terminally strict by definition. These estimates were already established in the matroidal case, but Proposition \ref{prop - flag matroids, jets equidimensional} also applies.)

    \vspace{3ex}

    \emph{Case 2:} Suppose there exists a proper handle $H \in \calI_{\matM_{1}}$ such that $\matM_{1} \setminus H$ is connected and $\rank(\matM_{1} / H) \geq 1$. Assume that $\mathfrak{p} \ni \zeta_{\mathscr{M} / H}$. Let $r$ be the smallest index such that $H$ contains a coloop on $\matM_{r}$; if no such index exists, set $r = k+1$. By hypothesis, $r>1$. 
    Finally assume that, for some $h \in H$, we know that \eqref{eqn - dim characterization irreducible m-jets, flag matroidal poly} holds for $\zeta_{(\mathscr{M} \setminus H)_{< r}}^{h}$ and that the dimension of $X_{\zeta_{(\mathscr{M} \setminus H\minus\{h\})_{< r}}^{h}} \cap X_{\zeta_{\mathscr{M} / H}} \subseteq \KK^{E\minus H}$ is at most $|E| - |H| - 2$. Then we claim \eqref{eqn - flag matroidal poly, jets irreducible, dim characterization in terms of min primes} holds for $\mathfrak{p}$. (Here we use the handle flag matroidal polynomials of Definition \ref{def - handle flag matroidal polys}.)

    \vspace{2ex}
    
    \emph{Proof}: This time let $\Gamma = \cap_{0 \leq q \leq m} \{D^{q} x_{h} = 0 \} \subseteq \mathscr{L}_m(\KK^E)$ for the particular $h \in H$ described in \emph{Case 2}'s hypotheses. So $\codim \Gamma = m+1$. Again by the intersection theory identity Lemma \ref{lemma - basic intersection theory identity} and Lemma \ref{lemma - jets, flag matroidal polynomial, handle intersect with gamma formula} we have
    \begin{align} \label{eqn - matroidal poly jets irreducible, case 2, part 1}
        \dim \scrL_{m}(\KK^{E}, X_{\zeta_{\mathscr{M}}}, \Var(\mathfrak{p})) 
        &\leq \dim \scrL_{m}(\KK^{E}, X_{\zeta_{\mathscr{M}}}, \Var(\zeta_{\mathscr{M}/H})) \\
        &\leq \dim \left( \scrL_{m}(\KK^{E}, X_{\zeta_{\mathscr{M}}}, \Var(\zeta_{\mathscr{M}/H})) \cap \Gamma \right) + \codim \Gamma \nonumber \\
        &= \dim  \scrL_{m}(\KK^{E\minus\{e\}}, \Var(\xi_{h}), \Var(\zeta_{\mathscr{M}/H})) + \codim \Gamma \nonumber.
    \end{align}
    Here $\xi_{h} = \bsx^{H\minus\{h\}} \zeta_{(\mathscr{M} \setminus H)_{< r}}^{h} \in \mathbb{C}[{E\minus\{h\}}]$ is as defined in Lemma \ref{lemma - jets, flag matroidal polynomial, handle intersect with gamma formula}, and the usage of Lemma \ref{lemma - basic intersection theory identity} is legal by Lemma \ref{lemma - jets, flag matroidal polynomial, handle intersect with gamma formula}.

    It suffices to prove that
    \begin{align} \label{eqn - matroidal poly jets irreducible, case 2, part 2}
        \dim  \scrL_{m}(\KK^{{E\minus\{h\}}}, \Var(\xi_{h}), \Var(\zeta_{\mathscr{M}/H})) < (m+1)(n-2),
    \end{align}
    for then substituting \eqref{eqn - matroidal poly jets irreducible, case 2, part 2} into \eqref{eqn - matroidal poly jets irreducible, case 2, part 1} gives \eqref{eqn - flag matroidal poly, jets irreducible, dim characterization in terms of min primes}. As $\bsx^{H\minus\{h\}} \in \KK[H\minus\{h\}]$ and $\zeta_{(\mathscr{M} \setminus H)_{< r}}^{h} \in \KK[E\minus H]$, we are in the same situation as \emph{Case 2} of Theorem \ref{thm - matroid poly, connected implies m-jets irreducible}. (For the conversion, replace $\xi_h$ with the $Q_h$ of \emph{Case 2}, Theorem \ref{thm - matroid poly, connected implies m-jets irreducible}.) The reasoning therein 
    implies that in order to show \eqref{eqn - matroidal poly jets irreducible, case 2, part 2}, it suffices to show (for all $0 \leq t \leq m$) that
    \begin{align} \label{eqn - matroidal poly jets irreducible, case 2, part 3}
        \dim \left( \scrL_{t}(\KK^{E\minus H}, X_{\zeta_{(\mathscr{M} \setminus H)_{<r}}^{h}}) \cap \pi_{t}^{-1} \Var( \zeta_{\mathscr{M} / H}) \right) 
        < (t+1)(n-|H| - 1), 
    \end{align}
    an intersection that occurs inside $\scrL_{t}(\KK^{E\minus H})$. By Lemma \ref{lemma - basic intersection theory identity} and the inductive hypothesis on $ \zeta_{(\mathscr{M} \setminus H)_{< r}}^{h}$, to prove \eqref{eqn - matroidal poly jets irreducible, case 2, part 3} it suffices to prove that
    \begin{align} \label{eqn - matroidal poly jets irreducible, case 2, part 4}
        X_{\zeta_{(\mathscr{M} \setminus H)_{< r}}^{h}} \cap X_{\zeta_{\mathscr{M} / H}} \subseteq \KK^{E\minus H} \text{ has dimension at most } |E| - |H| - 2. 
    \end{align}
    But \eqref{eqn - matroidal poly jets irreducible, case 2, part 4} was one of the assumptions in \emph{Case 2}, given our choice of $h \in H$. So unraveling all the implications we see that \eqref{eqn - flag matroidal poly, jets irreducible, dim characterization in terms of min primes} holds finishing \emph{Case 2}. 
 
\medskip

    \emph{The Inductive Argument:}

    Fix a flag matroidal polynomial $\zeta_{\mathscr{M}}$ on a terminally connected and terminally strict flag matroid $\matM$ on shared ground set $E$ and assume that \eqref{eqn - dim characterization irreducible m-jets, flag matroidal poly} holds for all suitable flag matroidal polynomials on smaller ground sets. Consider a minimal prime $\mathfrak{p}$ containing the true Jacobian of $\zeta_{\mathscr{M}}$. 

    First assume that $\matM_{1}$ is not a circuit. By \cite[Lemma 2.13]{DSW}, there exists a circuit $C \neq E$ of $\matM_{1}$ such that $\rank_{\matM_{1}}(C) \geq 2$. By \cite[Prop.\ 2.8]{DSW}, we may find a proper handle $H \in \calI_{\matM_{1}}$ of $\matM_{1}$ such that $\matM_{1} \setminus H$ is connected and $\matM_{1} \setminus H \supseteq C$. Thus, $2 \leq \rank(\matM_{1} \setminus H) = \rank(\matM_{1}) - |H| + 1$ forces $\rank(\matM_{1} / H) = \rank(\matM_{1}) - |H| \geq 1$. By Proposition \ref{prop - min primes of jacobian, flag matroidal poly}, either $\mathfrak{p} \ni x_{h}$ for some $h \in H$ or $\mathfrak{p} \ni \zeta_{\mathscr{M} / H}$ (or both). If $\mathfrak{p} \ni x_{h}$, then \emph{Case 1} shows that $\mathfrak{p}$ satisfies \eqref{eqn - flag matroidal poly, jets irreducible, dim characterization in terms of min primes}. If $\mathfrak{p} \ni \zeta_{\mathscr{M} / H}$, then \emph{Case 2} applies and shows that $\mathfrak{p}$ satisfies \eqref{eqn - flag matroidal poly, jets irreducible, dim characterization in terms of min primes}. (We may invoke \emph{Case 2} since we checked that: $\rank(\matM_{1} / H) \geq 1$; $\rank(\matM_{1} \setminus H) \geq 2$; for $r$ as defined in \emph{Case 2}, the flag matroid $(\mathscr{M} \setminus H)_{< r}$ is terminally strict (Remark \ref{rmk - basics about flag matroidal polys}.\ref{item-strict-contr/del}, \cite[Lemma 2.4.(c)]{DSW}); the flag matroid $(\mathscr{M} \setminus H)_{< r}$ is terminally connected by choice of $H$; by Lemma \ref{lemma - flag matroidal poly handle fml, intersecting induced polys}, there exists a $h \in H$ such that $X_{\zeta_{(\mathscr{M} \setminus H)_{<r}}^{h}} \cap X_{\zeta_{\mathscr{M} / H}} \subseteq \KK^{E\minus H}$ has dimension at most $|E| - |H| - 2$. Thus, with the particular $h \in H$ just referenced, $\zeta_{(\mathscr{M} \setminus H)_{<r}}^{h}$ falls into the inductive set-up of \emph{Case 2}.) Since $\mathfrak{p}$ satisfies \eqref{eqn - flag matroidal poly, jets irreducible, dim characterization in terms of min primes} we conclude that \eqref{eqn - dim characterization irreducible m-jets, flag matroidal poly}.

    Now assume that $\matM_{1}$ is a circuit. If the length of the flag is one then apply Theorem \ref{thm - matroid poly, connected implies m-jets irreducible} to conclude \eqref{eqn - dim characterization irreducible m-jets, flag matroidal poly}. If $\mathscr{M}$ has length at least two, then terminal strictness forces $\matM_{2}$ to be the free matroid on $E$ and so by Remark \ref{rmk-mat-quots}.\ref{item-mat-quot-b}, $\matM_{i}$ is the free matroid on $E$ for all $i \geq 2$. Since each summand $\zeta_{\matM_{i}}$ of $\zeta_{\mathscr{M}}$ is a matroid support polynomial,
    \begin{equation} \label{eqn - flag matroidal poly, jets irreducible, case of circuit}
    \zeta_{\mathscr{M}} = \left(\sum_{e \in E} c_{E\minus\{e\}} \bsx^{E\minus\{e\}} \right) + b \bsx^{E},
    \end{equation}
    where $c_{E\minus\{e\}} \in \KK^{\times}$, $b \in \KK$, and $\zeta_{\mathscr{M}} - b\bsx^{E} = \zeta_{\matM_{1}}$. If $b = 0$, then use Theorem \ref{thm - matroid poly, connected implies m-jets irreducible} to conclude \eqref{eqn - dim characterization irreducible m-jets, flag matroidal poly}. If $b \neq 0$ then
    \[
    \mathfrak{p} \ni \zeta_{\mathscr{M}} - x_{e} \partial_{x_{e}} \zeta_{\mathscr{M}} = c_{E\minus\{e\}} \bsx^{E\minus\{e\}}
    \]
    and so there exists an $e^{\prime} \in E \minus \{e\}$ such that $\mathfrak{p} \ni x_{e^{\prime}}$. So we may invoke \emph{Case 1} to confirm \eqref{eqn - flag matroidal poly, jets irreducible, dim characterization in terms of min primes} and hence \eqref{eqn - dim characterization irreducible m-jets, flag matroidal poly}.

    That completes the inductive step. The base case is the with the smallest $E$ possible admitting a connected matroid of rank $2$. This forces $|E| = 3$ and $\matM_{1} = \matU_{2,3}$, a circuit. Note that when we verified \eqref{eqn - dim characterization irreducible m-jets, flag matroidal poly} for circuits, the only case we invoked was \emph{Case 1} which only appeals to Proposition \ref{prop - flag matroids, jets equidimensional} and does not require the inductive setup. 
    \end{proof}

Deducing rationality via \cite{MustataJetsLCI} is now trivial:

\begin{corollary} \label{cor-flag-mtrdl-ratsing}
Consider a flag matroidal polynomial $\zeta_{\mathscr{M}}$ attached to a terminally connected and terminally strict flag matroid $\mathscr{M}$. If $\rank(\matM_{1}) \geq 2$, then $\zeta_{\mathscr{M}}$ has rational singularities.
\end{corollary}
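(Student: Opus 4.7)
The plan is to deduce this corollary immediately from the previous theorem in this subsection, following essentially verbatim the pattern established for matroidal polynomials in Corollary \ref{cor-mtrdl-rat-sing}. The main work has already been done in Theorem \ref{thm - flag matroidal poly, m-jets irreducible}; only a short argument wrapping Musta\c t\u a's criterion around it is needed.

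First I would verify that $X_{\zeta_{\mathscr{M}}}$ is singular so that the rational singularities statement is non-vacuous: since $\mathscr{M}$ is terminally strict and $\rank(\matM_1) \geq 2$, Remark \ref{rmk - basics about flag matroidal polys}(a) guarantees that $\zeta_{\mathscr{M}} \in (\{x_e\}_{e \in E})^{2}$ is singular at the origin. Next, since $X_{\zeta_{\mathscr{M}}}$ is a hypersurface, it is automatically a local complete intersection, so Musta\c t\u a's Theorem \ref{thm-mustata-main} is applicable. Theorem \ref{thm - flag matroidal poly, m-jets irreducible} tells us that $\scrL_{m}(\KK^{E}, X_{\zeta_{\mathscr{M}}})$ is irreducible for every $m \geq 1$; thus Theorem \ref{thm-mustata-main} yields that $X_{\zeta_{\mathscr{M}}}$ has canonical singularities.

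Finally, to promote canonical to rational, I would cite the Elkik--Flenner result (as used for Corollary \ref{cor-mtrdl-rat-sing}) that for l.c.i.\ varieties canonical and rational singularities coincide. This gives the conclusion.

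The hard work is entirely concentrated in Theorem \ref{thm - flag matroidal poly, m-jets irreducible}, which in turn rests on the handle decomposition combinatorics (Proposition \ref{prop - new handle fml, flag matroidal poly}), the dimension bound on the intersection of handle-induced hypersurfaces (Lemma \ref{lemma - flag matroidal poly handle fml, intersecting induced polys}), and the reduction of primes in the true Jacobian to the two cases $\frakp \ni x_h$ or $\frakp \ni \zeta_{\mathscr{M}/H}$ (Proposition \ref{prop - min primes of jacobian, flag matroidal poly}). So at the level of the corollary itself there is no obstacle: it is a two-step citation of Theorem \ref{thm - flag matroidal poly, m-jets irreducible} followed by Musta\c t\u a's theorem and the Elkik--Flenner equivalence.
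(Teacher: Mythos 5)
Your proof matches the paper's argument exactly: establish singularity at the origin via Remark \ref{rmk - basics about flag matroidal polys}, invoke Theorem \ref{thm - flag matroidal poly, m-jets irreducible} together with Musta\c t\u a's Theorem \ref{thm-mustata-main} to get canonical singularities of the l.c.i.\ hypersurface, then upgrade to rational via Elkik--Flenner. This is precisely how the paper proves Corollary \ref{cor-flag-mtrdl-ratsing} (it explicitly says to argue as in Corollary \ref{cor-mtrdl-rat-sing}, \emph{mutatis mutandis}).
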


\begin{proof}
Since $\rank(\matM_{1}) \geq 2$, $\zeta_{\mathscr{M}}$ is not smooth by Remark \ref{rmk - basics about flag matroidal polys}.\eqref{item-mat-poly-degree}. Now use Theorem \ref{thm - flag matroidal poly, m-jets irreducible} to argue as in Corollary \ref{cor-mtrdl-rat-sing} mutatis mutandis.
\end{proof}

We finish with a nice application of Corollary \ref{cor-flag-mtrdl-ratsing}, with the goal of giving a flavor of the large family of polynomials we now know to have rational singularities.

\begin{corollary} \label{cor - polys whose monomial support are ind sets are rational}
    Let $\matM$ be a loopless matroid on $E$ with $\rank(\matM) \geq 3$. For $1 \leq s \leq \rank(\matM) - 2$, set
    \[
    \calI_{\matM}^{\geq \rank(\matM) - s} = \{I \in \calI_{\matM} \mid \rank(\matM) - s \leq \rank(I) \leq \rank(\matM)\}.
    \]
    Then any polynomial $g \in \KK[E]$ with monomial support exactly $\calI_{\matM}^{\geq \rank(\matM) - s}$ has rational singularities. 
\end{corollary}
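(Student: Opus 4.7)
The plan is to realize $g$ as a flag matroidal polynomial on a naturally associated flag matroid obtained by iterated truncation, and then invoke Corollary \ref{cor-flag-mtrdl-ratsing}. Concretely, I would set
\[
\mathscr{M} = (\matM, \tau(\matM), \tau^{2}(\matM), \ldots, \tau^{s}(\matM)),
\]
so that $\matM_1 = \tau^s(\matM)$ is the terminal matroid and $\matM_{s+1} = \matM$. By Example \ref{ex - repeated trunctations, monomial support on independent sets}, the class $\FMP(\mathscr{M})$ consists of exactly those polynomials whose monomial support is $\calI_\matM^{\geq \rank(\matM) - s}$, so $g \in \FMP(\mathscr{M})$.

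The bulk of the work is just a verification that $\mathscr{M}$ satisfies the three hypotheses of Corollary \ref{cor-flag-mtrdl-ratsing}. For terminal rank: $\rank(\matM_1) = \rank(\tau^s(\matM)) = \rank(\matM) - s \geq 2$, using the assumption $s \leq \rank(\matM) - 2$. For terminal strictness: $\rank(\matM_2) = \rank(\tau^{s-1}(\matM)) = \rank(\matM) - s + 1 > \rank(\matM_1)$ (with the convention that terminal strictness is automatic if $s = 0$, though that case is excluded). For terminal connectedness: since $\matM$ is loopless, all truncations $\tau^j(\matM)$ with $j \leq \rank(\matM) - 1$ are loopless, and $\rank(\tau^{s-1}(\matM)) = \rank(\matM) - s + 1 \geq 3$. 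Hence Example \ref{ex - truncations are quotients} (the truncation of a loopless matroid of rank at least $2$ is connected) applied to $\tau^{s-1}(\matM)$ shows that $\matM_1 = \tau^s(\matM) = \tau(\tau^{s-1}(\matM))$ is connected.

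With these three conditions in place, Corollary \ref{cor-flag-mtrdl-ratsing} applies verbatim to conclude that $g$ has rational singularities. There is no real obstacle here: the entire argument is bookkeeping to line up the construction of $\mathscr{M}$ with the hypotheses of the corollary, and the only mildly delicate point is ensuring that the intermediate truncations remain loopless of rank $\geq 2$, which is exactly what the bound $s \leq \rank(\matM) - 2$ (together with $\rank(\matM) \geq 3$) guarantees.
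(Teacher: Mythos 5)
Your proof is correct and is essentially the same argument as in the paper: both construct the length-$(s+1)$ flag matroid by repeated truncation, invoke Example \ref{ex - repeated trunctations, monomial support on independent sets} to identify $g$ as a flag matroidal polynomial on it, verify terminal strictness, terminal connectedness, and terminal rank $\geq 2$ from the hypotheses $s \leq \rank(\matM)-2$ and looplessness, and then apply Corollary \ref{cor-flag-mtrdl-ratsing}. You spell out the connectedness check (applying the ``truncation of a loopless matroid of rank $\geq 2$ is connected'' fact to $\tau^{s-1}(\matM)$, which has rank $\geq 3$) in slightly more detail than the paper does, but the logic is identical.
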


\begin{proof}
    Let $\mathscr{M}$ be the length $s+1$ flag matroid given by $s$ repeated truncations, as in Example \ref{ex - repeated trunctations, monomial support on independent sets}. Then $\mathscr{M}$ is terminally strict (since $s \leq \rank(\matM) - 1)$ and terminally connected (since $\rank(\matM) \geq 2$ and $\matM$ is loopless, \emph{cf.}\ Example \ref{ex - truncations are quotients}). The rank of the terminal matroid of $\mathscr{M}$ is at least two since $s \leq \rank(\matM) - 2$. Example \ref{ex - repeated trunctations, monomial support on independent sets} shows that $g \in \FMP(\mathscr{M})$. Now appeal to Corollary \ref{cor-flag-mtrdl-ratsing}.
\end{proof}

\section{Feynman Diagrams and Feynman Integrands}
\label{sec-Feynman}

This section is motivated by Quantum Field Theory,  specifically the study of the Lee--Pomeransky form of Feynman integrals attached to Feynman diagrams. We postpone, for narrative purposes, an explicit definition of either the Feynman diagram or its associated polynomials until Subsection \ref{subsec-FInt}.

We will apply our techniques of handle induction and $m$-jet analysis to a more general class of polynomials than just those arising from Feynman diagrams: the \emph{Feynman integrands} (Definition \ref{def - feynman integrands}). The definition is motivated by Feynman diagrams of course, and we will recover the \emph{Feynman diagram polynomial} (Definition \ref{def - feynman diagram poly}) as a special case of a Feynman integrand (Proposition \ref{prop-FDiagPoly=FInt}).

 When casting a Feynman integral in Lee--Pomeransky form, one takes a Mellin transformation of what we call a Feynman diagram polynomial, assembled entirely out of data contained in the Feynman diagram. Theorem \ref{thm-feyn-poly-ratsing} asserts that under rather natural assumptions on the Feynman diagram the Feynman diagram polynomial has rational singularities. Certain distinctions have to be drawn in the theorem for scalar vs.\ vector-valued QFTs.

\subsection{Basics on Feynman Integrands}

We follow the narrative of the previous two sections: we give a handle formula for Feynman integrands and use it to study their $m$-jets. We will eventually prove the $m$-jets are irreducible under a mild connectivity and rank assumption; hence such Feynman integrands have rational singularities. As such, the beats of this section's story echo those of its predecessors.

\begin{define} \label{def - feynman integrands}
    Let $\matN$ be a quotient of a matroid $\matM$ on shared ground set $E$ such that $\rank(\matM) - \rank(\matN) = 1$. We \emph{always} assume $\rank(\matN) \geq 1$. Let $\zeta_{\matN}$ and $\xi_{\matM}$ be matroid support polynomials on $\matN$ and $\matM$ respectively. Now consider a set $F \supseteq E$ and define $\massm = (m_{f})_{f \in F} \in \KK^{F}$ and $\Delta_{\massm}^{F} = \sum_{f \in F} m_{f} x_{f}$. Then we call the polynomial
    \[
    \Feyn = \Feyn(\zeta_{\matN}, \Delta_{\massm}^{F}, \xi_{\matM}) := \zeta_{\matN}(1 + \Delta_{\massm}^{F}) + \xi_{\matM} \in \KK[F]
    \]
    a \emph{Feynman integrand}. We call the hypersurface cut out by $\Feyn$ the Feynman hypersurface; we denote it by $X_{\Feyn} \subseteq \KK^{F}$. In physical applications, $F$ and $E$ usually coincide. We allow $\massm$ to have zero and nonzero entries.

    We also consider the \emph{momentumless Feynman integrand}
    \[
    \Feyn(\zeta_{\matN}, \Delta_{\massm}^{F}, 0) = \zeta_{\matN}(1 + \Delta_{\massm}^{F}).
    \]
    Sometimes we will want to use Deletion-Contraction. So for any $S \subseteq F$, write $\Delta_{\massm}^{F \setminus S} = \sum_{f \in F \setminus S} m_{f} x_{f}$.
\end{define}

\begin{example}
    Proposition \ref{prop-FDiagPoly=FInt} says that Feynman diagram polynomials are instances of Feynman integrands. 
\end{example}

\begin{remark} \label{rmk - feynman integrand basic facts} Suppose that $\Feyn = \Feyn(\zeta_{\matN}, \Delta_{\massm}^{F}, \zeta_\matM)$ is a Feynman integrand.
    \begin{enumerate}[label=(\alph*)]
        \item\label{item-Feyn-min} We have $\min(\Feyn) = \zeta_{\matN}$. If $\Delta_{\massm}^{F} = 0$ then $\deg \Feyn = \deg \xi_{\matM} = \deg \zeta_{\matN} + 1$. If $\Delta_{\massm}^{F} \neq 0$ and $\matN$ is loopless, then $\Feyn$ contains a monomial of degree $\deg \zeta_{\matN} + 1$ that is either not squarefree or not in $\KK[E]$. (The latter is possible when $F \supsetneq E$.) Since $\xi_{\matM} \in \KK[E]$ is squarefree, $\deg \Feyn = \deg \zeta_{\matN} + 1$ in this case as well.
        \item\label{item-Feyn-irred} If $\matN$ is connected, then $\Feyn$ is irreducible. Indeed, let $ab = \Feyn$ be a factorization. Then $\min(a) \min(b) = \zeta_{\matN}$ is irreducible, and we may assume that $\min(a) = \zeta_{\matN}$ and $\min(b) = 1$. If $\deg a = \deg \Feyn = \deg \zeta_{\matN} + 1$ then $b = 1$ and the factorization is trivial. If $\deg a = \deg \zeta_{\matN}$, then $a = \zeta_{\matN}$. This implies $\zeta_{\matN} \mid \xi_{\matM}$ and so we have a factorization $\zeta_{\matN} c = \xi_{\matM}$. This induces a partition $A \sqcup C \subseteq E$ where $\zeta_{\matN} \in \KK[A]$ and $c \in \KK[C]$ (Remark \ref{rmk - basic matroid polynomial observations}.\eqref{item-partition}). But every element of $E$ lies in a basis of $\matN$, so $A = E$. This forces $c \in \KK^{\times}$ and $\zeta_{\matN} = \xi_{\matM}$ up to a constant. But this is impossible: $\deg \zeta_{\matN} = \rank(\matN) < \rank(\matM) = \deg \xi_{\matM}$.
        \item\label{item-Feyn-nonempty} If $\rank(\matN) \geq 2$, then $\Feyn \in  (\{x_{e}\}_{e \in E})^{2} \cdot \KK[F]$ is singular at the origin. If $\rank(\matN) \geq 1$, then $\zeta_{\matN} \neq 0$ and $\Feyn$ is not a scalar. 
        \item\label{item-Feyn-DC} $\Feyn$ satisfies Deletion-Contraction identities as induced by those of $\zeta_{\matN}$, $\Delta_{\massm}^{F}$, and $\xi_{\matM}$ since $\Delta_{\massm}^{F}$ is a matroid support polynomial on $U_{1, |\widetilde{F}|}$ where $\widetilde{F} =\{f\in F\mid m_f\neq 0\}$. The identities are cumbersome to write down so we record the simplification we require.

        Suppose that $e \in E$ is neither a loop nor coloop on $\matN$. If it is also neither a loop nor coloop on $\matM$, then there is a $g \in \KK[F]$ such that
        \[
        \Feyn(\zeta_{\matN}, \Delta_{\massm}^{F}, \xi_{\matM}) = \Feyn(\zeta_{N \setminus e}, \Delta_{\massm}^{F \minus \{e\}}, \xi_{\matM\setminus e}) + x_{e} g.
        \]
        If $e$ is a coloop on $\matM$ then there is a $g \in \KK[F]$ such that
        \[
        \Feyn(\zeta_{\matN}, \Delta_{\massm}^{F}, \xi_{\matM}) = \Feyn(\zeta_{N \setminus e}, \Delta_{\massm}^{F \minus \{e\}}, 0) + x_{e} g.
        \]
    \end{enumerate}
\end{remark}

As in previous sections, our first result jet theoretic result is to show that $\scrL_{m}(\KK^{F}, X_{\Feyn})$ is equidimensional under mild assumptions on $\matN$. The proof idea is also the same as before: use Deletion-Contraction to reduce studying the dimension of $\scrL_{m}(\mathbf{K}^{F}, X_{\Feyn})$ to studying the $m$-jets of a Feynman integrand on a smaller ground set. 

\begin{proposition} \label{prop - feynman integrand, jet scheme intersect with gamma basic fml}
    Let $\Feyn = \Feyn(\zeta_{\matN}, \Delta_{\massm}^{F}, \xi_{\matM})$ be a Feynman integrand. Select $e \in E$. Let $\Gamma \subseteq \scrL_{m}(\KK^F)$ be the variety cut out by $\{D^{q}x_{e}\}_{0 \leq q \leq m}$. If $e \in E$ is not a loop/coloop on $\matM$ nor $\matN$ then
    \[
    \scrL_{m}(\KK^{F}, X_{\Feyn}) \cap \Gamma \simeq \scrL_{m}(\KK^{F \minus \{e\}}, \Var( \Feyn(\zeta_{\matN \setminus e}, \Delta_{\massm}^{F \minus \{e\}}, \xi_{\matM\setminus e}) )).
    \]
    If $e$ is not a loop/coloop on $\matN$ but is a coloop on $\matM$, then
    \[
    \scrL_{m}(\KK^{F}, X_{\Feyn}) \cap \Gamma \simeq \scrL_{m}(\KK^{F \minus \{e\}}, \Var( \Feyn(\zeta_{\matN \minus \{e\}}, \Delta_{\massm}^{F \minus \{e\}}, 0) )).
    \]
\end{proposition}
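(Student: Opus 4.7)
The plan is to follow the pattern already established for the analogous statements in the matroidal (Proposition \ref{prop - matroidal poly, m-jets intersect with Gamma}) and flag matroidal (Proposition \ref{prop - flag matroidal poly, delete contract intersect with gamma}) cases. The essential idea is that intersecting with $\Gamma$ kills all the $x_e$-coordinates together with their prolongations, so any term of $\Feyn$ carrying a factor of $x_e$ (along with its derivatives) vanishes on $\Gamma$, leaving only the $x_e$-free portion, which is exactly the smaller Feynman integrand.

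First I would invoke the Deletion-Contraction identities recorded in Remark \ref{rmk - feynman integrand basic facts}.\ref{item-Feyn-DC}. In the first case (when $e$ is not a loop or coloop on either $\matM$ or $\matN$) there exists $g \in \KK[F]$ so that
\[
\Feyn = \Feyn(\zeta_{\matN \setminus e}, \Delta_{\massm}^{F \minus \{e\}}, \xi_{\matM\setminus e}) + x_e g,
\]
and in the second case (when $e$ is a coloop on $\matM$ but no loop/coloop on $\matN$)
\[
\Feyn = \Feyn(\zeta_{\matN \setminus e}, \Delta_{\massm}^{F \minus \{e\}}, 0) + x_e g
\]
for some $g \in \KK[F]$. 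Call the first summand $\Feyn'$ in either case.

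Next I would apply the universal derivation $D$ and expand via Leibniz:
\[
D^q \Feyn = D^q \Feyn' + \sum_{0 \leq p \leq q} \binom{q}{p}(D^p x_e)(D^{q-p} g).
\]
This shows
\[
(\{D^q \Feyn\}_{0 \leq q \leq m}) + (\{D^q x_e\}_{0 \leq q \leq m}) = (\{D^q \Feyn'\}_{0 \leq q \leq m}) + (\{D^q x_e\}_{0 \leq q \leq m}),
\]
which is the desired equality of defining ideals. Finally, since $\Feyn' \in \KK[F \minus \{e\}]$ involves none of the variables $\{x_e^{(q)}\}$ while $\Gamma$ cuts out precisely those variables, the scheme-theoretic intersection on the left-hand side becomes naturally identified with $\scrL_m(\KK^{F \minus \{e\}}, \Var(\Feyn'))$, exactly as in the proof of Proposition \ref{prop - matroidal poly, m-jets intersect with Gamma}.

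There is really no obstacle here beyond bookkeeping: the only thing to verify is that the Deletion-Contraction identity from Remark \ref{rmk - feynman integrand basic facts}.\ref{item-Feyn-DC} produces the stated shape in each of the two cases, which boils down to checking that $\zeta_{\matN}$, $\Delta_{\massm}^{F}$, and $\xi_{\matM}$ individually split off an $x_e$-multiple with the claimed residual term. For $\Delta_{\massm}^{F} = m_e x_e + \Delta_{\massm}^{F \minus \{e\}}$ this is immediate; for the matroid support polynomials it is Definition \ref{dfn-msp}; and the coloop case simply uses that $\xi_{\matM} = x_e \cdot \xi_{\matM/e}$ when $e$ is a coloop on $\matM$, so the $x_e$-free part of $\Feyn$ loses its $\xi_{\matM}$ contribution entirely, producing the momentumless integrand.
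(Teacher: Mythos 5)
Your proposal is correct and follows exactly the same route as the paper: the paper's own proof is precisely the one-line instruction to invoke Remark \ref{rmk - feynman integrand basic facts}.\ref{item-Feyn-DC} and then argue as in Proposition \ref{prop - matroidal poly, m-jets intersect with Gamma}. You have simply spelled out the details that the paper leaves implicit.
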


\begin{proof}
    Use Remark \ref{rmk - feynman integrand basic facts}.\ref{item-Feyn-DC} to argue as in Proposition \ref{prop - matroidal poly, m-jets intersect with Gamma}.
\end{proof}

Since a non-coloop $e \in \calI_{\matN}$ can be a coloop on $\matM$, we have to prove equidimensionality of $m$-jets in two steps: first the momentumless Feynman integrands; second the Feynman integrands themselves.

\begin{proposition} \label{prop - easiest Feynman form, equidimensional jets}
    Consider the momentumless Feynman integrand $\Feyn = \Feyn(\zeta_{\matN}, \Delta_{\massm}^{F}, 0)$. Then $\scrL_{m}(\KK^{F}, X_{\Feyn})$ is an equidimensional l.c.i. of dimension $(m+1)(|F| - 1)$) for all $m \geq 1$.
\end{proposition}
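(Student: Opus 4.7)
The plan is to combine Musta\c t\u a's Proposition \ref{prop - Mustata's characterization of irreducibility of m-jets in terms of dim lying over sing}(a) with an induction on $|F|$, paralleling the proof of Proposition \ref{prop - matroid poly, equidimensional jets}. Since $X_{\Feyn}$ is a hypersurface (hence a local complete intersection), that criterion reduces the claim to the dimension bound
\[
\dim \scrL_m(\KK^F, X_{\Feyn}) \leq (m+1)(|F|-1) \qquad \text{for every } m \geq 1,
\]
the case $m = 0$ being automatic. When $\massm = 0$ we have $\Feyn = \zeta_{\matN}$, a matroid support polynomial on $\matN$ regarded as a cylinder in $\KK^F \supseteq \KK^E$, and Proposition \ref{prop - matroid poly, equidimensional jets} yields the bound immediately. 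So henceforth assume $\massm \neq 0$ and induct on $|F|$. The base case $|F|=1$ is direct: $\matN$ must be a single coloop, $\Feyn = cx_e(1+m_ex_e)$ cuts a zero-dimensional subscheme of $\KK$, and its $m$-jets have dimension $0$.

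For the inductive step with $|F| \geq 2$, first suppose there is an $e \in E$ which is neither a loop nor a coloop on $\matN$. Set $\Gamma := \Var(\{D^q x_e\}_{0 \leq q \leq m}) \subseteq \scrL_m(\KK^F)$, a subscheme of codimension $m+1$. Proposition \ref{prop - feynman integrand, jet scheme intersect with gamma basic fml} (momentumless form) yields
\[
\scrL_m(\KK^F, X_{\Feyn}) \cap \Gamma \simeq \scrL_m(\KK^{F \minus \{e\}}, \Var(\Feyn(\zeta_{\matN \setminus e}, \Delta_{\massm}^{F \minus \{e\}}, 0))),
\]
and the right-hand side is again a momentumless Feynman integrand on the smaller ground set, attached to $\matN \setminus e$ of rank $\geq 1$. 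The intersection is nonempty since the zero jet lies in it, so Lemma \ref{lemma - basic intersection theory identity} combined with the inductive hypothesis gives
\[
\dim \scrL_m(\KK^F, X_{\Feyn}) \leq (m+1)(|F|-2) + (m+1) = (m+1)(|F|-1).
\]

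The remaining case is the main obstacle: every $e \in E$ is a loop or coloop of $\matN$. Then $c_{\matN} \neq \emptyset$ (since $\rank(\matN) \geq 1$) forces $\zeta_{\matN} = a \bsx^{c_{\matN}}$ for some $a \in \KK^\times$, and $\Feyn = a \bsx^{c_{\matN}} L$ with $L := 1 + \Delta_{\massm}^F$ is a product of linear forms cutting out a hyperplane arrangement in $\KK^F$. Iterating Lemma \ref{lemma - jet schemes of products, not nec disjoint variables} on the factorization $\Feyn = a \prod_{c \in c_{\matN}} x_c \cdot L$, any minimal prime $\mathfrak{p}$ of $I(\scrL_m(\KK^F, X_{\Feyn}))$ contains $\{x_c^{(q)}\}_{c \in c_{\matN},\, 0 \leq q \leq t_c}$ together with $\{D^b L\}_{0 \leq b \leq t_L}$ for some $t_c, t_L \in \{-1, \ldots, m\}$ satisfying $\sum_{c \in c_{\matN}}(t_c+1) + (t_L+1) = m+1$. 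The plan is a direct codimension count: the prescribed coordinate vanishings contribute independent codimension $\sum_c (t_c+1)$, and each $D^b L$ is either a genuinely new linear constraint (codimension $1$) or becomes $\KK$-linearly dependent on the already-prescribed $\{x_c^{(b)}\}$. In the latter scenario one checks that $\mathrm{supp}(\massm) \subseteq \{c \in c_{\matN} : t_c \geq \max(b,0)\} \subseteq \{c \in c_{\matN} : t_c \geq 0\}$, so that $D^0 L$ then reduces to the nonzero constant $1$ modulo the $\{x_c^{(0)} = 0\}$-conditions, forcing $1 \in \mathfrak{p}$ --- a contradiction. Hence every genuine minimal prime achieves the full codimension $m+1$, giving $\dim \Var(\mathfrak{p}) \leq (m+1)|F| - (m+1)$ and completing the induction.
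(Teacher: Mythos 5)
Your overall strategy matches the paper's: reduce to the dimension bound via Musta\c t\u a's criterion, induct on the size of the ground set by deleting a non-(co)loop of $\matN$ and intersecting with $\Gamma$, and handle directly the case in which $\matN$ has only loops and coloops. The one place you diverge is in that last, purely-(co)loop case. The paper observes that $\Feyn = c\,\bsx^{E'}\bigl(1 + \sum_{f\in F} m_f x_f\bigr)$ defines a simple normal crossing divisor and immediately appeals to Remark \ref{rmk - jets of arrangements} (the arrangement jet formula) to conclude equidimensionality. You instead run an explicit minimal-prime analysis: iterate Lemma \ref{lemma - jet schemes of products, not nec disjoint variables} on the factorization into distinct coordinates times the affine linear form $L$, obtain a generating tuple indexed by $(t_c)_{c\in c_\matN}$ and $t_L$ with $\sum(t_c+1)+(t_L+1)=m+1$, and verify by linear algebra on the distinct derivative "stories" that these equations are affinely independent, with any dependency forcing $1\in\mathfrak{p}$. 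Both arguments are correct and of comparable length; yours is more self-contained and does not lean on the SNC/arrangement fact, while the paper's is quicker once that fact is granted. Two small presentation notes: the case split into $\massm = 0$ at the start and the $|F|=1$ base case are both already subsumed by your all-(co)loop analysis, so they can be omitted without loss; and the phrase "$\KK$-linearly dependent on the already-prescribed $\{x_c^{(b)}\}$" should be sharpened to distinguish the $b\geq 1$ scenario (restriction of $D^bL$ is the zero form) from $b=0$ (restriction is the nonzero constant $1$), since both lead to the contradiction but for slightly different reasons.
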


\begin{proof}
    By Proposition \ref{prop - Mustata's characterization of irreducibility of m-jets in terms of dim lying over sing} it suffices to verify
    \begin{equation} \label{eqn - easiest Feynman form, equidimensional jets dim criterion}
        \dim \scrL_{m}(\KK^{F}, X_{\Feyn}) \leq (m+1)(|F| - 1) \text{ for all } m \geq 1.
    \end{equation}
    We do this by induction. Suppose that $e \in E$ is neither a loop nor coloop on $\matN$. Let $\Gamma$ be the variety with defining ideal $(\{D^{q}x_{e}\}_{0 \leq q \leq m})$. Then by Lemma \ref{lemma - basic intersection theory identity} and Proposition \ref{prop - feynman integrand, jet scheme intersect with gamma basic fml},
    \begin{align} \label{eqn - easiest Feynman form, equidimensional jets part 1}
        \dim \scrL_{m}(\KK^{F}, X_{\Feyn}) 
        &\leq \dim \left( \scrL_{m}(\KK^{F}, X_{\Feyn}) \cap \Gamma) \right) + \codim \Gamma \\
        &= \dim \scrL_{m}(\KK^{F \minus \{e\}}, \Var( \Feyn(\zeta_{N \setminus e}, \Delta_{\massm}^{F \minus \{e\}}, 0)) ) + \codim \Gamma. \nonumber
    \end{align}
    (The usage of Lemma \ref{lemma - basic intersection theory identity} is legal since $\rank(\matN\setminus e) = \rank(\matN) \geq 1$ by definition.) So if \eqref{eqn - easiest Feynman form, equidimensional jets dim criterion} holds for the $m$-jets attached to $\Feyn(\zeta_{N \setminus e}, \Delta_{\massm}^{F \setminus e}, 0)$, then \eqref{eqn - easiest Feynman form, equidimensional jets part 1} shows that\eqref{eqn - easiest Feynman form, equidimensional jets dim criterion} holds for $X_{\Feyn}$ as well. 

    Proceeding inductively, it enough to prove \eqref{eqn - easiest Feynman form, equidimensional jets dim criterion} holds when $\matN$ consists of only loops and coloops. In that case,
    \[
    \Feyn = c \bsx^{E^{\prime}} (1 + \sum_{f \in F} m_{f} x_{f})
    \]
    where $c \in \KK^{\times}$ and $E^{\prime} \subseteq E$ consists of the coloops of $\matN$; remark that $E^{\prime} \neq \emptyset$ since $\rank(\matN) \geq 1$. Thus, $\Feyn$ is a simple normal crossing divisor and \eqref{eqn - easiest Feynman form, equidimensional jets dim criterion} follows by a local computation using Remark \ref{rmk - jets of arrangements}.
\end{proof}

\begin{proposition} \label{prop - feynman integrand, equidimensional jets}
    Consider the Feynman integrand $\Feyn = \Feyn(\zeta_{\matN}, \Delta_{\textbf{m}}^{F}, \xi_{\matM})$. Assume that $\matN$ is loopless. Then $\scrL_{m}(\KK^{F}, X_{\Feyn})$ is an equidimensional l.c.i. of dimension $(m+1)(|F| - 1)$ for all $m \geq 1$.
\end{proposition}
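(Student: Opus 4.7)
The plan is to apply Musta\c t\u a's criterion (Proposition \ref{prop - Mustata's characterization of irreducibility of m-jets in terms of dim lying over sing}), which reduces the statement to verifying
\[
\dim \scrL_m(\KK^F, X_\Feyn) \leq (m+1)(|F| - 1) \quad \text{for all } m \geq 1,
\]
the case $m = 0$ being immediate. I would prove this bound by induction on $|E|$, mirroring Proposition \ref{prop - easiest Feynman form, equidimensional jets} but with an additional case split to accommodate the presence of $\xi_\matM$.

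To set up the induction step, I locate a convenient edge $e \in E$. Since $\matN$ is loopless and is a quotient of $\matM$, Remark \ref{rmk-mat-quots}.(e) forces $\matM$ to be loopless as well. Because $\rank(\matN) \geq 1$ and $\rank(\matM) = \rank(\matN) + 1$, the matroid $\matN$ cannot be Boolean on $E$ (otherwise $\rank(\matM) > |E|$), so there exists $e \in E$ that is neither a loop nor a coloop on $\matN$; automatically such $e$ is not a loop on $\matM$. Let $\Gamma \subseteq \scrL_m(\KK^F)$ be the codimension $m+1$ variety cut out by $(\{D^q x_e\}_{0 \leq q \leq m})$. Proposition \ref{prop - feynman integrand, jet scheme intersect with gamma basic fml} now offers two possible identifications of $\scrL_m(\KK^F, X_\Feyn) \cap \Gamma$: if $e$ is not a coloop on $\matM$ we get
\[
\scrL_m(\KK^{F \minus \{e\}}, \Var(\Feyn(\zeta_{\matN \setminus e}, \Delta_\massm^{F \minus \{e\}}, \xi_{\matM \setminus e}))),
\]
and the new integrand still satisfies the standing hypotheses ($\matN \setminus e$ is loopless and a quotient of $\matM \setminus e$ by Remark \ref{rmk-mat-quots}.\ref{item-quot-del-contr}; the rank difference is preserved; and $\rank(\matN \setminus e) = \rank(\matN) \geq 1$), so the inductive hypothesis applies. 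If instead $e$ is a coloop on $\matM$, the identification yields $\scrL_m(\KK^{F \minus \{e\}}, \Var(\Feyn(\zeta_{\matN \setminus e}, \Delta_\massm^{F \minus \{e\}}, 0)))$, a momentumless Feynman integrand handled directly by Proposition \ref{prop - easiest Feynman form, equidimensional jets}.

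In both subcases the right-hand variety is non-empty, because the smaller integrand is non-scalar by Remark \ref{rmk - feynman integrand basic facts}.\ref{item-Feyn-nonempty}. Therefore Lemma \ref{lemma - basic intersection theory identity} yields
\[
\dim \scrL_m(\KK^F, X_\Feyn) \leq (m+1)(|F| - 2) + (m+1) = (m+1)(|F| - 1),
\]
completing the induction. The base case occurs at $|E| = 2$ (smaller values are incompatible with $\rank(\matM) \leq |E|$ and $\rank(\matN) \geq 1$), where $\matN = \matU_{1,2}$ and $\matM$ is Boolean on $E$, so every $e \in E$ is a non-(co)loop of $\matN$ and a coloop of $\matM$, and the argument terminates through the momentumless reduction. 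The only routine subtlety is ensuring that all ingredients defining a Feynman integrand, namely the quotient relationship, the loopless condition on $\matN$, the equality $\rank(\matM) = \rank(\matN) + 1$, and $\rank(\matN) \geq 1$, persist under deletion of our chosen $e$; this is guaranteed by Remark \ref{rmk-mat-quots}.
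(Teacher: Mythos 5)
Your proposal is correct and follows essentially the same route as the paper's own proof: reduce to the dimension bound via Musta\c t\u a, induct on $|E|$ via an edge $e$ that is not a (co)loop on $\matN$, split on whether $e$ is a coloop on $\matM$ (feeding into either the inductive hypothesis or Proposition~\ref{prop - easiest Feynman form, equidimensional jets}), and close the loop with the base case $|E|=2$. The only cosmetic difference is that you justify the existence of a suitable $e$ upfront by noting $\matN$ cannot be Boolean, whereas the paper folds that observation into the inductive-argument paragraph; the substance is identical.
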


\begin{proof}
    Again it suffices by Proposition \ref{prop - Mustata's characterization of irreducibility of m-jets in terms of dim lying over sing} to check that
    \begin{equation} \label{eqn - feynman integrand, equidimensional jets dimension criterion}
        \dim \scrL_{m}(\KK^{F}, X_{\Feyn}) \leq (m+1)(|F| - 1) \text{ for all } m \geq 1
    \end{equation}
    Again we proceed by induction. As before, pick $e \in E$ and let $\Gamma$ be cut out by $\{D^{q}(x_{e})\}_{0 \leq q \leq m}$. 

\vspace{2ex}

    \emph{Case 1}: $e$ is not a (co)loop on $\matN$ nor on $\matM$.

\vspace{1ex}

    \noindent By  Lemma \ref{lemma - basic intersection theory identity} and Proposition \ref{prop - feynman integrand, jet scheme intersect with gamma basic fml},
    \begin{align} \label{eqn - feynman integrand, equidimensional jets, part 1}
        \dim \scrL_{m}(\KK^{F}, X_{\Feyn}) 
        &\leq \dim \left( \scrL_{m}(\KK^{F}, X_{\Feyn}) \cap \Gamma \right) + \codim \Gamma \\
        &= \dim \scrL_{m}(\KK^{F \minus \{e\}}, \Var( \Feyn(\zeta_{\matN \setminus e}, \Delta_{\massm}^{F \minus \{e\}}, \xi_{\matM\setminus e}))) + \codim \Gamma; \nonumber
    \end{align}
    since $\rank(\matN \setminus e) = \rank(\matN) \geq 1$, the usage of Lemma \ref{lemma - basic intersection theory identity} is valid. So if \eqref{eqn - feynman integrand, equidimensional jets dimension criterion} holds for the $m$-jets attached to $\Feyn(\zeta_{\matN \setminus e}, \Delta_{\massm}^{F \minus \{e\}}, \xi_{\matM\setminus e})$ then \eqref{eqn - feynman integrand, equidimensional jets, part 1} shows \eqref{eqn - feynman integrand, equidimensional jets dimension criterion} holds for $X_{\Feyn}$ as well. 

\vspace{2ex}

    \emph{Case 2}: $e$ is not a (co)loop on $\matN$ but is a coloop on $\matM$. 

\vspace{1ex}

    Argue as in \emph{Case 1}, but now using the coloop formula in Proposition \ref{prop - feynman integrand, jet scheme intersect with gamma basic fml}:    \begin{align*}
        \dim \scrL_{m}(\KK^{F}, X_{\Feyn}) \leq \dim \scrL_{m}(\KK^{F \minus \{e\}}, \Var( \Feyn(\zeta_{\matN \setminus e}, \Delta_{\massm}^{F \minus \{e\}}, 0))) + \codim \Gamma.
    \end{align*}
    By Proposition \ref{prop - easiest Feynman form, equidimensional jets}, we know 
    \[
    \dim \scrL_{m}(\KK^{F \minus \{e\}}, \Var( \Feyn(\zeta_{\matN \setminus e}, \Delta_{\massm}^{F \minus \{e\}}, 0))) \leq (m+1)(|F| - 2).
    \]
    So \eqref{eqn - feynman integrand, equidimensional jets dimension criterion} holds for $X_{\Feyn}$.

\vspace{2ex}

    \emph{The Inductive Argument}: 

\vspace{1ex}
    
    We prove that $\Feyn$ satisfies \eqref{eqn - feynman integrand, equidimensional jets dimension criterion} by induction on $|E|$. Assume that the claim holds on smaller ground sets. If $\matN$ admits a non-(co)loop $e$ then either: $\rank(\matM\setminus e) = \rank(\matN \setminus e) + 1$ happens and \emph{Case 1} and the inductive hypothesis apply, or $\rank(\matM\setminus e) = \rank(\matN \setminus e)$ happens and \emph{Case 2} applies since looplessness persists under deletions. If $\matN$ admits no non-(co)loops then since $\matN$ is loopless, $\matN$ must be a free matroid on $E$. But then $\rank(\matM) = \rank(\matN) + 1 = |E| + 1$ is impossible. 

    The base case of the induction is the smallest $E$ admitting a matroid quotient $\matM \twoheadrightarrow \matN$ where: $\rank(\matN) \geq 1$; $\rank(\matM) = \rank(\matN) + 1$; $\matN$ is loopless. This forces: $|E| = 2$; $\matM$ is the free matroid on $E$; $\matN$ is the circuit matroid $\matU_{1,2}$ on $E$. Here \emph{Case 2} applies and implies \eqref{eqn - feynman integrand, equidimensional jets dimension criterion}. As \emph{Case 2} does not appeal to any inductive hypothesis, the induction terminates. 
\end{proof}

\subsection{A Special Handle}

The inductive tools necessary to study the rationality of Feynman integrals involve, as before, a handle style induction that exploits connective handles $H$ on $\matN$. Our situation is similar to that of flag matroidal polynomials: $(\matM, \matN)$ is a terminally strict flag matroid after all. 

In the flag matroidal polynomial setting, if $H$ contained a coloop on $\matM$, the induction reduced studying the flag $(\matM,\matN)$ to the matroid $\matN\setminus H$, and for connected $\matN\setminus H$, any attached matroidal polynomial was smooth or rational thanks to Corollary \ref{cor-mtrdl-rat-sing}. 

In the present case, if $H$ has a coloop on $\matM$ the analogous approach would be to reduce studying the Feynman integrand $\Feyn(\zeta_{\matN}, \Delta_{\massm}^{E}, \xi_{\matM})$ attached to $\matN$ and $\matM$ to the momentum-free Feynman integrand $\Feyn(\zeta_{\matN \setminus H}, \Delta_{\massm}^{E\minus H}, 0)$. This is problematic since  momentum-free Feynman integrands often \emph{do not} have rational singularities. For instance, on the circuit $\matN=\matU_{2,3}$ the momentum-free Feynman integrand $\Psi_{\matN}\cdot(1 + x_{1} + x_{2} + x_{3})$ is not even irreducible at the point $(x_{1} = -1, x_{2} = 0 = x_{3})$. 

Thus, not only will our connective handle $H$ on $\matN$ need to satisfy all our previous requirements, it will also need to avoid all coloops on $\matM$. In other words, we will require $\rank(\matM \setminus H) > \rank(\matN \setminus H)$. The following result guarantees that we can find such a special handle whenever necessary.

\begin{proposition}  \label{prop - special handle wrt matroid quotients}
    Let $\matN$ be a quotient of $\matM$ on common ground set $E$ where $\rank(\matM) > \rank(\matN) \geq 2$. If $\matN$ is connected and not a circuit, then there exists a proper handle $H \in \calI_{\matN}$ with the following properties:
    \begin{enumerate}[label=(\alph*)]
        \item\label{item-connected} $\matN\setminus H$ is connected;
        \item\label{item-rank-stuff} $\rank(\matM \setminus H) > \rank(\matN \setminus H) \geq 2$
        \item\label{item-more-rank} $\rank(\matN/ H) \geq 1$.
    \end{enumerate}
\end{proposition}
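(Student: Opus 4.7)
The proof reduces via rank bookkeeping to a combinatorial handle-selection problem. Write $K$ for the set of coloops of $\matM$. Since handles of $\matN$ are handles of $\matM$ (Remark~\ref{rmk-mat-quots}(c)) and $\calI_\matN \subseteq \calI_\matM$ (Remark~\ref{rmk-mat-quots}(a)), any handle $H \in \calI_\matN$ with no $\matN$-coloop is simultaneously an independent handle of $\matM$. The structural fact that an independent handle of a matroid is either entirely inside the coloop set or entirely disjoint from it (proven by pairing a non-coloop $h \in H$ with a circuit $C_h$ and using $H \subseteq C_h$) then forces either $H \cap K = \emptyset$ or $H \subseteq K$. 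Remark~\ref{rmk-handle basics} yields
\[
\rank(\matM \setminus H) - \rank(\matN \setminus H) = \rank(\matM) - \rank(\matN) \quad\text{if } H \cap K = \emptyset,
\]
while the same difference equals $\rank(\matM) - \rank(\matN) - 1$ if $H \subseteq K$. Meanwhile, \cite[Lem.~2.13]{DSW} supplies a $\matN$-circuit $C$ of rank $\geq 2$ with $C \neq E$, and \cite[Prop.~2.8]{DSW} turns it into a handle $H$ with $\matN \setminus H$ connected and $\matN \setminus H \supseteq C$; so (a) is immediate, and the rank formulas force $\rank(\matN \setminus H) \geq 2$ (the second part of (b)) and $\rank(\matN / H) \geq 1$ (property (c)). What remains is the strict inequality in (b), equivalently enforcing $H \cap K = \emptyset$ whenever the rank gap $d := \rank(\matM) - \rank(\matN)$ equals $1$.

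I will split by the coloop profile of $\matM$. If $K = \emptyset$, the $H$ above works unchanged. If $K = E$, i.e.\ $\matM$ is free on $E$, the classification of connected corank-one matroids (only the circuit $\matU_{r, r+1}$, seen via duality with connected rank-one matroids) combined with $\matN$ being connected of rank $\geq 2$ but not a circuit forces $|E| \geq \rank(\matN) + 2$, hence $d \geq 2$, and so $H \subseteq K$ still suffices. The real content lies in the case $\emptyset \subsetneq K \subsetneq E$, where I aim to pick a non-coloop $f \in E \setminus K$ and produce a handle $H$ of $\matN$ that contains $f$. Concretely, I try to locate a circuit $C^\star$ of $\matN$ with $\rank_\matN(C^\star) \geq 2$, $C^\star \neq E$, and $f \notin C^\star$, and then use \cite[Prop.~2.8]{DSW} to extract a handle of $\matN$ inside $E \setminus C^\star$ containing $f$. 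When $\matN \setminus f$ is connected of rank $\geq 2$ and is not itself a circuit, \cite[Lem.~2.13]{DSW} applied inside $\matN \setminus f$ supplies such $C^\star$ directly.

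The main obstacle will be the residual subcase where, for every non-coloop $f \in E \setminus K$, the matroid $\matN \setminus f$ is disconnected, has rank $<2$, or is itself a circuit. My plan for this subcase is to exploit that $f$ is not a $\matN$-coloop, so lies in multiple $\matN$-circuits, and to use a finer analysis of this cycle structure around $f$ to construct $C^\star$ by hand; as a complementary tool I will pass to the dual quotient $\matM^\perp \twoheadrightarrow \matN^\perp$ guaranteed by Remark~\ref{rmk-mat-quots}(f), which interchanges deletion-based and contraction-based handle extraction and typically unlocks new admissible handles. Executing this last step is where the ``more complicated process for finding an appropriate handle'' alluded to in the introduction materializes.
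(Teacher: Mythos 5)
Your reduction up to the point of the case $\emptyset \subsetneq K \subsetneq E$ with rank gap $d=1$ is correct and agrees with the rank arithmetic in the paper: handles of $\matN$ are handles of $\matM$; an independent, coloop-free handle of $\matM$ lies either inside $K$ or disjoint from $K$; and the difference $\rank(\matM\setminus H) - \rank(\matN\setminus H)$ equals $d$ or $d-1$ accordingly. The easy cases $K=\emptyset$ and $K=E$ you dispatch correctly (the corank-one classification argument for $K=E$ is sound).

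The gap is exactly where you say it is, and your plan for filling it does not work as described. You propose to fix a non-coloop $f\in E\setminus K$, find a circuit $C^\star$ of $\matN$ with $\rank_\matN(C^\star)\geq 2$, $C^\star\neq E$, and $f\notin C^\star$, and then invoke \cite[Prop.~2.8]{DSW} to ``extract a handle of $\matN$ inside $E\setminus C^\star$ containing $f$.'' But Prop.~2.8 only produces a proper handle $H$ with $\matN\setminus H$ connected and $\matN\setminus H\supseteq C^\star$; it gives you no control over whether $f\in H$. Since the whole point of containing $f$ is to force $H\cap K=\emptyset$ via the coloop-dichotomy, losing that control is fatal to the argument. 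In the subcase where $\matN\setminus f$ is disconnected, of rank $<2$, or a circuit, you only sketch intentions (cycle-structure analysis around $f$, passing to the dual quotient) and leave the construction of $C^\star$ undone. So the core of the proposition remains unproved.

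The paper's approach avoids the element-by-element hunt entirely: apply \cite[Lem.~2.13]{DSW} and \cite[Prop.~2.8]{DSW} once to get an initial handle $H$; if condition (b) fails, the rank bookkeeping forces $d=1$ and, in fact, $H = K$ exactly (every $h\in H$ is a coloop of $\matM$, and conversely every $\matM$-coloop must lie in $H$ because $\matM\setminus H=\matN\setminus H$ is connected). One then locates a circuit $C'$ of $\matN$ \emph{containing} $H$ with $\rank_\matN(C')\geq 2$ and $C'\neq E$ (the case $|H|=1$ with only parallel-pair circuits through it is ruled out since it would make $\rank(\matN)=1$). Applying \cite[Prop.~2.8]{DSW} to $C'$ yields a handle $H'$ with $\matN\setminus H'\supseteq C'\supseteq K$, hence $H'\cap K=\emptyset$, and the rank formulas finish the job. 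The key maneuver you are missing is to enforce the constraint ``new handle is disjoint from $K$'' by pushing $K$ into the circuit that the new handle's complement must contain, rather than by trying to force a chosen non-coloop into the handle itself.
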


\begin{proof}
    First note that for any proper handle $H$ of $\matN$, the inequality $2 \leq \rank(\matN \setminus H) = \rank(\matN) - |H| + 1$ is equivalent to $1 \leq \rank(\matN/ H) = \rank(\matN) - |H|$, \emph{i.e.}\ to Condition \ref{item-more-rank}. Since $\rank(\matN) \geq 2$, by \cite[Lemma 2.13]{DSW} we can find a circuit $C$ of $\matN$ with $\rank_{\matN}(C) \geq 2$. Since $\matN$ is not a circuit, $C \neq E$. By \cite[Prop.~2.8]{DSW} we may find a proper handle $H \in \calI_{\matN}$ such that $\matN\setminus H$ is connected and $\matN\setminus H \supseteq C$. If $\rank(\matM \setminus H) > \rank(\matN \setminus H)$ we are done since $\rank(\matN \setminus H) \geq \rank_{\matN}(C) \geq 2$.

    So assume $\rank(\matM \setminus H) = \rank(\matN \setminus H)$ and pick $h \in H$. Since $H$ is a handle on both $\matM$ and $\matN$, by \cite[Lemma 2.4.(c)]{DSW}, $H\minus\{h\}$ consists of only coloops on both $\matM \setminus h$ and $\matN\setminus h$. So
    \begin{align*}
        \rank(\matM \setminus h) - |H| + 1 
        &= \rank(\matM \setminus H) \\
        &= \rank(\matN \setminus H) \\
        &=\rank(\matN \setminus h) - |H| + 1 \\
        &= \rank(\matN) - |H| + 1.
    \end{align*}
    So $\rank(\matM \setminus H) = \rank(\matN \setminus H)$ if and only if $\rank(\matM) = \rank(\matN) + 1$ and $h$ is a coloop on $\matM$. By the handle property, this means that $H$ consists of only coloops on $\matM$. As $\matM \setminus H = \matN \setminus H$ by Remark \ref{rmk-mat-quots}.\eqref{item-mat-quot-b}, certainly $\matM \setminus H$ is connected. So every element of $\matM \setminus H$ lies in a circuit of $\matM \setminus H$ and hence every element of $\matM \setminus H$ lies in a circuit of $\matM$. In summary:
    \[
    H \subseteq \{\text{coloops of $\matM$}\} \subseteq H.
    \]

    Since $H \in \calI_{\matN}$ is a handle on $\matN$ and $\matN$ is connected, the set 
    \[
    S = \{C^{\prime} \mid C^{\prime} \text{ is a circuit of $\matN$}, C^{\prime} \supseteq H \}
    \]
    is non-empty. In fact, there must be a $C^{\prime} \in S$ such that $\rank_{\matN}(C^{\prime}) \geq 2$. (Otherwise, $H$ should then be a singleton $\{e\}$ which, by the connectedness of $\matN$, would yield
    $
    S = \{ \{e, g\} \mid g \in E \setminus e \}
    $ and so $\{e\}\in\calB_\matN$, in contradiction to our hypotheses).
    
    So we may find a circuit $C^{\prime}$ of $\matN$ such that $C^{\prime} \supseteq H$ and $\rank_{\matN}(C^{\prime}) \geq 2$. Since $\matN$ is not a circuit, $\matN\neq C^{\prime}$. By \cite[Prop.~2.8]{DSW}, we may find a proper handle $H^{\prime} \in \calI_{\matN}$ of $\matN$ such that: $\matN\setminus H^{\prime}$ is connected; $\matN\setminus H^{\prime} \supseteq C^{\prime} \supseteq H = \{\text{coloops of $\matM$} \}$. Since $H^{\prime}$ is disjoint from the coloops of $\matM$, we have already seen that $\rank(\matM \setminus H^{\prime}) > \rank(\matN \setminus H^{\prime})$. By the choice of $C^{\prime}$, we have $\rank(\matN \setminus H^{\prime}) \geq \rank_{\matN}(C^{\prime}) \geq 2$. Thus $H^{\prime}$ satisfies \ref{item-connected} and \ref{item-rank-stuff}. That $H^{\prime}$ satisfies \ref{item-more-rank} as well comes from the proof's first sentence.
\end{proof}

\subsection{Rationality of Feynman Integrands}

We return to the now familiar process of building up our handle style induction: we give a handle formula for $\Feyn$, use it to find special hypersurfaces containing the true Jacobian of $\Feyn$ and use them to show that the $m$-jets of $\Feyn$ are irreducible under mild assumptions. We begin with the handle formula: 

\begin{proposition} \label{prop - new handle fml, Feynman}
    Let $\Feyn = \Feyn(\zeta_{\matN}, \Delta_{\massm}^{F}, \xi_{\matM})$ be a Feynman integrand. Let $H \in \calI_{\matN}$ be a proper handle such that $\rank(\matN/ H) \geq 1$ and $H$ contains no coloops on $\matN$ nor on$\matM$. Then there exist Feynman integrands 
    \[
    \Feyn_{/ H} := \Feyn(\zeta_{\matN/ H}, \Delta_{\massm}^{F}, \zeta_{\matM / H}) \quad \text{and} \quad \Feyn_{\setminus H}^{h}:= \Feyn(\zeta_{\matN\setminus H}^{h}, \Delta_{\massm}^{F \minus\{h\}}, \zeta_{\matM \setminus H}^{h})
    \]
    where $h \in H$ and $\zeta_{\matN/ H}, \zeta_{\matN\setminus H}^{h}$ (resp. $\xi_{\matM / H}, \xi_{\matM \setminus H}^{h}$) are the handle matroidal polynomials of $\zeta_{\matN}$ (resp. $\xi_{\matM}$) of Definition \ref{def - handle matroidal polynomials} such that
    \begin{equation} \label{eqn - new handle fml, Feynman}
    \Feyn = \left( \sum_{h \in H} \bsx^{H\minus\{h\}} \Feyn_{\setminus H}^{h} \right) + \bsx^{H} \left( \Feyn_{/H} + \sum_{h \in H} m_{h} \zeta_{\matN\setminus H}^{h} \right).
    \end{equation}
\end{proposition}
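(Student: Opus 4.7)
The plan is to reduce the Feynman handle identity to the handle formula for matroid support polynomials, Proposition \ref{prop - new handle fml - matroidal poly}, applied separately to each of the matroidal polynomial pieces $\zeta_{\matN}$ and $\xi_{\matM}$, and then to expand $\Feyn = \zeta_{\matN}(1+\Delta_{\massm}^{F}) + \xi_{\matM}$ and collect terms. First I would verify that the hypotheses of Proposition \ref{prop - new handle fml - matroidal poly} are available for both matroidal pieces: that $H$ is a proper handle on $\matM$ (true by Remark \ref{rmk-mat-quots}.\ref{item-mat-quot-c}, since handles of the quotient $\matN$ are handles of $\matM$); that $H \in \calI_{\matM}$ (true by Remark \ref{rmk-mat-quots}.(a), since $\calI_{\matN} \subseteq \calI_{\matM}$); and that $H$ contains no coloops on either matroid (by hypothesis). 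Since $\zeta_{\matN}$ and $\xi_{\matM}$ are matroid support polynomials, they are matroidal with the singleton data $f_e = x_e, d_e = 1$ (Proposition \ref{prop-MSP-are-MtrdlM}), so applying the handle formula yields
\[
\zeta_{\matN} = \sum_{h \in H} \bsx^{H\minus\{h\}} \zeta_{\matN\setminus H}^{h} + \bsx^{H}\zeta_{\matN/H}, \qquad \xi_{\matM} = \sum_{h \in H} \bsx^{H\minus\{h\}} \xi_{\matM\setminus H}^{h} + \bsx^{H}\xi_{\matM/H},
\]
and the resulting handle matroidal polynomials are themselves matroid support polynomials on $\matN\setminus H$, $\matN/H$, $\matM\setminus H$, $\matM/H$.

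Next I would check that these pieces assemble into legitimate Feynman integrands. Since $H$ is independent and has no coloops in either matroid, ranks behave as $\rank(\matN/H)=\rank(\matN)-|H|$, $\rank(\matN\setminus H)=\rank(\matN)-|H|+1$, and similarly for $\matM$, so that the difference $\rank(\matM/H)-\rank(\matN/H)=\rank(\matM\setminus H)-\rank(\matN\setminus H)=1$ is preserved. Combined with $\rank(\matN/H)\geq 1$ from the hypothesis (and consequently $\rank(\matN\setminus H)\geq 2$) and with Remark \ref{rmk-mat-quots}.\ref{item-quot-del-contr} (quotients are preserved under deletion and contraction), the objects $\Feyn_{/H}$ and $\Feyn_{\setminus H}^{h}$ are valid Feynman integrands.

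The main computation is then the algebraic verification of the formula. I would expand the right-hand side of \eqref{eqn - new handle fml, Feynman} by substituting the definitions of $\Feyn_{\setminus H}^{h}$ and $\Feyn_{/H}$; the purely constant-term (no $\Delta_{\massm}$) contributions of the right-hand side then collapse, via the matroidal handle formula applied above, to $\zeta_{\matN}+\xi_{\matM}$. The remaining obstacle, and the only spot requiring genuine bookkeeping, is to recognize the middle term $\zeta_{\matN}\Delta_{\massm}^{F}$. The key identity is
\[
\sum_{h\in H}\bsx^{H\minus\{h\}}\zeta_{\matN\setminus H}^{h}\,\Delta_{\massm}^{F} = \sum_{h\in H}\bsx^{H\minus\{h\}}\zeta_{\matN\setminus H}^{h}\,\Delta_{\massm}^{F\minus\{h\}} + \bsx^{H}\sum_{h\in H} m_{h}\zeta_{\matN\setminus H}^{h},
\]
obtained by splitting $\Delta_{\massm}^{F}=\Delta_{\massm}^{F\minus\{h\}}+m_hx_h$ inside each summand and using $x_h\cdot\bsx^{H\minus\{h\}}=\bsx^{H}$. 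This is exactly what produces the correction term $\bsx^{H}\sum_{h\in H}m_h\zeta_{\matN\setminus H}^{h}$ on the right-hand side, and once this identity is combined with $\bsx^{H}\zeta_{\matN/H}\Delta_{\massm}^{F}$, the sum matches $\zeta_{\matN}\Delta_{\massm}^{F}$ via the handle expansion of $\zeta_{\matN}$. Reassembling all three groups of terms yields the claimed formula \eqref{eqn - new handle fml, Feynman}.
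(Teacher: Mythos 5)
Your proposal is correct and follows essentially the same route as the paper: apply Proposition \ref{prop - new handle fml - matroidal poly} to both $\zeta_{\matN}$ and $\xi_{\matM}$, substitute into $\Feyn = \zeta_{\matN}(1+\Delta_{\massm}^{F})+\xi_{\matM}$, split $\Delta_{\massm}^{F}=\Delta_{\massm}^{F\minus\{h\}}+m_hx_h$ inside each $h$-summand to absorb the extra $m_hx_h$ factor into $\bsx^{H}$, and regroup into the handle Feynman integrands. The preliminary checks you include (that $H$ is a proper coloop-free independent handle on $\matM$ as well, and that the resulting pieces are genuine Feynman integrands) are the right due diligence and are consistent with the paper, which simply invokes them without spelling them out.
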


\begin{proof}
    Applying the handle formula Proposition \ref{prop - new handle fml - matroidal poly} to both $\zeta_{\matN}$ and $\zeta_\matM$ gives \eqref{eqn - new handle fml, Feynman}:
    \begin{align} \label{eqn - new handle fml, Feynman, pf, 1}
        \Feyn 
        &= \bigg( \big( \sum_{h \in H} \bsx^{H\minus\{h\}} \zeta_{\matN\setminus H}^{h} \big) + \bsx^{H} \zeta_{\matN/ H} \bigg) (1 + \Delta_{\massm}^{F}) \\
        &+ \bigg( \big(\sum_{h \in H} \bsx^{H\minus\{h\}} \xi_{\matM \setminus H}^{h} \big) + \bsx^{H} \xi_{\matM / H} \bigg) \nonumber \\
        &= \sum_{h \in H} \bsx^{H\minus\{h\}} \bigg( \zeta_{\matN\setminus H}^{h} (1 + \Delta_{\massm}^{F\minus\{h\}}) + \xi_{\matM \setminus H}^{h} \bigg) \nonumber \\
        &+ \big( \bsx^{H} \sum_{h \in H} m_{h} \zeta_{\matN\setminus H}^{h} \big) +  \bsx^{H} \bigg( \zeta_{\matN/ H} (1 + \Delta_{\massm}^{F}) + \xi_{\matM / H} \bigg)\nonumber \\
        &= \bigg( \sum_{h \in H} \bsx^{H\minus\{h\}} \Feyn_{\setminus H}^{h} \bigg) + \bsx^{H} \bigg( \Feyn_{/H} + \sum_{h \in H} m_{h} \zeta_{\matN\setminus H}^{h} \bigg) \nonumber.
    \end{align}
\end{proof}

\begin{define} \label{def - handle feynman integrands}
    In the setting of Proposition \ref{prop - new handle fml, Feynman}, we call the polynomials
    \[
    \Feyn_{/ H} := \Feyn(\zeta_{\matN/ H}, \Delta_{\massm}^{F}, \zeta_{\matM / H}) \quad \text{and} \quad \Feyn_{\setminus H}^{h}:= \Feyn(\zeta_{\matN\setminus H}^{h}, \Delta_{\massm}^{F\minus\{h\}}, \zeta_{\matM \setminus H}^{h})
    \]
    from \eqref{eqn - new handle fml, Feynman} the \emph{handle Feynman integrands}.  Note that $\Feyn_{/ H} \in \KK[F]$ and $\Feyn_{\setminus H}^{h} \in \KK[F\minus\{h\}]$. 
\end{define}

The first part of setting up the handle style induction is to study the minimal primes of the true Jacobian of a Feynman integrand, in s manner prallel to the previous cases. Here, the presence of $\Delta_{\massm}^{F}$ is an inconvenience in terms of argumentation, but surmountable in terms of results.

\begin{proposition} \label{prop - feynman integrand, minimal primes of true jacobian}
    Let $\Feyn = \Feyn(\zeta_{\matN}, \Delta_{\massm}^{F}, \xi_{\matM})$ be a Feynman integrand. Any prime ideal $\mathfrak{p}$ containing the true Jacobian $(\partial \Feyn) + (\Feyn)$ contains both  $\zeta_{\matN}, \xi_{\matM}$. 

    Moreover, if $H \in \calI_{\matN}$ is a proper handle on $\matN$ such that $\rank(\matN/ H) \geq 1$ and $H$ contains no coloops on $\matN$ nor $\matM$, then one of the following occurs:
    \begin{enumerate}[label=(\alph*)]
        \item\label{item-p-h} $\mathfrak{p} \ni x_{h}$ for some $h \in H$;
        \item\label{item-p-H} $\mathfrak{p}$ contains the handle Feynman integrand  $\Feyn_{/H}$.
    \end{enumerate}
\end{proposition}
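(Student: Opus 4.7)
The plan is to handle the two assertions in sequence, each powered by a specific identity that extracts structural information from the true Jacobian $(\partial\Feyn)+(\Feyn)$.

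For the first assertion, I would exploit the bi-graded structure of $\Feyn = \zeta_\matN(1+\Delta_\massm^F)+\xi_\matM$ as an element of $\KK[F]$. Under the standard grading, $\zeta_\matN$ is homogeneous of degree $r:=\rank(\matN)$ while both $\zeta_\matN\Delta_\massm^F$ and $\xi_\matM$ are homogeneous of degree $r+1$. Applying Euler's identity degree-by-degree yields
\[
(r+1)\,\Feyn \;-\; \sum_{f\in F} x_f\,\partial_f\Feyn \;=\; \zeta_\matN,
\]
so $\zeta_\matN\in(\Feyn)+(\partial\Feyn)\subseteq \mathfrak{p}$. Then $\xi_\matM = \Feyn - \zeta_\matN(1+\Delta_\massm^F)\in\mathfrak{p}$ follows immediately.

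For the second assertion, I would invoke the matroidal handle formula (Proposition \ref{prop - new handle fml - matroidal poly}) applied separately to $\zeta_\matN$ and $\xi_\matM$. The key observation is that for fixed $h\in H$, the monomial $\bsx^{H\minus\{h'\}}$ is divisible by $x_h$ precisely when $h'\neq h$, while the polynomials $\zeta_{\matN\setminus H}^{h'}$, $\xi_{\matM\setminus H}^{h'}$, $\zeta_{\matN/H}$, $\xi_{\matM/H}$ all live in $\KK[E\minus H]$ and hence are free of $x_h$. A direct bookkeeping yields
\[
\zeta_\matN - x_h\partial_{x_h}\zeta_\matN = \bsx^{H\minus\{h\}}\zeta_{\matN\setminus H}^{h}, \qquad \xi_\matM - x_h\partial_{x_h}\xi_\matM = \bsx^{H\minus\{h\}}\xi_{\matM\setminus H}^{h};
\]
combined with $\zeta_\matN,\xi_\matM\in\mathfrak{p}$ we obtain the congruences $x_h\partial_{x_h}\zeta_\matN \equiv -\bsx^{H\minus\{h\}}\zeta_{\matN\setminus H}^{h}$ and $x_h\partial_{x_h}\xi_\matM\equiv -\bsx^{H\minus\{h\}}\xi_{\matM\setminus H}^{h}$ modulo $\mathfrak{p}$.

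Next I would compute $x_h\partial_{x_h}\Feyn$ directly from the product rule,
\[
x_h\partial_{x_h}\Feyn = (x_h\partial_{x_h}\zeta_\matN)(1+\Delta_\massm^F) + m_h x_h\zeta_\matN + x_h\partial_{x_h}\xi_\matM \;\in\;\mathfrak{p},
\]
substitute the two congruences just established, split $1+\Delta_\massm^F = (1+\Delta_\massm^{F\minus\{h\}}) + m_h x_h$, and recognize $\zeta_{\matN\setminus H}^{h}(1+\Delta_\massm^{F\minus\{h\}})+\xi_{\matM\setminus H}^{h} = \Feyn_{\setminus H}^{h}$. This collapses to
\[
\bsx^{H\minus\{h\}}\Feyn_{\setminus H}^{h} + m_h\,\bsx^{H}\zeta_{\matN\setminus H}^{h}\;\in\;\mathfrak{p} \qquad\text{for every }h\in H.
\]
Summing these relations over $h\in H$ and subtracting the result from $\Feyn\in\mathfrak{p}$ using the Feynman handle formula of Proposition \ref{prop - new handle fml, Feynman} leaves exactly $\bsx^{H}\Feyn_{/H}\in\mathfrak{p}$. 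Primality of $\mathfrak{p}$ now forces either some $x_h\in\mathfrak{p}$ or $\Feyn_{/H}\in\mathfrak{p}$, which is the desired dichotomy.

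The main obstacle is bookkeeping in the second part: the variable $x_h$ appears in $\Delta_\massm^F$, so the product rule produces a cross-term $m_h x_h\zeta_\matN$ that is only killed modulo $\mathfrak{p}$ after the first part is in hand, and a second cross-term $m_h\bsx^{H}\zeta_{\matN\setminus H}^{h}$ that survives individually but telescopes after summing over $h$ against the last piece of the handle formula. Once one commits to the $\pmod{\mathfrak{p}}$ viewpoint powered by $\zeta_\matN,\xi_\matM\in\mathfrak{p}$ from the first part, the two parts fit together cleanly and no deep input beyond the two handle formulas and the Euler identity is needed.
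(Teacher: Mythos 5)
Your proposal is correct and follows essentially the same route as the paper: an Euler-type identity extracts $\zeta_\matN$ (and then $\xi_\matM$) from the true Jacobian; applying $x_h\partial_{x_h}$ to $\Feyn$ and using the matroidal handle formula for $\zeta_\matN$ and $\xi_\matM$ separately then yields, for each $h\in H$, an element of $\mathfrak{p}$ that telescopes against the Feynman handle formula after summing over $H$, leaving $\bsx^H\Feyn_{/H}\in\mathfrak{p}$. One small improvement worth noting: you sum the Euler derivation over all of $F$ rather than over $E$ as the paper writes, which makes the identity $(r+1)\Feyn - \sum_{f\in F}x_f\partial_f\Feyn = \zeta_\matN$ hold on the nose even when $F\supsetneq E$, avoiding a spurious factor $1+\Delta^{F\setminus E}_{\massm}$ that would otherwise appear.
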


\begin{proof}
    Let $\eps := \sum_{e \in E} x_{e} \partial_{x_{e}}$ be the Euler derivation. Since $\min(\Feyn) = \zeta_{\matN}$ is homogeneous of degree $\rank(\matN)$ and $\Feyn - \zeta_{\matN}$ is homogeneous of degree $\rank(\matN) + 1$ (or is $0$),  we observe that (the bullet signifies application of a vector field to a function)
    \begin{align*}
    \mathfrak{p} \ni (\rank(\matN) + 1)\Feyn - \eps \bullet \Feyn 
    &= (\rank(\matN) + 1) \Feyn - \eps \bullet (\Feyn - \zeta_{\matN}) - \eps \bullet \zeta_{\matN} \\
    &= \zeta_{\matN}.
    \end{align*}
    Since $\mathfrak{p} \ni \Feyn = \zeta_{\matN}(1 + \Delta_{\massm}^{E}) + \xi_{\matM}$ we deduce that $\mathfrak{p} \ni \xi_{\matM}$ as well.

    Now pick $h \in H$. By the handle formula Proposition \ref{prop - new handle fml - matroidal poly}
    \begin{align*}
        \zeta_{\matN} - x_{h} \partial_{x_{h}} \bullet \zeta_{\matN} = \bsx^{H\minus\{h\}}  \zeta_{\matN\setminus H}^{h} \quad \text{ and } \quad \xi_{\matM} - x_{h} \partial_{x_{h}} \bullet \xi_{\matM} = \bsx^{H\minus\{h\}} \xi_{\matM \setminus H}^{h}.
    \end{align*}
    Hence,
    \begin{align*}
        \mathfrak{p} 
        &\ni \Feyn - x_{h} \partial_{x_{h}} \bullet \Feyn \\
        &= (\zeta_{\matN} + \zeta_{\matN} \Delta_{\massm}^{F} + \xi_{\matM}) - x_{h} \partial_{x_{h}} \bullet (\zeta_{\matN} + \zeta_{\matN} \Delta_{\massm}^{F} + \xi_{\matM}) \\
        &= (\zeta_{\matN} - x_{h} \partial_{x_{h}} \bullet \zeta_{\matN}) + (\zeta_{\matN} - x_{h} \partial_{x_{h}} \bullet \zeta_{\matN}) \Delta_{\massm}^{F} - \zeta_{\matN} (x_{h} \partial_{x_{h}} \bullet \Delta_{\massm}^{F}) + (\xi_{\matM} - x_{h} \partial_{x_{h}} \bullet \xi_{\matM}) \\
        &= \bsx^{H\minus\{h\}} \zeta_{\matN\setminus H}^{h} + \bsx^{H\minus\{h\}} \zeta_{\matN\setminus H}^{h} \Delta_{\massm}^{F} - m_{h} x_{h} \zeta_{\matN} + \bsx^{H\minus\{h\}} \xi_{\matM \setminus H}^{h} \\
        &= \bsx^{H\minus\{h\}} \Feyn(\zeta_{\matN\setminus H}^{h}, \Delta_{\massm}^{F}, \xi_{\matM \setminus H}^{h}) - m_{h} x_{h} \zeta_{\matN}.
    \end{align*}
    Since $\mathfrak{p} \ni \zeta_{\matN}$, we find for all $h \in H$ that
    \begin{align*}
    \mathfrak{p} 
    \ni \bsx^{H\minus\{h\}} \Feyn( \zeta_{\matN\setminus H}^{h}, \Delta_{\massm}^{F}, \xi_{\matM \setminus H}^{h}) 
    =  \bsx^{H\minus\{h\}} \left( \zeta_{\matN\setminus H}^{h}  (1 + \Delta_{\massm}^{F}) +   \xi_{\matM \setminus H}^{h} \right).
    \end{align*}
    By the first ``$=$'' of \eqref{eqn - new handle fml, Feynman, pf, 1},
    \begin{equation*} \label{eqn - feynman integrand, min prime true jacobian, useful membership in coloop on M case}
    \mathfrak{p} \ni \bsx^{H} \left( \zeta_{\matN/ H} (1 + \Delta_{\massm}^{F}) + \zeta_{\matM / H} \right) = \bsx^{H} \Feyn(\zeta_{\matN/ H}, \Delta_{\massm}^{F}, \zeta_{\matM / H}) = \bsx^{H} \Feyn_{/ H}.
    \end{equation*}
    Primality of $\mathfrak{p}$ implies that either \ref{item-p-h} or \ref{item-p-H} must hold.
    \end{proof}

\begin{example} \label{example - feynman integrand, min primes jacobian, N a circuit}
    Let $\matN$ be a circuit on $E$ that is a quotient of $\matM$ where $\matN\neq \matM$, whence $\matM$ is the free matroid on $E$. Consider a Feynman integrand $\Feyn = \Feyn(\zeta_{\matN}, \Delta_{\massm}^{E}, \xi_{\matM})$ and let $\mathfrak{p}$ be a minimal prime of the true Jacobian $(\partial \Feyn) + (\Feyn)$. By Proposition \ref{prop - feynman integrand, minimal primes of true jacobian}, $\mathfrak{p} \ni \xi_{\matM}\in\KK^\times\cdot\bsx^E$. Since $\matM$ is free,  there exists an $e \in E$ such that $\mathfrak{p} \ni x_{e}$.
\end{example}

Now we turn to setting up a contrivance for falling into the inductive set-up. The first Lemma is familiar to its matroidal polynomial and flag matroidal polynomial counterparts; recall Definition \ref{def - handle feynman integrands}.

\begin{lemma} \label{lemma - feynman integrand, jet scheme intersect with gamma, lying over}
    Let $\Feyn = \Feyn(\zeta_{\matN}, \Delta_{\massm}^{F}, \xi_{\matM})$ be a Feynman integrand. Let $H \in \calI_{\matN}$ be a proper handle on $\matN$ such that $\rank(\matN/ H) \geq 1$ and $H$ contains no coloops on $\matN$ nor on $\matM$. Pick $h \in H$ and let $\Gamma \subseteq \scrL_{m}(\KK^{E})$ be the variety defined by $(\{D^{q} x_{h} \}_{0 \leq q \leq m})$. Then
    \begin{equation} \label{eqn - feynman integrand, jet scheme intersect with gamma}
        \scrL_{m}(\KK^{F}, X_{\Feyn}) \cap \Gamma \simeq \scrL_{m}(\KK^{F\minus\{h\}}, \Var(\bsx^{H\minus\{h\}} \Feyn_{\setminus H}^{h}))
    \end{equation}
    and 
    \begin{align} \label{eqn - feynman integrand, jet scheme intersect with gamma, lying over}
    \scrL_{m}  ( \KK^{F}, &X_{\Feyn}, \Var ( \Feyn_{/H}) ) \cap \Gamma \\
    &\simeq \scrL_{m} \big(\KK^{F\minus\{h\}}, \Var( \bsx^{H\minus\{h\}} \Feyn_{\setminus H}^{h}), \Var( \Feyn(\zeta_{\matN/ H}, \Delta_{\massm}^{F\minus\{h\}}, \xi_{\matM / H})) \big) \nonumber \\
    &\neq \emptyset. \nonumber
    \end{align}
\end{lemma}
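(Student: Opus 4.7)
The plan is to mirror the arguments used for the matroidal and flag matroidal counterparts of this result---Lemma \ref{lemma - matroidal poly, handle fml intersect with Gamma} and Lemma \ref{lemma - jets, flag matroidal polynomial, handle intersect with gamma formula}---with the handle formula \eqref{eqn - new handle fml, Feynman} doing the heavy lifting. Fix $h \in H$. The key observation is that, in the right-hand side of \eqref{eqn - new handle fml, Feynman}, every summand except the single term $\bsx^{H \minus \{h\}} \Feyn_{\setminus H}^{h}$ carries an explicit factor of $x_h$: the other summands of the first sum involve $\bsx^{H \minus \{h'\}}$ for $h' \neq h$ (which contains $x_h$), and the bracketed term is multiplied by $\bsx^H$. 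Hence one may write
\[
\Feyn = \bsx^{H \minus \{h\}} \Feyn_{\setminus H}^{h} + x_h \tilde g
\]
for some $\tilde g \in \KK[F]$, with the first summand lying in $\KK[F \minus \{h\}]$.

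Applying the Leibniz rule and using that $D^j x_h$ vanishes in the coordinate ring of $\Gamma$ for $0 \leq j \leq m$, we obtain the congruences $D^q \Feyn \equiv D^q\left( \bsx^{H \minus \{h\}} \Feyn_{\setminus H}^{h} \right) \pmod{I(\Gamma)}$ for $0 \leq q \leq m$. This immediately yields the isomorphism \eqref{eqn - feynman integrand, jet scheme intersect with gamma}, in parallel to Proposition \ref{prop - matroidal poly, m-jets intersect with Gamma}.

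For \eqref{eqn - feynman integrand, jet scheme intersect with gamma, lying over}, I split off the $x_h$-contribution from the mass linear form: since $\Delta_{\massm}^F = m_h x_h + \Delta_{\massm}^{F \minus \{h\}}$, a direct expansion gives
\[
\Feyn_{/H} = \Feyn(\zeta_{\matN/H}, \Delta_{\massm}^{F \minus \{h\}}, \xi_{\matM/H}) + m_h x_h \zeta_{\matN/H}.
\]
Again the Leibniz rule combined with $D^j x_h \in I(\Gamma)$ shows that $D^q \Feyn_{/H} \equiv D^q \Feyn(\zeta_{\matN/H}, \Delta_{\massm}^{F \minus \{h\}}, \xi_{\matM/H}) \pmod{I(\Gamma)}$ for $0 \leq q \leq m$. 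Adding this family of relations to those from the preceding paragraph establishes \eqref{eqn - feynman integrand, jet scheme intersect with gamma, lying over}.

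Finally, for the non-emptiness, I exhibit the origin $0 \in \KK^{F \minus \{h\}}$ as a point in both hypersurfaces on the right. The hypothesis $\rank(\matN/H) \geq 1$ together with Remark \ref{rmk-handle basics} forces $\rank(\matN \setminus H) \geq 2$; since $\matN/H$ is a quotient of $\matM/H$ and $\matN \setminus H$ is a quotient of $\matM \setminus H$ (Remark \ref{rmk-mat-quots}), all four matroids have positive rank. Their matroid support polynomials therefore vanish at $0$, and so do $\Feyn_{\setminus H}^{h}$, $\bsx^{H \minus \{h\}} \Feyn_{\setminus H}^{h}$, and $\Feyn(\zeta_{\matN/H}, \Delta_{\massm}^{F \minus \{h\}}, \xi_{\matM/H})$. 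By Remark \ref{rmk - always a m-jet lying over a point in X}, the zero $m$-jet at $0$ certifies non-emptiness. The only genuinely new bookkeeping beyond the matroidal and flag matroidal cases is the $m_h x_h \zeta_{\matN/H}$ correction that accompanies the restriction $\Delta_{\massm}^F$ to $\Delta_{\massm}^{F \minus \{h\}}$; this is the one subtle point of the proof.
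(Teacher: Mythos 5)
Your proof is correct and follows essentially the same route as the paper: the same decomposition $\Feyn = \bsx^{H\minus\{h\}}\Feyn_{\setminus H}^h + x_h\tilde g$ from Proposition \ref{prop - new handle fml, Feynman}, the same congruences modulo $I(\Gamma)$, the same observation that restricting $\Feyn_{/H}$ to $\{x_h = 0\}$ swaps $\Delta_{\massm}^F$ for $\Delta_{\massm}^{F\minus\{h\}}$ (you spell this out via the $m_h x_h\zeta_{\matN/H}$ correction term, the paper states it as an evaluation), and the same rank bookkeeping plus Remark \ref{rmk - always a m-jet lying over a point in X} for non-emptiness. No gaps.
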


\begin{proof}
    By Proposition \ref{prop - new handle fml, Feynman}, there exists  $g \in \KK[F]$ such that 
    \[
    \Feyn = \bsx^{H\minus\{h\}} \Feyn_{\setminus H}^{h} + x_{h} g.
    \]
    As the first summand lies in $\KK[F\minus\{h\}]$ we can prove \eqref{eqn - feynman integrand, jet scheme intersect with gamma} by mimicking the argument for \eqref{eqn - matroid poly, connected, m-jet handle intersect with Gamma} in Lemma \ref{lemma - matroidal poly, handle fml intersect with Gamma}. The justification for the ``$\simeq$'' of \eqref{eqn - feynman integrand, jet scheme intersect with gamma, lying over} follows similarly to the justification for \eqref{eqn - matroidal poly, connected, m-jet handle lying over intersect with Gamma} in Lemma \ref{lemma - matroidal poly, handle fml intersect with Gamma} with the additional observation that evaluating $\Feyn_{/H}$ at $x_{h}=0$ yields $\Feyn(\zeta_{\matN/ H}, \Delta_{\massm}^{F\minus\{h\}}, \xi_{\matM / H})$. 
    
    As for the ``$\neq \emptyset$'' part of \eqref{eqn - feynman integrand, jet scheme intersect with gamma, lying over}, as $\rank(\matN/ H) \geq 1$ we know that $\rank(\matN \setminus H) \geq 2$. Together these inequalities imply that $\bsx^{H\minus\{h\}}\Feyn_{\setminus H}^{h}$ and $\Feyn(\zeta_{\matN/ H}, \Delta_{\massm}^{F\minus\{h\}}, \xi_{\matM / H})$ are both nonzero and vanish at $0 \in \KK^{F\minus\{h\}}$. Then,  
    \[
    \scrL_{m} (\KK^{F\minus\{h\}}, \Var(\bsx^{H\minus\{h\}} \Feyn_{\setminus H}^{h}), \Var(\Feyn( \zeta_{\matN/ H}, \Delta_{\massm}^{F\minus\{h\}}, \xi_{\matM / H}))) \neq \emptyset
    \]
    follows by Remark \ref{rmk - always a m-jet lying over a point in X}.
\end{proof}

Lemma \ref{lemma - feynman integrand, jet scheme intersect with gamma, lying over} exhibits another obstacle $\Delta_{\massm}^{F}$ causes. In previous inductive arguments, we first passed from $\scrL_{m}(\KK^{F}, X_{\Feyn})$ to $\scrL_{m}(\KK^{F\minus\{h\}}, \Var(\bsx^{H\minus\{h\}} \Feyn_{\setminus H}^{h}))$ and then reduced to $\scrL_m(\KK^{F \minus H}, \Feyn_{\setminus H}^{h})$. Alas, here this reduction is nonsensical. First of all, $\Feyn_{\setminus H}^{h}$ may not reside in $\KK[F \minus H]$; because its definition involves $\Delta_{\massm}^{F \minus\{h\}}$ it may use $H \minus\{h\}$ variables. Secondly, it is less obvious how to extract the $\bsx^{H\minus\{h\}}$ term from $\bsx^{H\minus\{h\}}\Feyn_{\setminus H}^{h}$ since the multiplicands do not use disjoint variable sets. In previous arguments we had such disjointedness.
So we need another way to understand $\scrL_{m}(\KK^{F}, \Var(\bsx^{H\minus\{h\}} \Feyn_{\setminus H}^{h}))$, hence the following:

\begin{lemma} \label{lemma - feynman, intersecting with nabla}
    Let $E \subseteq F$ with $H^{\prime} = F \minus E$. Consider the Feynman integrand $\Feyn(\zeta_{\matN}, \Delta_{\massm}^{F}, \xi_{\matM})$. Consider the bounded integral vectors
    \[
    \Upsilon = \{ \mathbf{v} \in \mathbb{Z}^{|H^{\prime}|} \mid -1 \leq v_{k} \leq m \text{ and } |\mathbf{v}| \leq m - |H^{\prime}| \} ,
    \]
    and for each $\mathbf{v} \in \Upsilon$ define (with the convention $D^{-1}(g) = 0$) the ideal $J_{\mathbf{v}} \subseteq \KK[\{x_{f}^{(q)} \}_{\substack{ f \in F \\ 0 \leq q \leq m}}]$ by 
    \[
    J_{\mathbf{v}} \text{ is generated by } \{D^{p} x_{h^{\prime}} \}_{\substack{h^{\prime} \in H^{\prime} \\ 0 \leq p \leq v_{h^{\prime}}}} \cup \{D^{j} \Feyn(\zeta_{\matN}, \Delta_{\massm}^{F}, \xi_{\matM}) \}_{0 \leq j \leq m - |H^{\prime}| - |\mathbf{v}|}. 
    \]
    If $\mathfrak{q}$ is a minimal prime of $I(\scrL_{m}(\KK^{F}, \Var(\bsx^{H^{\prime}} \Feyn(\zeta_{\matN}, \Delta_{\massm}^{F}, \xi_{\matM}))))$, then
    \begin{equation} \label{eqn - feynman, intersect with nabla, min primes}
    \text{ there exists $\mathbf{v} \in \Upsilon$ such that } \mathfrak{q} \supseteq J_{\mathbf{v}}.
    \end{equation}

    Now fix $\mathbf{v} \in \Upsilon$ and set $t = m - |H^{\prime}| - |\mathbf{v}|$. The ideal $\nabla_{\mathbf{v}} \subseteq \KK[\{x_{f}^{(q)} \}_{\substack{ f \in F \\ 0 \leq q \leq m}}]$ defined by $\nabla_{\mathbf{v}} = (\{ D^{\ell} x_{h^{\prime}} \}_{\substack{h^{\prime} \in H^{\prime} \\ v_{h^{\prime}} + 1 \leq \ell \leq m}}) $ is a complete intersection of codimension $(m+1)|H^{\prime}| - (m-t)$. Moreover, we have a natural isomorphism
    \begin{equation} \label{eqn - feynman, intersect with nabla, embedding}
        \Var(J_{\mathbf{v}}) \cap \Var(\nabla_{\mathbf{v}}) \simeq (\pi_{m, t}^{\KK^{E}})^{-1} \bigg( \scrL_{t}(\KK^{E}, \Var(\Feyn(\zeta_{\matN}, \Delta_{\massm}^{E}, \xi_{\matM}))) \bigg) \subseteq \scrL_{m}(\KK^{E}).
    \end{equation}
    
    \end{lemma}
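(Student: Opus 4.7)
The lemma has two independent parts and I would tackle them in sequence. For the containment \eqref{eqn - feynman, intersect with nabla, min primes}, the approach is a repeated application of Lemma \ref{lemma - jet schemes of products, not nec disjoint variables}: view $\bsx^{H^{\prime}} \Feyn$ as a product of the $|H^{\prime}| + 1$ factors $x_{h^{\prime}_{1}}, \ldots, x_{h^{\prime}_{|H^{\prime}|}}, \Feyn$, having fixed some ordering $H^{\prime} = \{h^{\prime}_{1}, \ldots, h^{\prime}_{|H^{\prime}|}\}$. A single pass of that lemma, applied to the decomposition $x_{h^{\prime}_{1}} \cdot (\bsx^{H^{\prime} \setminus \{h^{\prime}_{1}\}} \Feyn)$, produces an integer $v_{h^{\prime}_{1}} \in \{-1, \ldots, m\}$ such that $\mathfrak{q}$ contains $\{D^{p} x_{h^{\prime}_{1}}\}_{0 \leq p \leq v_{h^{\prime}_{1}}}$ together with all derivatives of the remaining tail up to order $m - v_{h^{\prime}_{1}} - 1$. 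Iterating $|H^{\prime}|$ times peels one $x_{h^{\prime}}$ off per step, and since each peel consumes $v_{h^{\prime}} + 1$ units of derivative budget from the running total, the final derivative bound left on $\Feyn$ is $m - |H^{\prime}| - |\mathbf{v}|$. The resulting $\mathbf{v}$ then satisfies the constraints defining $\Upsilon$.

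For the second part I would first establish the codimension of $\nabla_{\mathbf{v}}$. The prolongation rule $D x_{f}^{(q)} = x_{f}^{(q+1)}$ combined with the identification $x_{f} = x_{f}^{(0)}$ yields by induction $D^{\ell} x_{h^{\prime}} = x_{h^{\prime}}^{(\ell)}$, so $\nabla_{\mathbf{v}}$ is generated by the pairwise distinct variables $x_{h^{\prime}}^{(\ell)}$ for $h^{\prime} \in H^{\prime}$ and $v_{h^{\prime}} + 1 \leq \ell \leq m$. Counting gives $\sum_{h^{\prime} \in H^{\prime}}(m - v_{h^{\prime}}) = m|H^{\prime}| - |\mathbf{v}|$ generators, and a one-line arithmetic check against $t = m - |H^{\prime}| - |\mathbf{v}|$ confirms this equals $(m+1)|H^{\prime}| - (m - t)$, verifying the complete-intersection codimension.

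For the isomorphism \eqref{eqn - feynman, intersect with nabla, embedding}, the key is that the variable-type generators of $J_{\mathbf{v}}$, namely $\{x_{h^{\prime}}^{(p)}\}_{0 \leq p \leq v_{h^{\prime}}}$, together with $\nabla_{\mathbf{v}} = \{x_{h^{\prime}}^{(\ell)}\}_{v_{h^{\prime}} + 1 \leq \ell \leq m}$ sweep out every coordinate $x_{h^{\prime}}^{(q)}$ attached to $h^{\prime} \in H^{\prime}$, $0 \leq q \leq m$, so their common vanishing locus is the coordinate-subspace copy of $\scrL_{m}(\KK^{E})$ inside $\scrL_{m}(\KK^{F})$. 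Writing the difference $\Feyn(\zeta_{\matN}, \Delta_{\massm}^{F}, \xi_{\matM}) - \Feyn(\zeta_{\matN}, \Delta_{\massm}^{E}, \xi_{\matM}) = \zeta_{\matN}\sum_{h^{\prime} \in H^{\prime}} m_{h^{\prime}} x_{h^{\prime}}$, which lies in $(\{x_{h^{\prime}}\}_{h^{\prime} \in H^{\prime}})$, Leibniz shows that each $D^{j}$ of this difference lands in $(\{x_{h^{\prime}}^{(q)}\}_{h^{\prime} \in H^{\prime}, 0 \leq q \leq j})$. Hence modulo $J_{\mathbf{v}} + \nabla_{\mathbf{v}}$, each generator $D^{j} \Feyn$ of $J_{\mathbf{v}}$ collapses to $D^{j} \Feyn(\zeta_{\matN}, \Delta_{\massm}^{E}, \xi_{\matM})$, and these are precisely the defining equations of $(\pi_{m, t}^{\KK^{E}})^{-1}\bigl(\scrL_{t}(\KK^{E}, \Var(\Feyn(\zeta_{\matN}, \Delta_{\massm}^{E}, \xi_{\matM})))\bigr)$ inside $\scrL_{m}(\KK^{E})$.

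The main obstacle is the bookkeeping of the cascade: one must track the derivative budget consistently through the iterated applications of Lemma \ref{lemma - jet schemes of products, not nec disjoint variables}, handling the edge cases $v_{h^{\prime}} = -1$ (no derivatives of $x_{h^{\prime}}$ captured by $\mathfrak{q}$) and $v_{h^{\prime}} = m$ (all captured) consistently with the $-1$ conventions of the paper, so that the output $\mathbf{v}$ honestly lies in $\Upsilon$. Once this combinatorial accounting is firm, the codimension count and the isomorphism reduce to formal rewriting.
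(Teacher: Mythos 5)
Your proposal is correct and follows essentially the same route as the paper: the containment \eqref{eqn - feynman, intersect with nabla, min primes} is obtained in both by iterating Lemma \ref{lemma - jet schemes of products, not nec disjoint variables} to peel off the $x_{h^{\prime}}$ factors one at a time, and the isomorphism \eqref{eqn - feynman, intersect with nabla, embedding} in both cases rests on the observation that modulo $J_{\mathbf{v}} + \nabla_{\mathbf{v}}$, every $D^{j}\Feyn(\zeta_{\matN}, \Delta_{\massm}^{F}, \xi_{\matM})$ collapses to $D^{j}\Feyn(\zeta_{\matN}, \Delta_{\massm}^{E}, \xi_{\matM})$. Your phrasing of the error term as a single difference $\zeta_{\matN}\sum_{h^{\prime}} m_{h^{\prime}}x_{h^{\prime}}$ followed by Leibniz is a slightly cleaner packaging of the term-by-term expansion the paper carries out in \eqref{eqn - feynman, intersect with nabla, computation}, but it is the same argument.
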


    \begin{proof}
    The proof of \eqref{eqn - feynman, intersect with nabla, min primes} is a repeated application of Lemma \ref{lemma - jet schemes of products, not nec disjoint variables}. As for the second paragraph, $\nabla_{\mathbf{v}}$ clearly is a complete intersection, easily seen to be of the specified codimension; it remains to check \eqref{eqn - feynman, intersect with nabla, embedding}. By construction,
    \begin{equation} \label{eqn - feynman, intersect with nabla, nabla plus lambda}
        J_{\mathbf{v}} + \nabla_{\mathbf{v}} = \bigg( \{x_{h^{\prime}}^{(q)}\}_{\substack{h^{\prime} \in H^{\prime} \\ 0 \leq q \leq m}} \bigg) + \bigg( \{D^{j} \Feyn(\zeta_\matN, \Delta_{\massm}^{F}, \zeta_\matM) \}_{0 \leq j \leq t} \bigg).
    \end{equation}
    When $t=-1$ the truth of \eqref{eqn - feynman, intersect with nabla, embedding} is trivial:
    \[
    \Var(J_{\mathbf{v}}) \cap \Var(\nabla_{\mathbf{v}}) = \Var( \{ x_{h^\prime}^{(q)} \}_{\substack{h^\prime \in H^{\prime} \\ 0 \leq q \leq m}} ) \simeq \scrL_{m}(\KK^{E}). 
    \]

    So we may assume $t \geq 0$. Let $0 \leq j \leq t \leq m$. We extract the monomials of $D^{j} \Feyn(\xi_\matN, \Delta_{\massm}^{F}, \xi_{\matM})$ containing members of $\{x_{f}\}_{f \in F \minus E}$. This (note the change in superscript of $\Delta_{\massm}^{(-)}$) exchanges $D^{j} \Feyn(\zeta_\matN, \Delta_{\massm}^{F}, \xi_\matM)$ with $D^{j} \Feyn(\zeta_\matN, \Delta_{\massm}^{E}, \xi_\matM)$, modulo a controllable error term. To be exact,
    \begin{align} \label{eqn - feynman, intersect with nabla, computation}
        D^{j} &\Feyn(\zeta_{\matN}, \Delta_{\massm}^{F}, \xi_{\matM}) \\
        &= D^{j} \zeta_{\matN} + D^{j}(\zeta_{\matN} \Delta_{\massm}^{F}) + D^{j} \xi_{\matM} \nonumber \\
        &= D^{j} \zeta_{\matN} + \left( \sum_{0 \leq a \leq j} \binom{m}{a}(D^{a} \zeta_{\matN}) (D^{m-a} \Delta_{\massm}^{F}) \right)+ D^{j} \xi_{\matM} \nonumber \\
        &= D^{j} \zeta_{\matN} + \left( \sum_{0 \leq a \leq j} \binom{m}{a} (D^{a} \zeta_{\matN}) \left[ (D^{m-a} \Delta_{\massm}^{E}) + \sum_{h^{\prime} \in H^{\prime}} m_{h^{\prime}} D^{m-a}x_{h^{\prime}} \right] \right)+ D^{j} \xi_{\matM} \nonumber \\
        &= D^{j} \Feyn(\zeta_{\matN}, \Delta_{\massm}^{E}, \xi_{\matM}) + g \nonumber
    \end{align}
    where $g$ is in the ideal of $\KK[\{x_{f}^{(q)}\}_{\substack{ f \in F \\ 0 \leq q \leq m}}]$ generated by $\{D^{\ell} x_{h^{\prime}} \}_{\substack{h^{\prime} \in H^{\prime} \\ 0 \leq \ell \leq m}}.$ Combining \eqref{eqn - feynman, intersect with nabla, nabla plus lambda} and \eqref{eqn - feynman, intersect with nabla, computation} we deduce
    \[
    J_{\mathbf{v}} + \nabla_{\mathbf{v}} = \bigg( \{x_{h^{\prime}}^{(q)}\}_{\substack{h^{\prime} \in H^{\prime} \\ 0 \leq q \leq m}} \bigg) + \bigg( \{ D^{j} \Feyn(\zeta_{\matN}, \Delta_{\massm}^{E}, \xi_{\matM}) \}_{0 \leq j \leq t} \bigg),
    \]
    and hence
    \[
    \Var(J_{\mathbf{v}}) \cap \Var(\nabla_{\mathbf{v}}) \simeq (\pi_{m,t}^{\KK^{E}})^{-1} \bigg( \scrL_{t}(\KK^{E}, \Var( \Feyn(\zeta_{\matN}, \Delta_{\massm}^{E}, \xi_{\matN}))) \bigg).
    \]
    \end{proof}

Next we give the main result of this section: when $\matN$ is connected and $\rank(\matN) \geq 2$ then  the $m$-jets of $\Feyn(\zeta_{\matN}, \Delta_{\massm}^{E}, \xi_{\matM})$ are irreducible. Note that this is a polynomial in $\KK[E]$; while our result doesn't speak to Feynman integrands where $F \supsetneq E$, the proof requires dealing with such cases. There are two key differences compared to the corresponding Theorems \ref{thm - matroid poly, connected implies m-jets irreducible}, \ref{thm - flag matroidal poly, m-jets irreducible} caused by $\Delta_{\massm}^{E}$. First we will have to employ our special handle guaranteed by Proposition \ref{prop - special handle wrt matroid quotients}; second, we will have to carefully combine Lemma \ref{lemma - feynman integrand, jet scheme intersect with gamma, lying over} and Lemma \ref{lemma - feynman, intersecting with nabla} to reduce to an inductive hypothesis.

\begin{theorem} \label{thm - feynman integrand, irreducible jets}
    Let $\Feyn = \Feyn(\zeta_{\matN}, \Delta_{\massm}^{E}, \xi_{\matM}) \in \KK[E]$ be a Feynman integrand. Suppose that $\matN$ is connected of positive rank. Then $\scrL_{m}(\KK^{E}, X_{\Feyn})$ is an irreducible, reduced, l.c.i. of dimension $(m+1)(n-1)$ for all $m \geq 1$. Moreover, $X_{\Feyn}$ is normal.
\end{theorem}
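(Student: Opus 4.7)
The proof plan follows the same architecture as Theorems \ref{thm - matroid poly, connected implies m-jets irreducible} and \ref{thm - flag matroidal poly, m-jets irreducible}: by Proposition \ref{prop - Mustata's characterization of irreducibility of m-jets in terms of dim lying over sing}, given that $\scrL_m(\KK^E, X_{\Feyn})$ is an equidimensional l.c.i.\ of the expected dimension (to be established in parallel to Proposition \ref{prop - feynman integrand, equidimensional jets}, after confirming looplessness of $\matN$ can be assumed since $\matN$ is connected of positive rank), it suffices to prove
\[
\dim \scrL_{m}(\KK^{E}, X_{\Feyn}, X_{\Feyn,\Sing}) < (m+1)(n-1) \qquad \text{for all } m\geq 1.
\]
I will induct on $|E|$, bounding $\dim \scrL_{m}(\KK^{E}, X_{\Feyn}, \Var(\mathfrak{p})) < (m+1)(n-1)$ for every minimal prime $\mathfrak{p}$ of the true Jacobian $(\partial\Feyn)+(\Feyn)$. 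By Proposition \ref{prop - feynman integrand, minimal primes of true jacobian}, once a suitable handle $H$ on $\matN$ is fixed, each such $\mathfrak{p}$ satisfies either (a) $\mathfrak{p}\ni x_h$ for some $h\in H$, or (b) $\mathfrak{p}\ni \Feyn_{/H}$.

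In \emph{Case (a)}, I intersect $\scrL_m(\KK^E, X_\Feyn)$ with the variety $\Gamma$ cut out by $\{D^q x_h\}_{0\le q\le m}$: by Proposition \ref{prop - feynman integrand, jet scheme intersect with gamma basic fml}, this intersection is isomorphic to the $m$-jets of a Feynman integrand on $E\minus\{h\}$, either $\Feyn(\zeta_{\matN\setminus h}, \Delta_{\massm}^{E\minus\{h\}}, \xi_{\matM\setminus h})$ or the momentumless $\Feyn(\zeta_{\matN\setminus h}, \Delta_{\massm}^{E\minus\{h\}}, 0)$. In either case Proposition \ref{prop - feynman integrand, equidimensional jets} or \ref{prop - easiest Feynman form, equidimensional jets} yields the bound $(m+1)(n-2)$, and Lemma \ref{lemma - basic intersection theory identity} combined with $\codim\Gamma\le m+1$ produces the desired strict inequality. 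In \emph{Case (b)}, I pick $h\in H$ supplied by the handle analogue of Lemma \ref{lemma - matroidal poly handle fml, intersecting induced polys} (comparing minimum-degree forms $\min(\Feyn_{/H})=\zeta_{\matN/H}$ and $\min(\Feyn_{\setminus H}^h)=\zeta_{\matN\setminus H}^h$, which by Corollary \ref{cor-irred} and a rank comparison cannot divide one another) to force $\dim(X_{\Feyn_{/H}}\cap X_{\Feyn_{\setminus H}^h})\le |E|-|H|-2$. Then Lemma \ref{lemma - feynman integrand, jet scheme intersect with gamma, lying over} reduces the problem to bounding $\dim \scrL_m(\KK^{E\minus\{h\}}, \Var(\bsx^{H\minus\{h\}}\Feyn_{\setminus H}^h), \Var(\Feyn(\zeta_{\matN/H}, \Delta_{\massm}^{E\minus\{h\}}, \xi_{\matM/H})))$.

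The main obstacle is \emph{Case (b)}: because $\Feyn_{\setminus H}^h$ lies in $\KK[(E\minus H)\cup\{h'\in H\minus\{h\}:m_{h'}\neq 0\}]$ rather than the clean $\KK[E\minus H]$ available in the (flag) matroidal setting, the standard factorization trick used in Theorem \ref{thm - flag matroidal poly, m-jets irreducible} no longer applies directly. This is precisely what Lemma \ref{lemma - feynman, intersecting with nabla} is designed to handle: for each minimal prime $\mathfrak{q}$ over $I(\scrL_m(\KK^{E\minus\{h\}}, \Var(\bsx^{H\minus\{h\}}\Feyn_{\setminus H}^h)))$ I extract an integer vector $\mathbf{v}\in\Upsilon$ with $\mathfrak{q}\supseteq J_\mathbf{v}$, intersect with the complete intersection $\nabla_\mathbf{v}$, and use its isomorphism \eqref{eqn - feynman, intersect with nabla, embedding} to land inside a preimage of $\scrL_t(\KK^{E\minus H}, \Var(\Feyn_{\setminus H}^h))$ for a suitable $t$. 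An inductive dimension bound on that smaller Feynman integrand — applicable because Proposition \ref{prop - special handle wrt matroid quotients} (applied to the quotient $\matM\twoheadrightarrow\matN$, after first disposing of the circuit case via Example \ref{example - feynman integrand, min primes jacobian, N a circuit}) provides $H$ with $\matN\setminus H$ connected, $\rank(\matN/H)\ge 1$, and $H$ containing no coloops on $\matM$ (so $\matM\setminus H\neq\matN\setminus H$, keeping the quotient strict) — then delivers the strict inequality via Lemma \ref{lemma - intersecting irreducible jet scheme with inverse image of avoidant variety downstairs}, closing the induction. The base cases are $\matN$ a circuit (handled by Example \ref{example - feynman integrand, min primes jacobian, N a circuit} via Case (a)) and $\rank(\matN)=1$, where $\matN=\matU_{1,n}$ forces $\matM$ to be free and a direct computation on $\Feyn$ suffices.
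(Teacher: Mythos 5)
Your proposal follows the paper's own architecture very closely — equidimensionality first, then Musta\c t\u a's criterion, then an induction on $|E|$ via the dichotomy from Proposition \ref{prop - feynman integrand, minimal primes of true jacobian}, with the circuit case dispatched by Example \ref{example - feynman integrand, min primes jacobian, N a circuit} and the special handle from Proposition \ref{prop - special handle wrt matroid quotients} driving Case (b). That is the right plan. However, there is a narrow but real bookkeeping error in Case (a) that, as written, destroys strictness. You take $\Gamma$ cut out by $\{D^q x_h\}_{0\le q\le m}$, of codimension $m+1$ in $\scrL_m(\KK^E)$. Lemma \ref{lemma - basic intersection theory identity} then only gives $\dim \scrL_m(\KK^E, X_\Feyn, \Var(x_h)) \le \dim(\scrL_m(\KK^E,X_\Feyn)\cap\Gamma) + (m+1) \le (m+1)(n-2) + (m+1) = (m+1)(n-1)$, which is \emph{not} the required strict inequality $< (m+1)(n-1)$. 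The paper's Case 1 avoids this by defining $\Gamma$ with only the $m$ equations $\{D^q x_h\}_{1\le q\le m}$ (so $\codim\Gamma = m$), noting that $\scrL_m(\KK^E, X_\Feyn, \Var(\mathfrak{p}))\subseteq\scrL_m(\KK^E, X_\Feyn, \Var(x_h))$ already lives in $\{x_h^{(0)}=0\}$, so intersecting further with $\Gamma$ still lands in $\scrL_m(\KK^{E\minus\{h\}}, Y)$ via Proposition \ref{prop - feynman integrand, jet scheme intersect with gamma basic fml}; the count then reads $(m+1)(n-2) + m = (m+1)(n-1) - 1$, which is sharp. You will find the identical maneuver in Case 1 of Theorems \ref{thm - matroid poly, connected implies m-jets irreducible} and \ref{thm - flag matroidal poly, m-jets irreducible}; your Case (a) must be patched the same way.

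A secondary misstatement: for $\rank(\matN)=1$ one has $\matN = \matU_{1,n}$ and hence $\rank(\matM) = 2$, but that does \emph{not} force $\matM$ to be free unless $n=2$. The correct observation for this base case is simply that $\Feyn$ is then a quadric, and a quadric over an algebraically closed field of characteristic zero is, after a linear change, either smooth or a sum of at least three squares (at least three because connectedness of $\matN$ plus Remark \ref{rmk - feynman integrand basic facts}.\ref{item-Feyn-irred} gives irreducibility), and the latter has rational singularities, so Theorem \ref{thm-mustata-main} gives irreducible jets. The rest of your plan — in particular the use of Lemma \ref{lemma - feynman, intersecting with nabla} to extract the $J_{\mathbf v}$ and $\nabla_{\mathbf v}$ reduction in Case (b), the observation that $\Feyn_{\setminus H}^h$ does not live in $\KK[E\minus H]$, and the degree comparison $\deg\zeta_{\matN\setminus H}^h = \deg\zeta_{\matN/H}+1$ that rules out divisibility — matches the paper's Case 2 and SubClaim 2.(a) essentially verbatim, and is sound.
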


\begin{proof}
    When $\rank(\matN) = 1$ and $\Feyn$ is smooth, this is immediate; when $\rank(\matN) = 1$ and $\Feyn$ is singular, use the the second paragraph of Corollary \ref{cor-Feyn-ratsing}'s proof and Musta\c t\u a's Theorem \ref{thm-mustata-main}. So we may assume that $\rank(\matN) \geq 2$. 
    
    By Proposition \ref{prop - Mustata's characterization of irreducibility of m-jets in terms of dim lying over sing}, it is enough to demonstrate the dimension bound
    \begin{equation} \label{eqn - feynman, irreducible jets, dimension criterion}
        \dim \scrL_{m}(\KK^E, X_{\Feyn}, X_{\Feyn, \Sing}) < (m+1)(n-1) \text{ for all } m \geq 1.
    \end{equation}
    We induce on $|E|$, via a handle induction on $\matN$, and resolve cases before giving the whole induction argument. 

    Let $\mathfrak{p}$ be a minimal prime of of the true Jacobian $(\partial \Feyn) + (\Feyn)$. To show \eqref{eqn - feynman, irreducible jets, dimension criterion} we must show
    \begin{equation} \label{eqn - feynman, irreducible jets, dimension bound for general min prime}
        \dim \scrL_{m}(\KK^E, X_{\Feyn}, \Var(\mathfrak{p})) < (m+1)(n-1) \text{ for all } m \geq 1.
    \end{equation}
    We first establish two cases.

    \vspace{2ex}

    \emph{Case 1}: If there exists an $e \in E$ such that $x_{e} \in \Var(\mathfrak{p})$, then \eqref{eqn - feynman, irreducible jets, dimension bound for general min prime} holds.

\vspace{1ex}

    \noindent\emph{Proof}: Let $\Gamma \subseteq \scrL_{m}(\KK^{E})$ be the codimension $m$ variety cut out by $(\{D^{q} x_{e} \}_{1 \leq q \leq m})$. Then
    \begin{align} \label{eqn - feynman, irreducible jets, case 1}
        \dim \scrL_{m}(\KK^E, X_{\Feyn}, \Var(\mathfrak{p})) 
        &\leq \dim \scrL_{m}(\KK^E, X_{\Feyn}, \Var(x_{e})) \\
        &\leq \dim \left(\scrL_{m}(\KK^E, X_{\Feyn}, \Var(x_{e})) \cap \Gamma \right) + \codim \Gamma \nonumber \\
        &= \dim \scrL_m(\KK^{E \minus \{e\}}, Y) + m, \nonumber
    \end{align}
    where $Y$ is one of two hypersurfaces. If $e$ is not a coloop on $\matM$, then $Y$ is cut out by $\Feyn(\zeta_{\matN \setminus e}, \Delta_{\mathfrak{m}}^{E\minus\{e\}}, \xi_{\matM\setminus e})$; if $e$ is a coloop on $\matM$, then $Y$ is cut out by $\Feyn(\zeta_{\matN \setminus e}, \Delta_{\mathfrak{m}}^{E\minus\{e\}}, 0).$ (The first ``$\leq$'' is the assumption of \emph{Case 1}; the second ``$\leq$'' is Lemma \ref{lemma - basic intersection theory identity}; the ``$=$'' is Proposition \ref{prop - feynman integrand, jet scheme intersect with gamma basic fml} for either choice of $Y$.) Since $\matN$ is loopless and deletions preserve looplessness, $\dim \scrL_{m}(Y) = (m+1)(n-2)$ by Proposition \ref{prop - feynman integrand, equidimensional jets} or Proposition \ref{prop - easiest Feynman form, equidimensional jets}, depending on the choice of $Y$. So \eqref{eqn - feynman, irreducible jets, case 1} gives $\dim \scrL_{m}(X_{\Feyn}, \Var(\mathfrak{p})) \leq (m+1)(n-1) - 1$, verifying \eqref{eqn - feynman, irreducible jets, dimension bound for general min prime}.

    \vspace{2ex}

    \emph{Case 2}: Suppose that $H \in \calI_{\matN}$ is a proper handle on $\matN$ such that: $\matN\setminus H$ is connected; $H$ has no coloops on $\matM$; $\rank(\matN/ H) \geq 1$; $\mathfrak{p} \ni \Feyn_{/H}$. Furthermore, assume there exists a $h \in H$ such that $\Feyn(\zeta_{\matN\setminus H}^{h}, \Delta_{\massm}^{E\minus H}, \xi_{\matM \setminus H}^{h})$ satisfies \eqref{eqn - feynman, irreducible jets, dimension criterion}. Then \eqref{eqn - feynman, irreducible jets, dimension bound for general min prime} holds.

\vspace{1ex}

    \noindent\emph{Proof}: Select $h \in H$ and consider $\bsx^{H\minus\{h\}} \Feyn_{\setminus H}^{h} \in \KK[{E\minus\{h\}}]$. Let $\Gamma \subseteq \scrL_{m}(\KK^{E})$ be the codimension $m+1$ variety cut out by $\{D^{q} x_{h} \}_{0 \leq q \leq m}.$ Then
    \begin{align} \label{eqn - feynman, irreducible jets, case 2 part 1}
        \dim &\scrL_{m}(\KK^{E}, X_{\Feyn}, \Var(\mathfrak{p})) \\
        &\leq \dim \scrL_{m}(\KK^{E}, X_{\Feyn}, \Var(\Feyn_{/H})) \nonumber \\
        &\leq \dim \left( \scrL_{m}(\KK^{E}, X_{\Feyn}, \Var(\Feyn_{/H})) \cap \Gamma \right) + \codim \Gamma \nonumber \\
        &= \dim \left( \scrL_{m}(\KK^{{E\minus\{h\}}}, \Var(\bsx^{H\minus\{h\}} \Feyn_{\setminus H}^{h}), \Var(\Feyn(\zeta_{\matN/ H}, \Delta_{\massm}^{{E\minus\{h\}}}, \xi_{\matM / H}))) \right)\nonumber\\
        &\qquad+ \codim \Gamma, \nonumber
    \end{align}
    where the first ``$\leq$'' is part of the \emph{Case 2} assumption, the second ``$\leq$'' is the intersection theory identity Lemma \ref{lemma - basic intersection theory identity}; the ``$=$'' is \eqref{eqn - feynman integrand, jet scheme intersect with gamma, lying over} of Lemma \ref{lemma - feynman integrand, jet scheme intersect with gamma, lying over}. (The legality of Lemma \ref{lemma - basic intersection theory identity} is guaranteed by the non-emptiness asserted in Lemma \ref{lemma - feynman integrand, jet scheme intersect with gamma, lying over}.)

    Let $\mathfrak{q}$ be a minimal prime of $\scrL_{m}(\KK^{E\minus\{h\}}, \Var(\bsx^{H\minus\{h\}} \Feyn_{\setminus H}^{h}))$. Since $\codim \Gamma = m+1$, to show \eqref{eqn - feynman, irreducible jets, dimension bound for general min prime} it suffices by \eqref{eqn - feynman, irreducible jets, case 2 part 1} to show that
    \begin{equation} \label{eqn - feynman, irreducible jets, case 2 part 2}
        \dim \left( \Var(\mathfrak{q}) \cap (\pi_{m,0}^{\KK^{{E\minus\{h\}}}})^{-1} \big( \Var(\Feyn( \zeta_{\matN/ H}, \Delta_{\massm}^{{E\minus\{h\}}}, \zeta_{\matM / H})) \big) \right) \leq (m+1)(n-2) - 1.
    \end{equation}
    
    Apply Lemma \ref{lemma - feynman, intersecting with nabla} with $H^{\prime} := ({E\minus\{h\}}) \minus (E \minus H)= H\minus\{h\}$.  Then, firstly, we may find an ideal $J_{\mathbf{v}} \subseteq \KK[\{x_{f}^{(q)}\}_{\substack{ f \in E \minus \{e\} \\ 0 \leq q \leq m}}]$ where $\Var(\mathfrak{q}) \subseteq \Var(J_{\mathbf{v}})$ and so validating $\eqref{eqn - feynman, irreducible jets, case 2 part 2}$ reduces  to validating
    \begin{equation} \label{eqn - feynman, irreducible jets, case 2, part 3}
        \dim \bigg( \Var(J_{\mathbf{v}}) \cap (\pi_{m,0}^{\KK^{{E\minus\{h\}}}})^{-1} \big( \Var(\Feyn( \zeta_{\matN/ H}, \Delta_{\massm}^{{E\minus\{h\}}}, \zeta_{\matM / H})) \big) \bigg) \leq (m+1)(n-2) - 1.
    \end{equation}
    Secondly, we can find an integer $-1 \leq t \leq m$ and a complete intersection $\nabla_{\mathbf{v}} \subseteq \KK[ \{x_{f}^{(q)}\}_{\substack{ f \in E \minus \{e\} \\ 0 \leq q \leq m}}]$ of codimension $(m+1)(|H| - 1) - (m+1) - (t+1)$ such that
    \begin{align} \label{eqn - feynman, irreducible jets, case 2, part 4}
        \Var(J_{\mathbf{v}}) \cap \Var(\nabla_{\mathbf{v}}) 
        \simeq (\pi_{m,t}^{\KK^{E \minus H}})^{-1} \bigg( \scrL_{t}(\KK^{E \minus H}, \Var(\Feyn(\zeta_{\matN\setminus H}^{h}, \Delta_{\massm}^{E \minus H}, \xi_{\matM \setminus H}^{h}))) \bigg).
    \end{align}
    (The right hand side in \eqref{eqn - feynman, irreducible jets, case 2, part 4} is never empty since $\rank(\matN \setminus H) \geq 2$; when $t=-1$ we convened this term denotes $\scrL_{m}(\KK^{E\minus H})$.) 

    Now we finish out \emph{Case 2} by validating \eqref{eqn - feynman, irreducible jets, case 2, part 3} in two cases, based on the choice of $-1 \leq t \leq m$.

    \vspace{5mm}
    
    \emph{SubClaim 2.(a): If $t \geq 0$, then \eqref{eqn - feynman, irreducible jets, case 2, part 3} holds.} 
    
    In this setting, we deduce that
    \begin{align} \label{eqn - feynman, irreducible jets, case 2, part 5}
        \dim &\bigg( \Var(J_{\mathbf{v}}) \cap (\pi_{m,0}^{\KK^{{E\minus\{h\}}}})^{-1} \big( \Var(\Feyn( \zeta_{\matN/ H}, \Delta_{\massm}^{{E\minus\{h\}}}, \zeta_{\matM / H})) \big) \bigg) \\
        &\leq \dim \bigg( \Var(J_{\mathbf{v}}) \cap \Var(\nabla_{\mathbf{v}}) \cap (\pi_{m,0}^{\KK^{{E\minus\{h\}}}})^{-1} \big( \Var(\Feyn( \zeta_{\matN/ H}, \Delta_{\massm}^{{E\minus\{h\}}}, \zeta_{\matM / H})) \big) \bigg)\nonumber\\
        & \phantom{xxx}+\codim \Var(\nabla_{\mathbf{v}}) \nonumber \\
        &= \dim \left[ (\pi_{m,t}^{\KK^{E\minus H}})^{-1} \left[ \scrL_{t} \bigg( \KK^{E\minus H}, \Var(\Feyn(\zeta_{\matN\setminus H}^{h}, \Delta_{\massm}^{E\minus H}, \xi_{\matM \setminus H}^{h})), \Var(\Feyn(\zeta_{\matN/ H}, \Delta_{\massm^{E\minus H}}, \zeta_{\matM / H})) \bigg) \right] \right]   \nonumber \\
        &\phantom{xxx}+ \codim \Var(\nabla_{\mathbf{v}}) \nonumber \\
        &= \dim \left[ \scrL_{t} \bigg( \KK^{E\minus H}, \Var(\Feyn(\zeta_{\matN\setminus H}^{h}, \Delta_{\massm}^{E\minus H}, \xi_{\matM \setminus H}^{h})), \Var(\Feyn(\zeta_{\matN/ H}, \Delta_{\massm^{E\minus H}}, \zeta_{\matM / H})) \bigg) \right] \nonumber \\
        &\phantom{xxx}+ \codim \Var(\nabla_{\mathbf{v}}) + (m-t)(|E| - |H|) \nonumber
    \end{align}
    Here: the ``$\leq$ is Lemma \ref{lemma - basic intersection theory identity}; the first ``$=$'' is \eqref{eqn - feynman, irreducible jets, case 2, part 4} plus the fact that intersecting with $\Var(\nabla_{\mathbf{v}})$ has the effect of evaluating $\Feyn(\zeta_{\matN/ H}, \Delta_{\massm}^{{E\minus\{h\}}}, \xi_{\matM / H})$ at $x_{h^{\prime}}^{(q)} = 0$, for all $h^{\prime} \in H\minus\{h\}$ and all $0 \leq q \leq m$, since $J_{\mathbf{v}} + \nabla_{\mathbf{v}} \supseteq \{x_{h^{\prime}}^{(q)}\}_{\substack{h^{\prime} \in H\minus\{h\} \\ 0 \leq q \leq m}}$; the third ``$=$'' is the fact $\pi_{m,t}^{\KK^{E\minus H}}$ is a trivial projection with fibers $\KK^{(m-t)(|E| - |H|)}$. The $t$-jets after the last ``$=$'' are non-empty as $\rank(\matN \setminus H) > \rank(\matN/ H) \geq 1$ by Remark \ref{rmk - feynman integrand basic facts}.\ref{item-Feyn-nonempty}; hence, Lemma \ref{lemma - basic intersection theory identity} is used correctly.
    
    Since 
    \begin{align*}
        (m &+ 1)(|E|-2) - 1 - 
        \bigg(\codim \Var(\nabla_{\mathbf{v}}) + (m-t)(|E| - |H|) \bigg) \\
        &= (m+1)(|E|-2) - 1 - \bigg((m+1)(|H| - 1) - (m-t) + (m-t)(|E| - |H|) \bigg) \\
        &= (t+1)(|E| - |H| - 1) - 1, 
    \end{align*} 
    the bound \eqref{eqn - feynman, irreducible jets, case 2, part 5} shows that in order to confirm \eqref{eqn - feynman, irreducible jets, case 2, part 3}, it suffices to confirm that
    \begin{align} \label{eqn - feynman, irreducible jets, case 2, part 6}
        \dim & \scrL_{t} \bigg( \KK^{E\minus H}, \Var(\Feyn(\zeta_{\matN\setminus H}^{h}, \Delta_{\massm}^{E\minus H}, \xi_{\matM \setminus H}^{h})), \Var(\Feyn(\zeta_{\matN/ H}, \Delta_{\massm^{E\minus H}}, \zeta_{\matM / H})) \bigg) \\
        &\leq (t+1)(|E| - |H| - 1) - 1. \nonumber
    \end{align}
    By \emph{Case 2}'s assumptions, $\scrL_{t}(\KK^{E\minus H}, \Var(\zeta_{\matN\setminus H}^{h}, \Delta_{\massm}^{E\minus H}, \xi_{\matM \setminus H}^{h}))$ is irreducible of dimension $(t+1)(|E| - |H| - 1)$. By Lemma \ref{lemma - intersecting irreducible jet scheme with inverse image of avoidant variety downstairs},  \eqref{eqn - feynman, irreducible jets, case 2, part 6} would follow from 
    \begin{equation} \label{eqn - feynman, irreducible jets, case 2, part 7}
        \dim \bigg( \Var(\Feyn(\zeta_{\matN\setminus H}^{h}, \Delta_{\massm}^{E\minus H}, \xi_{\matM \setminus H}^{h})) \cap \Var(\Feyn(\zeta_{\matN/ H}, \Delta_{\massm^{E\minus H}}, \zeta_{\matM / H}) ) \bigg) \leq |E|-|H| - 2.
    \end{equation}
    
    Recall that $\matN\setminus H$ is connected  \emph{Case 2}. Consequently, Remark \ref{rmk - feynman integrand basic facts}.\ref{item-Feyn-irred} implies that $\Feyn(\zeta_{\matN\setminus H}^{h}, \Delta_{\massm}^{E\minus H}, \xi_{\matM \setminus H}^{h})$ is irreducible. So the only way \eqref{eqn - feynman, irreducible jets, case 2, part 7} could fail is if there were an $\alpha \in \KK[E\minus H]$ such that $\alpha \Feyn(\zeta_{\matN\setminus H}^{h}, \Delta_{\massm}^{E\minus H}, \xi_{\matM \setminus H}^{h}) = \Feyn(\zeta_{\matN/ H}, \Delta_{\massm}^{E\minus H}, \xi_{\matM / H})$. In particular, the leading term $\zeta_{\matN\setminus H}^{h}$ of the first Feynman integrand should divide the leading term $\zeta_{\matN/ H}$ of the second Feynman integrand. As $\zeta_{\matN\setminus H}^{h}$ is homogeneous of degree $|E| - |H| + 1$ and $\zeta_{\matN/ H}$ is homogeneous of degree $|E| - |H|$, \emph{cf.}\ Remark \ref{rmk - feynman integrand basic facts}.\eqref{item-Feyn-min} and Remark \ref{rmk-handle basics}, this is impossible. Hence \eqref{eqn - feynman, irreducible jets, case 2, part 7} is true, and \emph{Subclaim 2.(a)} follows, completing \emph{Case 2} under the choice $t \geq 0$.

    \vspace{2ex}

    \emph{SubClaim 2.(b): If $t = -1$, then \eqref{eqn - feynman, irreducible jets, case 2, part 3} holds.} 
    
    The argument the same as in \emph{SubClaim 2.(a)}, except that our convention for $t=-1$ breaks down for \eqref{eqn - feynman, irreducible jets, case 2, part 5}. Here $\codim \Var(\nabla_{\mathbf{v}}) = (m+1)(|H| - 2)$ and the correct version of \eqref{eqn - feynman, irreducible jets, case 2, part 5} is
    \begin{align} \label{eqn - feynman, irreducible jets, case 2, part 8}
        \dim &\bigg( \Var(J_{\mathbf{v}}) \cap (\pi_{m,0}^{\KK^{{E\minus\{h\}}}})^{-1} \big( \Var(\Feyn( \zeta_{\matN/ H}, \Delta_{\massm}^{{E\minus\{h\}}}, \zeta_{\matM / H})) \big) \bigg) \\
        &\leq \dim \bigg( \Var(\Feyn(\zeta_{\matN/ H}, \Delta_{\massm}^{E\minus H}, \xi_{\matM / H}))) \bigg) \nonumber \\
        &\phantom{xxx}+ (m+1)(|H| - 2) + m(|E| - H|). \nonumber
    \end{align}
    (The Feynman hypersurface after the ``$\leq$'' in \eqref{eqn - feynman, irreducible jets, case 2, part 8} lives in $\KK^{E\minus H}$.) Then \eqref{eqn - feynman, irreducible jets, case 2, part 3} will follow from
    \begin{equation} \label{eqn - feynman, irreducible jets, case 2, part 9}
        \Var(\Feyn(\zeta_{\matN/ H}, \Delta_{\massm}^{E\minus H}, \xi_{\matM / H})) \subseteq \KK^{E\minus H} \text{ has dimension at most } |E| - |H| - 1,
    \end{equation}
    whcih is immediate since 
    $\rank(\matN/ H) \geq 1$ by assumption. So \emph{SubClaim 2.(b)} holds, completing \emph{Case 2}  entirely.

    \vspace{2ex}

    \emph{The Inductive Argument}. 
    
    \vspace{1ex}
    
    We prove that $\Feyn$ satisfies \eqref{eqn - feynman, irreducible jets, dimension criterion} by induction on $|E|$. So fix a ground set $E$ and Feynman integrand $\Feyn = \Feyn(\zeta_{\matN}, \Delta_{\massm}^{E}, \xi_{\matM})$ where $\matN$ is connected and $\rank(\matN) \geq 2$. We first tackle the inductive step. So assume that the claim holds for all suitable Feynman integrands over ground sets strictly smaller than $E$. We must show that for any minimal prime $\mathfrak{p}$ of the true Jacobian $(\partial \Feyn) + (\Feyn)$ the inequality \eqref{eqn - feynman, irreducible jets, dimension bound for general min prime} holds.
    
    First assume that $\matN$ is a circuit. By Example \ref{example - feynman integrand, min primes jacobian, N a circuit}, there exists  $e \in E$ such that $\mathfrak{p} \ni x_{e}$. \emph{Case 1} applies verifying \eqref{eqn - feynman, irreducible jets, dimension bound for general min prime} and hence \eqref{eqn - feynman, irreducible jets, dimension criterion}. 

    Now assume that $\matN$ is not a circuit. By Proposition \ref{prop - special handle wrt matroid quotients} we may find a proper handle $H \in \calI_{\matM}$ such that: $\matN\setminus H$ is connected; $\rank(\matM \setminus H) > \rank(\matN \setminus H) \geq 2$; $\rank(\matN/ H) \geq 1$. Since, for any $h \in H$, every element of $(H\minus\{h\})$ is a coloop on $\matM$ and $\matN$, the fact that $\rank(\matM \setminus H) > \rank(\matN \setminus H)$ implies that $H$ contains no coloops on $\matM$. Pick $h \in H$; then $\Feyn( \zeta_{\matN\setminus H}^{h}, \Delta_{\massm}^{E\minus H}, \xi_{\matM \setminus H}^{h})$ falls into the inductive setup. By the preceding analysis we may invoke Proposition \ref{prop - feynman integrand, minimal primes of true jacobian}: either there is some $g \in H$ such that $\mathfrak{p} \ni x_{g}$, or $\mathfrak{p} \ni \Feyn_{/ H}$. If the former membership holds we can use \emph{Case 1}; if the latter membership holds we can use \emph{Case 2}. Either way we certify \eqref{eqn - feynman, irreducible jets, dimension bound for general min prime} and consequently \eqref{eqn - feynman, irreducible jets, dimension criterion} as well.

    So the inductive step holds. The base case occurs when $E$ is the smallest possible ground set admitting a connected matroid $\matN$ of rank $2$. This forces $|E| = 3$ and $\matN = \matU_{2,3}$, \emph{i.e.}\ $\matN$ is the $2$-circuit. But we already handled the case of a circuit via \emph{Case 1}, without any inductive setup. This completes the induction and also the proof.
    \end{proof}

    Rationality of $\Feyn$ follows from irreducibility of $\scrL_{m}(\KK^{E}, X_{\Feyn})$ as before, though we can weaken the rank hypothesis:

    \begin{corollary} \label{cor-Feyn-ratsing}
        Let $\Feyn = \Feyn(\zeta_{\matN}, \Delta_{\massm}^{E}, \xi_{\matM}) \in \KK[E]$ be a Feynman integrand. If $\matN$ is connected and $\rank(\matN) \geq 2$, then $\Feyn$ has rational singularities; if $\matN$ is connected and $\rank(\matN) = 1$, then $\Feyn$ has rational singularities or is smooth. 
    \end{corollary}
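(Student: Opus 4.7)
The proof splits naturally based on $\rank(\matN)$. For $\rank(\matN) \geq 2$, the plan is to assemble the ingredients in exact parallel with Corollary \ref{cor-mtrdl-rat-sing}: Remark \ref{rmk - feynman integrand basic facts}.\ref{item-Feyn-nonempty} forces $\Feyn \in (\{x_e\}_{e \in E})^2$, so $X_\Feyn$ is singular at the origin; Theorem \ref{thm - feynman integrand, irreducible jets} gives that $\scrL_m(\KK^E, X_\Feyn)$ is irreducible for every $m \geq 1$; finally, Musta\c t\u a's Theorem \ref{thm-mustata-main} elevates this jet irreducibility to canonical singularities, which for the l.c.i. hypersurface $X_\Feyn$ coincide with rational singularities by the cited results of Elkik and Flenner. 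This part is a clean reassembly of previously established tools, with essentially no new content.

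The rank 1 case warrants a separate direct argument, because the rank 1 portion of Theorem \ref{thm - feynman integrand, irreducible jets}'s proof itself defers back to this corollary. Here connectedness forces $\matN = \matU_{1,n}$, so $\zeta_\matN = \sum_{e \in E} c_e x_e$ with every $c_e \in \KK^{\times}$. Since $\rank(\matM) = \rank(\matN) + 1 = 2$, the polynomial $\xi_\matM$ is a homogeneous quadric, and consequently $\Feyn = \zeta_\matN(1 + \Delta_\massm^E) + \xi_\matM$ has degree at most two. Irreducibility of $\Feyn$ is provided by Remark \ref{rmk - feynman integrand basic facts}.\ref{item-Feyn-irred}.

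The concluding step is to invoke the classical fact that an irreducible polynomial of degree at most two over an algebraically closed field of characteristic zero defines a hypersurface that is either smooth or has rational singularities. The justification is a standard affine normal form: such a polynomial is equivalent, under an affine change of coordinates, to one of $\sum_{i=1}^{r} x_i^2$ (cone), $\sum_{i=1}^{r} x_i^2 + c$ with $c \in \KK^{\times}$ (smooth), or $\sum_{i=1}^{r} x_i^2 + x_{r+1}$ (smooth). Irreducibility rules out $r \leq 2$ in the cone case, so $r \geq 3$, and the cone over a smooth projective quadric of dimension at least two is the prototypical rational hypersurface singularity.

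The main subtlety (rather than obstacle) is precisely the circular appearance between the theorem and this corollary: the rank 1 portion of Theorem \ref{thm - feynman integrand, irreducible jets} presupposes that $\Feyn$ is known to be smooth or rational so that Musta\c t\u a's theorem can be applied in the reverse direction to retrieve jet irreducibility. The quadric argument above is exactly the piece that breaks this circularity, after which the rank $\geq 2$ case follows mechanically.
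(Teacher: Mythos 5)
Your proposal is correct and follows essentially the same two-case structure as the paper: for $\rank(\matN) \geq 2$ the same chain (irreducible jets via Theorem \ref{thm - feynman integrand, irreducible jets}, then Musta\c t\u a plus Elkik--Flenner), and for $\rank(\matN)=1$ the same reduction to a quadric normal form with $\geq 3$ squares in the singular case. The only (immaterial) divergence is how you certify rationality of the quadric cone: you invoke the classical fact about affine cones over smooth projective quadrics of dimension $\geq 2$, whereas the paper computes the Bernstein--Sato polynomial $(s+1)(s+|E'|/2)$ and cites Remark \ref{rmk-rat-sing-properties}.
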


    \begin{proof}
        First assume $\rank (\matN) \geq 2$. Since $\matN$ is connected, $\Feyn$ is not smooth, see Remark \ref{rmk - feynman integrand basic facts}.\ref{item-Feyn-nonempty}. Now use Theorem \ref{thm - feynman integrand, irreducible jets} and argue as in Corollary \ref{cor-mtrdl-rat-sing}, \emph{mutatis mutandis}.

        Now assume that $\rank(\matN) = 1$. Then $\Feyn \in \mathbb{K}[E]$ is a quadric. Via a linear coordinate change, $\Feyn$ can be brought into one of the standard forms $1+\sum_{e'\in E'}x_{e'}^2$ or $x_e+\sum_{e'\in E'}x_{e'}^2$ or $\sum_{e'\in E'}x_{e'}^2$, where $E'\subseteq E$ and $e\notin E'$. In the first and second case, $\Feyn$ is smooth. In the third case, $|E'|\geq 3$ since, by Remark \ref{rmk - feynman integrand basic facts}.\ref{item-Feyn-irred}, $\Feyn$ is irreducible. It follows that in this third case $\Feyn$ has rational singularities since its Bernstein--Sato polynomial is $(s+1)(s+|E'|/2)$, compare Remark \ref{rmk-rat-sing-properties}. 
    \end{proof}

\subsection{Feynman Diagrams}\label{subsec-FInt}

We briefly discuss here the background for Feynman diagrams, and then show how our results on Feynman integrands imply rationality of singularities for Feynman diagrams. For more details, see \cite{BognerWeinzierl,HT,W-Feynman}

\medskip

Let $G$ be a graph with vertices $V:=V_G$ and edges $E:=E_G$. 
Denote by $\calT_G^i$ its set of $i$-forests, so $F\subseteq E$ is in $\calT_G^i$ precisely when it is circuit-free and the graph on the set of vertices of $G$ with the set of edges $F$ has exactly $(i-1)$ more connected components than $G$ does. In a connected graph, an $i$-forest has exactly $i$ connected components and a 1-forest is often called a \emph{spanning tree}.

We identify some of the vertices to be \emph{external vertices} and label them as $\VExt(G)$. To each external vertex $v \in \VExt(G)$ we associate a \emph{momentum} $p(v)$ where we require that 
\[
\sum_{v \in \VExt(G)} p(v) = 0,
\]
and we assume that  a \emph{mass function}
\[
\massm\colon E\to \KK
\]
has been chosen (where typically one assumes the range of the mass function to be $\RR_{\geq 0}$).  
One  incorporates the mass data into a linear polynomial:
\[
\Delta_{\massm}^{E} = \sum_{e \in E} (m_e)^2 x_{e} \in \KK[E]
\]
where $m_e=\massm(e)$.

The graph along with the data $G(V,E,\massm,p)$ of external vertices, momenta, and masses comprise a \emph{Feynman diagram}. 

\medskip

Feynman diagrams are a tool to model certain probabilities in scattering theory. The edges of a Feynman diagram represent particles that interact "at the vertices". External vertices possess also an "external leg", which represents an observable particle with momentum $p(v)$, "entering" or "exiting" the scattering process. For a chosen set of external vertices, the infinitely many possible Feynman diagrams that exhibit the chosen set as external represent the various ways in which the scattering may have proceeded. In the formulation of Lee--Pomeransky \cite{LeePomeransky}, (suitably normalized) integrals as displayed in \eqref{eqn-FeynmanIntegral} model the various probabilities according to which the system might behave; the relevant quantities are given as integrals over certain rational functions.

In Quantum Field Theory it turns out one can reduce the information stored in complicated Feynman diagram to ones in which the underlying graphic matroid is connected, without losing any information on the scattering.

\medskip

The key polynomials associated to a Feynman diagram $G$ are defined as follows and all live in $\KK[E] = \KK[\{x_{e}\}_{e \in E}]$. The \emph{first Symanzik polynomial} of $G$ is the matroid basis polynomial of the cographic matroid of $G$:
\[
\scrU :=\Psi_{(\matM_G)^\perp}=\sum_{T \in\calT^1_G} \bsx^{E\minus T} \in \KK[E].
\]

The \emph{second Symanzik polynomial} $\scrF_{0}^{W}$ of $G$ is given by enumerating the complements of all spanning $2$-forests of $G$ and then weighting each complement by a constant depending on our momentum data:
\[
\scrF_{0}^{W} := \sum_{F = T_{1} \sqcup T_{2} \in\calT^2_G} |p(T_{1})|^{2} \enspace {\bsx^{E\minus F}} \in \KK[E];
\]
here, $F = T_{1} \sqcup T_{2}$ is the decomposition of $F$ into its two tree components, and $p(T_{1}) = \sum_{v \in \VExt \cap T_{1}} p(v)$. (Since $p(T_{1}) + p(T_{2}) = \sum_{v \in \VExt(G)} p(v) = 0$, the term $p(T_{1})^{2}$ is independent of the choice of connected component of $F$.)

Quantum Field Theory investigates Feynman integrals when the momenta are vectors in Minkowski space $\RR^{1,3}$ with norm $|p|^2=p_0^2-(p_1^2+p_2^2+p_3^2)$. The superscript in $\scrF^W_0$ refers to \emph{Wick rotation}, multiplying momentum coordinates by $\sqrt{-1}$, and thus exchanging Minkowski norm by Euclidean norm, but also moving the entire situation out of the real into the complex domain. 

 A simplified "scalar" version arises when the range of $p$ is $\RR$. 
From the matroidal point of view these two cases behave similarly as long as the non-scalar formulation enjoys \emph{general kinematics}: no proper subsum of external vertices should have momentum sum zero, and no cancellation of terms should occur in the sum $\scrU\cdot\Delta^E_\massm +\scrF^W_0$.

\begin{prop} \label{prop-Patterson-W}
Suppose $\matM_G$ is connected and $|E|\geq 2$.
    \begin{asparaenum}
    \item If $p$ is scalar and not identically zero, there is a quotient $\matM_{G, p}$ of the graphic matroid $\matM_{G}$, with $\rank(\matM_{G,p})=\rank(\matM_G)-1$, such that the second Symanzik polynomial $\scrF_{0}^{W}$ is a configuration polynomial of the dual matroid $\matM_{G, p}^{\perp}$, \cite[Prop.~3.14]{Patterson}.
    \item If a vector-valued $p$ satisfies general kinematics then there is a matroid quotient $\matM_{G,p}$ of $\matM_G$ with  $\rank(\matM_{G,p})=\rank(\matM_G)-1$ such that $\scrF_0^W$ is a matroid support polynomial of $\matM_{G,p}$, \cite[Prop.~3.5]{W-Feynman}\qed
    \end{asparaenum}
    \end{prop}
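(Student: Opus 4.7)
Proof plan for Proposition \ref{prop-Patterson-W}:

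The strategy for both parts is to augment the standard realization of the graphic matroid $\matM_G$ with additional ``momentum data'' to produce a representable matroid $\widetilde{\matM}$ on $E\sqcup\{e_\star\}$ (or on $E\sqcup\VExt$, depending on the setup) whose deletion $\widetilde{\matM}\setminus e_\star$ recovers $\matM_G$ and whose contraction $\widetilde{\matM}/e_\star$ is the desired quotient $\matM_{G,p}$. Concretely, fix an orientation of $G$ and let $A$ be the reduced incidence matrix; its column matroid is $\matM_G$. For scalar $p$, form the augmented matrix $\widetilde{A}$ by adjoining one additional row whose entries are derived from $p(v)$ at the external vertices (zero elsewhere), and by appending a column $e_\star$ that is nonzero only in this new row. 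The column matroid of $\widetilde{A}$ provides the lift $\widetilde{\matM}$ in the sense of Remark \ref{rmk-mat-quots}.\ref{item-dual-DC}, so  $\matM_{G,p}:=\widetilde{\matM}/e_\star$ is automatically a quotient of $\matM_G$.

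Next, one checks that the rank drops by exactly one, i.e., $\rank(\matM_{G,p}) = \rank(\matM_G)-1$. This is precisely the content of the nontriviality hypothesis: scalar $p\not\equiv 0$ means the extra row is not in the row span of $A$, while general kinematics in the vector case ensures that each scalar ``slice'' of $p$ places the extra row outside the row span (even after considering subsums of external vertices, which corresponds to avoiding spurious coincidences among minors). In both cases, the new row is linearly independent of the rows of $A$, so adjoining $e_\star$ increases rank by one, hence contracting $e_\star$ strictly decreases rank.

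The key computational step is to identify $\scrF_0^W$ as the prescribed polynomial on $\matM_{G,p}$. For (a), the configuration polynomial of $(\matM_{G,p})^\perp$ is the determinant of $A^\perp X (A^\perp)^T$ where $A^\perp$ is a matrix whose row space is the orthogonal complement of that of $\widetilde{A}$ restricted appropriately; applying Cauchy--Binet expands this as a sum over bases of $(\matM_{G,p})^\perp$, and an inspection shows each basis corresponds to the complement $E\minus F$ of a spanning 2-forest $F=T_1\sqcup T_2$, with coefficient exactly $|p(T_1)|^2$ arising from the $2{\times}2$ minor involving the momentum row. This matches the matrix-tree theorem for 2-forests, which is essentially Kirchhoff's formula; details are executed in \cite[Prop.~3.14]{Patterson}. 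For (b), the vector-valued analysis proceeds componentwise, using that general kinematics rules out cancellations among monomials corresponding to distinct 2-forests, so that $\scrF_0^W$ has monomial support exactly $\{\bsx^{E\minus F}:F\in\calT_G^2\}$, which then coincides with the bases of $\matM_{G,p}$ by construction; this is \cite[Prop.~3.5]{W-Feynman}.

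The main obstacle is the verification that the monomial support of $\scrF_0^W$ matches the bases of $\matM_{G,p}$ with \emph{all} coefficients nonzero: in the scalar case this uses positivity $|p(T_1)|^2\neq 0$ coming from $p\not\equiv 0$ together with the external vertex distribution lying in both components of some 2-forest; in the vector case, it is exactly the content of general kinematics. Once these non-vanishing statements are established, the identification of $\scrF_0^W$ with a configuration (resp.\ matroid support) polynomial on $\matM_{G,p}^\perp$ (resp.\ $\matM_{G,p}$) is a direct minor-by-minor comparison.
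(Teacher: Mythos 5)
The statement ends with \verb|\qed| and cites \cite[Prop.~3.14]{Patterson} and \cite[Prop.~3.5]{W-Feynman} directly. There is therefore no in-paper argument to compare your sketch against; you are effectively being asked to reproduce what happens in those references.

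Your high-level intuition is on the right track --- momentum data should augment a linear-algebraic realization attached to $G$, and the $|p(T_1)|^2$ coefficients should come out of a Cauchy--Binet/all-minors matrix-tree expansion. But the specific lift construction you propose does not satisfy the definitional requirements of Remark~\ref{rmk-mat-quots}.\ref{item-dual-DC}, and this is a genuine gap. If you set
\[
\widetilde{A} = \begin{pmatrix} A & 0 \\ \rho & 1 \end{pmatrix}
\]
(reduced incidence matrix $A$, new row $\rho$, new column $e_\star$), then $\rank(\widetilde{A}) = \rank(A) + 1$ regardless of whether $\rho$ lies in the row span of $A$, because of the $e_\star$ column. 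So $\rank(\widetilde{\matM}) = \rank(\matM_G) + 1$, which already violates the required $\rank(\widetilde{\matM}) = \rank(\matM_G)$. Moreover, $\widetilde{\matM} \setminus e_\star$ is the column matroid of the stacked matrix $(A; \rho)$, which equals $\matM_G$ precisely when $\rho \in \operatorname{rowspan}(A)$; but then $\widetilde{\matM}/e_\star$ has rank $\rank(\matM_G)$, not $\rank(\matM_G) - 1$. Conversely, if $\rho \notin \operatorname{rowspan}(A)$ (which is what you invoke $p \not\equiv 0$ for), then $\widetilde{\matM} \setminus e_\star \neq \matM_G$. Either way the two lift requirements cannot be met simultaneously with this matrix. (There is also a bookkeeping ambiguity: the new row is indexed by edges, but you describe its entries via vertices $p(v)$, without specifying the passage from vertex to edge data.)

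The actual argument in \cite{Patterson} does not enlarge the ground set at all. It works dually: the first Symanzik $\scrU$ is the configuration polynomial of the cycle space $W = \ker\partial \subseteq \KK^E$ (with $\partial$ the reduced incidence map), and one builds the strictly larger subspace $W_p = \partial^{-1}(\KK\, p) \supseteq W$ of dimension $\dim W + 1$, still inside $\KK^E$. The configuration polynomial of $W_p$ is then shown (by Cauchy--Binet over $2$-forests, as you anticipated) to equal $\scrF_0^W$ with coefficients $|p(T_1)|^2$; nonvanishing of these coefficients is where $p \not\equiv 0$ (resp.\ general kinematics) enters. Since $W \subseteq W_p$ is a flag of subspaces of $\KK^E$, the configuration matroid $\matM_G^\perp$ of $W$ is a quotient of the configuration matroid $\matM_{G,p}^\perp$ of $W_p$; dualizing via Remark~\ref{rmk-mat-quots}.\ref{item-dual-DC} gives $\matM_{G,p}$ as a quotient of $\matM_G$ with the right rank drop. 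If you wanted to repair your approach by adding a ground-set element, the standard device is to add a new graph edge (e.g.\ from one external vertex to a new vertex at infinity) and work with the extended graphic matroid directly --- but that requires rebalancing the rank count against the added vertex, which your matrix $\widetilde{A}$ does not do.
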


\begin{define} \label{def - feynman diagram poly}
    For a Feynman diagram $G = (V,E,\massm,p)$, the associated \emph{Feynman diagram polynomial} $\mathscr{G}$ is
    \[
    \mathscr{G} := \mathscr{U} + \mathscr{U}\cdot \Delta_{\massm}^{E} + \scrF_{0}^{W} = \mathscr{U}\cdot(1 + \Delta_{\massm}^{E}) + \scrF_{0}^{W}.
    \]
\end{define}

In Quantum Field Theory, the following Mellin type integral is of crucial interest, where $D\in \RR_{>0}$, $\boldb\in\NN^E$:
\begin{eqnarray}
\label{eqn-FeynmanIntegral}
I_G(D,\boldb)&:=&\int\limits_{(\RR_{>0})^E}\frac{\prod_{e\in E}x_e^{b_e-1}}{\scrG^{D/2}}\,\mathrm{d}x_E.
\end{eqnarray}

Often, the Feynman diagram polynomial is a special instance of a Feynman integrand, see Definition \ref{def - feynman integrands}.
\begin{proposition} \label{prop-FDiagPoly=FInt}
    Let $\matM_G$ be connected, with $|E|\geq 2$. Suppose $p$ is scalar and not identically zero, or vector-valued satisfying general kinematics. Then the Feynman diagram polynomial $\mathscr{G}$ is a Feynman integrand. Specifically, 
    \[
    \mathscr{G} = \Feyn(\Psi_{\matM_{G}^{\perp}}, \Delta_{\massm}^{E}, \scrF_{0}^{W}).
    \]
\end{proposition}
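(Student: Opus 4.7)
The plan is to unpack definitions and read off the required data directly from Proposition~\ref{prop-Patterson-W}. Since a Feynman integrand is specified by a pair of matroids $\matN \twoheadrightarrow \matM$ (wait, $\matN$ a quotient of $\matM$) with $\rank(\matM)-\rank(\matN)=1$, together with a matroid support polynomial on each and a mass linear form, the problem reduces to identifying the correct matroids and checking the rank and support conditions.

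First I would set $\matN := \matM_G^\perp$ and $\matM := \matM_{G,p}^\perp$, where $\matM_{G,p}$ is the quotient of $\matM_G$ guaranteed by Proposition~\ref{prop-Patterson-W}. The polynomial $\mathscr{U} = \Psi_{\matM_G^\perp}$ is the matroid basis polynomial, hence a matroid support polynomial on $\matN$; this verifies the first matroidal ingredient. Next, Proposition~\ref{prop-Patterson-W} supplies the second: in the scalar case (a), $\scrF_0^W$ is a configuration polynomial of $\matM_{G,p}^\perp$ and therefore (as recalled in Example~\ref{ex-matroidal-polys}.\ref{item-configpoly}) an element of $\MSP(\matM_{G,p}^\perp)$; in the vector case (b) with general kinematics, the same conclusion holds for the corresponding dualized quotient. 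Either way $\xi_\matM := \scrF_0^W \in \MSP(\matM)$.

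Second, I would verify the quotient relation $\matM \twoheadrightarrow \matN$. Proposition~\ref{prop-Patterson-W} yields the quotient $\matM_G \twoheadrightarrow \matM_{G,p}$, so by the duality of Deletion-Contraction recorded in Remark~\ref{rmk-mat-quots}.\ref{item-dual-DC}, dualization reverses the arrow:
\[
\matM_{G,p}^\perp \twoheadrightarrow \matM_G^\perp,\qquad \text{i.e. } \matM \twoheadrightarrow \matN.
\]
The rank condition follows from $\rank(\matM_{G,p}) = \rank(\matM_G)-1$ via $\rank(\matN^\perp) = |E|-\rank(\matN)$: we get $\rank(\matM) - \rank(\matN) = (|E|-\rank(\matM_{G,p})) - (|E|-\rank(\matM_G)) = 1$. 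For the positivity condition $\rank(\matN)\geq 1$, note that connectedness of $\matM_G$ together with $|E|\geq 2$ forces $\matM_G$ to have no coloops, so $\rank(\matM_G^\perp) = |E|-\rank(\matM_G) \geq 1$.

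Finally, with these identifications in hand, the equality is just Definition~\ref{def - feynman integrands} read off:
\[
\Feyn(\Psi_{\matM_G^\perp},\Delta_\massm^E,\scrF_0^W) = \Psi_{\matM_G^\perp}(1+\Delta_\massm^E) + \scrF_0^W = \mathscr{U}(1+\Delta_\massm^E)+\scrF_0^W = \mathscr{G},
\]
which is Definition~\ref{def - feynman diagram poly}. There is no real obstacle here beyond bookkeeping the dualization; the substantive content has been outsourced to Proposition~\ref{prop-Patterson-W}. The only mild subtlety is ensuring that in case~(b) one uses the dualized matroid so that the rank and quotient conditions align with the Feynman integrand set-up, as in case~(a).
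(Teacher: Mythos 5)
Your proof is correct and takes essentially the same route as the paper's: identify $\matN=\matM_G^\perp$ and $\matM=\matM_{G,p}^\perp$, invoke Proposition~\ref{prop-Patterson-W} to see both $\mathscr{U}$ and $\scrF_0^W$ are matroid support polynomials on the relevant matroids, then dualize the quotient $\matM_G\twoheadrightarrow\matM_{G,p}$ via Remark~\ref{rmk-mat-quots}.\ref{item-dual-DC} to get $\matM_{G,p}^\perp\twoheadrightarrow\matM_G^\perp$ with the required rank drop of one. Your extra check that $\rank(\matM_G^\perp)\geq 1$ (so that the Feynman integrand set-up applies) is needed and correct; the paper defers that verification to the proof of Theorem~\ref{thm-feyn-poly-ratsing}, but it is a reasonable thing to include here.
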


\begin{proof}
    We know  that $\mathscr{U} = \Psi_{\matM_{G}^{\perp}}$ (straight from the definition) and $\scrF_{0}^{W}$ (by Proposition \ref{prop-Patterson-W}) are matroid support polynomials 
on (the connected matroid) $\matM_G^\perp$ and a matroid
    $\matM_{G, p}^\perp$ respectively, where $\matM_{G,p}$ is is a quotient of $\matM_{G}$ with $\rank(\matM_{G}) = \rank(\matM_{G, p}) + 1$. By Remark \ref{rmk-mat-quots}.\ref{item-dual-DC} and Definition \ref{dfn-dual-matroid}, $\matM_{G}^{\perp}$ is a quotient of $\matM_{G, p}^{\perp}$ with $\rank(\matM_{G, p}^{\perp}) =  \rank(\matM_{G}^{\perp}) + 1.$
\end{proof}

Our detailed study of Feynman integrands and their jet schemes applies to Feynman integrands: 

\begin{theorem} \label{thm-feyn-poly-ratsing}
    Let $G=G(V,E,\massm,p)$ be a Feynman diagram such that $\matM_G$ is connected.   Suppose
    \begin{itemize}
    \item $\massm,p$ are identically zero,  or
    \item $p$ is scalar and not identically zero, or
    \item $p$ is vector-valued and has general kinematics.
    \end{itemize}
    In all of the three cases above, the Feynman diagram polynomial $\mathscr{G}$ has rational singularities or is smooth (or, if $G=K_2$, is constant).
\end{theorem}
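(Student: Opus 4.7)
The plan is to reduce each of the three listed cases to results already proven about either matroidal polynomials or Feynman integrands. The heavy lifting has been done in Corollaries \ref{cor-mtrdl-rat-sing} and \ref{cor-Feyn-ratsing}, so the bulk of the work is to verify that $\mathscr{G}$ falls within their hypotheses, and to dispose of some degenerate boundary cases.

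First I would dispose of very small ground sets. If $|E| = 0$, there is nothing to prove; if $|E| = 1$, connectedness of $\matM_{G}$ forces the unique edge either to be a loop (giving $\mathscr{U} = 0$) or a coloop (giving $\mathscr{U} = 1$), so $\mathscr{G}$ is at most linear in $x_e$ and is constant or smooth. The $K_2$ case fits here. From now on assume $|E| \geq 2$.

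Next I would address the fully degenerate case: $\massm = 0$ and $p = 0$. Then $\Delta_{\massm}^{E} = 0$ and $\scrF_{0}^{W} = 0$, so $\mathscr{G} = \mathscr{U} = \Psi_{\matM_{G}^{\perp}}$ is the matroid basis polynomial of $\matM_{G}^{\perp}$. By Definition \ref{dfn-dual-matroid}, $\matM_{G}^{\perp}$ is connected since $\matM_{G}$ is. By Example \ref{ex-matroidal-polys}.\ref{item-matroidbasis}, $\Psi_{\matM_{G}^{\perp}}$ is a matroidal polynomial on $\matM_{G}^{\perp}$. If $\rank(\matM_{G}^{\perp}) \geq 2$, Corollary \ref{cor-mtrdl-rat-sing} gives rational singularities. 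If $\rank(\matM_{G}^{\perp}) = 0$, then $\mathscr{U} = 1$ is constant. If $\rank(\matM_{G}^{\perp}) = 1$, connectedness forces $\matM_{G}^{\perp} = \matU_{1,|E|}$ and $\mathscr{U} = \sum_{e} x_{e}$ is smooth.

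Finally, in the scalar (nonzero) and vector-valued (general kinematics) cases, Proposition \ref{prop-FDiagPoly=FInt} identifies $\mathscr{G} = \Feyn(\Psi_{\matM_{G}^{\perp}}, \Delta_{\massm}^{E}, \scrF_{0}^{W})$ as a Feynman integrand in the sense of Definition \ref{def - feynman integrands}, with underlying matroid quotient $\matM_{G,p}^{\perp} \twoheadrightarrow \matM_{G}^{\perp}$ of rank difference one. The ``lower'' matroid $\matN = \matM_{G}^{\perp}$ is connected. If $\rank(\matM_{G}^{\perp}) \geq 1$, Corollary \ref{cor-Feyn-ratsing} immediately yields that $\mathscr{G}$ has rational singularities or is smooth. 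The only case left is $\rank(\matM_{G}^{\perp}) = 0$, i.e.\ $G$ is a tree on its vertex set, in which case $\Psi_{\matM_{G}^{\perp}} = 1$ and $\mathscr{G} = 1 + \Delta_{\massm}^{E} + \scrF_{0}^{W}$ is of degree at most one, hence smooth (unless all coefficients vanish, in which case $\mathscr{G}$ is constant, but this can only occur under the hypothesis of the second paragraph).

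I do not expect a serious obstacle here; the genuine content has already been extracted in the Feynman integrand analysis. The only point requiring care is verifying that the boundary cases ($|E| \leq 1$ and $\rank(\matM_{G}^{\perp}) = 0$) are compatible with the stated exception for $K_{2}$ and otherwise give smoothness.
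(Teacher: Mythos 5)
Your proof is correct and follows essentially the same route as the paper: dispose of the $\massm=p=0$ case via Corollary \ref{cor-mtrdl-rat-sing}, and reduce the remaining cases to Corollary \ref{cor-Feyn-ratsing} via Proposition \ref{prop-FDiagPoly=FInt}; you are in fact slightly more careful than the paper in separating out the $\rank(\matM_G^\perp)=1$ and $\rank(\matM_G^\perp)=0$ subcases (though the latter cannot occur when $\matM_G$ is connected and $|E|\geq 2$, since a free matroid on at least two elements is disconnected). One inconsequential slip: for a single-loop graph one has $\matM_G^\perp=\matF_e$, so $\mathscr{U}=\Psi_{\matF_e}=x_e$, not $0$, but this does not affect the conclusion in that degenerate case.
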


\begin{proof}
    If $G$ has fewer that two vertices, there is nothing to discuss. If $G$ is a single (coloop) edge, the unique 2-forest is empty, and $\scrG=1(1+m_1^2x_1)+\scrF^W_0$ is linear or constant.  

    If $\massm=0=p$ then $\scrG=\scrU$ is the matroid basis polynomial of $\matM_{G}^{\perp}$ and we are done by Corollary \ref{cor-mtrdl-rat-sing}.

    So we may assume both that $|E| \geq 2$ and that $\scrG$ is the Feynman integrand $\Feyn(\Psi_{\matM_{G}^{\perp}}, \Delta_{\massm}^{E}, \scrF_{0}^{W})$, thanks to Proposition \ref{prop-FDiagPoly=FInt}. Since $\matM_G$ is connected, $\rank(\matM_G) < |E|$ forcing $\rank(\matM_{G}^{\perp}) \geq 1$. By Corollary \ref{cor-Feyn-ratsing}, $\scrG$ has rational singularities or is smooth.

\end{proof}

\section{Future Directions}

Here we document some natural questions  on the hypersurfaces considered throughout the paper. 

\subsection{Moduli Space of Matroid Support Polynomials} \label{subsect-ModuliSpace}

Fix a connected matroid $\matM$ on $E$. A matroid support polynomial 
\[
\chi_\matM = \sum_{B\in\calB_\matM}c_B\bsx^B \in \mathbb{K}[E] \qquad c_B\neq 0\,\,\forall B\in\calB_\matM
\]
on $\matM$ is characterized by the monomial support condition. So matroid support polynomials on $\matM$ are in bijection with 
points in the standard open torus $(\KK^\times)^{|\calB_{\matM}|}$. 

Now let $\mathscr{P}$ be a geometric property of a variety in $\mathbb{K}^E$. 
The moduli space of matroid support polynomials with property $\mathscr{P}$ is
\begin{align*}
\mathscr{C}(\matM, \mathscr{P}) 
    &= \{ (c_B)_{B \in \calB_\matM} \in (\KK^{\times)^{|\calB_{\matM}|}} \mid  \Var( \sum_{B \in \calB_\matM} c_B \bsx^B) \subseteq \KK^E \text{ has property } \mathscr{P} \}.
\end{align*}

\begin{problem}
    For a connected matroid $\matM$, discuss geometric properties of  $\mathscr{C}(\matM, \mathscr{P})$.
\end{problem}
The conditions we have in mind include being open or closed, empty or filling all of $(\KK^\times)^{|\calB_{\matM}|}$, or specific properties of the singular locus such as codimension or projective dimension.

If $\matM$ has positive rank and $\mathscr{P}$ encodes ``having rational singularities (or being smooth)'', Corollary \ref{cor-mtrdl-rat-sing} shows $\mathscr{C}(\matM, \mathscr{P}) = (\KK^\times)^{|\calB_{\matM}|}$. If $\matM$ is representable and connected of rank $2$ or more,  and if  $\mathscr{P}$ encodes ``the singular locus has codimension $3$'', then $\mathscr{C}(\matM, \mathscr{P})$ contains the configuration polynomials on $\matM$, but may not equal all of $(\KK^\times)^{|\calB_{\matM}|}$, \cite[Main Theorem, Ex.~5.2]{DSW}.  The situation is somewhat similar when $\scrP$ specifies Cohen--Macaulayness of the singular locus, \cite[Ex.~5.2, 5.3]{DSW}. 

\subsection{Matroidal Polynomials}

Matroid support polynomials are characterized either by the monomial support condition or by being matroidal polynomials with a particular choice of singleton data, \emph{cf.}\ Proposition \ref{prop-MSP-are-MtrdlM}. Matroidal polynomials are defined by Deletion-Contraction axioms. 

\begin{question}
    Let $\matM$ be a matroid on $E$. Consider the polynomials in $\KK[E]$
    \begin{equation*}
        \bigcup_{\sigma} \, \{\zeta_{\matN} \in \Matroidal(E, \sigma) \mid \matN = \matM \} 
    \end{equation*}
    where $\sigma$ ranges over all possible choices of singleton data. Does this class admit an explicit characterization, for example in terms of monomial support?
\end{question}

\subsection{Bernstein--Sato Polynomials}

The Bernstein--Sato polynomial $b_{f}(s) \in \CC[s]$ of $f \in \mathbb{K}[E]$ is the minimal, monic, $\CC[s]$-polynomial satisfying the functional equation 
\[
b_{f}(s) f^{s} = P(s) f^{s+1}
\]
where $P(s) \in A_{\KK^{E}}[s]$, $A_{\KK^{E}}[s]$ is the polynomial ring extension of the Weyl algebra $A_{\KK^{E}}$ over $\KK^E$, and differential operators $P(s)$ act on $f^{s+1}$ by formal application of the chain rule. (See the \cite{SaitoOnBFunction,Kollar,W-Dmodsurvey} for details). Since $b_f(-1)=0$ in every case, one may define the \emph{minimal exponent} of $f$ as 
\[
\tilde\alpha_f:=- \max \{ r\in\RR \mid r \text{ is a root of }b_{f}(s)/(s+1) \}.
\]
 Then $f$ having rational singularities is equivalent to $\tilde\alpha_f>1$, (\emph{cf.}\ Remark \ref{rmk-rat-sing-properties}).

\begin{question} \label{question-Bfunction}
    Fix a connected matroid $\matM$ of positive rank. Determine the minimal exponent and the Bernstein--Sato polynomial of the matroid basis polynomial $\psi_\matM$.
    \end{question}
One can also ask for a description for  arbitrary matroid support polynomials $\chi_\matM$, but even for configuration polynomials, the answer will have to involve some data from the configuration. The Bernstein--Sato polynomials of the two realizations of $\matU_{3,6}$ in \cite[Ex.~5.3]{DSW} have roots $-\{1,2,3/2\}$ (special) and $-\{1,3/2,5/3,7/3\}$ (generic). The stratification of $(\KK^\times)^E$ by Bernstein--Sato polynomial is known to be algebraic by \cite{Lyubeznik-bfu}.

%

\subsection{Jet Scheme Data}

For simplicity, let here $\chi_\matM$ denote a matroid support polynomial on a connected matroid $\matM$ of positive rank. The architecture of this paper involved estimating, but never actually answering the following:

\begin{question}\label{question-DimJetsOverSing}
    What is the dimension of 
    \begin{equation*}
        \mathscr{L}_m(\KK^E, X_{\chi_M}, X_{\chi_M, \Sing})
    \end{equation*}
    for a fixed $m \in \mathbb{Z}_{\geq 0}$?
\end{question}

The $m$-jets of a hypersurface $X \subseteq \KK^E$ lying over the singular locus $X_{\Sing}$ are the crucial ingredient needed for understanding the \emph{$m$-contact locus} $\mathscr{X}_m(\KK^E, X, X_{\Sing})$. These contact loci (see \cite{ContactLociArcSpaces,BudurCohomologyContactLoci,BudurEmbeddedNash} for an introduction) are the $m$-jets of $\KK^E$ vanishing on $X_{\Sing}$ that have order of vanishing along $X$ exactly $m$:

\begin{equation} \label{eqn-mContactLocus}
    \mathscr{X}_m(\KK^E, X, X_{\Sing}) = \bigg[ \pi_{m, m-1}^{-1}(\mathscr{L}_{m-1}(\mathbb{K}^{E}, X, X_{\Sing}) \bigg]  \minus \bigg[ \mathscr{L}_m(\mathbb{K}^{E}, X, X_{\Sing}) \bigg].
\end{equation}
 A major open problem in birational geometry is the (non)embedded \emph{Nash problem}. Roughly, this demands a description of the valuations of $X \subseteq \KK^E$ (data wedded to logarithmic resolutions of singularities) that arise from $m$-contact loci (which are jet/arc space data).   Satisfactory accounts of the $m$-contact loci are sparse and typically require \emph{ad hoc}  methods; compare for example \cite[Thm.~1.20, Sec.~5]{BudurEmbeddedNash}. A significant advance would be to count the number of irreducible components of $\mathscr{X}_{m}(\KK^E, X_{\zeta_\matM}, X_{\zeta_{\matM}, \Sing})$.

Knowing the dimension of $\mathscr{X}_{m}(\KK^E, X, X_{\Sing})$ is desirable as this is equivalent to knowing the \emph{$m$-log canonical threshold} (\cite[Def.~1.17, Prop.~1.18]{BudurEmbeddedNash}. This quantity is defined in terms of certain types of log-discrepancies on $m$-separating log resolutions, but very few explicit computations are known; \emph{cf.}\ \cite[Sec.~5]{BudurEmbeddedNash}. While $m$-thresholds closer to zero indicate "worse" singularities, the $m$-log-canonical threshold of a  rational hypersurface singularity need not equal 1, in contrast to the classical log-canonical threshold. Thus, both the $m$-log canonical threshold and the minimal exponent are tools for introducing a hierarchy of rational-inclusive singularities.

\subsection{Feynman Integrals}

By Theorem \ref{thm-feyn-poly-ratsing}, we know under mild assumptions on the Feynman diagram that the denominator of the Feynman integral \eqref{eqn-FeynmanIntegral} is a real (or even rational) power of a polynomial with rational singularities. In general, consider the multivariate Mellin transformation of $g \in \RR[E]$ by
\[
\{M(g)\}(\textbf{s}) = \int\limits_{(\RR_>0)^E} \big( \prod_{e \in E}x_{e}^{s_{e}-1} \big) g\, \de x
\]
where $\textbf{s}$ denotes the tuple $(s_e)_{e \in E}$ and $\de x$ denotes $\prod_{e \in E} \de x_e$. 

\begin{question}
    For $g \in \RR[E]$, what are the implications of $g$ having rational singularities over the complex numbers on the multivariate Mellin transformation $\{M(g)\}(\textbf{s})$?
\end{question}

\appendix
\section*{Acknowledgements}
We would like to thank Linquan Ma for helpful conversations about $F$-singularities. We also thank Graham Denham and Mathias Schulze for helpful comments on configuration polynomials and other matroid matters. The first author is grateful to Nero Budur for introducing him to jet schemes and explaining their importance.

\bibliographystyle{alpha}
\bibliography{refs}

\end{document}